\renewcommand\expandafter\subsection\expandafter{%
		\expandafter\@fb@secFB\subsection
	}%
\numberwithin{equation}{section}
\theoremstyle{plain}
\newtheorem{thm}{Theorem}[section]
\newtheorem{lem}[thm]{Lemma}
\newtheorem{cor}[thm]{Corollary}
\newtheorem{prop}[thm]{Proposition}
\theoremstyle{definition}
\newtheorem{defn}[thm]{Definition}
\newtheorem{rem}[thm]{Remark}
\DeclareMathOperator*{\PV}{PV}
\newcommand{\IC}{\mathbb{C}}
\newcommand{\IN}{\mathbb{N}}
\newcommand{\R}{\mathbb{R}}
\newcommand{\cA}{\mathcal{A}}
\newcommand{\cT}{\mathcal{T}}
\newcommand{\cD}{\mathcal{D}} 
\newcommand{\cF}{\mathcal{F}} 
\newcommand{\cG}{\mathcal{G}}
\newcommand{\cH}{\mathcal{H}}
\newcommand{\cK}{\mathcal{K}} 
\newcommand{\cL}{\mathcal{L}}
\newcommand{\cS}{\mathcal{S}} 
\newcommand{\cU}{\mathcal{U}}
\newcommand{\loc}{\operatorname{loc}}
\renewcommand{\L}{\operatorname{L}} 
\newcommand{\C}{\operatorname{C}} 
\renewcommand{\H}{\operatorname{H}} 
\newcommand{\W}{\operatorname{W}}
\newcommand{\B}{\operatorname{B}}
\newcommand{\F}{\operatorname{F}}
\newcommand{\V}{\operatorname{V}}
\newcommand{\X}{\operatorname{X}} 
\newcommand{\Y}{\operatorname{Y}}
\newcommand{\Z}{\operatorname{Z}}
 \newcommand{\U}{\operatorname{U}}
\DeclareRobustCommand{\Hdot}{\dot{\H}\protect{\vphantom{H}}
} 
\DeclareRobustCommand{\Wdot}{\dot{\W}\protect{\vphantom{W}}} 
\DeclareRobustCommand{\Ydot}{\dot{\V}\protect{\vphantom{V}}} 
\DeclareRobustCommand{\Fdot}{\dot{\F}\protect{\vphantom{F}}} 
\DeclareRobustCommand{\Xdot}{\dot{\X}\protect{\vphantom{X}}} 
\DeclareRobustCommand{\Ydot}{\dot{\Y}\protect{\vphantom{Y}}} 
\DeclareRobustCommand{\Zdot}{\dot{\Z}\protect{\vphantom{Z}}} 
\DeclareRobustCommand{\Udot}{\dot{\U}\protect{\vphantom{K}}} 
\DeclareRobustCommand{\cFdot}{\dot{\cF}\protect{\vphantom{\cF}}} 
\DeclareRobustCommand{\cLdot}{\dot{\cL}\protect{\vphantom{\cL}}} 
\DeclareRobustCommand{\cTdot}{\dot{\cT}\protect{\vphantom{\cT}}}
\DeclareRobustCommand{\cUdot}{\dot{\cU}\protect{\vphantom{\cU}}}
\DeclareRobustCommand{\cGdot}{\dot{\cG}\protect{\vphantom{\cG}}}
\newcommand{\eps}{\varepsilon} 
\renewcommand\Re{\operatorname{Re}}
\def\angle#1#2{\langle #1,#2 \rangle} 
\renewcommand{\iint}{\int_{}\kern-.34em \int} 
\renewcommand{\iiint}{\iint_{}\kern-.34em \int} 
\newcommand{\dd}{\, \mathrm{d} }
\newcommand{\dv}{\, \mathrm{d} v}
\newcommand{\dx}{\, \mathrm{d} x}
\newcommand{\ds}{\, \mathrm{d} s}
\newcommand{\dt}{\, \mathrm{d} t}
\title[Weak solutions to Kolmogorov--Fokker--Planck equations]{Weak solutions to Kolmogorov--Fokker--Planck equations: regularity, existence and uniqueness}
\author{Pascal Auscher}
\address{Universit\'e Paris-Saclay, CNRS, Laboratoire de Math\'{e}matiques d'Orsay, 91405 Orsay, France\\ and 
French-Australian  Mathematical Sciences and Interactions, Australian National University-CNRS, Canberra, ACT, 2601, Australia}
\email{pascal.auscher@universite-paris-saclay.fr}
\author{Cyril Imbert}
\address{D\'epartement de Math\'ematiques et Applications, \'Ecole normale sup\'erieure, Universit\'e PSL, CNRS, 75005 Paris, France }
\email{cyril.imbert@ens.psl.eu}
\author{Lukas Niebel}
\address{Institut f\"ur Analysis und Numerik, Westf\"alische Wilhelms-Universit\"at M\"unster\\
Orl\'eans-Ring 10, 48149 M\"unster, Germany.}
\email{lukas.niebel@uni-muenster.de}
\date{December 5, 2025
}
\thanks{The third author is funded by the Deutsche Forschungsgemeinschaft (DFG, German Research Foundation) under Germany's Excellence Strategy EXC 2044 --390685587, Mathematics M\"unster: Dynamics--Geometry--Structure. A CC-BY 4.0 \url{https://creativecommons.org/licenses/by/4.0/} public copyright license has been applied by the authors to the present document and will be applied to all subsequent versions up to the Author Accepted Manuscript arising from this submission.}
\subjclass[2010]{Primary: 35K65, 35R05, 35D30, 35Q84, 35R09 Secondary:  35K70, 35B65}
\keywords{Kolmogorov--Fokker--Planck equations, kinetic spaces, transfer of regularity, weak solutions, kinetic Cauchy problems}
\begin{document}

\begin{abstract} 
We prove existence, uniqueness and regularity of weak solutions of Kolmogorov--Fokker--Planck equations with either local or non-local diffusion in the velocity variable and rough diffusion coefficients or kernels. Our results cover the Cauchy problem and allow a broad class of source terms under minimal assumptions. The core of the analysis is a set of sharp kinetic embeddings \`a la Lions and transfer-of-regularity results \`a la Bouchut--H\"ormander. We formulate these tools in a homogeneous, scale-invariant form, available for a large range of regularity parameters. 
\end{abstract}

\maketitle

\tableofcontents
 
\section{Introduction} 

Motivated by the study of non-linear kinetic equations, this work is concerned with proving the existence and uniqueness of weak solutions for linear kinetic Kolmogorov--Fokker--Planck equations of both local and non-local (fractional) type, with rough diffusion coefficients or kernels.
To the best of our knowledge, this work establishes for the first time the well-posedness of the non-local Kolmogorov--Fokker--Planck equation with a rough diffusion kernel. 
An essential ingredient in our proof is a kinetic transfer-of-regularity result for weak solutions, which appears here for the first time in the non-local case and improves on the results of H\"ormander \cite{MR0222474} and Bouchut \cite{MR1949176} in the local case.
In all cases, we obtain the regularity exponents implied by the kinetic scaling. Kinetic regularity also allows us to deal with large classes of source terms. 
In particular, this precise solution theory allows us to define a notion of fundamental solution for such equations (presented in a subsequent work \cite{AIN2}).

In this introduction, we first define the class of equations and their weak solutions, and we state our main existence and uniqueness result together with some consequences.
Then we present our main instrumental result on kinetic regularity. We next highlight a few aspects of our proofs and present tools that we use. We also explain the recourse to homogeneous Sobolev spaces to obtain estimates that are invariant under kinetic scaling. 
We continue with a short review of the literature and the state of the art, and we describe the organisation of the article.
This work is the first of a series, and we also describe subsequent papers.

\subsection{Kolmogorov--Fokker--Planck equations}

The equations take the following general form:
\begin{equation}
 \label{eq:weaksol}
 (\partial_t + v \cdot \nabla_x ) f +\cA f = S, 
\end{equation}
where the unknown function $f=f(t,x,v)$, depending on time $t$, position $x$, and velocity $v$, is defined on $\Omega = \R \times \R^d \times \R^d$, with $d \ge 1$, and the source term $S$ is given. The operator $\cA$ is a diffusive linear operator acting on the velocity variable $v$. 
Later, we shall also consider restrictions of time to finite intervals, still with $x$ and $v$ in $\R^d$.

A first example is the kinetic Fokker--Planck equation corresponding to
\begin{equation}
 \label{e:fk}
 \cA f(t,x,v) = -\nabla_v \cdot ( \mathbf{A} \nabla_v f)(t,x,v) 
\end{equation}
where $\mathbf{A}=\mathbf{A}(t,x,v)$ is a real symmetric matrix satisfying for a.e. $(t,x,v) \in \Omega$,
\[
  \text{ eigenvalues of } \mathbf{A} (t,x,v) \text{ lie in } [\lambda, \Lambda] \]
for some $0 < \lambda \le \Lambda<\infty$. We shall see that coercivity and boundedness of the matrix $\mathbf{A}$ are sufficient (real symmetry plays no role), and no regularity other than measurability of the coefficients is required. The case $- \cA=\Delta_{v}$, the Laplace operator in the variable $v$, yields the well-known Kolmogorov equation.

Another important class of examples consists of kinetic equations with integral diffusion. They correspond to
\begin{equation}
 \label{e:A-kernel}
 \cA f (t,x,v) = \PV \int_{\R^d} (f(t,x,v) - f(t,x,v')) k (t,x,v,v') \dv'
\end{equation}
where the function $k \colon \Omega \times \R^d \to \R$ is non-negative and satisfies appropriate ellipticity conditions.
The main example is $\cA f (t,x,v) = (-\Delta_v)^\beta f (t,x,v)$ for which $k (t,x,v,v') = c_{d,\beta} |v'-v|^{-d-2\beta}$, $0<\beta<1$, for some
explicit constant $c_{d,\beta}>0$ and its regularity theory will be instrumental. 
In the general integro-differential case, since the kernel $k$ can be singular,
the meaning of $\cA f$ has to be understood in the principal value ($\PV$) sense.
The simplest form of the ellipticity condition on the kernel $k$ is the following:
\[
 \begin{cases}
   k (t,x,v,v') = k(t,x,v',v), \\
   \frac{\lambda}{|v'-v|^{d+2\beta}} \le k (t,x,v,v') \le \frac{\Lambda}{|v'-v|^{d+2\beta}}.
 \end{cases}
\]
These conditions are considered in many articles concerned with the regularity of parabolic equations with integral diffusion, see for instance \cite{MR2448308,MR2784330,MR2494809}.
Our approach allows us to deal with a larger class of kernels, satisfying weaker ellipticity conditions.
More precisely, our results apply to the class of kinetic equations with integral diffusion introduced in \cite{MR4049224}.
They naturally appear in the context of the study of the Boltzmann equation without Grad's cut-off assumption.
See Remark~\ref{rem:appli-kernel} below.

\subsection{Weak solutions: definition, main result and consequences}

Not only do we want to solve Cauchy problems on finite time intervals, but also to obtain estimates that are scale invariant. Thus, we start with homogeneous assumptions to state our main result. 

For the Kolmogorov--Fokker--Planck equation~\eqref{eq:weaksol} with $\cA= (-\Delta_{v})^{\beta}$, distributional solutions can be built via Fourier transform. For rough coefficients, we shall use a form method.
 
 The inhomogeneous Sobolev spaces $\H^\beta_{\vphantom{v} v}$ on $\R^d$ will be defined in a standard way. By using the Fourier transform, the homogeneous semi-norms
$\|f\|_{\Hdot^\beta_{v}}=\|D_{v}^\beta f\|_{\L^2_{\vphantom{v} v}}$, with $D_{v}=(-\Delta_{v})^{1/2}$, make sense for all $\beta\in \R$ when, say, $f\in \L^2_{v}$ to start with. A completion of the space $f\in \L^2_{v}$ for the semi-norm $\|f\|_{\Hdot^\beta_{v}}$  can always be obtained within the space of tempered distributions modulo polynomials. But this is not convenient when dealing with partial differential equations. Instead, one looks for a completion contained in the space of tempered distributions, which we denote by $\Hdot^\beta_{v}$ and refer to as a homogeneous Sobolev space. When $\beta<d/2$, 
this completion exists as a Hilbert space (see Section~\ref{s:homog-kolmo} for a definition). 
Otherwise, if $\beta\geq d/2$, such a completion exists but only as a semi-normed space, which poses technical difficulties in proofs and statements, so we decide not to consider them in this article,\footnote{With this adequate definition when $\beta\ge d/2$, one can simplify the definition of weak solutions below. Actually, most of our results here in which the threshold $d/2$ appears can be improved to results with a threshold $\mathsf{K}/2$ where $\mathsf{K}=2\beta+(2\beta+2)d$ is the homogeneous dimension for the scaling. We refer to the forthcoming article \cite{AN} for details.} although the norms above are still defined when, in addition, $f\in \L^2_{v}$. 
 
In any case for $\beta>0$, $\Hdot^\beta_{v}\cap \L^2_{v}=\H^\beta_{\vphantom{v} v}$. Hence, the constraint $\beta<d/2$ will arise in some proofs for technical reasons, and the case $\beta\ge d/2$ needs to be treated differently. We thought it interesting to present things like this as it directly covers the new cases of interest when $d=2,3$ and $0<\beta<1$. 

 As suggested above, we want that for all $(t,x) \in \R \times \R^d$, 
$\cA$ maps $\H^\beta_{\vphantom{v} v}$ to $\H^{-\beta}_{\vphantom{v} v}$ in a uniform fashion and in a scale-invariant way.
To make it rigorous and valid for all $\beta>0$, we assume that $\R\times \R^d\ni (t,x)\mapsto a_{t,x}$ is a family of continuous sesquilinear forms $a_{t,x}\colon \H^{\beta}_{v} \times \H^\beta_{\vphantom{v} v} \to \IC$  with the homogeneous estimate
\begin{equation}
 \label{e:ellip-upper}
 |a_{t,x}(f,g)|\le \Lambda \|f\|_{\Hdot^\beta_{v}} \|g\|_{\Hdot^\beta_{v}}
\end{equation}
for some $\Lambda <\infty$, uniformly for $(f,g) \in \H^{\beta}_{v} \times \H^\beta_{\vphantom{v} v}$, $(t,x)\in \R\times \R^d$, and that $(t,x)\mapsto a_{t,x}(f,g)$ is measurable (here, and throughout, ambient measure in $(t,x,v)$ is Lebesgue measure). We also assume homogeneous coercivity, namely that there exists $\lambda>0$ such that for all $(t,x)\in \R\times \R^d$ and all $f\in \H^\beta_{v}$, 
\begin{equation}
 \label{e:ellip-lower}
 \Re a_{t,x}(f,f) \ge \lambda \|f\|_{\Hdot^\beta_{v}}^2.
\end{equation}
 The mixed space 
\[\F^\beta=\L^2_{t,x}\H^{\beta}_{\vphantom{t,x} v}\]
(and its homogeneous version $\Fdot^\beta=\L^2_{t,x}\Hdot^{\beta}_{\vphantom{t,x} v}$ when $\beta<d/2$) plays a central role and 
we set
\begin{equation}
 \label{e:a-defi}
 \forall f,g \in \F^\beta, \qquad a(f,g)=\iint_{\R\times \R^d} a_{t,x}(f(t,x),g(t,x))\, \dt\dx.
\end{equation}
For short, we write $a_{t,x}(f(t,x),g(t,x))$ as $a_{t,x}(f,g)$. 
We define by $\cA: \F^\beta \to (\F^\beta)'$, the complex dual of $\F^\beta$ realized in the extended $\L^2_{t,x,v}$ inner product, the operator associated to $a$ by 
$$ \angle{\cA f}{g}= a(f,g), \quad f,g\in \F^\beta.$$
When $\beta<d/2$, this can be extended to $ \Fdot^\beta$: 
In this case, $a$ (identified with its extension) becomes a continuous and coercive sesquilinear form on $\Fdot^\beta \times \Fdot^\beta$ and $\cA: \Fdot^\beta \to (\Fdot^\beta)'$ is an isomorphism. We shall not distinguish notation.

\begin{rem} \label{rem:appli-kernel} We can work with weaker inhomogeneous formulations of boundedness and ellipticity conditions, see \eqref{e:ellip-upper-inhom} and \eqref{eq:weakelliptic-bis}. The situation is well-understood when $\cA$ is given by \eqref{e:fk} plus lower order terms. 
When $\cA$ is given by a diffusion kernel \eqref{e:A-kernel}, the second author and L.~Silvestre give sufficient conditions in \cite{MR4049224} on the kernel for obtaining the weaker forms: see Condition~(1.3) with $\bar R = +\infty$ for ellipticity and Conditions~(1.4), (1.5), (1.6), (1.7) with $\bar R = +\infty$ for boundedness. The interested reader is referred to \cite[Theorem~4.1]{MR4049224} and \cite[Theorem~2.1]{MR4688651}.
\end{rem}

We now define our classes of weak solutions to the equation $(\partial_{t}+v\cdot\nabla_{x})f +\cA f = S$ and state our main result concerned with their construction for fairly general source terms $S$ in appropriate spaces. 
Define 
\begin{equation*}
\Udot^\beta=
\begin{cases}
 \L^2_{t,x}\Hdot^{\beta}_{v} &,\ \mathrm{if}\ \beta <d/2,\\
 \L^2_{\loc, t}\L^2_{\vphantom{\loc, t,x} x,v}\cap \L^2_{t,x}\Hdot^{\beta}_{v}&, \ \mathrm{if}\ \beta\ge d/2, 
\end{cases}
\end{equation*} 
and set
\begin{equation*}
\|f\|_{\Udot^\beta_{}}= \|f\|_{ \L^2_{ t,x}\Hdot^{\beta}_{\vphantom{t,x} v} }= \| D_{v}^{\beta} f \|_{\L^2_{\vphantom{t,x} t,x,v}}
\end{equation*}
which makes sense for all $\beta>0$ by the above remarks.
The threshold $d/2$ is only there because our definition here of the homogeneous Sobolev spaces has this restriction.

\begin{defn}[Weak solutions]
\label{defn:weaksol}
Let $S \in \cD'(\Omega)$. A distribution $f \in \cD'(\Omega)$ is said to be a \emph{weak solution} to \eqref{eq:weaksol} if $f\in \Udot^\beta$ and for all $h\in \cD(\Omega)$, 
\[
 -\iiint_{\Omega} f (\partial_{t}+v\cdot\nabla_{x}) \overline h \dt \dx \dv + a(f,h)
 = \angle{S}{h}.
\]
\end{defn}

\begin{rem}
 We may also consider the adjoint equation 
 \begin{equation}
 \label{eq:weaksolback}
 -(\partial_t + v \cdot \nabla_x ) f +\cA^* f = S, 
\end{equation}
where $\cA^*$ is the adjoint of $\cA$.
We refer to it as the backward kinetic equation, and we define its weak solutions similarly. The theory we develop will need to consider both equations \eqref{eq:weaksol} and \eqref{eq:weaksolback}.
\end{rem}

\begin{rem} We do not assume $\L^\infty_{\loc, t}\L^2_{x,v}$ in this definition (compare, for example, \cite{MR4049224,MR3923847,MR4653756,MR4453413}). The choice of $ \L^2_{\loc, t}\L^2_{\vphantom{\loc, t} x,v}$ when $\beta\ge d/2$ is somewhat arbitrary. It would suffice to have a subspace of $\L^1_{\loc}(\Omega)$ that embeds in $\L^1_{\loc,t}\,\cS'_{x,v}$ (the space of locally integrable functions on $\R^{}_{t}$ valued in the Schwartz space $\cS'_{x,v}=\cS(\R^{2d})$) and on which we can compute $\|D_{v}^{\beta}f\|_{\L^2_{t,x,v}}$. This is because we use the partial Fourier transform in $(x,v)$ in our arguments. In the end, the weak solutions we construct belong to $\C^{}_{0}(\R^{}_{t}\, ; \L^2_{x,v} )$ anyway.
\end{rem}

\begin{thm}[Existence and uniqueness of weak solutions]
\label{thm:HomExUn} Let $\beta>0.$
Assume that the family $a$ of sesquilinear forms satisfies \eqref{e:ellip-upper} and \eqref{e:ellip-lower}. 
Let $S = S_1 + S_2 + S_3$ with $S_1 \in \L^2_{t,x}\Hdot^{-\beta}_{\vphantom{t,x} v}$, $S_2 \in \L^2_{t,v}\Hdot_{\vphantom{t,v} x}^{-\frac{\beta}{2\beta+1}}$ and
$S_3 \in \L^1_{ t}\L^2_{\vphantom{t,x} x,v}$. There exists a unique weak solution $f\in \Udot^\beta$
to \eqref{eq:weaksol} in the sense of Definition~\ref{defn:weaksol}.
Moreover, $f\in \C^{}_{0}(\R^{}_{t}\, ; \L^2_{x,v} )$ and $f\in \L^2_{t,v}\Hdot^{\frac{\beta}{2\beta+1}}_{\vphantom{t,v}x}$
with \begin{multline*}
 \sup_{t\in \R} \|f(t)\|_{\L^2_{x,v}} + \|D_{\vphantom{D_{v}} x}^{\frac{\beta}{2\beta+1}} f \|_{\L^2_{t,x,v}} + \| D_{v}^{\beta} f \|_{\L^2_{ t,x,v}}  \\
 \lesssim \|D_{v}^{-\beta}S_1 \|_{\L^2_{t,x,v}}+\|D_{\vphantom{D_{v}} x}^{-\frac{\beta}{2\beta+1}}S_2 \|_{\L^2_{t,x,v}} + \|S_3 \|_{ \L^1_{t}\L^2_{x,v}}
\end{multline*}
for an implicit cons\-tant depending only on $d,\beta, \lambda, \Lambda$, and where $D_x$ and $D_v$ denote $(-\Delta_x)^{\frac12}$ and $(-\Delta_v)^{\frac12}$, respectively. In addition, the solution satisfies the energy equality: for any $s,t \in \R$ with $s<t$, 
\begin{align} \label{e:intro:energy}
\nonumber \|f(t)\|^2_{\L^2_{x,v}} + 2\Re& \int_{s}^t\int_{\R^d} a_{\tau,x}({f},{f})\dx\dd \tau = 
\\ \|f(s)\|^2_{\L^2_{x,v}}  &+ 2\Re \int_{s}^t\bigg(\int_{\R^d} \angle {S_{1}}{f} \dx + \int_{\R^d} \angle {S_{2}}{f} \dv + \iint_{\R^{2d}} {S_{3}}\overline f \dx \dv \bigg)\dd \tau.
 \end{align} 
 
\end{thm}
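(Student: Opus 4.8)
The plan is to obtain existence from a variational argument \`a la Lions, to derive the a~priori bound and the energy equality \eqref{e:intro:energy} from the kinetic energy identity, and to deduce uniqueness from the latter. The whole scheme rests on two structural facts. First, the transport operator $\cT=\partial_{t}+v\cdot\nabla_{x}$ is formally skew-adjoint on $\L^2_{t,x,v}$: it carries no zeroth order term and $v\cdot\nabla_{x}$ is divergence-free in $x$, so $\Re\iiint_{\Omega}h\,\overline{\cT h}=0$ for $h\in\cD(\Omega)$. Second, by \eqref{e:ellip-lower} the form $a$ is coercive in the velocity direction, $\Re a(h,h)\ge\lambda\|h\|_{\Fdot^{\beta}}^{2}$. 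Hence the sesquilinear form $E(f,h)=-\iiint_{\Omega}f\,\overline{\cT h}+a(f,h)$ satisfies $\Re E(h,h)\ge\lambda\|h\|_{\Fdot^{\beta}}^{2}$: the diffusion provides coercivity only in $v$, and every estimate in $x$ and $t$ must instead be generated by the kinetic embeddings and the transfer-of-regularity results.

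For existence I would first treat the model datum $S=S_{1}\in\L^2_{t,x}\Hdot^{-\beta}_{v}=(\Fdot^{\beta})'$. Here $h\mapsto\angle{S_{1}}{h}$ is continuous for $\|\cdot\|_{\Fdot^{\beta}}$, so Lions' projection theorem applied to $E$ with the coercivity above yields $f\in\Fdot^{\beta}=\Udot^{\beta}$ solving the weak formulation of Definition~\ref{defn:weaksol}; the transfer-of-regularity and kinetic embeddings then upgrade $f$ to $\L^2_{t,v}\Hdot^{\beta/(2\beta+1)}_{x}\cap\C_{0}(\R_{t};\L^2_{x,v})$. For a general source $S=S_{1}+S_{2}+S_{3}$, the functionals $\angle{S_{2}}{\cdot}$ and $\angle{S_{3}}{\cdot}$ are \emph{not} continuous for the bare velocity norm: $\angle{S_{2}}{h}$ is controlled only through the gained norm $\|h\|_{\L^2_{t,v}\Hdot^{\beta/(2\beta+1)}_{x}}$ and $\angle{S_{3}}{h}$ only through $\sup_{t}\|h(t)\|_{\L^2_{x,v}}$. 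I would therefore approximate $S_{2}$ and $S_{3}$ strongly in their respective norms by functions in $\cD(\Omega)\subset\L^2_{t,x}\Hdot^{-\beta}_{v}$, solve each approximate problem by the previous step, and pass to the limit using the uniform a~priori estimate below together with weak compactness; the limit is the desired weak solution, and it inherits the full kinetic regularity. The threshold $\beta<d/2$ is where $\Hdot^{\beta}_{v}$ is a genuine Hilbert space; for $\beta\ge d/2$ the same scheme is run with the inhomogeneous forms \eqref{e:ellip-upper-inhom}--\eqref{eq:weakelliptic-bis} and the $\L^2_{\loc,t}\L^2_{x,v}$ correction built into $\Udot^{\beta}$, which is routine bookkeeping.

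The a~priori estimate and the energy equality both come from the kinetic energy identity. Writing the equation as $\cT f=S-\cA f$ with $\cA f\in(\Fdot^{\beta})'$ by \eqref{e:ellip-upper}, the transfer-of-regularity theorem gives $f\in\L^2_{t,v}\Hdot^{\beta/(2\beta+1)}_{x}$ and the continuity-in-time embedding gives $f\in\C_{0}(\R_{t};\L^2_{x,v})$. Testing the equation against $f$ over a time slab $\R^{d}_{x}\times\R^{d}_{v}\times(s,t)$ and using that the skew-symmetric operator $\cT$ contributes the boundary term $\tfrac12\big(\|f(t)\|_{\L^2_{x,v}}^{2}-\|f(s)\|_{\L^2_{x,v}}^{2}\big)$ produces \eqref{e:intro:energy}. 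Letting $s\to-\infty$, where $\|f(s)\|_{\L^2_{x,v}}\to0$, and bounding the right-hand side by Cauchy--Schwarz, one absorbs the $S_{1}$ term into $\lambda\|f\|_{\Fdot^{\beta}}^{2}$, the $S_{2}$ term into the gained norm $\|D_{x}^{\beta/(2\beta+1)}f\|_{\L^2_{t,x,v}}^{2}$, and the $S_{3}$ term into $\sup_{t}\|f(t)\|_{\L^2_{x,v}}^{2}$ through a Gr\"onwall argument, which yields the stated bound.

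Uniqueness then follows at once: the difference $g$ of two weak solutions solves the homogeneous equation, so \eqref{e:intro:energy} with $S=0$ and coercivity give $\|g(t)\|_{\L^2_{x,v}}^{2}\le\|g(s)\|_{\L^2_{x,v}}^{2}$ for all $s<t$; since $g\in\C_{0}(\R_{t};\L^2_{x,v})$ vanishes as $s\to-\infty$, we get $g\equiv0$. I expect the genuine difficulty to be the rigorous justification of the energy identity under only measurability of $(t,x)\mapsto a_{t,x}$: since $f$ is not an admissible test function, one must regularize in time (mollification or Steklov averages adapted to the kinetic scaling), use the continuity-in-time embedding to make sense of the pointwise-in-$t$ norm $\|f(t)\|_{\L^2_{x,v}}$ and of the boundary terms, and pass to the limit in $a(f,f)$ without any regularity of the coefficients. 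This step, which fuses the skew-symmetry of transport with the kinetic embeddings, is the technical heart of the proof.
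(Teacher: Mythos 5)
Your overall architecture is the paper's: existence via Lions' theorem (Theorem~\ref{thm:lions}) using the skew-symmetry of $\partial_t+v\cdot\nabla_x$ and coercivity of $a$, the gained regularity and time-continuity from the kinetic embedding/transfer-of-regularity theorem, and uniqueness from the energy equality plus the zero limits at infinity. Where you genuinely diverge is the treatment of the sources $S_2,S_3$: you approximate them in $\L^2_{t,v}\Hdot_{\vphantom{t,v} x}^{-\frac{\beta}{2\beta+1}}$ and $\L^1_t\L^2_{x,v}$, solve the approximate problems, and extract a weak limit using uniform a priori bounds. The paper instead runs a duality scheme: it first shows $-(\partial_t+v\cdot\nabla_x)+\cA^*$ is an isomorphism from $\cFdot^\beta_\beta$ onto $(\Fdot^\beta)'$, notes via Theorem~\ref{thm:optimalregularity} that its inverse lands in $\L^2_{t,v}\Hdot_{\vphantom{t,v} x}^{\frac{\beta}{2\beta+1}}\cap\C_0^{}(\R_t;\L^2_{x,v})$, and defines the solution operator directly by duality against this inverse; the weak-solution property is then verified by strong approximation in $\Fdot^\beta$ using Lemma~\ref{lem:density}. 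Both routes work; yours is more standard but needs the full a priori estimate before existence (and, to close it, you must couple the energy identity with the transfer estimate, since the gained norm $\|D_x^{\beta/(2\beta+1)}f\|_{\L^2}$ never appears on the left-hand side of the energy identity and so cannot absorb the $S_2$ term by itself), while the paper's duality scheme gets boundedness for free from the adjoint bound.

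Two points need correction. First, what you call ``the technical heart'' --- the rigorous energy identity --- is not a separate difficulty at all: it is exactly item (ii), formula \eqref{e:abs-cont}, of Theorem~\ref{thm:optimalregularity}, applied with first component $S_1-\cA f$ (this is precisely the paper's Lemma~\ref{lem:energyequalityweaksol}). Moreover, your proposed route for it --- time mollification or Steklov averages --- is exactly the kind of argument the homogeneous setting is designed to avoid: mollified solutions are not admissible (compactly supported) test functions, cut-offs generate commutators with $v\cdot\nabla_x$ that are not controlled by homogeneous norms, and no a priori pointwise-in-time $\L^2_{x,v}$ control is assumed. The paper instead proves \eqref{e:abs-cont} through the representation $f=\cK^+_\beta S$ (uniqueness for the constant-coefficient fractional Kolmogorov equation) and Fourier-side approximation of the sources in the class $\cS_K$, for which $\partial_t\Gamma g_k\in\L^2_{t,x,v}$ makes the computation legitimate. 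You should simply cite \eqref{e:abs-cont} rather than re-prove it.

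Second, the case $\beta\ge d/2$ is not ``routine bookkeeping.'' There $\Hdot^\beta_v$ is not defined as a Hilbert space, so $\Fdot^\beta$ is not Hilbert and Lions' theorem cannot be invoked in the homogeneous framework at all. The paper devotes Section~\ref{sec:Proofswhenbetaged/2} to this: it solves inhomogeneous Cauchy problems on finite strips for $\cA+\varepsilon$ with the twist $f\mapsto e^{-\varepsilon t}f$ (Theorem~\ref{thm:CP0T}), extracts weak$^*$ limits as $\varepsilon\to0$ and $T\to\infty$ with bounds uniform in both, pushes the initial time to $-\infty$, and then needs a separate duality step exploiting the completeness of $\cLdot^\beta_\beta$ (Corollary~\ref{cor:complete}) to handle $S_2,S_3$. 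Your plan as written (``run the same scheme with the inhomogeneous forms'') does not produce the scale-invariant estimates of the theorem, because the inhomogeneous Lions argument yields constants depending on the length of the time interval; the limiting procedure is where the real work lies.
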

\begin{rem}\label{rem:commentmainthm} Let us explain the statement. 
 The notation $\C^{}_{0}(\R;X)$ is the space of continuous functions on $\R$ taking values in $X$ with zero limits at $\pm\infty$. Note that since the solution $f(t)\in \L^2_{x,v}$ at all times, the expressions $D_{v}^{\beta} f$ and $D_{\vphantom{D_{v}}x}^{\frac{\beta}{2\beta+1}} f$ make sense, so that $f\in \H_{v}^{\beta}$ and $f\in \H_{ \vphantom{\H_{v}} x}^{\frac{\beta}{2\beta+1}}$, at least almost everywhere in the other variables. But the point of this formulation is that estimates are homogeneous. In \eqref{e:intro:energy}, the bracket $\angle {S_{1}}{f}$ denotes the extension of the sesquilinear duality on $\L^2_{v}$ to $\Hdot^{-\beta}_{v} \times \H^{\beta}_{v}$, giving an integrable function of $(t,x)$, and the second one $\angle {S_{2}}{f}$ refers to the extension of the sesquilinear duality on $\L^2_{x}$ to $\Hdot_{\vphantom{\L^2_{x}} x}^{-\frac{\beta}{2\beta+1}}\times \H_{\vphantom{\L^2_{x}} x}^{\frac{\beta}{2\beta+1}}$, giving an integrable function of $(t,v)$. Integrals are in the sense of Lebesgue. 
\end{rem}
\begin{rem}
As the problem is linear, all sources that belong to any interpolation space that can be obtained from the above three ones give rise to a weak solution. We shall not attempt to write such statements. 

Note that the source terms $S_{1}$ have been considered in the literature (see below), but having the source terms $S_{2}$ and $S_{3}$ is new.  
\end{rem}
\begin{rem} 
We also consider the kinetic Cauchy problem on half-infinite time intervals and construct solutions with homogeneous estimates; see Theorem~\ref{thm:homCP}. 
\end{rem}
\begin{rem} 
 We extend our approach to inhomogeneous spaces and construct solutions for the kinetic Cauchy problem in the corresponding inhomogeneous framework on finite time intervals, under weaker ellipticity conditions \eqref{e:ellip-upper-inhom} and \eqref{eq:weakelliptic-bis}, see Theorem~\ref{thm:CP0T}. 
\end{rem}

\begin{rem}
\label{rem:backward}
 The same statements and remarks hold for the backward equations corresponding to the operator in \eqref{eq:weaksolback}.
\end{rem}

Keeping in mind Remark~\ref{rem:backward}, let us mention a corollary of Theorem~\ref{thm:HomExUn}, that
concerns the boundedness of the following two operators
 \begin{equation}
\label{eq:KAdefintro}
\cK_{\cA}^\pm=(\pm(\partial_{t}+v\cdot \nabla_{x})+\cA)^{-1}
\end{equation}
 defined as follows: For $S\in \cD'(\Omega)$, $\cK_{\cA}^\pm S$ is the unique weak solution in $\Udot^\beta$, whenever it exists, to $\pm(\partial_{t}+v\cdot \nabla_{x})f+\cA f=S$ in the sense
 of Definition~\ref{defn:weaksol}, respectively. Remark that as $\cK_{\cA}^\pm S$ satisfy an equation and $\cA\cK_{\cA}^\pm S\in \L^2_{t,x}\Hdot^{-\beta}_{v}$, the transport term $(\partial_{t}+v\cdot \nabla_{x})\cK_{\cA}^\pm S$ is a distribution (tempered if $S\in \cS'(\Omega)$). This motivates the introduction of a kinetic space with only conditions on the transport and regularity with respect to the velocity.
\begin{cor}[Boundedness]
\label{cor:boundedness} Let $\beta>0$. Let 
\begin{align*}
 \Zdot^{\beta}&:= \L^2_{t,x}\Hdot^{-\beta}_{v} + \L^2_{t,v}\Hdot_{\vphantom{t,v} x}^{-\frac{\beta}{2\beta+1}}
+ \L^1_{t}\L^2_{x,v},
\\
\Ydot^\beta&:= \L^2_{t,x}\Hdot^{\beta}_{v} \cap \L^2_{t,v}\Hdot_{\vphantom{t,v} x}^{\frac{\beta}{2\beta+1}} \cap \C^{}_{0}(\R^{}_{t}\, ; \L^2_{x,v}),\\
\cLdot^\beta_{\beta}&:=\{f\in \Udot^\beta\, ;\, (\partial_t + v \cdot \nabla_x)f \in \Zdot^{\beta}\},
\end{align*}
and equip them with the natural norms. 
Under the assumptions of Theorem~\ref{thm:HomExUn} on $\cA$, the maps
\begin{align}
 \label{eq:KAintro}
  \cK_{\cA}^\pm: \Zdot^{\beta}\to \Ydot^\beta
\end{align}
 are continuous, and 
\[ \pm(\partial_{t}+v\cdot\nabla_{x}) + \cA : \cLdot^\beta_{\beta} \to \Zdot^{\beta}\]
are isomorphisms. In particular, 
 \begin{equation}
\label{eq:maxreg1}
\|\pm(\partial_{t}+v\cdot \nabla_{x})f+\cA f \|_{\Zdot^{\beta}} \sim 
 \|f\|_{\Udot^\beta}+ \|(\partial_{t}+v\cdot \nabla_{x})f \|_{\Zdot^{\beta}}, 
 \qquad f\in \cLdot^\beta_{\beta}
\end{equation}
 and, 
 \begin{equation}
\label{eq:maxreg2}
\| \cK_{\cA}^\pm S\|_{\Ydot^\beta} \lesssim \|S \|_{\Zdot^{\beta}} \sim 
 \| \cK_{\cA}^\pm S\|_{\Udot^\beta}+ \|(\partial_{t}+v\cdot \nabla_{x})\cK_{\cA}^\pm S \|_{\Zdot^{\beta}}, \qquad S\in \Zdot^{\beta}.
\end{equation}
The implicit constants depend only on $\beta, d, \lambda, \Lambda$.
\end{cor}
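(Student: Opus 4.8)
The plan is to read the corollary directly off Theorem~\ref{thm:HomExUn} and its backward counterpart (Remark~\ref{rem:backward}), together with the boundedness estimate \eqref{e:ellip-upper} for $\cA$; no new analysis is needed beyond careful bookkeeping with the sum- and intersection-space norms. First I would establish continuity of $\cK_{\cA}^\pm\colon \Zdot^{\beta}\to\Ydot^\beta$. Fix $S\in\Zdot^\beta$ and any admissible decomposition $S=S_1+S_2+S_3$ with $S_1\in\L^2_{t,x}\Hdot^{-\beta}_{v}$, $S_2\in\L^2_{t,v}\Hdot_{x}^{-\frac{\beta}{2\beta+1}}$, $S_3\in\L^1_t\L^2_{x,v}$. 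Theorem~\ref{thm:HomExUn} (resp.\ its backward version) produces the unique weak solution $f=\cK_{\cA}^\pm S\in\Udot^\beta$ and bounds $\sup_{t}\|f(t)\|_{\L^2_{x,v}}+\|D_x^{\frac{\beta}{2\beta+1}}f\|_{\L^2_{t,x,v}}+\|D_v^{\beta}f\|_{\L^2_{t,x,v}}$---which is precisely $\|f\|_{\Ydot^\beta}$---by the sum of the three source norms. Since the left-hand side is independent of the chosen decomposition, taking the infimum over all decompositions gives $\|f\|_{\Ydot^\beta}\lesssim\|S\|_{\Zdot^\beta}$, i.e.\ continuity; linearity of $\cK_{\cA}^\pm$ follows from uniqueness.

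Next I would treat $L^\pm:=\pm(\partial_t+v\cdot\nabla_x)+\cA\colon\cLdot^{\beta}_{\beta}\to\Zdot^\beta$. For well-definedness and boundedness, note that if $f\in\cLdot^{\beta}_{\beta}$ then $(\partial_t+v\cdot\nabla_x)f\in\Zdot^\beta$ by definition, while \eqref{e:ellip-upper} yields $\cA f\in\L^2_{t,x}\Hdot^{-\beta}_{v}\subset\Zdot^\beta$ with $\|\cA f\|_{\L^2_{t,x}\Hdot^{-\beta}_{v}}\lesssim\|f\|_{\Udot^\beta}$; hence $L^\pm f\in\Zdot^\beta$ and $\|L^\pm f\|_{\Zdot^\beta}\lesssim\|f\|_{\cLdot^{\beta}_{\beta}}$. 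Bijectivity is exactly existence and uniqueness: given $S\in\Zdot^\beta$, the solution $f=\cK_{\cA}^\pm S$ satisfies $(\partial_t+v\cdot\nabla_x)f=\pm(S-\cA f)\in\Zdot^\beta$, so $f\in\cLdot^{\beta}_{\beta}$ with $L^\pm f=S$ (surjectivity), while uniqueness gives injectivity. Finally $\|f\|_{\cLdot^{\beta}_{\beta}}=\|f\|_{\Udot^\beta}+\|(\partial_t+v\cdot\nabla_x)f\|_{\Zdot^\beta}$, and here $\|f\|_{\Udot^\beta}\le\|f\|_{\Ydot^\beta}\lesssim\|S\|_{\Zdot^\beta}$ while $\|(\partial_t+v\cdot\nabla_x)f\|_{\Zdot^\beta}\le\|S\|_{\Zdot^\beta}+\|\cA f\|_{\L^2_{t,x}\Hdot^{-\beta}_{v}}\lesssim\|S\|_{\Zdot^\beta}$, so $(L^\pm)^{-1}=\cK_{\cA}^\pm$ maps $\Zdot^\beta$ boundedly into $\cLdot^{\beta}_{\beta}$. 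These two-sided bounds make $L^\pm$ an isomorphism without recourse to the open mapping theorem.

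The two displayed equivalences then follow by assembling these estimates. Identity \eqref{eq:maxreg1} is the two-sided bound $\|L^\pm f\|_{\Zdot^\beta}\sim\|f\|_{\Udot^\beta}+\|(\partial_t+v\cdot\nabla_x)f\|_{\Zdot^\beta}$ just proved (upper bound from boundedness of $L^\pm$, lower bound by feeding $L^\pm f\in\Zdot^\beta$ into $\cK_{\cA}^\pm$ and recovering $f$ by uniqueness), and \eqref{eq:maxreg2} combines the continuity of $\cK_{\cA}^\pm$ with \eqref{eq:maxreg1} applied to $f=\cK_{\cA}^\pm S$, for which $L^\pm f=S$. I do not expect a genuine obstacle here: the only points that need care are that the a~priori estimate of Theorem~\ref{thm:HomExUn}, stated for a fixed decomposition of the source, survives the passage to the infimum defining the $\Zdot^\beta$ norm, and that the transport identity $(\partial_t+v\cdot\nabla_x)f=\pm(S-\cA f)$ is read distributionally, with $\cA f$ controlled through \eqref{e:ellip-upper}. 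Completeness of $\Zdot^\beta$, $\Ydot^\beta$, and $\cLdot^{\beta}_{\beta}$ in their natural norms should be recorded for the isomorphism statement, but the explicit two-sided bounds already deliver the conclusion.
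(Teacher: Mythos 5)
Your proposal is correct and takes essentially the same route as the paper: Corollary~\ref{cor:boundedness} is read off from Theorem~\ref{thm:HomExUn} (and its backward analogue via Remark~\ref{rem:backward}), with the homogeneous bound \eqref{e:ellip-upper} giving $\|\cA f\|_{\L^2_{t,x}\Hdot^{-\beta}_{v}}\lesssim \|f\|_{\Udot^\beta}$ so that the transport term can be traded against the source, and uniqueness giving the identification $f=\cK_{\cA}^\pm(L^\pm f)$. This is exactly the deduction the paper intends (and the same pattern as its proof of Lemma~\ref{lem:isom-embed} for the model operator), so no further comment is needed.
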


\begin{rem} 
 Note that as we have not defined homogeneous Sobolev spaces with large exponents, it would be more rigorous to write $\Ydot^\beta= 
 \Udot^\beta\cap \L^2_{t,v}\Hdot_{\vphantom{t,v} x}^{\frac{\beta}{2\beta+1}} \cap \C^{}_{0}(\R^{}_{t}\, ; \L^2_{x,v})$ but this amounts to the same in terms of norms (as $\beta/(2\beta+1)<1/2\le d/2$, $\Hdot_{\vphantom{t,v} x}^{\frac{\beta}{2\beta+1}} $ is well-defined).  We make the abuse of notation for clarity. This space is a closed subspace of $\L^2_{t,x}\Hdot^{\beta}_{v} \cap \L^2_{t,v}\Hdot_{\vphantom{t,v} x}^{\frac{\beta}{2\beta+1}} \cap \L^\infty_{\vphantom{t,v} t}\L^2_{x,v}$, the dual space to $\Zdot^{\beta}$ for the duality extending the $\L^2_{t,x,v}$ duality. 
 \end{rem}

\begin{rem} 
The formal adjoint of $\cK_{\cA}^+$ is $(\cK_{\cA}^+)^*= (-(\partial_{t}+v\cdot \nabla_{x})+\cA^*)^{-1} =\cK_{\cA^*}^-$.
\end{rem}

There is a particularly important special case which is, in fact, proved along the way. 

\begin{cor}[The fractional Kolmogorov equation]\label{cor:equality} Let $\beta>0$ and assume $\cA=(-\Delta_{v})^\beta\ $\footnote{Sstrictly speaking we should write $I_{t,x}\otimes(-\Delta_{v})^\beta $ with $I_{t,x}$ being the identity on $\L^2_{t,x}$.}. For $S\in \Zdot^{\beta}$, the weak solutions $\cK_{(-\Delta_{v})^\beta}^\pm S$ are the Kolmogorov solutions $($noted $\cK_{\beta}^\pm S)$ given by the fundamental solution for the forward and backward operators $\pm (\partial_{t}+v\cdot\nabla_{x}) +(-\Delta_{v})^{\beta}$.
 \end{cor}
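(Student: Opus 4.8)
The plan is to argue by uniqueness. First I would record that $\cA=(-\Delta_{v})^{\beta}$ fits the framework of Theorem~\ref{thm:HomExUn}: its associated family of forms is $a_{t,x}(f,g)=\langle D_{v}^{\beta}f, D_{v}^{\beta}g\rangle_{\L^2_{v}}$, independent of $(t,x)$, satisfying \eqref{e:ellip-upper} and \eqref{e:ellip-lower} with $\lambda=\Lambda=1$. Thus Theorem~\ref{thm:HomExUn} and Corollary~\ref{cor:boundedness} apply and furnish the continuous solution operators $\cK_{(-\Delta_{v})^\beta}^\pm:\Zdot^{\beta}\to\Ydot^\beta$, characterised by the property of being the unique weak solutions in $\Udot^\beta$. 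It then suffices to prove that the Kolmogorov solution $\cK_\beta^\pm S$, built from the fundamental solution of $\pm(\partial_{t}+v\cdot\nabla_{x})+(-\Delta_{v})^{\beta}$, is itself a weak solution lying in $\Udot^\beta$ in the sense of Definition~\ref{defn:weaksol}; the uniqueness assertion of Theorem~\ref{thm:HomExUn} then forces $\cK_\beta^\pm S=\cK_{(-\Delta_{v})^\beta}^\pm S$.

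To build and control $\cK_\beta^\pm S$ I would pass to the partial Fourier transform in $(x,v)$, with dual variables $(\xi,\eta)$. There the transport field becomes a shear, $\widehat{v\cdot\nabla_x f}=-\xi\cdot\nabla_\eta\hat f$, while $(-\Delta_{v})^{\beta}$ becomes multiplication by $|\eta|^{2\beta}$, so the homogeneous equation integrates explicitly along the affine characteristics in $\eta$, yielding a multiplier-and-shear propagator and, through Duhamel, a kernel representation of $\cK_\beta^\pm$. For $S$ in a dense convenient class (say $\cD(\Omega)$, or Schwartz data for each component), these manipulations are classical: the resulting $\cK_\beta^\pm S$ solves the equation pointwise, decays as $t\to\pm\infty$, and one reads off directly that it lies in $\Udot^\beta$ and satisfies the integral identity of Definition~\ref{defn:weaksol} after pairing against $h\in\cD(\Omega)$ and integrating by parts in $t$ and $x$.

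The final step is to pass from the dense class to all $S\in\Zdot^{\beta}$. This relies on the homogeneous kinetic estimates of the paper, namely the kinetic embeddings \`a la Lions together with the transfer of regularity, which yield $\|\cK_\beta^\pm S\|_{\Ydot^\beta}\lesssim\|S\|_{\Zdot^{\beta}}$ on the dense class; these are precisely the scale-invariant bounds established along the way for the model operator. Combined with the continuity of $\cK_{(-\Delta_{v})^\beta}^\pm$ from Corollary~\ref{cor:boundedness} and the equality already proven on the dense class, a density argument extends the identity $\cK_\beta^\pm S=\cK_{(-\Delta_{v})^\beta}^\pm S$ to all of $\Zdot^{\beta}$.

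I expect the main obstacle to be the Fourier-side estimates placing the Kolmogorov solution in $\Ydot^\beta$, uniformly and simultaneously for the three source components $S_1\in\L^2_{t,x}\Hdot^{-\beta}_{v}$, $S_2\in\L^2_{t,v}\Hdot^{-\beta/(2\beta+1)}_{x}$ and $S_3\in\L^1_{t}\L^2_{x,v}$: one must show that the shear-plus-damping propagator converts the velocity regularity encoded by $|\eta|^{2\beta}$ into the gain of $\beta/(2\beta+1)$ derivatives in $x$ dictated by the kinetic scaling. Once these bounds are in hand the identification is immediate from uniqueness, which is why the statement is, as noted, proved along the way.
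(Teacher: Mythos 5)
Your proof is correct and rests on the same mechanism as the paper's --- show that the Kolmogorov solution is a weak solution in $\Udot^\beta$, then invoke uniqueness --- but you source these two facts differently, and less economically. The paper's proof is two lines: Lemma~\ref{lem:distributionalsolutions} (case $\gamma=\beta$, valid for all $\beta>0$) already asserts that $\cK_{\beta}^{\pm}S$ solves the equation in $\cD'(\Omega)$ for \emph{every} $S\in\Zdot^{\beta}$, the dense-class Fourier computation and the density passage you sketch having been carried out once and for all inside that lemma and Corollary~\ref{cor:Kbeta}; Corollary~\ref{cor:Kbeta} places $\cK_{\beta}^{\pm}S$ in $\Udot^{\beta}$ (and since $D_{v}^{\beta}\cK_{\beta}^{\pm}S\in\L^2_{t,x,v}$, distributional and weak solutions coincide for $\cA=(-\Delta_{v})^{\beta}$); and uniqueness is the Fourier-side Lemma~\ref{lem:uniqueness}. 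In particular, the paper never invokes Theorem~\ref{thm:HomExUn} or Corollary~\ref{cor:boundedness} here: the corollary is established before, and independently of, the rough-coefficient existence theory --- this is what ``proved along the way'' means, and its ingredients feed into the proof of Theorem~\ref{thm:HomExUn} rather than the other way round. Your route, which takes Theorem~\ref{thm:HomExUn} and Corollary~\ref{cor:boundedness} as given and then identifies the two solution operators on a dense class before extending by density, is not circular (those results are proved without using this corollary), but it pushes a statement about the constant-coefficient model through the full Lions/energy-equality machinery, including the separate Section~\ref{sec:Proofswhenbetaged/2} argument when $\beta\ge d/2$, where the three direct ingredients suffice. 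Two points to tighten: the bound $\|\cK_{\beta}^{\pm}S\|_{\Ydot^{\beta}}\lesssim\|S\|_{\Zdot^{\beta}}$ you appeal to is precisely Corollary~\ref{cor:Kbeta} and should be cited as such rather than treated as folklore; and $\cD(\Omega)$ is not a viable dense class uniformly in $\beta$ --- when $\beta\ge d/2$, a test function whose integral in $v$ does not vanish fails to belong to $\L^2_{t,x}\Hdot^{-\beta}_{v}$ at all --- so one should use the class $\cS_{K}$ of Lemma~\ref{lem:density}, whose partial Fourier transforms are supported away from the origin, exactly as the paper does.
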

 
 Of course, the action of the fundamental solution is meant through an extension from a dense class of sources for which the usual formula via the Fourier transform can be used.

\begin{rem} 
Let us make the following comment, which illustrates our strategy. The story begins with the model operators studied by Kolmogorov, and we only have access to Kolmogorov solutions $\cK_{\beta}^\pm S$. We first establish (new) bounds for them, that is, the bounds \eqref{eq:KAintro} for $\cK_{\beta}^\pm$. Next, we prove a novel uniqueness statement for each equation $\pm (\partial_{t}+v\cdot\nabla_{x})f +(-\Delta_{v})^{\beta}f=0$ when $f\in \Udot^\beta$ that implies Lemma~\ref{lem:distributionalsolutions}. These two facts already show Corollary~\ref{cor:equality}. But we can say more: they imply that $\cK_{\beta}^\pm$ is an isomorphism between the source space and the kinetic space, materialised by the equivalences in \eqref{eq:maxreg1} and \eqref{eq:maxreg2}.  This isomorphism property is at the heart of kinetic embeddings, yielding in particular a posteriori continuity valued in $\L^2_{x,v}$ of weak solutions for rough $\cA$ and that are eventually injected into the proof of Theorem~\ref{thm:HomExUn}. 
\end{rem}

\subsection{Kinetic embedding and transfer of regularity}

As just pointed out, our main tool in establishing the above existence and uniqueness statement is the following result that is independent of any equation. This result is both reminiscent of Bouchut's and H\"ormander's transfer of regularity results in \cite{MR1949176,MR0222474} (only proved for weak solutions in the local case there) and Lions' embedding theorem for time continuity \cite{Lions} (used essentially for parabolic equations, which is sometimes called the homogeneous case in kinetic theory as it means no dependence in the position variable). The fractional setting is new, and we also treat a broader class of source terms than has previously been considered in the literature.

\begin{thm}[Kinetic embedding and transfer of regularity]\label{thm:optimalregularity}
 Let $\beta >0$. Let $f\in \cD'(\Omega)$ be such that $f\in \Udot^\beta$
 and $(\partial_{t}+v\cdot\nabla_{x})f =S_{1}+S_{2}+S_{3}$, 
 with $S_1 \in \L^2_{t,x}\Hdot^{-\beta}_{\vphantom{t,x} v}$, $S_2 \in \L^2_{t,v}\Hdot_{\vphantom{t,x} x}^{-\frac{\beta}{2\beta+1}}$ and $S_3 \in \L^1_{\vphantom{t,x} t}\L^2_{\vphantom{t,x} x,v}$, that is $f\in \cLdot^\beta_{\beta}$. Then, we have
\begin{enumerate}
\item $f\in \C^{}_{0}(\R^{}_{t}\, ; \L^2_{x,v} )$ and 
$f\in \L^2_{t,v}\Hdot^{\frac{\beta}{2\beta+1}}_{\vphantom{t,v}x}$
with 
\begin{multline*}
\|D_{x \vphantom{D_{v}}}^{\frac{\beta}{2\beta+1}} f \|_{\L^2_{t,x,v}}+ \sup_{t\in \R} \|f(t)\|_{\L^2_{x,v}} \\\lesssim_{d,\beta} \|D_{v}^\beta f\|_{\L^2_{\vphantom{t,x,v} t,x,v}}+ \|D_{v}^{-\beta}S_1 \|_{\L^2_{t,x,v}}+ \|D_{\vphantom{D_{v}} x}^{-\frac{\beta}{2\beta+1}}S_2 \|_{\L^2_{t,x,v}} + \|S_3 \|_{\L^1_{\vphantom{t,x} t}\L^2_{\vphantom{t,x,v} x,v}}.
\end{multline*}
\item The map $t\mapsto\|f(t)\|_{\L^2_{x,v}}^2$ is absolutely continuous on $\R$, and for a.e. $t\in \R$,
\begin{equation}\label{e:abs-cont}
\frac{\mathrm{d} }{\mathrm{d}t }\|f(t)\|_{\L^2_{x,v}}^2 =  2\Re \bigg(\int_{\R^d} \angle {S_{1}}{f} \dx + \int_{\R^d} \angle {S_{2}}{f} \dv + \iint_{\R^{2d}} {S_{3}}\overline f \, \dx\dv \bigg).
\end{equation}
\end{enumerate}

\end{thm}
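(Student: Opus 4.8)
The plan is to establish the result first for smooth, well-localized data where all manipulations are justified, and then to pass to the general case by a density and completeness argument. The central mechanism is a regularization in the transport direction: I would introduce a mollification adapted to the kinetic (Galilean) structure, replacing $f$ by $f_\varepsilon$ obtained by convolving along the flow of $\partial_t + v\cdot\nabla_x$ (equivalently, convolving $f(t,x+tv,v)$ in suitable variables and then undoing the shear), so that $(\partial_t+v\cdot\nabla_x)f_\varepsilon$ is the corresponding regularization of the right-hand side $S_1+S_2+S_3$. For such $f_\varepsilon$ the quantity $t\mapsto \|f_\varepsilon(t)\|_{\L^2_{x,v}}^2$ is genuinely differentiable, and pairing the equation against $f_\varepsilon$ yields the identity
\begin{equation*}
\tfrac{\mathrm{d}}{\mathrm{d}t}\|f_\varepsilon(t)\|_{\L^2_{x,v}}^2 = 2\Re \angle{(\partial_t+v\cdot\nabla_x)f_\varepsilon}{f_\varepsilon},
\end{equation*}
since the transport operator $\partial_t + v\cdot\nabla_x$ is skew-adjoint on $\L^2_{x,v}$ (its $x$-divergence in the $(x,v)$ phase space vanishes). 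This is the pointwise-in-$t$ engine behind both claims.

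\emph{Gain of regularity in $x$ (claim (i), the $\Hdot^{\beta/(2\beta+1)}_x$ bound).} This is the genuinely kinetic step, and the heart of the matter. Working on the Fourier side in $(x,v)$, I would use that on the transport characteristics the velocity frequency is sheared by the spatial frequency: the symbol of $v\cdot\nabla_x$ couples $\xi$ (dual to $x$) to $\eta$ (dual to $v$). The standing control $\|D_v^\beta f\|_{\L^2}<\infty$ together with control of the transport derivative $(\partial_t+v\cdot\nabla_x)f$ in the dual space $\Zdot^\beta$ forces, by an interpolation/averaging lemma in the spirit of Bouchut--H\"ormander, a gain of $\beta/(2\beta+1)$ derivatives in $x$. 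The exponent is dictated by kinetic scaling: under $(t,x,v)\mapsto(r^{2\beta+1}t, r^{2\beta+1}x\ \text{(roughly)}, rv)$ one checks that $\beta$ derivatives in $v$ match $\beta/(2\beta+1)$ derivatives in $x$. I would make this quantitative by testing the frequency localizations against the equation and optimizing the split between ``$v$-frequency large'' and ``$x$-frequency large'' regimes, each source $S_i$ being handled by the duality it is naturally paired with ($S_1$ with $\Hdot^\beta_v$, $S_2$ with $\Hdot^{\beta/(2\beta+1)}_x$, $S_3$ with $\L^2_{x,v}$). Since the statement is scale-invariant, it suffices to prove it in a homogeneous, frequency-localized form and sum.

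\emph{Time continuity and the energy identity (claims (i) continuity, and (ii)).} For the regularized $f_\varepsilon$, integrating the differentiated identity between times $s$ and $t$ gives an absolutely continuous $\|f_\varepsilon(t)\|^2_{\L^2_{x,v}}$, and the already-established bound from the regularity step controls $\sup_t\|f_\varepsilon(t)\|_{\L^2_{x,v}}$ uniformly in $\varepsilon$ by the right-hand side. I would then pass $\varepsilon\to 0$: the $f_\varepsilon$ converge to $f$ in $\Udot^\beta$ and the regularized sources converge in $\Zdot^\beta$, so the uniform bounds persist and the limit $f$ lies in $\L^2_{t,v}\Hdot^{\beta/(2\beta+1)}_x$ with the stated estimate. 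Uniform equicontinuity of $\{\|f_\varepsilon(\cdot)\|^2_{\L^2_{x,v}}\}$ (from the integrable modulus furnished by the right-hand side) upgrades $\L^2$-valued weak continuity to genuine continuity, and the vanishing at $\pm\infty$ follows because the right-hand side terms are globally integrable against $f$ in time, forcing $\|f(t)\|_{\L^2_{x,v}}\to 0$; hence $f\in\C_0(\R;\L^2_{x,v})$. Finally, passing to the limit in the integrated identity and differentiating yields the absolutely continuous energy identity \eqref{e:abs-cont}, where the three brackets are interpreted exactly through the dualities of Remark~\ref{rem:commentmainthm}.

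The step I expect to be the main obstacle is the regularity gain in $x$ with the \emph{sharp} exponent $\beta/(2\beta+1)$ in the fractional ($\beta\neq 1$) case and under the weak, distribution-valued sources $S_2,S_3$: one must simultaneously keep every estimate homogeneous and scale-invariant, justify the interpolation lemma on the Fourier side without smoothness of the coefficients (here only the model transport operator enters, which helps), and control the crossing between the two frequency regimes cleanly. The mollification must be chosen to commute properly with the transport field so that no commutator error spoils the skew-adjointness identity; managing that commutator (or arranging for its absence by mollifying along characteristics) is the delicate technical point, together with the separate treatment of the threshold $\beta\ge d/2$ where only the seminorm, not a full homogeneous Sobolev space, is available.
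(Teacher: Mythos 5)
Your plan has two genuine gaps, and they sit exactly where the theorem's content lies. First, the sharp $x$-regularity gain is never proved: you defer it to ``an interpolation/averaging lemma in the spirit of Bouchut--H\"ormander,'' but no such lemma is available in this setting (fractional $\beta$, homogeneous scale-invariant norms, distributional solutions, and sources $S_2, S_3$ outside $\L^2_{t,x}\Hdot^{-\beta}_v$); producing it \emph{is} the theorem. The paper's mechanism is different and you never invoke it: rewrite the hypothesis as the constant-coefficient fractional Kolmogorov equation $(\partial_t+v\cdot\nabla_x)f+(-\Delta_v)^\beta f=S$ with $S:=S_1+(-\Delta_v)^\beta f+S_2+S_3$, prove a uniqueness theorem for distributional solutions in $\Udot^\beta$ of this model equation (Lemma~\ref{lem:uniqueness}: after the Galilean change of variables and partial Fourier transform the equation is an ODE in $t$ whose nonzero solutions grow exponentially as $t\to-\infty$, incompatible with the global-in-time integrability), and conclude $f=\cK^+_\beta S$, the explicit Duhamel operator, whose mapping bounds into $\L^2_{t,v}\Hdot_x^{\frac{\beta}{2\beta+1}}\cap\C^{}_0(\R^{}_t;\L^2_{x,v})$ are proved by weighted temporal estimates on the Fourier side (Corollary~\ref{cor:Kbeta}, Proposition~\ref{p:boundsKolmoapriori}). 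Both ingredients --- uniqueness for the model equation and the kernel estimates --- are absent from your outline, and your frequency-splitting sketch does not explain how they would be replaced.

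Second, your ``pointwise-in-$t$ engine'' cannot start. Under the hypotheses (for $\beta<d/2$) one only knows $D_v^\beta f\in\L^2_{t,x,v}$; there is \emph{no} a priori finiteness of $\|f(t)\|_{\L^2_{x,v}}$ at any time --- the paper stresses this point. Mollification, whether along characteristics in $t$ or in $(x,v)$, cannot create it: the failure of $\Hdot^\beta_v\hookrightarrow\L^2_v$ is a low-frequency/decay phenomenon that convolution with a nice kernel does not remedy, so $\|f_\varepsilon(t)\|_{\L^2_{x,v}}$ may be infinite for every $t$ and every $\varepsilon$, and the skew-adjointness identity you want to differentiate is vacuous. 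Moreover, even granting that identity, global integrability in time of the right-hand side only shows that $\|f(t)\|^2_{\L^2_{x,v}}$ has limits at $\pm\infty$, not that these limits vanish; in the paper the $\C^{}_0$ conclusion comes from the decay of the Kolmogorov kernel $K(t,s,\varphi,\xi)$ through the representation $f=\cK^+_\beta S$, not from the energy identity. Finally, the commutator you flag is not merely delicate: under the Galilean shear the norm $\|D_v^\beta\cdot\|_{\L^2}$ becomes the time-dependent weight $\sup(|\xi-t\varphi|,|\varphi|^{1/(1+2\beta)})$, so time-convolution does not interact boundedly with it. The paper avoids mollifying $f$ altogether: it approximates the \emph{source} in the dense class $\cS_K$ of Lemma~\ref{lem:density} and takes $g_k=\cK^+_\beta S_k$, for which $\Gamma g_k\in\H^1_t(\L^2_{x,v})$, then passes to the limit --- once again relying on the solution operator that your argument never constructs.
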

\begin{rem} The same comments as in Remark~\ref{rem:commentmainthm} apply.
 \end{rem}
 
\begin{rem} Note that estimates are invariant under the kinetic scaling $(t,x,v)\mapsto (\delta^{2\beta} t, \delta^{2\beta+1} x, \delta v)$.
\end{rem}
\begin{rem} In the local case $\beta=1$, it is possible to replace the assumption $f\in \Udot^1$ by $f\in \L^2_{t,x}\Wdot^{1,2}_v$ with $\|\nabla_{v}f\|_{\L^2_{t,x,v}}< \infty$. See Section~\ref{sec:localcase}. 
\end{rem}
\begin{rem}
 We shed light on the fact that we do not assume \textit{a priori} control on $f(t) \in \L^2_{x,v}$ at any time when $\beta<d/2$, which may seem surprising. When $\beta\ge d/2$, we make this assumption only qualitatively (one could even take a much weaker one), and it does not appear in the estimate.
 \end{rem}
\begin{rem}
We allow the free transport of $f$ to belong to a larger space than just $\L^2_{t,x}\Hdot^{-\beta}_{\vphantom{t,x} v}$. 
\end{rem}
\begin{rem}
 Not only do we obtain continuity of $t\mapsto f(t)$ in $ \L^2_{x,v}$ but we also get absolute continuity of $t\mapsto\|f(t)\|_{\L^2_{x,v}}^2$ and zero limit at infinity.  Both facts are crucial when dealing with weak solutions. 
 \end{rem}

\begin{rem}
 Theorem~\ref{thm:optimalregularity} is a special case of a more general result, see Theorem~\ref{thm:optimalregularitygeneralisation}, which itself is a consequence of embeddings obtained for a scale of kinetic spaces, see Theorem~\ref{thm:homkinspace}. Such generalisations could be useful for further developments in the study weak solutions. As their proofs follow a similar pattern, we shall prove them all in this article.
\end{rem}

\begin{rem}
  Theorem~\ref{thm:optimalregularity} implies the sharp kinetic Sobolev embedding as
  \begin{equation*}
    \|f\|_{\L^{2\kappa}_{\vphantom{t,x,v} t,x,v}} \lesssim_{d,\beta} \|D_{v}^\beta f\|_{\L^2_{\vphantom{t,x,v} t,x,v}}+ \|D_{v}^{-\beta}S_1 \|_{\L^2_{t,x,v}}+ \|D_{\vphantom{D_{v}} x}^{-\frac{\beta}{2\beta+1}}S_2 \|_{\L^2_{t,x,v}} + \|S_3 \|_{\L^1_{\vphantom{t,x} t}\L^2_{\vphantom{t,x,v} x,v}}
  \end{equation*}
  with the optimal gain of integrability $\kappa = 1+\frac{\beta}{(\beta+1)d}$. This is a direct consequence of the Sobolev embedding for the anisotropic space $  \L^2_{x}\H_v^\beta \cap \L^2_{v}\H_{\vphantom{\H_v} x}^{\frac{\beta}{2\beta+1}}$ at every fixed time.  Integrating this estimate and combining it with the $\L^\infty_t\L^2_{x,v}$ estimate yields the claim. This strategy was also used in \cite{MR4049224,MR3923847} in order to prove a gain of integrability for weak subsolutions. We refer to \cite{dmnz_kinegeom_2025} for an alternative proof in the case $\beta = 1$ based on kinetic trajectories. 
\end{rem}

\subsection{Highlights and observations} 

 The theory of kinetic equations related to this work has been much developed, and we shall provide a short review. We wish to highlight some points in our contribution and make a few observations on tools that we do or do not use. 

The Hilbertian framework proposed by J.-L.~Lions to construct weak solutions had already been used for some particular kinetic equations. Our approach shows that it applies in a general setting, thanks to the complete description of the kinetic embedding and transfer of regularity for scales of adapted kinetic spaces: it allows us to obtain uniqueness under minimal assumptions and also to obtain existence with more source terms. 
 
 A second highlight concerns the strategy of proof of the kinetic embedding and transfer of regularity: it boils down to estimates and appropriate uniqueness of distributional solutions of the constant coefficients Kolmogorov equation $(\partial_{t}+v\cdot\nabla_{x})f +(-\Delta_{v})^\beta f=S$ and its adjoint. This also shows the isomorphism properties of the integral Kolmogorov operators. This is interesting in its own right and new at this level of generality. 
 
 As for the techniques of proofs, they only use partial Fourier transformation for the position and velocity variables and the Galilean change of variables. Using homogeneous norms also provides us with a very convenient setup in which estimates are quite elementary and do not require subtle decompositions in Fourier space.
 
We also mention that there are common tools in the field that we do not need. We neither use the Galilean group law associated with the fields $(\partial_{t}+v\cdot\nabla_{x}), \nabla_{v}$ nor rely on commutator techniques \`a la H\"ormander in the context of hypoelliptic equations.
Our final observation is that we do not use the recently developed local regularity theory of weak solutions, such as the De Giorgi-Nash-Moser estimates. In other words, the construction of weak solutions does not require knowing that weak solutions have local regularity properties. 

\subsection{From Lions embedding to kinetic embeddings.}

We think it could be useful for the reader to focus on the genesis of our kinetic embedding and the subtlety of using homogeneous spaces by describing the parabolic case, where there is no transport and no position variable $x$.
 
 The starting point of J.-L.~Lions in his 1957 article \cite{Lions} is his famous abstract embedding. Three Hilbert spaces $(\V,\H,\V')$ form a \emph{Gelfand triple} if there are continuous and dense inclusions $\V\hookrightarrow \H\hookrightarrow \V'$. With such a triple, Lions established that if $f\in \L^2(0,T\, ; \, \V)$ and $f \in \H^1(0,T\, ; \, \V')$ then $f\in \C([0,T]\, ; \, \H)$ and $t\mapsto \|f(t)\|_{\H}^2$ is absolutely continuous. His proof uses the \textit{a priori} knowledge that $f(t)$ exists in $\H$ almost everywhere, approximation procedures that are compatible with the topologies of the three spaces $\V,\H,\V'$, and integration theory for vector-valued functions.

For example, with $\V=\H^1(\R^d)$ the classical Sobolev space, this shows that if $f\in \L^2(0,T\, ; \, \H^1(\R^d))$ and $f \in \H^1(0,T\, ; \, \H^{-1}(\R^d))$ then $f\in \C([0,T]\, ; \, \L^2(\R^d))$. 
Given the natural scaling, the embedding could be envisioned with $\H^1, \H^{-1}$ replaced by their homogeneous versions $\Hdot^1, \Hdot^{-1}$: however, it is not true when $T<\infty$, for the simple reason that both embeddings $\Hdot^1(\R^d) \hookrightarrow \L^2(\R^d) \hookrightarrow \Hdot^{-1}(\R^d)$ fail. Indeed, take any $g \in \Hdot^1(\R^d)$ which is neither an element of $\L^2(\R^d)$ nor an element of $\Hdot^{-1}(\R^d)$ and consider $f(t) \equiv g \in \L^2(0,1\, ; \Hdot^1(\R^d)) \cap \Hdot^1(0,1\, ; \, \Hdot^{-1}(\R^d))$ which cannot be an element of $\C([0,1]\, ; \, \L^2(\R^d))$. In other words, there is no possible temporal trace theory with homogeneous spaces on finite time intervals.

 This failure is not an obstacle when $T=\infty$. Indeed, it is proved by the first author with S.~Monniaux and P.~Portal in \cite{amp} that the embedding for homogeneous spaces is true provided one works on an infinite interval.  The strategy is to prove first a uniqueness result for the backward heat equation so that any distribution $f$ that belongs to $\L^2(0,\infty\, ; \Hdot^1(\R^d)) \cap \Hdot^1(0,\infty\, ; \, \Hdot^{-1}(\R^d))$ is represented as a Duhamel integral for the backward heat equation (up to a constant, depending on how homogeneous Sobolev spaces are defined) with source term $\partial_{t}f +\Delta f$. As this integral is shown to belong to $ \C^{}_{0}([0,\infty)\, ; \, \L^2(\R^d))$, this yields the desired conclusion for the embedding. This can also be done on $\R$.

With another strategy using half-time derivatives, it was shown by the first author and M. Egert in \cite{ae} that the condition $f \in \Hdot^1(\R\, ; \, \Hdot^{-1}(\R^d))$, that is, $\partial_{t}f\in \L^2(\R\, ; \, \Hdot^{-1}(\R^d))$, can even be relaxed to larger spaces, with the aim of allowing more source terms in equations. Again, what was new was to work with homogeneous spaces.

Our proof of continuity in time in Theorem~\ref{thm:optimalregularity} will rely on using the strategy of \cite{amp} in a kinetic context: working on infinite intervals is required to obtain the scale-invariant estimates; and we allow as general source terms as possible, in the spirit of \cite{ae}. We also make it work for fractional diffusion, which is new even in the homogeneous/parabolic case.

\subsection{Review of literature}

This work is concerned with the study of kinetic equations with both local and integral diffusion. It is related to the existing literature when $0< \beta\le 1$ stemming from various trends of research. 

\subsubsection{Kolmogorov equations}

The Kolmogorov equation was proposed by A.~Kolmogorov in 1934 in \cite{MR1503147}. He derived explicitly the fundamental solution of $(\partial_t + v \cdot \nabla_x ) - \Delta_v$, which allows one to observe the regularising effect of the, at first glance, degenerate diffusion equation. This was the starting point of H\"ormander's hypoellipticity theory \cite{MR0222474}. One of the examples in his paper gave birth to a trend of research devoted to a class of equations now referred to as ultra-parabolic equations or equations of Kolmogorov type. It was launched by E.~Lanconelli and S.~Polidoro \cite{MR1289901}. In this article, they first considered such equations with constant coefficients, exhibited a Lie group structure associated to the equation (extending the Galilean invariance of the original Kolmogorov equation) and established a Harnack inequality. 

\subsubsection{Construction of weak solutions}
Existence (and uniqueness) of weak solutions has been addressed in the literature devoted to kinetic theory in various settings. 

 P.~Degond used Lions' existence theorem to treat the kinetic Cauchy problem on $[0,T]$ for the Fokker--Planck equation ($\beta=1$) with constant diffusion coefficients and a bounded drift term with bounded divergence and source terms in $ \L^2_{t,x}\H^{-1}_v$ \cite{MR875086}. He attributes the idea to a previous work of S.~Baouendi and P.~Grisvard for the related stationary problem in one dimension \cite{MR252817}. 
   J.~A.~Carrillo used the same idea in bounded domains \cite{MR1634851}. More recently,  D.~Albritton, S.~Armstrong, J.~C.~Mourrat and M.~Novack \cite{albritton2021variational} used this idea in a context of equations with confinement, and they worked with a Gaussian weight $\gamma$ in the velocity variable. More precisely, they defined a kinetic space made of functions $f \in \L^2_{t,x}\H^1_v(\gamma \mathrm{d} v)$ such that $(\partial_t+ v \cdot \nabla_x)f \in \L^2_{t,x}\H^{-1}_v(\gamma \mathrm{d} v)$. It is proven that these functions are continuous in time with values in $\L^2_{x,v}(\gamma \mathrm{d} v)$. They also used another route via a minimisation procedure. 

K.~Nystr\"om and M.~Litsg\aa rd considered the Cauchy-Dirichlet problem for the Fokker--Planck equation ($\beta=1$) in possibly unbounded domains in \cite{MR4312909}. Weak solutions are constructed on the full space in the case where coefficients are allowed to be rough using a minimisation procedure of a convex functional. For the uniqueness result, coefficients are allowed to be rough only in a compact set; they are assumed to be constant in the rest of the domain. See also the work of F.~Anceschi and A.~Rebucci in  \cite{anceschi2023obstacle} for a follow-up on an obstacle problem.

In \cite{MR4350284}, the third author together with R.~Zacher 
introduced and developed a notion called \textit{kinetic maximal $\L^2$-regularity}. This concept is applied to distributional solutions 
of the Kolmogorov operators with constant diffusion coefficients when $ 0 < \beta \le 1$. The Cauchy problem is studied on $[0,T]$ and in inhomogeneous spaces in $(x,v)$-variables on the full space.
Continuity in time is proven for the space of solutions, and the temporal trace space is characterised in terms of an anisotropic Sobolev space.
There are similarities and differences with our approach of kinetic embedding that we shall explain in the last section. 

\subsubsection{Transfer of regularity}

The regularising effect of equations combining free transport $(\partial_t+ v\cdot \nabla_x)$ and diffusion $-\Delta_{v}$ in the velocity variable $v$ was first exhibited by A.~Kolmogorov in \cite{MR1503147}. It can be seen from the explicit computation of the fundamental solution. This transfer of regularity has been quantified in H\"ormander's work, although only in a crude way for the regularity exponents. 

An important step in the study of the regularity of kinetic equations is the observation that the means of solutions in the velocity variable $v$ are more regular than the solutions themselves.
This was observed in particular by F.~Golse, B.~Perthame and R.~Sentis \cite{MR0808622} (see also \cite{MR0923047}). Results of this type are nowadays called \textit{averaging lemmas}. The theory was further developed by the study of the regularity of the solution itself, exhibiting the \textit{transfer of regularity} property of the free transport. In this perspective, F. Bouchut's article \cite{MR1949176} was influential as it was among the first to provide optimal and global estimates for this phenomenon, which is sometimes also called \textit{kinetic regularisation}. We emphasise again that the results of \cite{MR1949176} apply to weak solutions only when $\beta = 1$ and in inhomogeneous spaces. 

This regularising effect can also be seen for the weak solutions constructed in \cite{albritton2021variational} by working with a commutator method inspired by the work of H\"ormander. Sharp estimates are proven in adapted Besov spaces.

\subsubsection{Regularity of weak solutions}

Although we do not use the local regularity of weak solutions at all, recent developments in such topics have been crucial to our understanding and definition of a weak solution. In the case of rough and real coefficients satisfying an ellipticity condition, but without any smoothness assumptions on coefficients, a local $\L^\infty$ bound for weak solutions was obtained through Moser's iterative method by A.~Pascucci and S.~Polidoro \cite{MR2068847}. Then W.~Wang and L.~Zhang \cite{MR2514358} proved a local H\"older regularity thanks to a weak Poincar\'e inequality satisfied by weak subsolutions. In both works, weak solutions $f$ together with $\nabla_v f$ and $(\partial_t + v \cdot \nabla_x)f$ are assumed to be locally square integrable. 
It makes sense for $f$ and $\nabla_v f$, because of the natural energy estimate, but the free transport term has no reason to be square integrable in the case of rough diffusion coefficients. Another proof of the local H\"older estimate was proposed in \cite{MR3923847} by F.~Golse, the second author, C.~Mouhot and A.~Vasseur, where a Harnack inequality is established. This latter work focuses on \eqref{eq:weaksol} with \(\cA f = -\nabla_v \cdot ( \mathbf{A} \nabla_v f)\) and weak solutions are assumed to be locally in $\L^\infty_t \L^2_{x,v} \cap \L^2_{t,x} \H^1_v$ and such that $(\partial_t + v \cdot \nabla_x) f
\in \L^2_{t,x}\H^{-1}_v$. See also the works of J.~Guerand with the second author and with C.~Mouhot, and also the work of the third author with R. Zacher \cite{MR4653756,MR4453413,niebel2023kinetic} for some further developments. An explanation on how to prove estimates for weak (sub-) supersolutions belonging to the space $\L^2_{\loc,t,x}\L^2_v \cap \L^2_{t,x} \Hdot^1_v$ can be found in the recent work of H.~Dietert, C.~Mouhot, the last author and R.~Zacher \cite{dmnz_kinegeom_2025}.

We conclude this review of the literature by mentioning that we focused mainly on literature concerning weak solutions. For an overview of the literature on strong $\L^p$ solutions or strong H\"older solutions, we refer to \cite{MR4336467,MR4299838} and \cite{MR4275241,MR4229202} and the references therein. See also the pioneering work of L. Rothschild and E. Stein \cite{MR0436223}. 

\subsection{Future developments} 

We announce some subsequent works. We have used the material of this article to provide a construction of the fundamental solution for non-constant diffusion coefficients, see \cite{AIN2}. In the local case, we have also obtained sharp decay estimates. We shall establish self-improvement of the regularity properties of local weak solutions with improved source terms, both in the local and non-local cases. The authors also plan to treat $\L^p$ estimates in the spirit of kinetic maximal $\L^p$-regularity as in \cite{MR4336467} for weak solutions of the (fractional) Kolmogorov equation with constant (and even possibly rough) coefficients. We will also develop further regularity theory along the characteristics with some new variational methods, with consequences for weak solutions.
Finally, the setup here will be adapted to allow lower-order terms with unbounded coefficients. 

Among possible other developments is the construction of weak solutions on bounded domains, but this would require other methods, as we use the Fourier transform.

It is an interesting question how far one can stretch the methods of the article. While it is natural to apply them to operators of Kolmogorov type as considered in \cite{MR1289901}, another problem is to tackle H\"ormander sum of squares with a drift term. 

\subsection{Organisation of the article}

In Section~\ref{s:homog-kolmo}, homogeneous kinetic spaces are introduced, and estimates for Kolmogorov operators are obtained. Section~\ref{s:uniqueness-embeddings} concerns kinetic embeddings, which rely on an appropriate uniqueness result for the equation associated with Kolmogorov operators. Weak solutions for equations with rough coefficients are constructed in Section~\ref{s:weak-rough}. The kinetic Cauchy problem is treated in Section~\ref{s:cauchy}. In particular, inhomogeneous kinetic spaces are introduced, and they are used to solve the kinetic Cauchy problem on finite time intervals. In Section~\ref{sec:localcase}, we focus specifically on the case of local diffusion operators ($\beta=1$). 
Section~\ref{sec:kinmax} contains an alternative definition of homogeneous kinetic spaces that permits us to relax the constraint $\gamma \le 2\beta$ and allows us to compare our findings with previously known kinetic embeddings. We also explain in this final section how to lift the limitation $\beta < d/2$.

\subsection{Notation}
The full space $\R \times \R^d \times \R^d$ is denoted by $\Omega$ while $\Omega_+$ denotes $(0,\infty) \times \R^d \times \R^d$.
The set of distributions in $\Omega$ (resp. tempered distributions in $\Omega$) is denoted by $\cD' (\Omega)$ (resp. $\cS'(\Omega)$). 
The operators $(-\Delta_{v})^{1/2}$ and $(-\Delta_x)^{1/2}$ are denoted by $D_v$ and $D_x$ respectively.
We work with functions of $(t,x,v)$. As we use mixed spaces, we shall put variables in the index, \textit{e.g.} $\L^2_{t,x} \Hdot^\beta_{\vphantom{t,x} v}$. 
The Galilean change of variables is denoted by $\Gamma f (t,x,v)= f(t,x+tv,v)$. 
Given a Banach space $\B$, $\C^{}_{0}(\R\, ; \, \B)$ denotes the space of continuous functions valued into $\B$, with limit zero at $\pm\infty$.
Given two Banach spaces $X$ and $Y$, $X \hookrightarrow Y$ means that $X$ is continuously embedded in $Y$.
We use the same bracket notation for the duality pairing in different contexts with subsequent explanations, hoping this does not create any confusion.

\section{Homogeneous kinetic spaces and Kolmogorov operators}
\label{s:homog-kolmo}

\subsection{Main functional spaces}

Before defining rigorously the homogeneous kinetic spaces that we will work with, we give here a short list and state the main result for the reader's convenience. The ranges of $\gamma,\beta$ will be specified below.
\begin{itemize}
\item The space $\Hdot^{{\alpha}}$ is the homogeneous Sobolev space on $\R^d$.
\item The space $\Fdot^\beta$ is $\L^2_{t,x} \Hdot^\beta_v$ and $\Udot^\beta$ is $ \L^2_{\loc, t}\L^2_{\vphantom{\loc, t} x,v} \cap \L^2_{t,x}\Hdot^{\beta}_{v}$.
\item The space $ \Xdot^{\gamma}_{\beta}$ equals $\L^2_{x}\Hdot_{\vphantom{\beta} v}^\gamma \cap \L^2_{\vphantom{\beta} v}\Hdot_{\vphantom{\beta} x}^{\frac{\gamma}{2\beta+1}}$ when $\gamma \ge 0$ and $\Xdot^{\gamma}_{\beta}= \L^2_{x}\Hdot^{\gamma}_{\vphantom{\beta} v} + \L^2_{v}\dot\H_{\vphantom{\beta} x}^{\frac{\gamma}{2\beta+1}}$ if $\gamma\le 0$. 
\item The space $\cFdot^\gamma_{\beta}$ is made of $f \in \L^2_{t,x}\Hdot^{\gamma}_{v}$ such that $(\partial_t + v \cdot \nabla_x)f \in \L^2_{t,x} \Hdot^{\gamma -2\beta}_{\vphantom{t,x} v}$.
\item The space $\cGdot^\gamma_{\beta}$ is made of  $f \in \L^2_{t,x}\Hdot^{\gamma}_{v}$ such that $(\partial_t + v \cdot \nabla_x)f \in \L^2_{t} \Xdot^{\gamma -2\beta}_\beta$.
\item The space $\cLdot^\gamma_{\beta}$ is made of  $f \in \L^2_{t,x}\Hdot^{\gamma}_{v}$ such that $(\partial_t + v \cdot \nabla_x)f \in \L^2_{t} \Xdot^{\gamma -2\beta}_\beta+ \L^1_{t} \Xdot^{\gamma-\beta}_{\beta} $.
\end{itemize}

We use calligraphic letters whenever we are concerned with kinetic spaces, that is, spaces which also describe the regularity in terms of the kinetic operator $\partial_t +v \cdot \nabla_x$.

 \begin{rem}
 	The calligraphic scales are adapted to the fractional Laplacian of order $\beta$: formally, $(-\Delta_{v})^\beta$ maps $\cLdot^\gamma_{\beta}$ into $ \L^2_{t,x}\Hdot^{\gamma-2\beta}_{v} \subset \L^2_{t} \Xdot^{\gamma -2\beta}_\beta$ when $\gamma-2\beta\le 0$. The spaces $\cFdot^\beta_{\beta}$ and $ \cLdot^\beta_{\beta}$ are the most important ones here. Having full scales is important for the potential extension of the theory (and proofs are not much different anyway). The spaces $\cGdot^\gamma_{\beta}$ are also related to other kinetic spaces defined in Section~\ref{sec:kinmax}.  
 \end{rem}

\begin{thm}[Kinetic embeddings and transfers of regularity: homogeneous case]
\label{thm:homkinspace}
Assume $\beta>0$,  $0\le \gamma\le 2\beta$ with $\gamma <d/2$. 
\begin{enumerate} 
\item  $\cLdot^\gamma_{\beta} \hookrightarrow \L^2_{\vphantom{\beta} t,v}\Hdot_{\vphantom{\beta} x}^{\frac{\gamma}{2\beta+1}}$ with 
\(
\|D_{\vphantom{\beta}x}^{\frac{\gamma}{2\beta+1}} f\|_{\L^2_{t,x,v}} \lesssim_{d,\beta,\gamma} \|f\|_{\cLdot^\gamma_{\beta}}.
\)
\item $\cLdot^\gamma_{\beta} \hookrightarrow \C^{}_{0}(\R^{}_{t}\, ; \, \Xdot^{\gamma-\beta}_{\beta} )$ with
\(
\sup_{t\in \R} \|f(t)\|_{\Xdot^{\gamma-\beta}_{\beta}} \lesssim_{d,\beta,\gamma} \|f\|_{\cLdot^\gamma_{\beta}}.
\) 
\item There is a subspace of $\L^2_{t,x,v}$ $($even of $\cS(\Omega)$ if $d>2${}$)$  dense in all $\cFdot^\gamma_{\beta}, \cGdot^\gamma_{\beta}$ and $\cLdot^\gamma_{\beta}$.
\item The families of spaces $(\cFdot^\gamma_{\beta})_{\gamma}$ and $(\cGdot^\gamma_{\beta})_{\gamma}$ have the complex interpolation pro\-perty. 
\end{enumerate}
When $\gamma=\beta\ge d/2$,  $\mathrm{(i), (ii), (iii)}$ hold provided we define the kinetic space $\cLdot^\beta_{\beta}$ with $\Udot^\beta$ replacing $\L^2_{t,x}\Hdot^{\beta}_{v}$.
\end{thm}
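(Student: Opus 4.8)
\emph{Strategy.} The plan is to reduce the whole statement to sharp bounds and a uniqueness property for the constant-coefficient fractional Kolmogorov operator, following the philosophy of \cite{amp} adapted to the kinetic and fractional setting. Taking the partial Fourier transform in $(x,v)$, with $\xi$ dual to $x$ and $\eta$ dual to $v$, the free transport becomes $(\partial_t + v\cdot\nabla_x) \mapsto \partial_t - \xi\cdot\nabla_\eta$ and $(-\Delta_v)^\beta \mapsto |\eta|^{2\beta}$, so that the forward equation $(\partial_t+v\cdot\nabla_x)u+(-\Delta_v)^\beta u = S$ turns into the scalar transport--decay equation $(\partial_t - \xi\cdot\nabla_\eta)\widehat u + |\eta|^{2\beta}\widehat u = \widehat S$. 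Its characteristics $\rho\mapsto \eta+\rho\xi$ correspond exactly to the Galilean change $\Gamma$, and integrating the resulting ODE along them yields the explicit Duhamel formula
\[
\widehat{\cK_\beta^+ S}(t,\xi,\eta) = \int_0^\infty \exp\!\Big(-\int_0^r |\eta+\rho\xi|^{2\beta}\dd\rho\Big)\,\widehat S\big(t-r,\xi,\eta+r\xi\big)\dd r,
\]
with the backward operator $\cK_\beta^-$ obtained by the mirror integral over $r<0$. I would then run three steps: (1) prove the homogeneous mapping bounds for $\cK_\beta^\pm$ containing all the gains asserted in (i)--(ii); (2) prove that a distribution in the solution space solving the homogeneous equation must vanish; (3) represent an arbitrary $f\in\cLdot^\gamma_\beta$ as $\cK_\beta^+$ applied to its own Kolmogorov source, transferring the bounds of Step~1 to $f$.

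\emph{Step 1: estimates for $\cK_\beta^\pm$.} Multiplying the Fourier equation by $\overline{\widehat u}$ and integrating in $(\xi,\eta)$, the transport term is skew-adjoint and drops out, while the diffusion term produces the dissipation $\int|\eta|^{2\beta}|\widehat u|^2$; this gives at once the energy identity, the $\L^\infty_t\L^2_{x,v}$ bound and the $\L^2_{t,x}\Hdot^\beta_v$ bound, hence the $\L^2_{x,v}$-valued continuity and absolute continuity of $t\mapsto\|u(t)\|_{\L^2_{x,v}}^2$ in the borderline case $\gamma=\beta$. The heart is the gain of $x$-regularity (i): from the Duhamel kernel, for fixed $\xi$ one splits the velocity frequencies at the balancing scale $|\eta|\sim|\xi|^{1/(2\beta+1)}$. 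On $\{|\eta|\gtrsim|\xi|^{1/(2\beta+1)}\}$ one has $|\xi|^{\gamma/(2\beta+1)}\lesssim|\eta|^\gamma$ and the bound follows from the $\Hdot^\gamma_v$ control; on the complementary low-frequency set the dissipation $\int_0^r|\eta+\rho\xi|^{2\beta}\dd\rho$ forces the characteristic to leave a ball of radius $\mu$ within time $\sim\mu/|\xi|$, and optimising $\mu^{2\beta}\cdot(\mu/|\xi|)\sim1$ reproduces precisely the exponent $\gamma/(2\beta+1)$. The same kernel, evaluated at a fixed time, yields continuity into the mixed trace space $\Xdot^{\gamma-\beta}_\beta$ with limits $0$ at $\pm\infty$ (the velocity exponent $\gamma-\beta$ being the midpoint between $\gamma$ and $\gamma-2\beta$, exactly as in Lions' trace theorem). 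Throughout, each estimate is checked to be invariant under the kinetic scaling, which fixes all exponents.

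\emph{Steps 2--3: uniqueness, representation, and (i)--(ii).} Along a characteristic with $\xi\neq0$ the homogeneous solution equals its datum times $\exp(-\int|\eta+\rho\xi|^{2\beta})$, which blows up as one integrates backward in time (the integral diverges since $|\eta+\rho\xi|\to\infty$); this is incompatible with membership in $\L^2_{t,x}\Hdot^\gamma_v$ on the \emph{whole} line unless the solution is zero. Making this rigorous for distributional solutions --- passing from the distributional equation to the pointwise ODE along characteristics by mollifying in $(x,v)$ and using that the flow commutes with these mollifications --- is the content of Lemma~\ref{lem:distributionalsolutions}, which I would invoke. Granting uniqueness, for $f\in\cLdot^\gamma_\beta$ I set $\widetilde S=(\partial_t+v\cdot\nabla_x)f+(-\Delta_v)^\beta f$, which lies in $\L^2_t\Xdot^{\gamma-2\beta}_\beta+\L^1_t\Xdot^{\gamma-\beta}_\beta$ because $f\in\L^2_{t,x}\Hdot^\gamma_v$ gives $(-\Delta_v)^\beta f\in\L^2_{t,x}\Hdot^{\gamma-2\beta}_v$. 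Then $\cK_\beta^+\widetilde S$ and $f$ solve the same equation and both lie in $\Fdot^\gamma$ (or $\Udot^\beta$ in the borderline case), so uniqueness gives $f=\cK_\beta^+\widetilde S$ and the bounds of Step~1 transfer verbatim, proving (i) and (ii). The borderline case $\gamma=\beta\ge d/2$ is handled identically: the estimates are homogeneous and do not see the threshold, and the qualitative $\L^2_{\loc,t}\L^2_{x,v}$ control built into $\Udot^\beta$ is exactly what is needed to run the uniqueness and representation arguments.

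\emph{Density (iii) and interpolation (iv).} For density I would approximate at the level of sources: pick $\widetilde S_n$ smooth and frequency-localised away from $\{\xi=0\}\cup\{\eta=0\}$ and at infinity, converging to $\widetilde S$ in the source space; then $\cK_\beta^+\widetilde S_n\to f$ in the kinetic norm and each $\cK_\beta^+\widetilde S_n$ is smooth and integrable, indeed Schwartz when $d>2$, the dimension restriction ensuring that Schwartz functions belong to all the homogeneous spaces in play. For (iv), I would realise $\cFdot^\gamma_\beta$ and $\cGdot^\gamma_\beta$ as retracts of weighted $L^2$ scales that are known to interpolate: the coretraction is $\cK_\beta^+$ composed with the source embedding and the retraction is $\pm(\partial_t+v\cdot\nabla_x)+(-\Delta_v)^\beta$, so complex interpolation transfers from the weighted $L^2$ scale by the standard retract lemma. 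The main obstacle I anticipate is Step~1 together with the rigorous distributional uniqueness of Step~2: proving the gain-in-$x$ estimate with the sharp exponent and simultaneously handling the whole-line representation with only partial Fourier analysis and the Galilean change, all while tracking the threshold $\gamma<d/2$ under which the homogeneous Sobolev spaces are genuine spaces of distributions.
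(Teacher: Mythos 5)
Your proposal is correct and follows essentially the same route as the paper: explicit Duhamel/Fourier representation of the Kolmogorov operators along Galilean characteristics with homogeneous weighted bounds (the paper's Propositions~\ref{prop:estimatesT} and \ref{p:boundsKolmo}), whole-line uniqueness via backward-in-time exponential growth (Lemma~\ref{lem:uniqueness}), the representation $f=\cK_\beta^+\widetilde S$ transferring the bounds (Lemma~\ref{lem:isom-embed}), density via sources frequency-localised away from the degenerate sets (Lemmas~\ref{lem:density} and \ref{lem:strongerdensity}), and interpolation transferred from weighted $\L^2$ scales through the isomorphism. The only cosmetic discrepancy is that what you attribute to Lemma~\ref{lem:distributionalsolutions} (uniqueness of distributional solutions) is the paper's Lemma~\ref{lem:uniqueness}; the former instead states that $\cK_\beta^\pm S$ solves the equation.
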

\begin{rem}
It is not clear whether the complex interpolation property is satisfied by $(\cLdot^\gamma_{\beta})_{\gamma}$ or not.
\end{rem}

\begin{rem}
If we impose $f\in \Udot^\gamma$ in the definition of the kinetic spaces when $\gamma\ge d/2$ and $\gamma\ne \beta$, then we can keep (i), (ii), and (iii), but it is not clear whether they are Banach spaces, and we may lose property (iv).
\end{rem}

\subsection{Homogeneous kinetic spaces}

{We come to precise definitions.}

\subsubsection{Homogeneous Sobolev spaces $\Hdot^\alpha (\R^d)$.}

To define homogeneous Sobolev spaces, we use the Fourier transform on tempered distributions in $\R^d$, equipped with Euclidean norm $|x|$ and inner product $x\cdot y$. 

Assume first $0\le\alpha<d/2$.  We define the homogeneous Sobolev space $\Hdot^\alpha(\R^d)$ as
\begin{align}
 \label{eq:homSobbetapos}
 \Hdot^\alpha(\R^d)= \{f\in \cS'(\R^d)\, ; \, \exists g \in \L^2(\R^d),  \hat f = |\xi|^{-\alpha}\hat g\}
\end{align}
and we set $(-\Delta)^{\alpha/2}f= g$ and $\|f\|_{\Hdot^\alpha}=\|g\|_{2}$. The key point is not only $|\xi|^{-\alpha}\hat g \in \cS'(\R^d)$, it is also that  $|\xi|^{-\alpha}\hat g\in \L^1_{\loc}(\R^d)$, so that the elements of $\Hdot^\alpha(\R^d)$ have locally integrable Fourier transform. 

With this definition, $\Hdot^\alpha(\R^d)$ is a Hilbert space and $\cS(\R^d)\subset \Hdot^\alpha(\R^d)\subset \cS'(\R^d)$ with continuous and dense inclusions. Elements of $\Hdot^\alpha(\R^d)$ are in fact $\L^q$-functions for $q=q_{\alpha}= \frac{2d}{d-2\alpha}$ the Sobolev conjugate exponent for the Sobolev inequality $\|f\|_{q}\le C(d, \alpha)\|f\|_{\Hdot^\alpha} $ and $\Hdot^\alpha(\R^d)$ agrees with 
$ \{f\in  \L^q(\R^d)\, ; \, (-\Delta)^{\alpha/2}f \in \L^2(\R^d)\}$. For $\alpha=0$, $\Hdot^0(\R^d)=\L^2(\R^d)$.

\begin{rem}
 Literature on kinetic equations is mostly concerned with $\alpha\in (0,1]$. We observe that the requirement $0<\alpha < d/2$ is only to have a space of functions that is a Hilbert space.
 It forces $\alpha<1$ when $d=2$ and $\alpha<1/2$ when $d=1$. But we can also manage the case $\alpha\ge d/2$ differently. We refer the reader to forthcoming work \cite{AN} for further explanations. 
 The $d/2$ threshold never shows up when using inhomogeneous functional spaces, see Section~\ref{s:cauchy}.
\end{rem}
\begin{rem}
When $0<\alpha<1$, an equivalent norm on $\Hdot^\alpha(\R^d)$ is given by the Gagliardo or Beppo-Levi (semi-)norm (see \cite{MR2768550} for example), but we shall not use it. When $\alpha=1$, then our definition differs from the space $\Wdot^{1,2}(\R^d)=\{f\in \cD'(\R^d)\, ;\, \nabla f\in \L^2(\R^d)\}$ as this space contains constants. We will indicate how this changes the statement and proof of some of our results for kinetic equations, see Section~\ref{sec:localcase}. 
\end{rem}

Define for $\alpha<0$,
\[
 \Hdot^{\alpha}(\R^d)= \{f\in \cS'(\R^d)\, ; \, \exists g \in \L^2(\R^d),  \hat f = |\xi|^{-\alpha}\hat g\}
\]
with norm $\|f\|_{\Hdot^{\alpha}}=\|g\|_{2}$. This is also a Hilbert space, contained in $\cS'(\R^d)$ {and with elements having $\L^2_{\loc}(\R^d)$ Fourier transforms}.
When $-d/2 <\alpha<0$, $\Hdot^{\alpha}(\R^d)$ identifies to the dual of $\Hdot^{-\alpha}(\R^d)$ for the duality induced by the $\L^2(\R^d)$ inner product and we also have $\cS(\R^d)\subset \Hdot^{\alpha}(\R^d)$ as a dense subset. If $\alpha\le -d/2$, then Schwartz functions whose Fourier transforms are supported away from the origin form a dense set.

We eventually come to the proper interpretation of fractional powers of the Laplacian. If $f\in \Hdot^\alpha(\R^d)$, $\alpha\in \R$ and $\alpha<d/2$, and $\gamma\in \R$ with $\alpha-2\gamma<d/2$, then $(-\Delta)^\gamma f$ is the tempered distribution whose Fourier transform agrees with $|\xi|^{2\gamma-\alpha}\hat g$ where $\hat f=|\xi|^{-\alpha}\hat g$ and $g\in \L^2(\R^d)$. Thus, $(-\Delta)^\gamma f \in \Hdot^{\alpha-2\gamma}(\R^d)$ and has Fourier transform $|\xi|^{2\gamma} \hat f$.

\subsubsection{Homogeneous weak kinetic spaces}
\label{sec:homweakkineticspaces}

We work with time, position and velocity variables $(t,x,v)\in \Omega=\R\times\R^d\times \R^d$. As we use mixed spaces, we shall put the variables in the index, e.g. $\L^2_{t,x}\Hdot^{-\beta}_{\vphantom{t,x} v}$ for $\L^2(\R^{}_{t}\times \R_{x}^d\, ; \,\Hdot^{-\beta}(\R_{v}^d))$. Without indication, Lebesgue spaces are with respect to the Lebesgue measure. We also indicate the variables on differentiation operators. We recall that we use the notation $D_{v}$ for $(-\Delta_{v})^{1/2}$ and $D_{x}$ for $(-\Delta_{x})^{1/2}$. In what follows, $\beta>0$ is fixed. 
\bigskip

We introduce next for $\gamma \in [0,d/2)$, 
 \begin{align}
 \label{eq:dotXgammabeta}
 \Xdot^{\gamma}_{\beta}= \L^2_{\vphantom{\beta} x}\Hdot_{\vphantom{\beta} v}^\gamma \cap \L^2_{\vphantom{\beta} v}\Hdot_{\vphantom{\beta} x}^{\frac{\gamma}{2\beta+1}}
\end{align}
with Hilbertian norm $\|f\|_{\Xdot^{\gamma}_{\beta}}$ given by
\begin{align*}
 \|f\|_{\Xdot^{\gamma}_{\beta}}^2= \|f\|_{\L^2_{\vphantom{\beta} x}\Hdot^\gamma_{\vphantom{\beta} v}}^2 + \|f\|_{\L^2_{\vphantom{\beta} v}\Hdot_{\vphantom{\beta} x}^{\frac{\gamma}{2\beta+1}}}^2.
\end{align*}
If $\gamma<0$, we set 
\begin{align}
\label{eq:dotX-gammabeta}
 \Xdot^{\gamma}_{\beta}= \L^2_{\vphantom{\beta} x}\Hdot^{\gamma}_{\vphantom{\beta} v} + \L^2_{\vphantom{\beta} v}\dot\H_{\vphantom{\beta} x}^{\frac{\gamma}{2\beta+1}}
\end{align}
with Hilbertian norm $ \|f\|_{\Xdot^{\gamma}_{\beta}}$ given by
\begin{align*}
 \|f\|_{\Xdot^{\gamma}_{\beta}}^2= \inf_{f=f_{1}+f_{2}} \|f_{1}\|_{\L^2_{\vphantom{\beta} x}\Hdot^{\gamma}_{\vphantom{\beta} v}}^2 + \|f_{2}\|_{ \L^2_{\vphantom{\beta} v}\dot\H_{\vphantom{\beta} x}^{\frac{\gamma}{2\beta+1}}}^2.
\end{align*}
\begin{rem}
\label{rem:Xdotgammabeta}
For $\gamma<d/2$, $\Xdot^{\gamma}_{\beta}$ are Hilbert spaces. They are subspaces of $\cS'_{x,v}=\cS'(\R^{2d})$, and subspaces of $\L^2_{\loc,x,v} {=\L^2_{\loc}(\R^{2d})}$ when $\gamma\ge 0$. The subspace of $\cS_{x,v}=\cS(\R^{2d})$ of Schwartz functions whose Fourier transforms (in $\cS_{\varphi,\xi}$)  have compact support away from $\varphi=0$ and $\xi=0$ is a dense subspace for $-\infty < \gamma < d/2$.
\end{rem}
\begin{rem}
For $\gamma \in [0,d/2)$, the space $\Xdot^{-\gamma}_{\beta}$ identifies with the dual of $ \Xdot^{\gamma}_{\beta}$ for the duality extending the $\L^2_{x,v}$ inner product. 
\end{rem}

We introduce next for $\gamma<d/2$, 
 \begin{align}
 \label{eq:Fdotgammabeta}
 \cFdot^\gamma_{\beta}= \{f\in \cD'(\Omega)\, ; \, f \in \L^2_{t,x}\Hdot^{\gamma}_{v} \ \& \ (\partial_t + v \cdot \nabla_x)f \in \L^2_{t,x} \Hdot^{\gamma -2\beta}_{\vphantom{t,x} v}\}
\end{align}
with Hilbertian norm $ \|f\|_{\cFdot^\gamma_{\beta}}$ defined by
\begin{align}
 \label{eq:Fdotgammabetanorm}
 \|f\|_{\cFdot^\gamma_{\beta}}^2= \| D_{v}^{\gamma} f\|_{\L^2_{t,x,v}}^2 + \|(\partial_t + v \cdot \nabla_x)f\|_{\L^2_{t,x}\Hdot^{\gamma-2\beta}_{\vphantom{t,x} v}}^2,
 \end{align}
\begin{align}
 \label{eq:Kdotgammabeta}
 \cGdot^\gamma_{\beta}= \{ f \in \cD'(\Omega)\, ; \, f \in \L^2_{t,x}\Hdot^{\gamma}_{v} \ \& \ (\partial_t + v \cdot \nabla_x)f \in \L^2_{t} \Xdot^{\gamma -2\beta}_\beta\}
\end{align}
with Hilbertian norm $ \|f\|_{\cGdot^\gamma_{\beta}}$ defined by
\begin{align*}
 \|f\|_{\cGdot^\gamma_{\beta}}^2= \| D_{v}^{\gamma} f\|_{\L^2_{t,x,v}}^2 + \|(\partial_t + v \cdot \nabla_x)f\|_{\L^2_{t}\Xdot^{\gamma-2\beta}_\beta}^2, 
 \end{align*}
 and
 \begin{align}
 \label{eq:Ldotgammabeta}
 \cLdot^\gamma_{\beta}= \{ f \in \cD'(\Omega)\, ; \, f \in \L^2_{t,x}\Hdot^{\gamma}_{v} \ \& \ (\partial_t + v \cdot \nabla_x)f \in \L^2_{t} \Xdot^{\gamma -2\beta}_\beta+\L^1_{t} \Xdot^{\gamma-\beta}_{\beta}\}
\end{align}
with norm $ \|f\|_{\cLdot^\gamma_{\beta}}$ defined by
\begin{align*}
 \|f\|_{\cLdot^\gamma_{\beta}}= \| D_{v}^{\gamma} f\|_{\L^2_{t,x,v}} + \|(\partial_t + v \cdot \nabla_x)f\|_{\L^2_{t}\Xdot^{\gamma-2\beta}_\beta+\L^1_{t} \Xdot^{\gamma-\beta}_{\beta}}.
 \end{align*}
 
 The action of the field $\partial_t + v \cdot \nabla_x$ is taken in the sense of distributions on $\Omega$.
\begin{rem} \label{rem:inclusions}
 $ \cFdot^\gamma_{\beta}\subset \cGdot^\gamma_{\beta} \subset \cLdot^\gamma_{\beta} \subset \L^1_{\loc,t}\cS'_{x,v}$.
\end{rem}

Eventually, we can repeat \textit{verbatim} the definitions on $
\Omega_{+}= (0,\infty)\times \R^d \times \R^d$.

\subsection{Kolmogorov operators}
\label{sec:estimates}

\subsubsection{Galilean change of variables}

The Galilean change of variables is defined by
\[\Gamma f(t,x,v)=f(t,x+tv,v)\]
and also for fixed $t$,
\[\Gamma(t)f(x,v)= f(x+tv,v)\]
which are isometries on $\L^2_{t,x,v}$  and $\L^2_{x,v}$ respectively. Furthermore, $(\Gamma(t))_{t\in \R}$ is a strongly continuous group on $\L^2_{x,v}$ \cite[Lemma~5.2]{MR4350284}. These two properties imply that the mapping $\Gamma \colon \C^{}_{0}(\R^{}_{t}\, ; \, \L^2_{x,v}) \to \C^{}_{0}(\R^{}_{t}\, ; \, \L^2_{x,v})$ is continuous. 

\subsubsection{Partial Fourier transform}

Our definition of (partial) Fourier transform in the variables $(\varphi,\xi)$ dual to $(x,v)$ is
\[
\widehat f(t,\varphi,\xi)=\iint_{\R^d\times \R^d} e^{-i(x\cdot \varphi+v\cdot\xi)}f(t,x,v) \dx \dv.
\]

 This technical lemma relating the Fourier transform and the Galilean change of variables $\Gamma$ will be used frequently. To simplify the presentation 
we introduce the weights
\begin{align}
\label{eq:weight}
 W(t,\varphi,\xi)= \sup(|\xi-t \varphi|, |\varphi|^{\frac{1}{1+2\beta}}),
\end{align}
and 
\[w(\varphi,\xi)= \sup(|\xi|, |\varphi|^{\frac{1}{1+2\beta}})
\]
which depend on the fixed parameter $\beta>0$.

We define $\cD_{t}\,\cS_{x,v}$ as the subspace of functions in $\cS(\Omega)$ having compact support in the $t$-variable, identified to compactly supported functions in $t$ valued in the Schwartz space in $(x,v)$ variables. Its dual is denoted $\cD'_{t}\,\cS'_{x,v}$ and identified with the space of distributions valued in the space of tempered distributions. 

\begin{lem}\label{l:galilean}
 Let $\gamma\in \R$ with $\gamma<d/2$. Then the map $f\mapsto \widehat{\Gamma f}$ is an isomorphism 
\begin{enumerate}
\item $\cS(\Omega)$ onto $\cS(\Omega)$, and $\cD_{t}\,\cS_{x,v}$ onto $\cD_{t}\,\cS_{\varphi,\xi}$.
\item $\cS'(\Omega)$ onto $\cS'(\Omega)$, and $\cD_{t}'\,\cS'_{x,v}$ onto $\cD'_{t}\,\cS'_{\varphi,\xi}$.
\item $\L^p_{t \vphantom{\varphi,\xi}}\Xdot^{\gamma}_{\beta} $ onto $\L^p_{t \vphantom{\varphi,\xi}}\L^2_{ \varphi,\xi}\big(W^{2\gamma} \dd \varphi \dd \xi \big)$, $1\le p <\infty$.
\item $\C^{}_{0}(\R^{}_{t}\, ;\, \Xdot^{\gamma}_{\beta})$ onto $\C^{}_{0}\big(\R^{}_{t}\, ; \, \L^2_{\vphantom{t} \varphi,\xi}\big(W^{2\gamma} \dd \varphi \dd \xi \big)\big)$.
\item $\L^2_{t}\Xdot^{\gamma}_{\beta}$ onto $\L^2_{t,\varphi,\xi}\big(W^{2\gamma} \dt \dd \varphi \dd \xi \big)$.
\end{enumerate}
\end{lem}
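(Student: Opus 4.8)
The plan is to prove each isomorphism property by computing how the Fourier transform interacts with the Galilean shift, and then reducing everything to a weighted change of measure on the frequency side. First I would establish the single fundamental identity that drives all five parts. Since $\Gamma f(t,x,v)=f(t,x+tv,v)$, applying the partial Fourier transform in $(x,v)$ and performing the change of variables $x\mapsto x+tv$ (a measure-preserving shift in $x$ for each fixed $(t,v)$) gives, after tracking the exponential,
\begin{equation*}
\widehat{\Gamma f}(t,\varphi,\xi)=\widehat f(t,\varphi,\xi+t\varphi).
\end{equation*}
Thus $f\mapsto\widehat{\Gamma f}$ is nothing but the ordinary partial Fourier transform followed by the $(t\text{-dependent})$ shear $\Phi_t\colon(\varphi,\xi)\mapsto(\varphi,\xi+t\varphi)$ in the $\xi$-variable. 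The key numerical observation is that this shear is exactly compatible with the weights: writing $w(\varphi,\xi)=\sup(|\xi|,|\varphi|^{1/(1+2\beta)})$ and $W(t,\varphi,\xi)=\sup(|\xi-t\varphi|,|\varphi|^{1/(1+2\beta)})$, one has $W(t,\Phi_t(\varphi,\xi))=w(\varphi,\xi)$, since $\xi+t\varphi-t\varphi=\xi$. In other words, the shear sends the weight $w$ to $W$.

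With this identity in hand, parts (i) and (ii) are routine. The partial Fourier transform is an isomorphism of $\cS(\Omega)$ onto itself and of $\cD_t\,\cS_{x,v}$ onto $\cD_t\,\cS_{\varphi,\xi}$ (it acts only in $(x,v)$, leaving the compact $t$-support and smoothness untouched), and the shear $\Phi_t$, being a linear automorphism depending smoothly on the parameter $t$, preserves these spaces; the composition therefore does too. Taking transposes (the shear has Jacobian $1$, so it is an isometry on $\L^2_{\varphi,\xi}$ and dualises cleanly) yields the corresponding statements for $\cS'(\Omega)$ and $\cD_t'\,\cS'_{\varphi,\xi}$, which is part (ii). For part (iii), I would unwind the definition of $\Xdot^\gamma_\beta$: by Lemma's earlier discussion (the definitions \eqref{eq:dotXgammabeta}, \eqref{eq:dotX-gammabeta}), the norm $\|g\|_{\Xdot^\gamma_\beta}$ is comparable, via Plancherel in $(x,v)$, to $\|w^\gamma\,\widehat g\|_{\L^2_{\varphi,\xi}}$ for every $\gamma<d/2$ (for $\gamma\ge0$ because $\max(|\xi|^\gamma,|\varphi|^{\gamma/(1+2\beta)})\sim w^\gamma$ controls the sum of the two pieces, and for $\gamma<0$ because the infimum over decompositions $f=f_1+f_2$ is realised by the sharp cutoff $\{|\xi|\ge|\varphi|^{1/(1+2\beta)}\}$). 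Hence
\begin{equation*}
\|\Gamma f(t)\|_{\Xdot^\gamma_\beta}^2\sim\int |w(\varphi,\xi)|^{2\gamma}\,|\widehat{\Gamma f}(t,\varphi,\xi)|^2\dd\varphi\dd\xi
=\int |W(t,\varphi,\xi)|^{2\gamma}\,|\widehat f(t,\varphi,\xi)|^2\dd\varphi\dd\xi,
\end{equation*}
where the last equality is the change of variables $\eta=\xi+t\varphi$ together with $W(t,\varphi,\eta)=w(\varphi,\eta-t\varphi)$ and the weight-matching identity above. Integrating the $t$-th power and taking the $p$-th root gives the $\L^p_t$ statement. Parts (iv) and (v) are then immediate specialisations: (v) is $p=2$, and (iv) follows by combining the fibrewise isometry with the already-established strong continuity of $(\Gamma(t))_t$ on $\L^2_{x,v}$ (hence on $\Xdot^\gamma_\beta$ after the Fourier multiplier $w^\gamma$), which upgrades the pointwise-in-$t$ identity to continuity in $t$ with vanishing limits at $\pm\infty$.

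The main obstacle I anticipate is the weight equivalence $\|g\|_{\Xdot^\gamma_\beta}\sim\|w^\gamma\widehat g\|_{\L^2_{\varphi,\xi}}$ in the negative-exponent regime $\gamma<0$, where $\Xdot^\gamma_\beta$ is defined as a \emph{sum} $\L^2_x\Hdot^\gamma_v+\L^2_v\Hdot^{\gamma/(2\beta+1)}_x$ with an infimum norm. One must check that the optimal splitting on the Fourier side is the obvious one into the regions $\{|\xi|\ge|\varphi|^{1/(1+2\beta)}\}$ and its complement, and that this produces the weight $\min(|\xi|^\gamma,|\varphi|^{\gamma/(1+2\beta)})=w^\gamma$ (note the $\min$, since $\gamma<0$); this is elementary but must be done carefully to get a two-sided comparison with constants depending only on $d,\beta,\gamma$. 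Everything else reduces to the single shear identity and Plancherel, so once the weight comparison is secured the five parts fall out uniformly. I would also remark that the restriction $\gamma<d/2$ is used only to guarantee that $\Xdot^\gamma_\beta$ is a genuine space of distributions with the stated duality, and plays no role in the Fourier-side computation itself.
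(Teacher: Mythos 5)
Your strategy is exactly the paper's own proof (Plancherel, the $t$-dependent shear on the Fourier side, and a weight-matching identity), but your fundamental Fourier identity carries a sign error, and it is not harmless. With $\Gamma f(t,x,v)=f(t,x+tv,v)$ and the paper's convention for the partial Fourier transform, substituting $y=x+tv$ turns the phase $x\cdot\varphi+v\cdot\xi$ into $y\cdot\varphi+v\cdot(\xi-t\varphi)$, so the correct identity is $\widehat{\Gamma f}(t,\varphi,\xi)=\widehat f(t,\varphi,\xi-t\varphi)$; your formula $\widehat f(t,\varphi,\xi+t\varphi)$ is the one for $\Gamma^{-1}$. Your subsequent algebra is internally consistent with the wrong identity, but the resulting key display, $\|\Gamma f(t)\|^2_{\Xdot^{\gamma}_{\beta}}\sim\int W(t,\varphi,\xi)^{2\gamma}|\widehat f(t,\varphi,\xi)|^2\dd\varphi\dd\xi$, is both the wrong statement and a false one. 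It is the wrong statement because the lemma places the $\Xdot^{\gamma}_{\beta}$-norm on $f$ and the $W$-weighted norm on $\widehat{\Gamma f}$, whereas you place them on $\Gamma f$ and on $\widehat f$, respectively. It is false because, renaming $g=\Gamma f$, it asserts that $g\mapsto\widehat{\Gamma^{-1}g}$ is a fibrewise isomorphism of $\Xdot^{\gamma}_{\beta}$ onto $\L^2_{\varphi,\xi}(W^{2\gamma}\dd\varphi\dd\xi)$; but $\widehat{\Gamma^{-1}g}(t,\varphi,\xi)=\widehat g(t,\varphi,\xi+t\varphi)$, and after the substitution $\eta=\xi+t\varphi$ one is left comparing the weight $\sup(|\eta-2t\varphi|,|\varphi|^{\frac{1}{1+2\beta}})$ with $w(\varphi,\eta)=\sup(|\eta|,|\varphi|^{\frac{1}{1+2\beta}})$, whose ratio is unbounded (for instance along $\eta=2t\varphi$ with $|\varphi|\to\infty$). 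So, as written, the proof establishes neither (iii) nor (v).

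The repair is one line and restores the paper's argument: with the correct identity and the observation $W(t,\varphi,\xi)=w(\varphi,\xi-t\varphi)$ (equivalently your weight identity $W(t,\varphi,\xi+t\varphi)=w(\varphi,\xi)$, which \emph{was} correct — only the Fourier formula carried the wrong sign), the measure-preserving substitution $\eta=\xi-t\varphi$ gives
\begin{equation*}
\int|\widehat{\Gamma f}(t,\varphi,\xi)|^2\,W(t,\varphi,\xi)^{2\gamma}\dd\varphi\dd\xi=\int|\widehat f(t,\varphi,\eta)|^2\,w(\varphi,\eta)^{2\gamma}\dd\varphi\dd\eta\sim\|f(t)\|^2_{\Xdot^{\gamma}_{\beta}},
\end{equation*}
and integrating in $t$ yields (iii) and (v). Your remaining ingredients are sound and match the paper: the $\gamma<0$ comparison $\|g\|_{\Xdot^{\gamma}_{\beta}}\sim\|w^{\gamma}\widehat g\|_{\L^2_{\varphi,\xi}}$ does hold (the pointwise optimisation over splittings produces the weight $\min(|\xi|^{\gamma},|\varphi|^{\gamma/(2\beta+1)})=w^{\gamma}$), and (i)--(ii) are routine. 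For (iv), there is a subtlety you gloss over: since $W$ depends on $t$, the target $\C_{0}\big(\R_{t}\,;\,\L^2_{\varphi,\xi}(W^{2\gamma}\dd\varphi\dd\xi)\big)$ is not a standard vector-valued space; the paper makes sense of it as the image, under the conjugated change of variables $\widehat\Gamma h(t,\varphi,\xi)=h(t,\varphi,\xi-t\varphi)$, of $\C_{0}\big(\R_{t}\,;\,\L^2_{\varphi,\xi}(w^{2\gamma}\dd\varphi\dd\xi)\big)$, rather than deducing it from strong continuity of $(\Gamma(t))_{t}$ as you propose.
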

\begin{proof} We have $\widehat {\Gamma f}(t,\varphi,\xi)= \widehat f(t,\varphi,\xi-t\varphi)$.
The first item is rather easy, and we skip the details. The second item follows by duality using the first one, replacing $\Gamma$ by $\Gamma^*=\Gamma^{-1}$. For the third and fourth items, Plancherel's theorem tells us that 
$u\in \Xdot^{\gamma}_{\beta}$ if and only if 
$\widehat u \in \L^2_{\varphi,\xi}(w^{2\gamma} \dd\varphi \dd\xi\big)$. Apply this to $u(x,v)=f(t,x,v)$ for fixed $t$ and then use the change of variables induced by $\Gamma(t)$. This proves the third item. For the fifth item, apply further Fubini's theorem when $p=2$. Concerning the fourth item, the right-hand space should be understood as the image under the change of variable $\widehat \Gamma$ of 
$\C^{}_{0}\big(\R^{}_{t}\, ; \, \L^2_{\varphi,\xi}(w^{2\gamma} \dd\varphi \dd\xi\big)\big)$ where $\widehat \Gamma$ is the conjugate of the Galilean change of variables through the Fourier transform, that is $\widehat \Gamma {h}(t,\varphi,\xi)={h}(t,\varphi,\xi-t\varphi)$.
 \end{proof}

\subsubsection{Forward and backward Kolmogorov operators of order $\beta$}

The forward and backward Kolmogorov (integral) operators of order $\beta$ are denoted by $\cK^{\pm}_\beta$. They are formally defined as inverse maps of $\pm (\partial_{t}+v\cdot\nabla_{x}) +(-\Delta_{v})^{\beta}$, and this is what we rigorously prove under specific assumptions. To define them for now, we use Lemma~\ref{l:galilean} and for
$S\in \cD'_{t}\, \cS'_{x,v}$
(in particular $S \in \cS'(\Omega)$ is possible) 
we set 
\[ f =\cK^{+}_{\beta} S \] whenever
\begin{equation}
 \label{e:conjugaison}
 \widehat{\Gamma f}= T(\widehat {\Gamma S}) \in \cD'_{t}\, \cS'_{\varphi,\xi}, 
\end{equation}
with
\begin{equation}
 \label{eq:kolmogorovoperator}
Th(t,\varphi,\xi)= \int_{-\infty}^t K(t,s,\varphi,\xi)h(s,\varphi,\xi)\ds,
\end{equation}
where for $s,t\in \R$ and $(\varphi,\xi)\in \R^{2d}$, 
\begin{equation}
 \label{eq:kolmogorovkernel}
K(t,s,\varphi,\xi)= \exp\bigg({-\int_{s}^t |\xi-\tau \varphi|^{2\beta}\, d\tau}\bigg).
\end{equation}
Of course, the integral for $Th$ above makes sense only on certain classes of measurable functions $h$ of $(t,\varphi,\xi)$ and is thought of as the Duhamel formula for the ordinary differential equation,
 \[
g'(t)+ |\xi-t\varphi|^{2\beta}g(t)= h(t), \qquad t\in \R. 
\]
But the corresponding $S$ are distributions.

The operator $\cK^{-}_{\beta}$ is the formal adjoint to $\cK^{+}_{\beta}$. It is associated in the same fashion to the adjoint of $T$ given by the following formula,
\begin{equation}
 \label{eq:kolmogorovoperatoradjoint}
T^*h(s,\varphi,\xi)= \int^{+\infty}_{s} K(t,s,\varphi,\xi)h(t,\varphi,\xi)\, dt.
\end{equation}

\begin{rem}
We insist that at this stage that $\cK^{\pm}_{\beta}$ are defined through integral representations, that is, with the help of what is usually called the Kolmogorov fundamental solution in Fourier variables. That $\cK^{\pm}_{\beta} S$ are distributional solutions of the corresponding PDE under appropriate assumptions needs to be proved. It is after we prove uniqueness results for such PDEs that we will be able to connect these solutions to weak solutions.
\end{rem}

\subsubsection{Density}

In order to establish, for instance, the energy equality (see Lemma~\ref{lem:energyequalityweaksol}), it is useful to have density results at hand. This will also be needed to prove (iii) in Theorem~\ref{thm:homkinspace} and for various estimates. As we use a homogeneous setup, we cannot use convolution techniques in the spatial variables. We use Fourier techniques instead.

\begin{lem}[Density] \label{lem:density} 
Assume $d\ge 1$. Let $\cS_{K}$ be the subset of those $S\in \cD_{t}\, \cS_{x,v}$ whose partial Fourier transform has compact support in $\R^{}_{t}\times (\R^{d}_{\varphi}\setminus \{0\})\times (\R^{d}_{\xi}\setminus \{0\})$.   Then $\cS_{K}$ is dense in $\L^p_{t}\Xdot^{\gamma}_{\beta}$ and in $\C^{}_{0}(\R^{}_{t}\, ;\, \Xdot^{\gamma}_{\beta})$ for all $1\le p<\infty$ and $-\infty<\gamma<d/2$ and $\Gamma \cK^{\pm}_{\beta} (\tilde\cS_{K}) \subset \H^1_{t}(\L^2_{x,v})$. 
\end{lem}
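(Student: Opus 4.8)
The plan is to push everything through the partial Fourier transform $\widehat{\cdot}=\cF_{x,v}$ in $(x,v)$ (and, for the last assertion, through the Galilean change of variables $\Gamma$), where the weighted $\L^2$ description of the spaces $\Xdot^\gamma_\beta$ used in the proof of Lemma~\ref{l:galilean} makes both the density and the kernel estimate transparent. Recall that Plancherel gives an isomorphism $\Xdot^\gamma_\beta \to \L^2_{\varphi,\xi}(w^{2\gamma}\dd\varphi\dd\xi)$, hence isomorphisms $\L^p_t\Xdot^\gamma_\beta \to \L^p_t\L^2_{\varphi,\xi}(w^{2\gamma})$ and $\C_0(\R_t;\Xdot^\gamma_\beta)\to \C_0(\R_t;\L^2_{\varphi,\xi}(w^{2\gamma}))$. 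By its very definition, $\cF_{x,v}$ maps $\cS_K$ bijectively onto $\C^\infty_c(U)$, where $U=\R_t\times(\R^d_\varphi\setminus\{0\})\times(\R^d_\xi\setminus\{0\})$: the support condition defining $\cS_K$ is exactly that $\widehat S\in\C^\infty_c(U)$, and conversely $\cF^{-1}_{\varphi,\xi}h\in\cS_K$ for every $h\in\C^\infty_c(U)$.

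For the two density statements I would reduce the spatial direction to the fiberwise density already recorded in Remark~\ref{rem:Xdotgammabeta}: the class $D$ of Schwartz functions on $\R^{2d}$ whose Fourier transform is compactly supported away from $\{\varphi=0\}$ and $\{\xi=0\}$ is dense in $\Xdot^\gamma_\beta$ for every $-\infty<\gamma<d/2$. Then I would invoke the classical fact that, for a Banach space $B$ with dense subspace $D$, finite sums $\sum_j\psi_j(t)u_j$ with $\psi_j\in\C^\infty_c(\R)$ and $u_j\in D$ are dense in $\L^p_t(B)$ for $1\le p<\infty$ (via simple functions) and in $\C_0(\R_t;B)$ (via a cutoff in $t$, a partition of unity on the resulting compact time-interval combined with the density of $D$, and a final mollification in $t$). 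Taking $B=\Xdot^\gamma_\beta$ and observing that every such finite sum lies in $\cS_K$, since its $(x,v)$-Fourier transform $\sum_j\psi_j(t)\widehat{u_j}$ is smooth and compactly supported in $U$, yields both claims at once.

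For the mapping property I would work in the Galilean–Fourier picture. Writing $h=\widehat{\Gamma S}$, the defining relation \eqref{e:conjugaison} gives $\widehat{\Gamma\cK^+_\beta S}=Th$ with $T$ as in \eqref{eq:kolmogorovoperator}, so by Plancherel in $(x,v)$ it suffices to check that $Th$ and $\partial_tTh$ lie in $\L^2_{t,\varphi,\xi}$. For $S\in\tilde\cS_K$ the function $h$ is smooth, compactly supported in $t$, and supported in $(\varphi,\xi)$ inside a compact set on which $\varphi$ is bounded away from $0$; this is the only feature I use. From $|\xi-\tau\varphi|\ge|\varphi|\,|\tau-\tau_*|$ with $\tau_*=(\xi\cdot\varphi)/|\varphi|^2$, the exponent in \eqref{eq:kolmogorovkernel} satisfies $\int_s^t|\xi-\tau\varphi|^{2\beta}\dd\tau\gtrsim|\varphi|^{2\beta}|t-\tau_*|^{2\beta+1}$ for large $t$, so $0\le K(t,s,\varphi,\xi)\le e^{-c|t|^{2\beta+1}}$ uniformly for $(\varphi,\xi)$ in the support of $h$ and $s\in\supp_t h$. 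Since moreover $Th(t,\cdot)=0$ below $\supp_t h$ and $T$ does not move $(\varphi,\xi)$, this stretched-exponential decay and the compact $(\varphi,\xi)$-support give $Th\in\L^2_{t,\varphi,\xi}$; differentiating the Duhamel formula, $\partial_tTh=h-|\xi-t\varphi|^{2\beta}Th$, and the at most polynomial growth of $|\xi-t\varphi|^{2\beta}$ is absorbed by the decay of $Th$, whence $\partial_tTh\in\L^2_{t,\varphi,\xi}$ as well. This yields $\Gamma\cK^+_\beta S\in\H^1_t(\L^2_{x,v})$, and $\cK^-_\beta$ is treated identically using $T^*$ from \eqref{eq:kolmogorovoperatoradjoint} and decay as $t\to-\infty$.

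I expect the main obstacle to be the $\C_0$-in-time density, since the cutoff, partition-of-unity and mollification steps must be performed in the sup-over-$t$ topology of the vector-valued space while keeping the approximants inside $\cS_K$; by contrast the spatial density is handed to us by Remark~\ref{rem:Xdotgammabeta}, and the kernel estimate becomes routine once the inequality $|\xi-\tau\varphi|\ge|\varphi|\,|\tau-\tau_*|$ is used to exploit $\varphi\neq0$.
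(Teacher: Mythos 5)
Your proof is correct and follows essentially the same route as the paper: the density is obtained by lifting the fiberwise density of Remark~\ref{rem:Xdotgammabeta} to the time-dependent spaces (a step the paper leaves implicit), and the $\H^1_{t}(\L^2_{x,v})$ property is proved on the Galilean--Fourier side from the compact support of $h=\widehat{\Gamma S}$, decay of the kernel $K$, and the Duhamel identity $\partial_t Th = h - |\xi-t\varphi|^{2\beta}Th$. The only cosmetic difference is the kernel bound: you use $|\xi-\tau\varphi|\ge |\varphi|\,|\tau-\tau_*|$ together with $|\varphi|\ge m$ on the support to get stretched-exponential decay, whereas the paper invokes Lemma~\ref{lem:comp} with $|\xi-s\varphi|\ge m$ to get plain exponential decay; both suffice.
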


\begin{proof}
 By Remark~\ref{rem:Xdotgammabeta} after the definition of the spaces $\Xdot^{\gamma}_{\beta}$, one can see that any element of the listed spaces can be approximated by a function $S\in \cS_{K}$. 
 
 It remains to show that for $S\in \cS_{K}$, $\Gamma\cK_{\beta}^\pm S$ and $\partial_{t}\, \Gamma \cK_{\beta}^\pm S$ belong to $\L^2_{t,x,v}$. 
 We do it for $\cK_{\beta}^+$, the proof for $\cK_{\beta}^-$ being the same.
 
 Let $h=\widehat {\Gamma S}$. This function is $\C^\infty$ and has support in $$\{(t,\varphi,\xi) \in \Omega\, ; \, |t|\le M, m\le |\varphi|\le M, m\le |\xi-t\varphi|\le M\}$$ for some $0<m\le M<\infty$. 

 We now look at $g=Th$ which is to be $g= \widehat{\Gamma\cK_{\beta}^+S}$.  
 Hence it is enough to show that $g$ and $\partial_{t}g$ 
 belong to $ \L^2_{t,\varphi,\xi}$.

 Since
 \[
 g(t,\varphi,\xi)= \int_{-M}^t K(t,s,\varphi,\xi)h(s,\varphi,\xi)\ds
 \] 
and $|\xi|\le M+M^2$ on the support of $h$, we have that the support of $g$ is contained in $\{(t, \varphi, \xi)\in \R^{2d+1}\, ; t\ge -M, \, m\le |\varphi| \le M, |\xi| \le M+M^2\}$, and that 
 $g$ is continuous and bounded. In addition, if $t\ge 2M$ and using $|\xi-s\varphi|\ge m$ and $s\le M$ on the support of $h(s,\varphi,\xi)$,  we have 
$|K(t,s,\varphi,\xi)| \lesssim e^{-c_{\beta}m^{2\beta} (t-M)}$ (see Lemma~\ref{lem:comp} below). Hence $e^{c|t-M|}g$ is also bounded for some $c>0$. All this implies that $g, |\xi-t\varphi|^{2\beta}g \in \L^2_{t,\varphi,\xi}$.

 Eventually, using the support of $h$ and the estimates for $\partial_{t}K$ allows us to differentiate $g$ under the integral sign and we find the equation
 $\partial_{t}g + |\xi-t\varphi|^{2\beta}g=h$. From the above properties of $g$, we conclude that $\partial_{t}g\in  \L^2_{t,\varphi,\xi}$ as well. 
 \end{proof}

\begin{rem}
 For $S\in \cS_{K}$, $\cK^{\pm}_{\beta}S$ are strong $\L^2$ solutions of the forward and backward Kolmogorov equations respectively.
\end{rem}
 
We remark that we can reach a stronger density result when $d\ge 2$. We include this result for its own sake.

 \begin{lem}[Stronger density] \label{lem:strongerdensity} 
 Assume $d\ge2$. There exists a subset $\tilde\cS_{K}$ of $\cS(\Omega)$ which is dense in $\L^p_{t}\Xdot^{\gamma}_{\beta}$ and in $\C^{}_{0}(\R^{}_{t}\, ;\, \Xdot^{\gamma}_{\beta})$ for all $1\le p<\infty$ and $-\infty<\gamma<d/2$ and such that $\cK^{\pm}_{\beta} (\cS_{K}) \subset \cS(\Omega)$. 
\end{lem}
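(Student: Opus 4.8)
The plan is to prescribe the \emph{sources} in Galilean--Fourier variables and to keep their frequency support away from the locus responsible for the loss of both smoothness and decay of $K$; the hypothesis $d\ge 2$ is exactly what renders that locus negligible. Write $\hat S(t,\varphi,\eta)$ for the partial Fourier transform of $S$ in $(x,v)$, so that $\widehat{\Gamma S}(t,\varphi,\xi)=\hat S(t,\varphi,\xi-t\varphi)$ by Lemma~\ref{l:galilean}. For $\delta>0$ set
\[
R_\delta=\Big\{(t,\varphi,\eta)\in\Omega\, ;\, |\varphi|\ge\delta,\ \big|\eta-\tfrac{\eta\cdot\varphi}{|\varphi|^2}\varphi\big|\ge\delta\Big\},
\]
the second condition being that the distance from the origin to the line $\{\eta+\sigma\varphi:\sigma\in\R\}$ is at least $\delta$. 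Define $\tilde\cS_K$ to be the linear span of functions $S$ whose partial Fourier transform factorises as $\hat S(t,\varphi,\eta)=a(t)\,b(\varphi,\eta)$ with $a\in\cS(\R)$ and $b\in\C^\infty_c(\R^{2d})$ supported in $\{(\varphi,\eta):(t,\varphi,\eta)\in R_\delta\}$ for some $\delta>0$. Since $b$ is smooth with compact support and $a$ is Schwartz, each such $S$ lies in $\cS(\Omega)$, so $\tilde\cS_K\subset\cS(\Omega)$.

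Density is checked by transferring to frequency. By Lemma~\ref{l:galilean}~(iii)--(v), $f\mapsto\widehat{\Gamma f}$ identifies $\L^p_t\Xdot^\gamma_\beta$ and $\C^{}_0(\R^{}_t;\Xdot^\gamma_\beta)$ with the images under $\widehat\Gamma$ of $\L^p_t\L^2_{\varphi,\xi}(w^{2\gamma}\dd\varphi\dd\xi)$ and $\C^{}_0(\R^{}_t;\L^2_{\varphi,\xi}(w^{2\gamma}\dd\varphi\dd\xi))$; in the $(\varphi,\eta)$ variables (i.e. before applying $\widehat\Gamma$) these are weighted with $w^{2\gamma}$. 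The complement of $\bigcup_{\delta>0}R_\delta$ is $\{\varphi=0\}\cup\{\eta\parallel\varphi\}$, which has Lebesgue measure zero in $\R^{2d}_{\varphi,\eta}$ \emph{precisely because} $d\ge2$ (the parallel locus has codimension $d-1\ge1$); in $d=1$ it is the whole space, which is why the stronger statement fails there. Multiplying any target by a smooth cutoff equal to $1$ off a shrinking neighbourhood of this null set and invoking dominated convergence (the weight being finite a.e.) yields density of $\tilde\cS_K$ in all listed spaces, for every $-\infty<\gamma<d/2$ and $1\le p<\infty$.

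The core is to prove $\cK^\pm_\beta S\in\cS(\Omega)$ for $S\in\tilde\cS_K$. Put $h=\widehat{\Gamma S}$, so $\widehat{\Gamma\cK^+_\beta S}=Th$ with $Th(t,\varphi,\xi)=\int_{-\infty}^t K(t,s,\varphi,\xi)\,h(s,\varphi,\xi)\ds$. On $\supp h$ we have $(t,\varphi,\xi-t\varphi)\in R_\delta$, and since $\{\xi-\tau\varphi:\tau\in\R\}$ is the line through $\xi-t\varphi$ in direction $\varphi$, the definition of $R_\delta$ forces $|\xi-\tau\varphi|\ge\delta$ for \emph{all} $\tau\in\R$. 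Two consequences follow. First, the phase $\Phi(t,s,\varphi,\xi)=\int_s^t|\xi-\tau\varphi|^{2\beta}\dd\tau$ stays in the region where $r\mapsto r^{2\beta}$ is smooth, so $K=e^{-\Phi}$ is $\C^\infty$ in $(\varphi,\xi)$ with derivatives growing at most polynomially in $\tau,t,s$. Second, $|\varphi|\ge\delta$ gives $|\xi-\tau\varphi|\ge\delta|\tau|-|\xi|$, whence $\Phi(t,s,\varphi,\xi)\gtrsim_\delta|t-s|^{2\beta+1}$ once $|t-s|$ is large compared with $|\xi|/|\varphi|$, so $K$ decays super-exponentially (Lemma~\ref{lem:comp}). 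Differentiating $Th$ under the integral sign and pairing the super-exponential decay of $K$ with the Schwartz decay of $h$ in $(t,\varphi,\xi)$ shows that $Th$ and each of its derivatives decay faster than any polynomial; hence $Th\in\cS(\R^{2d+1})$ and $\cK^+_\beta S\in\cS(\Omega)$ by Lemma~\ref{l:galilean}~(i)--(ii). The adjoint $T^*$ is obtained from $T$ by the reflection $t\leftrightarrow s$, under which $R_\delta$ and $K$ are invariant, so the identical argument gives $\cK^-_\beta S\in\cS(\Omega)$.

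I expect the last step to be the main obstacle: one must differentiate $\Phi$ arbitrarily often in $(\varphi,\xi)$, producing factors $|\xi-\tau\varphi|^{2\beta-k}$ together with powers of $\tau$ that grow polynomially across the integration range, and then verify that these are all absorbed by the super-exponential decay of $e^{-\Phi}$. Both the smoothness of the integrand (via the uniform lower bound $|\xi-\tau\varphi|\ge\delta$, controlling the singular powers when $\beta<1$) and this decay rest on the geometric support condition defining $R_\delta$, which is non-degenerate, and leaves a null complement, only when $d\ge2$.
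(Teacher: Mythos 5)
Your proposal is correct and follows essentially the same route as the paper: the class is defined by Fourier support kept at positive distance from the degenerate locus $\{\varphi=0\}\cup\{d(\xi,\R\varphi)=0\}$ (which is Lebesgue-null precisely when $d\ge2$), density is obtained by cutting off near this null set, and the mapping property rests on the same key observation that $d(\xi,\R\varphi)\ge\delta$ forces $|\xi-\tau\varphi|\ge\delta$ for all $\tau\in\R$, yielding smoothness of the kernel and exponential decay in $t-s$. The only differences are cosmetic: the paper works with compactly supported data in the Galilean--Fourier variables and gives a quantitative $\varepsilon^{1/2}$ cutoff estimate, whereas you use tensor products $a(t)b(\varphi,\eta)$ in the plain Fourier variables and qualitative dominated convergence (the reduction to spans of tensor products and the mollification step being standard but left implicit).
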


\begin{proof}
We begin with the construction of $\tilde\cS_{K}$, arguing again on the Fourier side. Let $d(\xi, \R \varphi)$ denote the distance from $\xi$ to the line $\R\varphi$. When $\varphi\ne 0$, we have $d(\xi, \R \varphi)^2= \big|\xi- ({\xi}\cdot{\varphi})\, \frac \varphi {|\varphi|^2}\big|^2$ is a $\C^\infty$ function of $(\varphi,\xi)$ on $ \R^d_{\varphi} \setminus \{0\} \times\R^d_{\xi}$. It is not identically zero as $d\ge2$. Define the non-empty open set
\[\Omega_{K}=\{(t,\varphi,\xi)\in \Omega\, ; \, |\varphi|>0, d(\xi, \R \varphi)>0 \}.\] We say that 
$S\in \tilde\cS_{K}$ if and only if $h=\widehat{\Gamma S} \in \cS(\Omega)$ and has compact support in $\Omega_{K}$.  

Let us prove the density of $\tilde\cS_{K}$ in $\L^p_{t}\Xdot^{\gamma}_{\beta}$ and $\C^{}_{0}(\R^{}_{t}\, ;\, \Xdot^{\gamma}_{\beta})$. By Lemma~\ref{lem:density},
we may assume that $S\in \cS_{K}$, so that $h=\widehat {\Gamma S}$ has support in $\{(t,\varphi,\xi) \in \Omega\, ; \, |t|\le M, m\le |\varphi|\le M, m\le |\xi-t\varphi|\le M\}$ for some $0<m\le M<\infty$. On this set, the function 
$(t,\varphi,\xi)\mapsto d(\xi, \R \varphi)^2$ is $\C^\infty$ and
 if we let $\chi\in \C^\infty(0,\infty)$ with $\chi(y)=1$ if $y\ge 4$ and $\chi(y)=0$ if $y\le 1$, then $h_{\varepsilon}:(t,\varphi,\xi)\mapsto \chi(d(\xi, \R \varphi)^2/\varepsilon^2)h(t,\varphi,\xi)$ is $\C^\infty$ with compact support in $\Omega_{K}$. 
 For the convergence of $h_{\varepsilon}$ to $h$, we have that $h$ has support in $[-M,M]\times \{m\le |\varphi|\le M\} \times \{|\xi|\le M+M^2\}$. In addition $h-h_{\varepsilon}=0$ when $d(\xi, \R \varphi)\ge 2\varepsilon$. If $d(\xi, \R \varphi)\le 2\varepsilon$, then $\xi $ belongs to the cylinder $ \R\varphi+B(0,2\varepsilon)$. Thus, using all the support information, $$
\bigg\| \bigg(\iint |h- h_{\varepsilon}|^2 \, W^{2\gamma} \dd\xi \dd\varphi\bigg)^{1/2} \bigg\|_{\L^p_{t}}  \lesssim_{m,M, \gamma, \beta,p}  \|h\|_{\L^\infty}\, \varepsilon^{1/2}. 
$$

We turn to proving that the Kolmogorov operators map $\tilde\cS_{K}$ to $\cS(\Omega)$. We do it for $\cK_{\beta}^+$, the proof for $\cK_{\beta}^-$ being the same. Let $S\in \cS_{K}$ and set $h=\widehat{\Gamma S}$. We have to show that $g=Th\in \cS(\Omega)$. Since $h$ has compact support in $\Omega_{K}$, there exists $0<m<M<\infty$ such that $h$ is supported in $[-M,M]\times F$, with
\[
 F=\{(\varphi,\xi)\in \R^{2d}\, ;  \, m\le |\varphi|\le M, m\le d(\xi,\R \varphi) , |\xi|\le M\}.
\]  
On $\R\times (\R^{2d} \setminus F) \cup ((-\infty, -M] \times F)$, $g$ is clearly 0, that is, $g$ is supported in $ [-M,\infty)\times F $. 
Let $0<m'<m$ and $M<M'<\infty$ and $O=\{(\varphi,\xi)\in \R^{2d}\, ;  \, m'< |\varphi|<M', m'< d(\xi,\R \varphi) , |\xi|< M'\}$. When $m' \le d(\xi,\R \varphi) $ and $t\ge s$, $\int_{s}^t|\xi-\tau\varphi|^{2\beta}\, d\tau \ge (t-s) (m')^{2\beta}$. In particular $(\varphi,\xi)\mapsto \exp (- \int_{s}^t|\xi-\tau\varphi|^{2\beta}\, d\tau)$ is $\C^\infty$ on $O$ and $g$ is $\C^\infty$ with respect to $(\varphi,\xi)\in O$ for any $t> -M'$, and all its partial derivatives with respect to $(\varphi,\xi)\in O$ have exponential decay in $t\to \infty$, uniformly. Next, as $\partial_{t}g= -|\xi-t\varphi|^{2\beta} g+h$, the same is true for $\partial_{t}g$. By induction, we obtain that $g$ is $\C^\infty$ with respect to $(t,\varphi,\xi)$. Since $g$ has support in $ [-M,\infty)\times F $, with all partial derivatives having exponential decay in time, uniformly in $(\varphi,\xi)$, we conclude that $g\in \cS(\Omega)$. 
\end{proof}

Remark that $\tilde\cS_{K}$ is even contained in $ \cD_{t}\, \cS_{x,v}$ but elements in  $\cK^{\pm}_{\beta} (\tilde\cS_{K})$ do not have compact support in $t$.

\subsubsection{Estimates}

The main technical result related to Kolmogorov operators is the following a priori estimates. It makes precise how Kolmogorov operators act on various functional spaces in $(t,x,v)$
. 

\begin{prop}[A priori estimates for Kolmogorov operators] \label{p:boundsKolmoapriori} Let $\beta>0$ and $\gamma\in \R$. Assume that $S\in \cS_{K}$ of Lemma~\ref{lem:density}. In particular, $\cK^{\pm}_{\beta}S$ belongs to $\L^2_{t,x,v}$, so all fractional derivatives below make sense allowing the $\Xdot^\alpha_{\beta}$ and $\Hdot^\alpha_{v}$ norms to be computed. Then $\cK^{\pm}_{\beta}S\in \L^2_{t}\Xdot^\gamma_{\beta} \cap \C^{}_{0}(\R^{}_{t}\, ; \, \Xdot^{\gamma-\beta}_{\beta})$ with bound
\begin{align*}
 \max\{
 \|\cK^{\pm}_{\beta}S\|_{\L^2_{t}\Xdot^\gamma_{\beta}}, 
\sup_{t\in \R}\|\cK^{\pm}_{\beta}S\|_{\Xdot^{\gamma-\beta}_{\beta}}
\}
\lesssim
 \min\{\|S\|_{\L^2_{t}\Xdot^{\gamma-2\beta}_{\beta}}, \|S\|_{\L^1_{t}\Xdot^{\gamma-\beta}_{\beta}}
 \}
\end{align*}
where the implicit constants depend only on $\beta,\gamma,d$. 
In particular, if $0\le \gamma\le 2\beta$, we have the following four inequalities
\begin{align*}
 \max\{
 \|D_{v}^\gamma\cK^{\pm}_{\beta}S\|_{\L^2_{t,x,v}},
\|D_{\vphantom {D_{v}} x}^{\frac{\gamma}{2\beta+1}}\cK^{\pm}_{\beta}S\|_{\L^2_{t,x,v}} 
\}
\lesssim
 \min\{ \|D_{v}^{\gamma-2\beta} S\|_{\L^2_{t,x,v}}, \|D_{\vphantom {D_{v}} x}^\frac{\gamma-2\beta}{2\beta+1}S\|_{\L^2_{t,x,v}}
 \},
\end{align*}
and if $\beta\le \gamma\le 2\beta$,  
\begin{align*}
\max\bigg\{&\sup_{t\in \R} \|D_{v}^{\gamma-\beta}\cK^{\pm}_{\beta}S(t)\|_{\L^2_{x,v}},
\sup_{t\in \R} \|D_{\vphantom {D_{v}} x}^{\frac{\gamma-\beta}{2\beta+1}}\cK^{\pm}_{\beta}S(t)\|_{\L^2_{x,v}} \bigg\}
\\
&
\lesssim
\min\{\|D_{v}^{\gamma-2\beta} S\|_{\L^2_{t,x,v}}, \|D_{\vphantom {D_{v}} x}^\frac{\gamma-2\beta}{2\beta+1}S\|_{\L^2_{t,x,v}}, \|S\|_{\L^1_{t}\Xdot^{\gamma-\beta}_{\beta}}
\}.
\end{align*}
\end{prop}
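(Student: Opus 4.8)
The plan is to conjugate everything through the Galilean change of variables and the partial Fourier transform, reducing the proposition to a family of one–dimensional (in the time variable) weighted estimates for the scalar operator $T$, with constants uniform in the frequency parameters $(\varphi,\xi)$. Concretely, writing $h=\widehat{\Gamma S}$ and $g=Th=\widehat{\Gamma\cK^{+}_{\beta}S}$ (and symmetrically $T^{*}$ for $\cK^{-}_{\beta}$), Lemma~\ref{l:galilean} turns $\|\cdot\|_{\L^2_{t}\Xdot^{\gamma}_{\beta}}$ and $\|\cdot\|_{\C_{0}(\R_{t};\Xdot^{\gamma-\beta}_{\beta})}$ into the weighted norms $\|W^{\gamma}(\cdot)\|_{\L^2_{t,\varphi,\xi}}$ and $\sup_{t}\|W^{\gamma-\beta}(t)(\cdot)\|_{\L^2_{\varphi,\xi}}$, while the source norms in $\L^2_{t}\Xdot^{\gamma-2\beta}_{\beta}$ and $\L^1_{t}\Xdot^{\gamma-\beta}_{\beta}$ become $W^{\gamma-2\beta}$- and $W^{\gamma-\beta}$-weighted norms. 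Since $S\in\cS_{K}$, all quantities are finite by Lemma~\ref{lem:density}, so I may manipulate them freely. By Fubini it then suffices to bound, for each fixed $(\varphi,\xi)$ and uniformly in it, the scalar operator $h\mapsto Th(t)=\int_{-\infty}^{t}K(t,s,\varphi,\xi)h(s)\,\ds$ between the appropriate $W$-weighted spaces in $t$.

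The first step of the scalar analysis is to replace the drift $a(\tau):=|\xi-\tau\varphi|$ appearing in the exponent of $K$ by the weight $W(\tau)=\sup(a(\tau),b)$, where $b=|\varphi|^{1/(1+2\beta)}$. The set $\{a<b\}$ is an interval of length $\lesssim b^{-2\beta}$ on which $W\equiv b$, so $\int_{\{a<b\}}W^{2\beta}\lesssim 1$; hence $\int_{s}^{t}W^{2\beta}\le\int_{s}^{t}a^{2\beta}+C$ and
\[ K(t,s,\varphi,\xi)\le C\exp\Bigl(-\int_{s}^{t}W(\tau)^{2\beta}\,\dd\tau\Bigr),\qquad t\ge s, \]
with $C$ depending only on $\beta$. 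The second, and decisive, structural fact is that $W$ is essentially constant on the natural time scale: writing $a(\tau)^{2}=|\varphi|^{2}(\tau-\tau_{*})^{2}+\delta^{2}$ with $\tau_{*}=(\xi\cdot\varphi)/|\varphi|^{2}$ and $\delta=d(\xi,\R\varphi)$, the $|\varphi|$-Lipschitz bound on $a$ shows that $|t-s|\lesssim W^{-2\beta}$ forces $a$ to vary by at most $\lesssim b$, whence $W(t)\asymp W(s)$.

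With these two facts the three required bounds become routine one–variable computations. For the $\L^2_{t}\to\L^2_{t}$ estimate I apply Schur's test to the kernel $M(t,s)=W(t)^{\gamma}\exp(-\int_{s}^{t}W^{2\beta})\,W(s)^{2\beta-\gamma}$: substituting $u=\int_{s}^{t}W^{2\beta}$ in each Schur integral and using $W(t)\asymp W(s)$ for $u=O(1)$ together with the exponential suppression $e^{-u}$ of the tails shows that both $\sup_{t}\int_{-\infty}^{t}M\,\ds$ and $\sup_{s}\int_{s}^{\infty}M\,\dt$ are $O(1)$ uniformly in $(\varphi,\xi)$ and for every $\gamma\in\R$. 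The $\L^1_{t}\to\L^2_{t}$ estimate follows from Minkowski's integral inequality, reducing it to $\sup_{s}\|W(\cdot)^{\gamma}K(\cdot,s)W(s)^{\beta-\gamma}\|_{\L^2_{t}}\lesssim 1$, handled by the same substitution. Finally the $\sup_{t}$ bound in $\Xdot^{\gamma-\beta}_{\beta}$ comes from the pointwise estimate on $W(t)^{\gamma-\beta}|Th(t)|$: Cauchy--Schwarz in $s$ gives the $\L^2_{t}$-source version and a direct estimate the $\L^1_{t}$-source version, both uniform in $(\varphi,\xi)$. Membership in $\C_{0}$ (continuity and decay at $\pm\infty$, not merely $\L^{\infty}$) is read off from the fact that, for $S\in\cS_{K}$, $g=Th$ is smooth, supported in $\{t\ge-M\}\times\{\text{annulus in }(\varphi,\xi)\}$ and exponentially decaying as $t\to+\infty$ (Lemma~\ref{lem:density}), so $t\mapsto W(t)^{\gamma-\beta}g(t)$ is continuous into $\L^2_{\varphi,\xi}$ with zero limits.

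The four displayed inequalities are then immediate specialisations. Conjugating $D_{v}^{\alpha}$ and $D_{x}^{\alpha/(2\beta+1)}$ through $\widehat{\Gamma(\cdot)}$ turns them into multiplication by $|\xi-t\varphi|^{\alpha}$ and $|\varphi|^{\alpha/(2\beta+1)}$, both dominated by $W^{\alpha}$ when $\alpha\ge0$; hence for $0\le\gamma\le2\beta$ the left-hand sides are controlled by $\|W^{\gamma}g\|_{\L^2}$ and for $\beta\le\gamma\le2\beta$ by $\sup_{t}\|W^{\gamma-\beta}g(t)\|_{\L^2}$, while on the source side, for $\gamma-2\beta\le0$ the sum-space structure of $\Xdot^{\gamma-2\beta}_{\beta}$ yields $\|S\|_{\Xdot^{\gamma-2\beta}_{\beta}}\le\min\{\|D_{v}^{\gamma-2\beta}S\|_{\L^2_{x,v}},\|D_{x}^{(\gamma-2\beta)/(2\beta+1)}S\|_{\L^2_{x,v}}\}$, producing the minima on the right. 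I expect the main obstacle to be exactly the uniformity in $(\varphi,\xi)$ and across all $\gamma\in\R$ of the Schur and Minkowski bounds: the weight ratios $W(t)^{\gamma}W(s)^{2\beta-\gamma}$ are genuinely unbounded pointwise, and only the precise matching between the $W^{-2\beta}$ time scale, the exponential decay of the kernel, and the near-constancy $W(t)\asymp W(s)$ on that scale keeps the constants finite; organising this bookkeeping cleanly—e.g.\ via the scaling normalisation $|\varphi|=1$, $\tau_{*}=0$, leaving only the parameter $\delta\ge0$—is where the real work lies.
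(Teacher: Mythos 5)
Your proposal is correct in substance and shares the paper's overall architecture: conjugation by $\Gamma$ and the partial Fourier transform (Lemma~\ref{l:galilean}), reduction to weighted temporal estimates for the scalar operator $T$ that are uniform in $(\varphi,\xi)$, and the final specialisation for $0\le\gamma\le 2\beta$ by splitting the weight $W$ and using the sum/intersection structure of $\Xdot^{\gamma-2\beta}_{\beta}$. Where you genuinely diverge is in the proof of the core scalar estimates. The paper (Proposition~\ref{prop:estimatesT}) derives them from the comparison Lemma~\ref{lem:comp} and the kernel estimates \eqref{eq:K1}--\eqref{eq:K6}: the $\L^2_{t}$-weighted bound comes from Riesz--Thorin interpolation of the $\L^1_{t}\to\L^1_{t}$ and $\L^\infty_{t}\to\L^\infty_{t}$ bounds of Proposition~\ref{prop:L1Linftyestimates}, the $\L^1_{t}\to\L^2_{t}$ bound from Stein--Weiss interpolation with change of measure, and the $\C^{}_{0}$ bounds from duality plus a density argument. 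You instead uniformise the exponent via $K(t,s)\le e^{2}\exp\big(-\int_{s}^{t}W^{2\beta}\big)$ (your observation that $\{a<b\}$ is an interval of length $\lesssim b^{-2\beta}$ is correct, since $b/|\varphi|=b^{-2\beta}$) and then run Schur's test, Minkowski, and Cauchy--Schwarz directly with the substitution $u=\int_{s}^{t}W^{2\beta}\,\dd\tau$; your pointwise weight-exchange bounds play exactly the role of the paper's \eqref{eq:K4}. Your route is more self-contained (no Stein--Weiss, and all four bounds for every $\gamma\in\R$ come from one computation); the paper's route externalises the bookkeeping to standard interpolation theorems. One presentational gloss: the reduction to fixed $(\varphi,\xi)$ is pure Fubini only for the $\L^2_{t}$-source bounds; for sources in $\L^1_{t}\Xdot^{\gamma-\beta}_{\beta}$ the mixed norms must be exchanged by Minkowski's integral inequality, which you do invoke, as does the paper.

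One point must be argued more carefully than your phrase ``near-constancy for $u=O(1)$ together with exponential suppression of the tails'' suggests. After substitution the Schur integrands are of the form $(W(t)/W(s))^{\pm\theta}e^{-u}$, and iterating near-constancy over $\sim u$ unit blocks only gives an exponential bound $A^{u}$ on the ratio, which $e^{-u}$ cannot absorb in general. What actually closes the argument is the polynomial bound $\max(W(t),W(s))/\min(W(t),W(s))\lesssim 1+u^{\frac{1}{1+2\beta}}$, which follows from the explicit V-shape of $\tau\mapsto|\xi-\tau\varphi|$ with slope $|\varphi|$ and floor $b$: if the ratio exceeds $2$, the larger value, say $W(t)$, equals $|\xi-t\varphi|$, and $W\gtrsim W(t)$ on a sub-interval of $[s,t]$ of length $\gtrsim W(t)/|\varphi|$, whence $u\gtrsim W(t)^{1+2\beta}/|\varphi|=(W(t)/b)^{1+2\beta}\geq(\text{ratio})^{1+2\beta}$, using $W(s)\ge b$ and $b^{1+2\beta}=|\varphi|$. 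Since you flag precisely this bookkeeping as where the real work lies (and your normalisation $|\varphi|=1$, $\tau_{*}=0$ makes it a clean computation), I consider the plan sound; but in a final write-up this ratio lemma is the statement you must isolate and prove, as it is your replacement for the paper's Lemma~\ref{lem:comp} and \eqref{eq:K4}.
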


 Using our definition of spaces in the restricted range of parameters and density,  this immediately gives us the following boundedness results.
 
\begin{prop}[Estimates for Kolmogorov operators] \label{p:boundsKolmo}
Let $\beta>0$ and $\gamma\in \R$. We have the following bounds. 
\begin{enumerate} 
\item For $\gamma <d/2$, the operators $\cK^{\pm}_{\beta}$ map boundedly $\L^2_{t}\Xdot^{\gamma-2\beta}_{\beta}$ to $\L^2_{t}\Xdot^\gamma_{\beta}$. 
\item For $\gamma-\beta <d/2$, the operators
$\cK^{\pm}_{\beta}$ map boundedly $\L^2_{t}\Xdot^{\gamma-2\beta}_{\beta}$ to $\C^{}_{0}(\R^{}_{t}\, ; \, \Xdot^{\gamma-\beta}_{\beta})$.
\item For $\gamma <d/2$, the operators $\cK^{\pm}_{\beta}$ map boundedly $\L^1_{t}\Xdot^{\gamma-\beta}_{\beta}$ to $\L^2_{t}\Xdot^\gamma_{\beta}$.
\item For $\gamma-\beta <d/2$, $\cK^{\pm}_{\beta}$ map boundedly $\L^1_{t}\Xdot^{\gamma-\beta}_{\beta}$ to $\C^{}_{0}(\R^{}_{t}\, ;\, \Xdot^{\gamma-\beta}_{\beta})$.
\end{enumerate}
If $0\le \gamma\le 2\beta$ and $\gamma<d/2$,  then we have boundedness on $\L^2_{t,x,v}$ of
\begin{equation*}
  D_{v}^\gamma \cK^{+}_{\beta} D_{v}^{2\beta-\gamma}, 
D_{\vphantom {D_{v}} x}^{\frac{\gamma}{2\beta+1}} \cK^{+}_{\beta} D_{v}^{2\beta-\gamma}, D_{v}^\gamma \cK^{+}_{\beta} D_{\vphantom {D_{v}} x}^{\frac{2\beta-\gamma}{2\beta+1}}, D_{\vphantom {D_{v}} x}^{\frac{\gamma}{2\beta+1}} \cK^{+}_{\beta} D_{\vphantom {D_{v}} x}^{\frac{2\beta-\gamma}{2\beta+1}}.
\end{equation*}
\end{prop}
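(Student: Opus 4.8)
The plan is to read off each boundedness statement directly from the a priori estimates of Proposition~\ref{p:boundsKolmoapriori}, which already hold on the subclass $\cS_{K}$, and then to pass to the full spaces by density and completeness. Since $\cK^{\pm}_{\beta}$ has already been defined as a linear map on $\cD'_{t}\,\cS'_{x,v}$ through \eqref{e:conjugaison}--\eqref{eq:kolmogorovoperatoradjoint}, there are really two tasks: to upgrade the a priori inequalities to genuine operator bounds, and to verify that the resulting bounded extension coincides with the distributional $\cK^{\pm}_{\beta}$ already in hand.

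For item (i), the hypothesis $\gamma<d/2$ guarantees, by Remark~\ref{rem:Xdotgammabeta}, that $\Xdot^{\gamma}_{\beta}$ is a Hilbert space continuously included in $\cS'_{x,v}$, so that $\L^2_{t}\Xdot^{\gamma}_{\beta}$ is complete; since $\gamma-2\beta<\gamma<d/2$, the same remark shows the source space $\L^2_{t}\Xdot^{\gamma-2\beta}_{\beta}$ is well-defined, and Lemma~\ref{lem:density} provides the density of $\cS_{K}$ in it. The first a priori inequality of Proposition~\ref{p:boundsKolmoapriori} then says that $\cK^{\pm}_{\beta}\colon\cS_{K}\to \L^2_{t}\Xdot^{\gamma}_{\beta}$ is bounded for the $\L^2_{t}\Xdot^{\gamma-2\beta}_{\beta}$ norm on the source, and I would extend it by continuity to all of $\L^2_{t}\Xdot^{\gamma-2\beta}_{\beta}$. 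Items (ii), (iii) and (iv) follow the exact same scheme: for (ii) and (iv) the target is $\C^{}_{0}(\R^{}_{t}\, ;\, \Xdot^{\gamma-\beta}_{\beta})$, which is a Banach space precisely when $\gamma-\beta<d/2$, while the sources $\L^2_{t}\Xdot^{\gamma-2\beta}_{\beta}$ and $\L^1_{t}\Xdot^{\gamma-\beta}_{\beta}$ again admit $\cS_{K}$ as a dense subset by Lemma~\ref{lem:density}; in each case the relevant a priori bound is one of the four inequalities packaged in the $\max/\min$ of Proposition~\ref{p:boundsKolmoapriori}.

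The four composition estimates on $\L^2_{t,x,v}$ are obtained by specialising the second and third displays of Proposition~\ref{p:boundsKolmoapriori}. Taking $\gamma=0$ shows $\Xdot^{0}_{\beta}=\L^2_{x,v}$, hence that $\cS_{K}$ is dense in $\L^2_{t,x,v}=\L^2_{t}\Xdot^{0}_{\beta}$. Now fix $g\in\cS_{K}$ and apply the estimate with $S=D_{v}^{2\beta-\gamma}g$, which again lies in $\cS_{K}$ because $|\xi|^{2\beta-\gamma}$ is smooth on the support of $\widehat g$ away from $\xi=0$; since $D_{v}^{\gamma-2\beta}S=g$, the bound in terms of $\|D_{v}^{\gamma-2\beta}S\|_{\L^2_{t,x,v}}$ reads as $\|g\|_{\L^2_{t,x,v}}$ and yields the $\L^2_{t,x,v}$ boundedness of $D_{v}^{\gamma}\cK^{+}_{\beta}D_{v}^{2\beta-\gamma}$ and of $D_{x}^{\frac{\gamma}{2\beta+1}}\cK^{+}_{\beta}D_{v}^{2\beta-\gamma}$ on the dense class $\cS_{K}$, hence on all of $\L^2_{t,x,v}$. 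Choosing instead $S=D_{x}^{\frac{2\beta-\gamma}{2\beta+1}}g$, so that $D_{x}^{\frac{\gamma-2\beta}{2\beta+1}}S=g$, and using the bound in terms of $\|D_{x}^{\frac{\gamma-2\beta}{2\beta+1}}S\|_{\L^2_{t,x,v}}$ handles the remaining two operators.

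The step that needs genuine care, and which I expect to be the main obstacle, is the identification of the abstract continuous extension with the operator $\cK^{\pm}_{\beta}$ defined at the level of distributions. For this I would use that the source spaces embed continuously into $\cD'_{t}\,\cS'_{x,v}$ (as $\Xdot^{\alpha}_{\beta}\subset\cS'_{x,v}$ by Remark~\ref{rem:Xdotgammabeta}), and likewise the targets. If $S_{n}\in\cS_{K}$ converges to $S$ in the source norm, then $S_{n}\to S$ in $\cD'_{t}\,\cS'_{x,v}$, while $\cK^{\pm}_{\beta}S_{n}$ converges both to the abstract extension in the target norm and, via the representation \eqref{e:conjugaison} together with the isomorphism properties of $\widehat{\Gamma}$ recorded in Lemma~\ref{l:galilean} and the continuity of the Duhamel map $T$, to $\cK^{\pm}_{\beta}S$ in $\cD'_{t}\,\cS'_{x,v}$; uniqueness of limits in the latter space forces the two to agree. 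This is the only point where one must check that the two a priori meanings of $\cK^{\pm}_{\beta}S$ coincide, everything else being a routine density-and-completeness argument.
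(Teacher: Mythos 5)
Your proposal is correct and follows essentially the same route as the paper, which deduces Proposition~\ref{p:boundsKolmo} directly from the a priori estimates of Proposition~\ref{p:boundsKolmoapriori} together with the density of $\cS_{K}$ (Lemma~\ref{lem:density}) and the definition of the spaces in the restricted parameter range. Your additional care in checking completeness of the targets and in identifying the continuous extension with the distributionally defined $\cK^{\pm}_{\beta}$ (via convergence in $\cD'_{t}\,\cS'_{x,v}$ and the weighted boundedness of $T$) only makes explicit what the paper leaves implicit.
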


\begin{rem} 
  The bounds into $\C^{}_{0}$ spaces are a key aspect of our approach. The only restriction on $\gamma$ is the one imposed by our definition of Sobolev spaces. See Proposition~\ref{prop:estimatesT} below, where the Fourier estimates are proven without restriction.
\end{rem}
We isolate an important boundedness result for later use in the case $\gamma=\beta$, and that is valid when $\beta>0$.

\begin{cor}\label{cor:Kbeta} 
If $\beta>0$, the operator \(\cK_{\beta}^\pm\) is a bounded linear map
 \begin{equation}
 \label{eq:Kbeta}
 \cK_{\beta}^\pm: \L^2_{t,x}\Hdot^{-\beta}_{v} + \L^2_{t,v}\Hdot_{\vphantom{t,v} x}^{-\frac{\beta}{2\beta+1}}
+ \L^1_{t}\L^2_{x,v}\to \L^2_{t,x}\Hdot^{\beta}_{v} \cap \L^2_{t,v}\Hdot_{\vphantom{t,v} x}^{\frac{\beta}{2\beta+1}} \cap \C^{}_{0}(\R^{}_{t}\, ; \L^2_{x,v}).
\end{equation}
\end{cor}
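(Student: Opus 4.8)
The plan is to read the corollary off as the $\gamma=\beta$ instance of the estimates already assembled for the scale $\Xdot^{\gamma}_{\beta}$, after rewriting both the source and the target in terms of these spaces. Since $\Xdot^{\beta-2\beta}_{\beta}=\Xdot^{-\beta}_{\beta}=\L^2_{x}\Hdot^{-\beta}_{v}+\L^2_{v}\Hdot_{x}^{-\frac{\beta}{2\beta+1}}$ and $\Xdot^{\beta-\beta}_{\beta}=\Xdot^{0}_{\beta}=\L^2_{x,v}$, the source in \eqref{eq:Kbeta} equals $\L^2_{t}\Xdot^{-\beta}_{\beta}+\L^1_{t}\L^2_{x,v}$, using that $\L^2_{t}$ commutes with the sum, i.e. $\L^2_{t}\Xdot^{-\beta}_{\beta}=\L^2_{t,x}\Hdot^{-\beta}_{v}+\L^2_{t,v}\Hdot_{x}^{-\frac{\beta}{2\beta+1}}$. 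Likewise $\Xdot^{\beta}_{\beta}=\L^2_{x}\Hdot^{\beta}_{v}\cap\L^2_{v}\Hdot_{x}^{\frac{\beta}{2\beta+1}}$, so the target is $\L^2_{t}\Xdot^{\beta}_{\beta}\cap\C^{}_{0}(\R^{}_{t}\,;\,\L^2_{x,v})$.

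When $\beta<d/2$, this is merely the combination of items (i)--(iv) of Proposition~\ref{p:boundsKolmo} at $\gamma=\beta$: the choice $\gamma=\beta<d/2$ covers (i) and (iii), while $\gamma-\beta=0<d/2$ covers (ii) and (iv). Together they bound $\cK^{\pm}_{\beta}$ from each summand $\L^2_{t}\Xdot^{-\beta}_{\beta}$ and $\L^1_{t}\L^2_{x,v}$ into $\L^2_{t}\Xdot^{\beta}_{\beta}\cap\C^{}_{0}(\R^{}_{t}\,;\,\L^2_{x,v})$, and boundedness on the sum of the two source spaces follows because the two extensions are restrictions of one and the same map $\cK^{\pm}_{\beta}$.

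The remaining regime $\beta\ge d/2$ is the crux, since there $\Hdot^{\beta}_{v}$ is not a Hilbert space of tempered distributions and Proposition~\ref{p:boundsKolmo} does not formally apply. Here I would return to the a priori estimates of Proposition~\ref{p:boundsKolmoapriori}, valid for \emph{every} $\beta>0$ and $\gamma\in\R$ on the dense class $\cS_{K}$. As $\gamma=\beta$ lies in both ranges $0\le\gamma\le2\beta$ and $\beta\le\gamma\le2\beta$, for $S\in\cS_{K}$ one obtains
\begin{equation*}
\|D_{v}^{\beta}\cK^{\pm}_{\beta}S\|_{\L^2_{t,x,v}}+\|D_{\vphantom{D_{v}}x}^{\frac{\beta}{2\beta+1}}\cK^{\pm}_{\beta}S\|_{\L^2_{t,x,v}}+\sup_{t\in\R}\|\cK^{\pm}_{\beta}S(t)\|_{\L^2_{x,v}}\lesssim\min\{\|S\|_{\L^2_{t}\Xdot^{-\beta}_{\beta}},\,\|S\|_{\L^1_{t}\L^2_{x,v}}\},
\end{equation*}
and, by Lemma~\ref{lem:density} together with continuity of $\Gamma^{-1}$ on $\C^{}_{0}(\R^{}_{t}\,;\,\L^2_{x,v})$, each $\cK^{\pm}_{\beta}S$ already lies in $\C^{}_{0}(\R^{}_{t}\,;\,\L^2_{x,v})$. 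The two seminorms make sense because $\cK^{\pm}_{\beta}S(t)\in\L^2_{x,v}$, and the only genuine Sobolev spaces entering on the source side, namely $\Hdot^{-\beta}_{v}$, $\Hdot_{x}^{-\frac{\beta}{2\beta+1}}$ and $\L^2_{x,v}$, all carry exponents strictly below $d/2$; hence $\cS_{K}$ is dense in $\L^2_{t}\Xdot^{-\beta}_{\beta}$ and in $\L^1_{t}\L^2_{x,v}$, again by Lemma~\ref{lem:density}.

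It then remains to extend by continuity. Given $S$ in the source space and $S_{n}\in\cS_{K}$ with $S_{n}\to S$ in the source norm, the displayed bound makes $(\cK^{\pm}_{\beta}S_{n})_{n}$ Cauchy in $\C^{}_{0}(\R^{}_{t}\,;\,\L^2_{x,v})$ and makes $(D_{v}^{\beta}\cK^{\pm}_{\beta}S_{n})_{n}$ and $(D_{\vphantom{D_{v}}x}^{\frac{\beta}{2\beta+1}}\cK^{\pm}_{\beta}S_{n})_{n}$ Cauchy in $\L^2_{t,x,v}$; since $\C^{}_{0}$ is closed in $\L^{\infty}_{t}\L^2_{x,v}$, the limits stay valued in $\C^{}_{0}$. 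The delicate point, and the main obstacle, is to identify these norm-limits with the distributionally-defined $\cK^{\pm}_{\beta}S$ and its derivatives: the source space embeds continuously in $\cD'_{t}\,\cS'_{x,v}$, so $S_{n}\to S$ and $\cK^{\pm}_{\beta}S_{n}\to(\text{limit})$ there as well, and one reconciles the homogeneous seminorms with the Fourier-multiplier definition of $\cK^{\pm}_{\beta}$ via Lemma~\ref{l:galilean} (alternatively, by testing against the dense class and using the adjoint $\cK^{\mp}_{\beta}$). This ensures the seminorm bounds survive the passage to the limit and describe the genuine operator $\cK^{\pm}_{\beta}$ rather than some spurious completion, which is exactly what makes the statement hold for all $\beta>0$.
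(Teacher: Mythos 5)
Your proposal is correct and takes essentially the same route as the paper: the paper's own proof handles all $\beta>0$ at once by approximating $S$ with $S_{j}\in\cS_{K}$ (Lemma~\ref{lem:density}, applicable because every exponent appearing in the source space is below $d/2$), invoking the a priori estimates of Proposition~\ref{p:boundsKolmoapriori} at $\gamma=\beta$, and passing to the limit using closedness of $\C^{}_{0}(\R^{}_{t}\,;\,\L^2_{x,v})$ --- which is exactly your argument in the regime $\beta\ge d/2$. Your separate treatment of $\beta<d/2$ via Proposition~\ref{p:boundsKolmo} is only a cosmetic repackaging, since that proposition is itself obtained from the same density mechanism; your extra care in identifying the norm-limit with the Fourier-defined $\cK^{\pm}_{\beta}S$ is a welcome elaboration of a step the paper leaves implicit.
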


\begin{proof} 
Suppose that $S$ belongs to the source space. As $-\beta<0<d/2$, by Lemma~\ref{lem:density} one can find $S_{j}\in \cS_{K}$ converging to $S$ in the source space norm. By Proposition~\ref{p:boundsKolmoapriori},  $\cK_{\beta}^\pm S_{j}$ satisfies the estimates in the sum of norms of the target space. In particular, $\cK_{\beta}^\pm S\in \C^{}_{0}(\R_{\vphantom{t,v} t}\, ; \L^2_{x,v})$ at the limit. Hence, we can also take limits in the other norms and obtain the conclusion.  
\end{proof}

Our goal is now to establish Proposition~\ref{p:boundsKolmoapriori}.
Thanks to \eqref{e:conjugaison} and Lemma~\ref{l:galilean}, its proof is done in the Fourier side, using Plancherel in the end. 
The method consists of working with fixed Fourier variables $(\varphi,\xi)$ and proving weighted temporal estimates by integrating with respect to $t$. It makes explicitly clear two facts: on the one hand, the uniformity (or dependency) of the constants with respect to $(\varphi,\xi)$, yielding in the end homogeneous estimates, and on the other hand, that time intervals can be infinite.

 Making the variables $(\varphi,\xi)$ implicit, we set
\[
  K(t,s)=K(t,s,\varphi,\xi), \quad Th(t)= \int_{-\infty}^t K(t,s)h(s)\ds, \quad W(t)=W(t,\varphi,\xi)
\]
 for the Kolmogorov kernel, the associated operator and the weight defined in \eqref{eq:kolmogorovkernel}, \eqref{eq:kolmogorovoperator} and \eqref{eq:weight}.

\subsubsection{Weighted temporal estimates}

In order to prove Proposition~\ref{p:boundsKolmoapriori}, we fix $(\varphi,\xi)\ne (0,0)$, define weighted temporal spaces and study how the map $T$ acts on them.

For ${\eps} \in \R$ and $1\le p < \infty$, we use the notation $\L^p_{t}(W^{\varepsilon p})$ for the space $\L^p(\R^{}_{t}\, ;\,W^{\varepsilon p} \dt)$ of measurable functions $g$ such that 
$W^{\varepsilon}g\in \L^p_{t}$ with norm $\|g\|_{\L^p_{t}(W^{\varepsilon p})}= \|W^{\varepsilon}g\|_{\L^p_{t}}$, that is 
 \[
 \|g\|_{\L^p_{t}(W^{\varepsilon p})}^p: = \int_{\R} |g(t)|^p\,\big(\sup(|\xi-t \varphi|, |\varphi|^{\frac{1}{1+2\beta}})\big)^{\varepsilon p} \dd t.\] 
The following proposition will use elementary estimates on the kernel $K$. By abuse of notation, we also denote by $W^\varepsilon$ the operator of multiplication by $W^\varepsilon(t)$. 
\begin{prop}[Weighted temporal estimates]
\label{prop:estimatesT} Let $\beta >0$ and $\gamma\in \R$. Then with bounds for the norms that only depend on $\beta, \gamma, d$, 
 \begin{align*}
T &: \L^2_{t}(W^{2\gamma -4\beta}) \to \L^2_{t}(W^{2\gamma})\ \mathrm{ is\ bounded.}
\\
 T&: \L^1_{t}(W^{\gamma -\beta}) \to \L^2_{t}(W^{2\gamma})\ \mathrm{ is\ bounded.}
\\
 W^{\gamma-\beta}T&: \L^2_{t}(W^{2\gamma -4\beta}) \to \C^{}_{0}(\R^{}_{t})\ \mathrm{ is\ bounded.} 
 \\
 W^{\gamma-\beta} T&: \L^1_{t}(W^{\gamma-\beta}) \to \C^{}_{0}(\R^{}_{t})\ \mathrm{ is\ bounded.}
\end{align*}
The same estimates are valid for $T^*$.
\end{prop}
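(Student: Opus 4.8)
The proof is entirely a family of one–variable estimates in $t$ with $(\varphi,\xi)\neq(0,0)$ frozen, the whole point being that the constants come out independent of $(\varphi,\xi)$. The plan is to derive all four bounds, and their counterparts for $T^*$, from two ingredients: an exponential decay estimate for the kernel $K$ expressed through the weight $W$, and a one–dimensional integral estimate for powers of $W$ against that decay.

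\emph{Kernel reduction.} First I would record, from the elementary estimates of Lemma~\ref{lem:comp}, the bound $K(t,s)\le e\,\exp\big(-\int_s^t W(\tau)^{2\beta}\,\dd\tau\big)$ for $s\le t$. This is the only place the precise shape of $W$ enters. Writing $|\xi-\tau\varphi|^2=|\varphi|^2(\tau-\tau_*)^2+d(\xi,\R\varphi)^2$ with $\tau_*=(\xi\cdot\varphi)/|\varphi|^2$, the set where $|\xi-\tau\varphi|<|\varphi|^{\frac{1}{1+2\beta}}$ has length at most $|\varphi|^{-\frac{2\beta}{1+2\beta}}$, and on it $W^{2\beta}=|\varphi|^{\frac{2\beta}{1+2\beta}}$; hence replacing $|\xi-\tau\varphi|$ by $W$ in the exponent of $K$ costs only an additive $O(1)$, uniformly in $(\varphi,\xi)$.

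\emph{Scaling and master integral bounds.} Setting $\rho=|\varphi|^{\frac{1}{1+2\beta}}$ and substituting $\tau=\tau_*+\rho^{-2\beta}y$, one gets $W=\rho\,w(y)$ with $w(y)=\sqrt{\max(y^2+\mu^2,1)}\ge 1$, $\mu=d(\xi,\R\varphi)/\rho\ge 0$, and $\int_s^t W^{2\beta}\,\dd\tau=\int_{y_s}^{y_t}(y^2+\mu^2)^\beta\,\dd y$. All four estimates are homogeneous under the kinetic scaling, so the powers of $\rho$ cancel identically and only the one–parameter family indexed by $\mu\ge0$ survives. I would then establish, uniformly in $\mu\ge0$ and in the base point, the ratio bound
\[
\Big(\tfrac{W(t)}{W(s)}\Big)^{a}\lesssim_{a,\beta}\exp\Big(\tfrac12\int_s^t W^{2\beta}\,\dd\tau\Big)\qquad(a\in\R,\ s\le t),
\]
and, for every $a\in\R$ and $c>0$, the gain estimate
\[
\int_s^\infty W(t)^{a}\,e^{-c\int_s^t W^{2\beta}\,\dd\tau}\,\dd t\ \lesssim_{a,\beta,c}\ W(s)^{\,a-2\beta},
\]
together with its backward analogue $\int_{-\infty}^t W(s)^{a}\,e^{-c\int_s^t W^{2\beta}\,\dd\tau}\,\dd s\lesssim W(t)^{a-2\beta}$. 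In the $y$–variable these are one–dimensional statements about $w$ and $\Phi(y)=\int_0^y(r^2+\mu^2)^\beta\,\dd r$; they follow by splitting the integration into the bounded–length set $\{w\sim1\}$, which contributes $O(1)$, and the tails, where $\Phi'(y)=(y^2+\mu^2)^\beta\gtrsim |y|^{2\beta}$ forces the exponential to dominate any fixed power of $w$ and localises the integral to the scale $w(y_s)^{-2\beta}$, producing exactly the advertised gain of $2\beta$ homogeneity degrees.

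\emph{Conclusion.} With these in hand the four assertions are routine bookkeeping. For $T:\L^2_t(W^{2\gamma-4\beta})\to\L^2_t(W^{2\gamma})$ I would conjugate by the weights and apply Schur's test to $W(t)^\gamma K(t,s)W(s)^{2\beta-\gamma}\mathbf 1_{\{s\le t\}}$, whose two Schur integrals are the master bound with $a=\gamma$ (forward) and $a=2\beta-\gamma$ (backward). For $T:\L^1_t(W^{\gamma-\beta})\to\L^2_t(W^{2\gamma})$, Minkowski's integral inequality reduces matters to $\big(\int_s^\infty W(t)^{2\gamma}K(t,s)^2\,\dd t\big)^{1/2}\lesssim W(s)^{\gamma-\beta}$, again the gain estimate with $a=2\gamma$. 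For $W^{\gamma-\beta}T:\L^2_t(W^{2\gamma-4\beta})\to\C^{}_{0}(\R^{}_{t})$, Cauchy--Schwarz reduces to $W(t)^{2(\gamma-\beta)}\int_{-\infty}^t K(t,s)^2 W(s)^{4\beta-2\gamma}\,\dd s\lesssim1$, the backward estimate with $a=4\beta-2\gamma$. For $W^{\gamma-\beta}T:\L^1_t(W^{\gamma-\beta})\to\C^{}_{0}(\R^{}_{t})$ I need only the pointwise bound $W(t)^{\gamma-\beta}K(t,s)W(s)^{\beta-\gamma}\lesssim1$ for $s\le t$, which is the ratio estimate with $a=\gamma-\beta$. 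In the two $\C^{}_{0}$ statements, continuity and the vanishing at $\pm\infty$ follow from the uniform bounds by a density argument, the output being manifestly continuous and decaying for compactly supported $h$. Finally, the same reasoning applies verbatim to $T^*$, whose kernel is again $K(t,s)$ integrated in $t$ over $[s,\infty)$: the roles of $s$ and $t$ and of the forward and backward master integrals are simply interchanged. The main obstacle throughout is precisely the uniformity in $(\varphi,\xi)$, i.e. proving the ratio and gain estimates with constants independent of the minimum $d(\xi,\R\varphi)$; the scaling removes the $\rho$–dependence, but one still has to control the ``flat'' region near $\tau_*$ uniformly in the remaining parameter $\mu$.
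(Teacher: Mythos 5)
Your proposal is correct, and although it rests on the same two kernel facts as the paper's proof --- a pointwise bound killing arbitrary powers of the ratio $W(t)/W(s)$ (the paper's \eqref{eq:K4}) and forward/backward integral gain bounds (the paper's \eqref{eq:K5}--\eqref{eq:K6}, with \eqref{eq:K1}--\eqref{eq:K2} as the case $a=0$) --- both your derivation of these facts and your bookkeeping differ genuinely from the paper's. For the kernel facts, you first replace the exponent $\int_s^t|\xi-\tau\varphi|^{2\beta}\,\dd\tau$ by $\int_s^t W(\tau)^{2\beta}\,\dd\tau$ at an additive $O(1)$ cost and then scale out $|\varphi|$ to reduce to a one-parameter family in $\mu$; the paper instead works with the comparison of Lemma~\ref{lem:comp} and the algebraic manipulation of the quantities $A,B,C$ in the proof of \eqref{eq:K4}. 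For the four assertions, you use Schur's test (first item), Minkowski plus an $\L^2_{t}$ bound on the kernel slice (second item), and Cauchy--Schwarz (third item), whereas the paper interpolates the $\L^1_{t}$--$\L^1_{t}$ and $\L^\infty_{t}$--$\L^\infty_{t}$ bounds of Proposition~\ref{prop:L1Linftyestimates} for the first item, invokes Stein--Weiss interpolation with change of measure for the second, and obtains the third by dualizing the second applied to $T^*$. Your route is more elementary and self-contained, avoiding interpolation theory and duality entirely, and each of the four bounds reduces to one explicit kernel integral; the paper's route is more economical, since the single family of weighted $\L^1/\L^\infty$ endpoint bounds generates everything, and the duality step produces the $T^*$ estimates at no extra cost. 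Two spots in your sketch need care in a full write-up: the degenerate case $\varphi=0$ (where $\rho=0$) must be treated separately, though there $W\equiv|\xi|$ is constant and all integrals are explicit; and in the gain estimate your claim that the region $\{w\sim 1\}$ ``contributes $O(1)$'' is only adequate when $w(y_s)\sim 1$ --- if $w(y_s)\gg 1$ and $a<2\beta$ one must retain the exponentially small factor accumulated in travelling to that region. The cleanest repair uses ingredients you already have: by your ratio bound, $w(y)^{a-2\beta}\lesssim_{a,\beta,c} w(y_s)^{a-2\beta}e^{(c/2)\Phi(y)}$, so substituting $\dd\Phi=w^{2\beta}\,\dd y$ gives
\begin{equation*}
\int_{y_s}^\infty w(y)^{a}e^{-c\Phi(y)}\,\dd y=\int_{0}^\infty w^{a-2\beta}e^{-c\Phi}\,\dd\Phi\lesssim w(y_s)^{a-2\beta}\int_{0}^\infty e^{-(c/2)\Phi}\,\dd\Phi\lesssim_{a,\beta,c} w(y_s)^{a-2\beta},
\end{equation*}
which proves the gain estimate uniformly in $\mu$ in two lines.
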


\subsubsection{Elementary estimates on the function $K$}

To prove Proposition~\ref{prop:estimatesT}, we begin with two elementary lemmas about the function $K$.
\begin{lem}\label{lem:comp} Let $\beta>0$.
 There are constants $0<c_{\beta}<C_{\beta}<\infty$ such that for all $s,t \in \R$ with $s<t$ and $(\varphi,\xi)\ne (0,0)$, 
 \[
  c_{\beta} \le \frac{\int_{s}^t |\xi-\tau \varphi|^{2\beta}\dd\tau}{(t-s) (|\xi-s \varphi|^{2\beta}+ ((t-s)|\varphi|)^{2\beta})} \le C_{\beta}
\]
and 
\[ c_{\beta} \le \frac{\int_{s}^t |\xi-\tau \varphi|^{2\beta}\dd\tau}{(t-s) (|\xi-t \varphi|^{2\beta}+ ((t-s)|\varphi|)^{2\beta})}\le C_{\beta}.
\]
\end{lem}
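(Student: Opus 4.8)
The plan is to reduce both inequalities to a single scale-invariant estimate on the unit interval. Writing $\tau = s + (t-s)\sigma$ with $\sigma \in [0,1]$, set $P = \xi - s\varphi$ and $u = -(t-s)\varphi$, so that $\xi - \tau\varphi = P + \sigma u$ and $|u| = (t-s)|\varphi|$. The change of variables gives
\[
\int_s^t |\xi - \tau\varphi|^{2\beta}\,d\tau = (t-s)\int_0^1 |P + \sigma u|^{2\beta}\,d\sigma,
\]
so it suffices to prove the dimension-free comparison
\[
\int_0^1 |P + \sigma u|^{2\beta}\,d\sigma \sim_\beta |P|^{2\beta} + |u|^{2\beta}
\]
for all $P, u \in \R^d$ not both zero, with constants depending only on $\beta$. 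The first inequality of the lemma is exactly this with $|P| = |\xi - s\varphi|$; the second one follows at once because $\big| |\xi - s\varphi| - |\xi - t\varphi| \big| \le (t-s)|\varphi| = |u|$ forces $|\xi - s\varphi|^{2\beta} + |u|^{2\beta}$ and $|\xi - t\varphi|^{2\beta} + |u|^{2\beta}$ to be comparable.

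For the upper bound I would simply use $|P + \sigma u| \le |P| + |u|$ together with $(a+b)^{2\beta} \le C_\beta(a^{2\beta} + b^{2\beta})$, which gives $\int_0^1 |P+\sigma u|^{2\beta}\,d\sigma \le C_\beta(|P|^{2\beta} + |u|^{2\beta})$ immediately.

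The lower bound is the only real point. Set $M = \max(|P|, |P + u|)$. The triangle inequality $|u| \le |P| + |P+u| \le 2M$ gives $M \ge |u|/2$, while $M \ge |P|$ is trivial; together these yield $M^{2\beta} \gtrsim_\beta |P|^{2\beta} + |u|^{2\beta}$. It then remains to show $\int_0^1 |P + \sigma u|^{2\beta}\,d\sigma \gtrsim_\beta M^{2\beta}$. The map $\sigma \mapsto |P + \sigma u|$ is $|u|$-Lipschitz; assuming $M = |P|$ (the case $M = |P+u|$ is symmetric under $\sigma \mapsto 1 - \sigma$), for $\sigma \in [0, 1/4]$ we get $|P + \sigma u| \ge |P| - |u|\sigma \ge M - |u|/4 \ge M/2$, the last step using $|u| \le 2M$. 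Integrating over $[0,1/4]$ yields $\int_0^1 |P+\sigma u|^{2\beta}\,d\sigma \ge \tfrac14 (M/2)^{2\beta}$, which is the claim.

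The crux, and the step I expect to be the \emph{main obstacle}, is precisely the inequality $M \ge |u|/2$, which handles the geometrically awkward case where the segment $\tau \mapsto \xi - \tau\varphi$ passes close to the origin in the interior of $[s,t]$: there the integrand is small in the middle, yet the right-hand side of the lemma is large through the $((t-s)|\varphi|)^{2\beta}$ term. The observation that at least one endpoint value $\max(|\xi - s\varphi|, |\xi - t\varphi|)$ must then be comparable to $(t-s)|\varphi|$ is exactly what lets one locate a subinterval of definite length on which the integrand stays of size $M$, and is what makes the lower bound hold uniformly in $(\varphi,\xi)$ and dimension.
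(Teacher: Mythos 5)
Your proof is correct, and after the common first step it genuinely diverges from the paper's argument. Both proofs begin with the same affine change of variables reducing everything to the normalized integral $\int_0^1 |\xi'-\tau \varphi'|^{2\beta}\,\mathrm{d}\tau$ on the unit interval. The paper then concludes by a soft argument: this function of $(\xi',\varphi')$ is continuous and positive on the compact set $\{|\xi'|^{2\beta}+|\varphi'|^{2\beta}=1\}$, hence bounded above and below by positive constants there, and homogeneity of degree $2\beta$ extends the bounds to all $(\xi',\varphi')\neq(0,0)$; the second displayed inequality is obtained by simply rerunning this with $\xi'=\xi-t\varphi$. You instead prove the comparison quantitatively: the upper bound via the triangle inequality and $(a+b)^{2\beta}\lesssim_\beta a^{2\beta}+b^{2\beta}$, and the lower bound via the key observation that $M=\max(|P|,|P+u|)\ge |u|/2$, so that the $|u|$-Lipschitz integrand stays $\ge M/2$ on a subinterval of length $1/4$ attached to the endpoint realizing $M$; you then deduce the second inequality from the first using $\bigl||\xi-s\varphi|-|\xi-t\varphi|\bigr|\le (t-s)|\varphi|$ rather than repeating the argument. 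Each route has its merits. The paper's is shorter, and the only thing to check is positivity of the integral on the unit sphere. Yours is elementary and constructive: it produces explicit constants, and it is manifestly independent of the dimension $d$ — a point worth noting, since the paper's compactness argument takes place on a compact subset of $\R^{2d}$ and delivers $d$-free constants (as asserted in the statement) only after one invokes the rotation invariance that reduces matters to a two-dimensional subspace, a reduction the paper leaves implicit.
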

\begin{proof}
 Set $\xi'= \xi-s\varphi$ and $\varphi'=(t-s)\varphi$, then a change of variable yields
 $$
\int_{s}^t |\xi-\tau \varphi|^{2\beta}\, \dd\tau= (t-s) \int_{0}^1 |\xi'-\tau \varphi'|^{2\beta}\, \dd\tau.
$$
Now the function 
$$
(\xi',\varphi')\mapsto \int_{0}^1 |\xi'-\tau \varphi'|^{2\beta}\,\dd\tau
$$
is continuous on $\R^{2d}$, hence bounded above and below by positive numbers $C_{\beta}, c_{\beta}$ on the compact set defined by
$|\xi'|^{2\beta}+ |\varphi'|^{2\beta}=1$. One can conclude the first inequality by using that it is also homogeneous of order $2\beta$. 
One can do similarly with $\xi'=\xi-t\varphi$ writing
$$
\int_{s}^t |\xi-\tau \varphi|^{2\beta}\, \dd\tau= (t-s) \int_{0}^1 |\xi'+\tau \varphi'|^{2\beta}\, \dd\tau
$$
and conclude as before. 
\end{proof}

\begin{lem}
\label{lem:estimatesKts} Let $\beta>0$, $\varepsilon\in \R$, $\alpha>0$ and $(\varphi,\xi)\ne (0,0)$. One has the following estimates with implicit constants depending on $\beta, \varepsilon, \alpha$ and independent of $s,t,\varphi,\xi$.  
\begin{align}
 \label{eq:K1} \int_{-\infty}^t K(t,s)^\alpha \ds &\simeq_{\beta,\alpha} W^{-2\beta}(t),
 \\
 \label{eq:K2} \int_{s}^\infty K(t,s)^\alpha \, \dt & \simeq_{\beta,\alpha} W^{-2\beta}(s),
\\
 \label{eq:K4} \sup\big( W^\varepsilon(t), W^\varepsilon(s)\big) K(t,s) & \lesssim_{\beta,\varepsilon} \inf\big( W^\varepsilon(t), W^\varepsilon(s)\big) , \quad s\le t.
 \\
 \label{eq:K5} 
 \int_{-\infty}^t K(t,s) W^{2\beta -\varepsilon}(s) \ds
 &\lesssim_{\beta,\varepsilon} W^{- \varepsilon}(t),  \\
 \label{eq:K6} 
 \int_{s}^\infty K(t,s)W^{2\beta -\varepsilon}(t) \dt
 &\lesssim_{\beta,\varepsilon} W^{-\varepsilon}(s).  \end{align}

 \end{lem}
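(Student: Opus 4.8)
The plan is to reduce all five estimates to Lemma~\ref{lem:comp} together with a single scaling analysis of a one-dimensional temporal integral, keeping all constants uniform in $(\varphi,\xi)$. First I would treat \eqref{eq:K1}. Substituting $r=t-s$ and invoking Lemma~\ref{lem:comp} replaces $\int_s^t|\xi-\tau\varphi|^{2\beta}\dd\tau$, up to the constants $c_\beta,C_\beta$, by the comparable quantity
\[
E(r):=r\,|\xi-t\varphi|^{2\beta}+r^{1+2\beta}|\varphi|^{2\beta}.
\]
Writing $A=|\xi-t\varphi|$ and $P=|\varphi|^{\frac1{1+2\beta}}$, so that $W(t)=\max(A,P)$, the elementary point is that $E(r)\le 2$ for $0\le r\le W(t)^{-2\beta}$, while $E(r)\gtrsim r\,W(t)^{2\beta}$ for $r\ge W(t)^{-2\beta}$; both are checked by splitting into the regimes $A\ge P$ and $A<P$. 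Splitting $\int_0^\infty K^\alpha\dd r$ at the break-point $r=W(t)^{-2\beta}$ and using $e^{-\alpha C_\beta E(r)}\le K^\alpha\le e^{-\alpha c_\beta E(r)}$ then yields $\int_{-\infty}^tK(t,s)^\alpha\dd s\simeq W(t)^{-2\beta}$. The estimate \eqref{eq:K2} is obtained verbatim from the companion comparison in Lemma~\ref{lem:comp} written with $|\xi-s\varphi|$ and integrating in $t$.

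The core of the remaining estimates is a quantitative comparison of the weights at two times $s\le t$. Using the triangle inequality $\big||\xi-t\varphi|-|\xi-s\varphi|\big|\le(t-s)|\varphi|$ together with $\min(W(t),W(s))\ge P$, I would show
\[
\frac{\max(W(t),W(s))}{\min(W(t),W(s))}\le 1+u,\qquad u:=(t-s)\,|\varphi|^{\frac{2\beta}{1+2\beta}}.
\]
On the other hand, the second term in Lemma~\ref{lem:comp} gives $\int_s^t|\xi-\tau\varphi|^{2\beta}\dd\tau\gtrsim (t-s)^{1+2\beta}|\varphi|^{2\beta}=u^{1+2\beta}$, hence $K(t,s)\lesssim e^{-c_\beta u^{1+2\beta}}$. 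Since this Gaussian-type decay in $u^{1+2\beta}$ beats any power $(1+u)^{|\varepsilon|}$, the identity $\sup(W^\varepsilon(t),W^\varepsilon(s))K(t,s)=\inf(W^\varepsilon(t),W^\varepsilon(s))\,(\max/\min)^{|\varepsilon|}K(t,s)$ immediately gives \eqref{eq:K4}.

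For \eqref{eq:K5} and \eqref{eq:K6} I would record the refined consequence that, for every real $\mu$ and $s\le t$,
\[
W^\mu(s)\,K^{1/2}(t,s)\lesssim_\mu W^\mu(t),
\]
which follows by the same mechanism: $W^\mu(s)/W^\mu(t)\le(1+u)^{|\mu|}$, $K^{1/2}(t,s)\lesssim e^{-c_\beta u^{1+2\beta}/2}$, and $(1+u)^{|\mu|}e^{-c_\beta u^{1+2\beta}/2}\lesssim 1$; the symmetric bound holds with the roles of $s,t$ exchanged. Factoring $K=K^{1/2}K^{1/2}$ and applying this with $\mu=2\beta-\varepsilon$ converts $W^{2\beta-\varepsilon}(s)$ into $W^{2\beta-\varepsilon}(t)$ at the cost of one factor $K^{1/2}$, so that
\[
\int_{-\infty}^t K(t,s)\,W^{2\beta-\varepsilon}(s)\dd s\lesssim W^{2\beta-\varepsilon}(t)\int_{-\infty}^t K^{1/2}(t,s)\dd s\simeq W^{2\beta-\varepsilon}(t)\,W^{-2\beta}(t)=W^{-\varepsilon}(t)
\]
by \eqref{eq:K1} with $\alpha=\tfrac12$; the symmetric computation with \eqref{eq:K2} gives \eqref{eq:K6}. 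The estimates for $T^*$ need nothing new, as its kernel is $K$ with $s$ and $t$ interchanged.

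The main obstacle I anticipate is purely the bookkeeping of the anisotropic weight $W$ with its two regimes $A\ge P$ and $A<P$: one must verify that the single scale-invariant break-point $W^{-2\beta}$ and the scaling parameter $u$ work uniformly across both regimes and all $(\varphi,\xi)\ne(0,0)$. Once these are correctly identified, each of the five estimates reduces to a one-line exponential-versus-polynomial comparison, so no genuinely hard analytic step remains.
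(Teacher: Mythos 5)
Your proof is correct and follows essentially the same route as the paper's: all five bounds are reduced to Lemma~\ref{lem:comp}, with \eqref{eq:K1}--\eqref{eq:K2} obtained from a splitting/scaling analysis of the resulting one-dimensional integral (which the paper packages as the elementary identity \eqref{eq:equivAB}), \eqref{eq:K4} from the observation that the ratio of the weights at two times grows at most polynomially in a quantity that the kernel decays in exponentially, and \eqref{eq:K5}--\eqref{eq:K6} from the factorization $K=K^{1/2}K^{1/2}$ together with the weight-transfer bound and \eqref{eq:K1}--\eqref{eq:K2} at $\alpha=\tfrac12$, exactly as in the paper. The only difference is cosmetic, in \eqref{eq:K4}: you bound $\max(W(t),W(s))/\min(W(t),W(s))\le 1+u$ with $u=(t-s)|\varphi|^{2\beta/(1+2\beta)}$ via the triangle inequality and $W\ge|\varphi|^{1/(1+2\beta)}$, and then use only the $((t-s)|\varphi|)^{2\beta}$ part of the kernel decay, whereas the paper passes to the scale-invariant variables $A,B,C$ and absorbs the analogous ratio $\rho$ into the exponential; both rest on the same polynomial-versus-exponential mechanism.
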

\begin{proof}
Proof of \eqref{eq:K1} and \eqref{eq:K2}. 
It is easily checked for all $A, B \ge 0$, and $c>0$, 
\begin{align}
\label{eq:equivAB}
 \int_{0}^\infty e^{-cu^{1+2\beta}A^{2\beta}} e^{-cuB^{2\beta}} \, \dd u \simeq_{c,\beta} \inf\big(B^{-2\beta}, A^{-\frac{2\beta}{1+2\beta}}\big)= \big(\sup\big(B, A^{\frac{1}{1+2\beta}}\big)\big)^{-2\beta}.
\end{align}
 Thus,  use the comparisons in Lemma~\ref{lem:comp}, the change of variable $t-s=u$ and \eqref{eq:equivAB}.
 
Proof of \eqref{eq:K4}. First, there is nothing to do if $s=t$, and we assume $s<t$. Also for $\varepsilon=0$, we see that $0<K(t,s)\le 1$, so we are done. Assume next $\varepsilon\ne 0$. Set $ A=(t-s)^{\frac 1 {2\beta}} |\xi-t\varphi|$, $B=(t-s)^{\frac 1 {2\beta}} |\xi-s\varphi|$ and $C= (t-s)^{ \frac{2\beta+1}{2\beta}} |\varphi|$. We have $C>0$, $A,B\ge 0$ and $|A-B|\le C$. Observe that using both inequalities in Lemma~\ref{lem:comp} yields
 \[
 K(t,s) \le \exp\big( -\frac{c_{\beta}}2\big(A^{2\beta} + B^{2\beta} + 2C^{2\beta}\big)\big)
 \]
 so that after multiplication by $(t-s)^{\frac 1 {2\beta}} $, the desired inequality follows from 
 \[
 \sup(\rho^\varepsilon, \rho^{-\varepsilon})\exp\big( -\frac{c_{\beta}}2\big(A^{2\beta} + B^{2\beta} + 2C^{2\beta}\big)\big) \le C_{\beta,\varepsilon}, 
 \]
where 
\[\rho= \frac {\sup\big(A, C^{\frac 1 {2\beta+1}}\big)}{ \sup\big(B, C^{\frac 1 {2\beta+1}}\big)}. 
\]
 The case $\varepsilon<0$ reduces to the case $\varepsilon>0$ by symmetry. Thus, we assume now $\varepsilon>0$. 
If $\sup\big(A, C^{\frac 1 {2\beta+1}}\big)=C^{\frac 1 {2\beta+1}}$, then $\rho\le 1$. If $\sup\big(A, C^{\frac 1 {2\beta+1}}\big)=A$, then, using $A\le B+C$,  \[\rho\le 1 + \frac {C}{\sup\big(B, C^{\frac 1 {2\beta+1}}\big)} \le 1 + \big(\sup\big(B, C^{\frac 1 {2\beta+1}}\big)\big)^{2\beta}.\] 
Similarly, we have $\rho^{-1} \le 1 + \big(\sup\big(A, C^{\frac 1 {2\beta+1}}\big)\big)^{2\beta}$.
Hence, the term $\sup(\rho^\varepsilon, \rho^{-\varepsilon})$ can be absorbed by the exponential, and this finishes the proof of \eqref{eq:K4}. 

Proof of \eqref{eq:K5}. Using the pointwise inequality  \eqref{eq:K4} and the integral one \eqref{eq:K2}, 
\begin{align*}
 \int_{-\infty}^t K(t,s)W^{2\beta -\varepsilon}(s) \ds &= \int_{-\infty}^t K(t,s)^{\frac 1 2} (W^{4\beta -2\varepsilon}(s)K(t,s))^{\frac 1 2} \ds
 \\
 & \lesssim_{\beta, 2\beta-\varepsilon} W^{2\beta -\varepsilon}(t) \int_{-\infty}^t K(t,s)^{\frac 1 2} \ds
 \\
 & \lesssim_{\beta,\varepsilon} W^{-\varepsilon}(t).
\end{align*}

The proof of \eqref{eq:K6} is the same. 
 \end{proof}

\subsubsection{An intermediate result}

Before proving Proposition~\ref{prop:estimatesT}, we prove estimates for $T$ in $\L^1$ and $\L^\infty$ spaces. 
\begin{prop}[$\L^1$ and $\L^\infty$ estimates for $T$]
 \label{prop:L1Linftyestimates}
 Let $\beta >0$ and $\varepsilon\in \R$. Then with bounds for the norms that only depend on $\beta$ and $ \varepsilon$, 
\begin{align*}
 W^\varepsilon T W^{-\varepsilon} &: \L^1_{t} \to \C^{}_{0}(\R^{}_{t}) \ \mathrm{ is \ bounded.}
 \\
W^\varepsilon T W^{2\beta-\varepsilon} &: \L^1_{t} \to \L^1_{t} \ \mathrm{ is \ bounded.}
\\W^\varepsilon T W^{2\beta-\varepsilon} &: \L^\infty_{t} \to \L^\infty_{t}\ \mathrm{ is\ bounded.}
\end{align*} 
The same estimates are valid for $T^*$.
\end{prop}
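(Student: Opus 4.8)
The plan is to treat all six statements (the three for $T$ together with the three for $T^*$) as Schur-type bounds for one-dimensional integral operators in the variable $t$, with $(\varphi,\xi)\ne (0,0)$ fixed and $W(t)>0$ throughout. Indeed, $W^\varepsilon T W^{b}$ is the integral operator with kernel $W^\varepsilon(t)K(t,s)W^{b}(s)\mathbf{1}_{\{s\le t\}}$, while $W^\varepsilon T^* W^{b}$, viewed as an operator in the variable $s$, has kernel $W^\varepsilon(s)K(t,s)W^{b}(t)\mathbf{1}_{\{s\le t\}}$. Since the pointwise bound \eqref{eq:K4} and the integral bounds \eqref{eq:K5}, \eqref{eq:K6} are symmetric under interchange of $s$ and $t$, the estimates for $T^*$ follow from the same computations with $s$ and $t$ swapped, so I only detail $T$.

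For the $\L^\infty_t\to \L^\infty_t$ bound of $W^\varepsilon T W^{2\beta-\varepsilon}$, the Schur test requires $\sup_t \int_{-\infty}^t W^\varepsilon(t)K(t,s)W^{2\beta-\varepsilon}(s)\ds \lesssim 1$; factoring out $W^\varepsilon(t)$ and applying \eqref{eq:K5} with exponent $\varepsilon$ gives $\int_{-\infty}^t K(t,s)W^{2\beta-\varepsilon}(s)\ds \lesssim W^{-\varepsilon}(t)$, so the product is $\lesssim 1$. For the $\L^1_t\to \L^1_t$ bound, the Schur test requires $\sup_s \int_s^\infty W^\varepsilon(t)K(t,s)W^{2\beta-\varepsilon}(s)\dt \lesssim 1$; factoring out $W^{2\beta-\varepsilon}(s)$ and applying \eqref{eq:K6} with exponent $2\beta-\varepsilon$ gives $\int_s^\infty K(t,s)W^\varepsilon(t)\dt \lesssim W^{\varepsilon-2\beta}(s)$, whence the product is again $\lesssim 1$. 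Both are mechanical once the right exponents are plugged into Lemma~\ref{lem:estimatesKts}.

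The remaining bound $W^\varepsilon T W^{-\varepsilon}\colon \L^1_t\to \C^{}_0(\R^{}_t)$ splits into a sup-norm estimate and the $\C^{}_0$ membership. The sup bound is immediate: \eqref{eq:K4} gives $W^\varepsilon(t)K(t,s)W^{-\varepsilon}(s)\lesssim 1$ for $s\le t$, hence $|W^\varepsilon(t)T(W^{-\varepsilon}g)(t)| \lesssim \int_{-\infty}^t |g(s)|\ds \le \|g\|_{\L^1_t}$. To obtain continuity and the vanishing at $\pm\infty$, I would first verify the claim for $g\in \C_{c}(\R^{}_{t})$ with $\supp g\subset[-M,M]$, and then pass to the limit using that $\C^{}_0(\R^{}_t)$ is closed in $\L^\infty_t$ and $\C_{c}(\R^{}_{t})$ is dense in $\L^1_t$. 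For such $g$, continuity of $t\mapsto W^\varepsilon(t)\int_{-\infty}^t K(t,s)W^{-\varepsilon}(s)g(s)\ds$ follows from joint continuity of the integrand and positivity of $W$; the function vanishes identically for $t<-M$, and as $t\to +\infty$ dominated convergence applies, the integrand being dominated by $\lesssim |g(s)|$ on $[-M,M]$ while $W^\varepsilon(t)K(t,s)\to 0$ pointwise.

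The one point requiring more than the already-proven kernel estimates is precisely this decay $W^\varepsilon(t)K(t,s)\to 0$ as $t\to+\infty$, for which \eqref{eq:K4} (giving only $\lesssim 1$) is insufficient; this is the main obstacle. Here I would invoke Lemma~\ref{lem:comp} in the form $K(t,s)\le \exp(-c_\beta (t-s)(|\xi-t\varphi|^{2\beta}+((t-s)|\varphi|)^{2\beta}))$: when $\varphi\ne 0$ the factor $\exp(-c_\beta(t-s)^{1+2\beta}|\varphi|^{2\beta})$ decays faster than any power of $t$ and so beats the at-most-polynomial growth of $W^\varepsilon(t)$, while when $\varphi=0$ (so $\xi\ne 0$) one has $W(t)\equiv|\xi|$ and $K(t,s)=\exp(-(t-s)|\xi|^{2\beta})\to 0$. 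Once this decay is secured, everything else is a routine application of the Schur test, and the estimates for $T^*$ follow by the symmetry noted above.
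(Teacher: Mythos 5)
Your proof is correct and follows essentially the same route as the paper: the same kernel estimates \eqref{eq:K4}, \eqref{eq:K5}, \eqref{eq:K6} (your Schur test is exactly the paper's Fubini argument), the same reduction to continuous compactly supported data via density and closedness of $\C^{}_{0}(\R^{}_{t})$ in $\L^\infty_{t}$, and dominated convergence for the limit at $+\infty$. The only differences are minor: you prove continuity directly from joint continuity of the integrand, whereas the paper notes that $T(W^{-\varepsilon}f)$ solves the ODE $g'(t)+|\xi-t\varphi|^{2\beta}g(t)=(W^{-\varepsilon}f)(t)$ classically and is hence locally Lipschitz, and you make explicit via Lemma~\ref{lem:comp} the pointwise decay $W^{\varepsilon}(t)K(t,s)\to 0$ as $t\to+\infty$, a detail the paper passes over as clear but which indeed requires more than \eqref{eq:K4}.
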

\begin{proof}
We only treat $T$ as the proofs for $T^*$ are similar. We start with the $\L^\infty_{t}$ bound for $W^\varepsilon T(W^{-\varepsilon}f)$ when $f\in \L^1_{t}$. We have by \eqref{eq:K4}, 
\[ W^\varepsilon(t)|T(W^{-\varepsilon}f)(t)|\le \int_{-\infty}^t W^\varepsilon(t)K(t,s)W^{-\varepsilon}(s)|f(s)|\ds \lesssim_{\beta,\varepsilon}\int_{-\infty}^t |f(s)|\ds,
\] 
which is controlled by $\|f\|_{\L^1_{t}}$. This also proves the fact $W^\varepsilon(t)|T(W^{-\varepsilon}f)(t)|\to 0$ when $t\to -\infty$. 

By a density argument and closedness of $\C^{}_{0}(\R^{}_{t})$ in $\L^\infty_{t}$, it is now enough to prove the continuity of $W^{\varepsilon}T(W^{-\varepsilon}f)$ and limit 0 at $+\infty$ assuming $f$ to be compactly supported in $[-R,R]$ for some fixed $R>0$ and continuous. For the limit, the above estimate rewrites for $t\ge R$,
\begin{align*}
W^{\varepsilon}(t) |T(W^{-\varepsilon}f)(t)| \le \int_{-R}^R W^\varepsilon(t)K(t,s)W^{-\varepsilon}(s) |f(s)|\ds
\end{align*}
and clearly the integral converges to $0$ by dominated convergence as $t\to +\infty$. 

For the continuity, we use that $W^{\varepsilon}$ and $W^{-\varepsilon}$ are continuous and also that $T(W^{-\varepsilon}f)$ is locally Lipschitz in time since, as $f$ is continuous with compact support, it is easy to see that $g=T(W^{-\varepsilon}f)$ solves the ordinary differential equation 
$
g'(t)+ |\xi-t\varphi|^{2\beta}g(t)= (W^{-\varepsilon}f)(t) 
$
in the classical sense. 

Next, the $\L^1_{t}-\L^1_{t}$ and $\L^\infty_{t}-\L^\infty_{t}$ bounds of $W^\varepsilon T W^{2\beta-\varepsilon}$
follow from \eqref{eq:K5} and \eqref{eq:K6} by Fubini's theorem.
\end{proof}

\subsubsection{Proof of Proposition~\ref{prop:estimatesT}}

We are now ready to prove the temporal weighted estimates for the operator $T$.

\begin{proof}[Proof of Proposition~\ref{prop:estimatesT}] The last boundedness statement is already contained in Proposition~\ref{prop:L1Linftyestimates}.
We prove the other three properties in the order they are stated. 

First, interpolating $\L^1_{t}-\L^1_{t}$ and $\L^\infty_{t}-\L^\infty_{t}$ boundedness of $W^\gamma T W^{2\beta-\gamma}$, we obtain $\L^2_{t}-\L^2_{t}$ boundedness which is equivalent to the first item. 

Secondly, we have $W^\gamma T :\L^1_{t}(W^{\gamma-2\beta})\to \L^1_{t}$ and $W^\gamma T :\L^1_{t}(W^{\gamma})\to \L^\infty_{t}$. Using Stein-Weiss interpolation with change of measure (see \cite[Theorem 5.4.1]{MR0482275}), we obtain $W^\gamma T :\L^1_{t}(W^{\gamma-\beta})\to \L^2_{t}$ is bounded, which is equivalent to the second item. 

Thirdly, the second case applied to $T^*$ rewrites $W^\gamma T^* W^{\beta-\gamma}: \L^1_{t}\to \L^2_{t}$ is bounded, so by duality and changing $\beta-\gamma$ to $\gamma-\beta$, we have 
 $W^{\gamma-\beta}TW^{2\beta-\gamma}$ maps $\L^2_{t}$ into $\L^\infty_{t}$. Note that if we apply this to continuous and compactly supported functions $f$ then we also have $W^{\gamma-\beta}TW^{2\beta-\gamma}f= W^{\gamma-\beta}TW^{\beta-\gamma}(W^\beta f)\in \C^{}_{0}(\R^{}_{t}) $ as $W^\beta f \in \L^1_{t}$. 
By density of such functions $f$ in $\L^2_{t}$ and closedness of $\C^{}_{0}(\R^{}_{t})$ in $\L^\infty_{t}$, we conclude that $W^{\gamma-\beta}TW^{2\beta-\gamma}$ maps $\L^2_{t}$ into $\C^{}_{0}(\R^{}_{t})$ which is equivalent to the third item.
\end{proof}

\subsubsection{Proof of Proposition~\ref{p:boundsKolmoapriori}}

We can now obtain the desired a priori estimates for the Kolmogorov operators.
\begin{proof}[Proof of Proposition~\ref{p:boundsKolmoapriori}] Let us concentrate on $\cK^{+}_{\beta}$, the proof for $\cK^{-}_{\beta}$ being similar. We come back to the definition of $T$ depending on the three variables and recall that the weight $W$ also depends on the three variables. By the Fourier transform correspondence $S\mapsto \widehat {\Gamma S}$, Lemma~\ref{l:galilean} and the construction in Lemma~\ref{lem:density}, the space $\widehat {\Gamma \cS_{K}}$ is dense in 
$\L^p_{\vphantom{\varphi,\xi}t}\L^2_{\varphi,\xi}(W^\alpha)$ for all $\alpha\in \R$ and $1\le p<\infty$, where $\L^2_{\varphi,\xi}(W^{\alpha})$ is here the weighted $\L^2$ space with respect to $W^{\alpha}(t,\varphi,\xi)\dd\varphi\dd\xi$, for fixed $t$.
 Using additionally Fubini's theorem, the first assertion for $\cK^{+}_{\beta}$ is equivalent to the weighted boundedness
$$
T: \L^2_{\varphi,\xi}\L^2_{t}(W^{2\gamma-4\beta}) \to \L^2_{\varphi,\xi}\L^2_{t}(W^{2\gamma}).
$$
Using  the conjugate $\widehat \Gamma$ of the Galilean change of variables through the Fourier transform, and using the fact that this map preserves $\C^{}_{0}(\R^{}_{t}\, ; \, \L^2_{\varphi,\xi})$, the second is equivalent to 
$$
W^{\gamma-\beta}\, T: \L^2_{\varphi,\xi}\L^2_{t}(W^{2\gamma-4\beta}) \to \C^{}_{0}(\R^{}_{t}\, ; \, \L^2_{\varphi,\xi}).
$$
The third is equivalent to 
$$
T: \L^1_{t}\L^2_{\varphi,\xi}(W^{2\gamma-2\beta})\to \L^2_{\vphantom{t} \varphi,\xi}\L^2_{\vphantom{t} t}(W^{2\gamma}).
$$ 
With the same idea as for the second assertion, the fourth one is equivalent to 
$$
W^{\gamma-\beta}\, T: \L^1_{t}\L^2_{\varphi,\xi}(W^{2\gamma-2\beta})\to \C^{}_{0}(\R^{}_{t}\, ; \, \L^2_{\varphi,\xi}).
$$ 

The first two assertions on $T$ are direct consequences of Proposition~\ref{prop:estimatesT}.

For the third one, we have by Proposition~\ref{prop:estimatesT} and by Minkowski integral inequality
 \begin{align*}
 \|\|W^{\gamma}\, Th\|_{\L^2_{t}}\|_{\L^2_{ \varphi,\xi}}&\lesssim_{\beta,\gamma} \|\|W^{\gamma-\beta}h\|_{\L^1_{t}}\|_{{\L^2_{\varphi,\xi}}} \\
&\le \|\| W^{\gamma-\beta}h\|_{{\L^2_{\varphi,\xi}}}\|_{\L^1_{t}}.
\end{align*}
For the $\L^\infty$ bound in the fourth one, write
\begin{align*}
 \|W^{\gamma-\beta}(t)\, Th(t)\|_{\L^2_{\varphi,\xi}}&\le  \int_{-\infty}^t \| W^{\gamma-\beta}(t)K(t,s)h(s)\|_{{\L^2_{\varphi,\xi}}}\, \dd s \\
&\lesssim_{\beta,\gamma} \int_{-\infty}^t \| W^{\gamma-\beta}(s)h(s)\|_{{\L^2_{\varphi,\xi}}}\, \dd s
\end{align*}
using Minkowski integral inequality and \eqref{eq:K4} directly, rather than Proposition~\ref{prop:estimatesT}. Continuity and limits follow the argument in Proposition~\ref{prop:L1Linftyestimates}. We leave such details to the reader. 

We turn to the specific case $0\le \gamma\le 2\beta$.   The first weigthed boundedness above reformulates as  the $\L^2_{t,\varphi,\xi}$ boundedness of 
$W^{\gamma-2\beta}TW^\gamma$. Using the definition of $W$ in \eqref{eq:weight} and $\gamma, 2\beta-\gamma\ge 0$, we can split this into the $\L^2_{t,\varphi,\xi}$ boundedness of four operators. Coming back to the Kolmogorov operators and noting that the operators $D_{v}^\alpha$ and $D_{x}^\alpha$ preserve $\cS_{K}$, we obtain the $\L^2_{t,x,v}$ control of
 $$D_{v}^\gamma \cK^{+}_{\beta} D_{v}^{2\beta-\gamma}S, \,
D_{\vphantom {D_{v}} x}^{\frac{\gamma}{2\beta+1}} \cK^{+}_{\beta} D_{v}^{2\beta-\gamma}S , \, D_{v}^\gamma \cK^{+}_{\beta} D_{\vphantom {D_{v}} x}^{\frac{2\beta-\gamma}{2\beta+1}}S, \, D_{\vphantom {D_{v}} x}^{\frac{\gamma}{2\beta+1}} \cK^{+}_{\beta} D_{\vphantom {D_{v}} x}^{\frac{2\beta-\gamma}{2\beta+1}}S$$
by $\|S\|_{\L^2_{t,x,v}}$ when $S\in \cS_{K}$. 
We argue similarly for the final inequality when $\beta\le \gamma \le 2\beta$. 

\end{proof}

\begin{rem}
 Note that $D_{x}$ commutes with $\cK^{+}_{\beta}$, but not $D_{v}$. 
\end{rem}
\section{Uniqueness and kinetic embeddings}
\label{s:uniqueness-embeddings}

\subsection{Uniqueness}

The kinetic embeddings rely on a crucial uniqueness result. 
\begin{lem}[Uniqueness]
 \label{lem:uniqueness}
  Let $\beta>0$ and $\gamma>0 $. If $f\in \Udot^\gamma $  satisfies $\pm (\partial_{t}+v\cdot\nabla_{x})f +(-\Delta_{v})^{\beta}f=0$ in $\cD'(\Omega)$, then $f=0$.
\end{lem}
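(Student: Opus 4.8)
The plan is to pass to the Galilean-conjugated partial Fourier transform, in which the equation becomes a scalar linear ODE in $t$ at each frequency, and then to exploit the fact that the homogeneous solutions of this ODE grow at one temporal infinity, which is incompatible with the global-in-time integrability encoded in $f\in\Udot^\gamma$.

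First I would reduce to the forward operator; the backward case follows by the time reversal $t\mapsto -t$ (which merely exchanges the roles of $+\infty$ and $-\infty$ below). Set $u=\widehat{\Gamma f}$; this is legitimate by Lemma~\ref{l:galilean} since $f\in\L^1_{\loc,t}\cS'_{x,v}$. Using $\widehat{\Gamma f}(t,\varphi,\xi)=\widehat f(t,\varphi,\xi-t\varphi)$ together with the identity $\Gamma(\partial_t+v\cdot\nabla_x)\Gamma^{-1}=\partial_t$ and the fact that $(-\Delta_v)^\beta$ acts as multiplication by $|\xi|^{2\beta}$ on $\widehat f$ (hence by $|\xi-t\varphi|^{2\beta}$ after conjugation), the equation $(\partial_t+v\cdot\nabla_x)f+(-\Delta_v)^\beta f=0$ transforms into
\[
\partial_t u(t,\varphi,\xi)+|\xi-t\varphi|^{2\beta}\,u(t,\varphi,\xi)=0 \quad\text{in }\cD'(\Omega).
\]
In parallel, Plancherel's theorem and the change of variables underlying Lemma~\ref{l:galilean} turn the finiteness of $\|f\|_{\Udot^\gamma}=\|D_v^\gamma f\|_{\L^2_{t,x,v}}$ into
\[
\iiint |\xi-t\varphi|^{2\gamma}\,|u(t,\varphi,\xi)|^2\dt\dd\varphi\dd\xi<\infty.
\]

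Next I would freeze the frequency. By Fubini, for a.e.\ $(\varphi,\xi)$ the slice $t\mapsto u(t,\varphi,\xi)$ lies in $\L^2(\R_t,|\xi-t\varphi|^{2\gamma}\dt)$; testing the distributional equation against products $\psi(t)\zeta(\varphi,\xi)$ with $\psi$ ranging over a countable dense subset of $\cD(\R_t)$ shows that, outside a common null set of frequencies, $t\mapsto u(t,\varphi,\xi)$ solves the scalar ODE $g'+|\xi-t\varphi|^{2\beta}g=0$ in the sense of distributions on $\R_t$. Since the coefficient $t\mapsto|\xi-t\varphi|^{2\beta}$ is continuous, such a distributional solution has an absolutely continuous representative and is given by the integrating factor: $u(t,\varphi,\xi)=c(\varphi,\xi)\exp\big(-\int_0^t|\xi-\tau\varphi|^{2\beta}\dd\tau\big)$. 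Finally I would use growth at $t\to-\infty$. For a.e.\ $(\varphi,\xi)$ one has $\varphi\neq0$, so $\int_0^t|\xi-\tau\varphi|^{2\beta}\dd\tau\to-\infty$ as $t\to-\infty$ and the exponential factor diverges super-polynomially (like $\exp(c|\varphi|^{2\beta}|t|^{2\beta+1})$), while the weight $|\xi-t\varphi|^{2\gamma}$ grows only polynomially. Hence $\int_\R|\xi-t\varphi|^{2\gamma}|u(t,\varphi,\xi)|^2\dt=+\infty$ unless $c(\varphi,\xi)=0$, and the displayed finiteness forces $c(\varphi,\xi)=0$ for a.e.\ $(\varphi,\xi)$; thus $u\equiv0$ and $f=0$.

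The main obstacle is the rigorous frequency-freezing step: disintegrating the distributional identity on $\Omega$ into a scalar ODE valid for a.e.\ $(\varphi,\xi)$ with an exceptional null set independent of the test function, and checking that the relevant slices are genuinely locally integrable in $t$ so that both the ODE and its integrating-factor representation make classical sense. The degeneracy of the weight $|\xi-t\varphi|^{2\gamma}$ on the negligible set $\{\xi=t\varphi\}$ (which is unavoidable when $d=1$) must be acknowledged, but it is harmless: it is a null set in $(t,\varphi,\xi)$, it does not affect the divergence of the time integral at $-\infty$, and the global-in-time blow-up still propagates $c(\varphi,\xi)=0$ across it. Note that working on the whole line $\R_t$ is essential here; this is exactly the mechanism by which infinite time intervals yield uniqueness for the homogeneous problem.
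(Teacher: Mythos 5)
Your proof is correct and follows essentially the same route as the paper: conjugate by $\Gamma$ and the partial Fourier transform, reduce to the ODE $\partial_t u + |\xi-t\varphi|^{2\beta}u=0$, represent its solutions via the integrating factor, and use divergence of the weighted time integral as $t\to-\infty$ to force the data to vanish — the paper merely packages the ODE step as a $\C^1(\R^{}_{t};\L^1_{\loc,\varphi,\xi})$-valued equation with data $G_{0}\in \L^1_{\loc,\varphi,\xi}$ rather than your a.e.-frequency slicing with a countable family of test functions, which is an equivalent implementation. Two small repairs: the backward case requires the reflection $(t,v)\mapsto(-t,-v)$, not $t\mapsto -t$ alone (which flips the sign of the transport term), and the local-in-$t$ integrability of the slices that your ODE argument needs is not a consequence of the weighted $\L^2$ bound near $\{\xi=t\varphi\}$ (that bound degenerates there once $2\gamma\ge 1$) but is supplied by the definition of $\Udot^\gamma$: elements of $\Hdot^\gamma_{v}$ have locally integrable Fourier transforms, so $\widehat{\Gamma f}$ and $|\xi-t\varphi|^{2\beta}\widehat{\Gamma f}$ lie in $\L^1_{\loc}(\Omega)$, exactly the point the paper records before running the argument.
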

Recall that $\Udot^\gamma$ is defined just before Definition~\ref{defn:weaksol}.
\begin{proof}
 We only consider $ (\partial_{t}+v\cdot\nabla_{x})f +(-\Delta_{v})^{\beta}f=0$, while the backward equation can be treated by a similar argument. When $f\in \Udot^\gamma$, $f$ and $(-\Delta_{v})^{\beta}f$ belong to $\L^1_{\loc,t}\cS'_{x,v}$, with partial Fourier transform in $\L^1_{\loc}(\Omega)$, and the equation above can be interpreted in $\cD'_{t}\, \cS'_{x,v}$. Setting $G=\widehat{\Gamma f}$, 
 we have $G \in \L^1_{\loc}(\Omega)$ and
$
\partial_{t}G+ |\xi-t\varphi|^{2\beta} G=0
$ in $\cD'_{t}\, \cS'_{x,v}\subset \cD'(\Omega)$.  From $\partial_{t}G=-|\xi-t\varphi|^{2\beta} G \in \L^1_{\loc}(\Omega)$, we deduce that $G$ has a representative in $ \C(\R^{}_{t}; \L^1_{\loc, \varphi,\xi})$ and then in $\C^1(\R^{}_{t}; \L^1_{\loc, \varphi,\xi})$. Identifying $G$ with this representative, the equation can be interpreted as a first-order differential equation valued in $\L^1_{\loc,\varphi,\xi}$. Thus, there exists $G_{0}\in \L^1_{\loc,\varphi,\xi}$ such that 
$G= K(t,0)G_{0}$ for all $t\in \R$, where $K(t,0)$ is the kernel defined in \eqref{eq:kolmogorovkernel}. As $(-\Delta_{v})^{\gamma/2}f\in \L^2_{t,x,v}$, we have that $|\xi-t\varphi|^\gamma G \in \L^2_{t,\varphi,\xi}$, that is 
\begin{equation}
\label{eq:control}
\iiint_{\Omega} |\xi-t\varphi|^{2\gamma} \exp\bigg({-2\int_{0}^t |\xi-\tau \varphi|^{2\beta}\, d\tau}\bigg) |G_{0}(\varphi,\xi)|^2\, \dt \dd \varphi \dd \xi <\infty.
\end{equation}
We can use Fubini's theorem and notice that the $t$-integral on $\R$ is infinite when $(\varphi,\xi)\ne (0,0)$ because of the exponential growth when $t\to -\infty$. This proves that $G_{0}=0$ almost everywhere. Hence, $G=0$ and so is $f$.
 \end{proof}

\begin{rem} 
\label{rem:uniqueness} The argument works for a larger class of $f$. From the argument, we see that it suffices that $f, (-\Delta_{v})^\beta f \in \L^1_{\loc,t}\cS'_{x,v}$ with $G=\widehat {\Gamma f}\in \L^1_{\loc}(\Omega)$ such that there is some measurable function $m$ for which $mG\in \L^1_{t}\L^1_{\loc,\varphi,\xi}$ and 
\[
\int_{\R} |m(t,\varphi,\xi)| \exp\bigg({-\int_{0}^t |\xi-\tau \varphi|^{2\beta}\, d\tau}\bigg) \, \dt =\infty
\]
almost everywhere. For example this works for $f\in \L^2_{t,v}\Hdot_{\vphantom{t,v} x}^{\frac{\gamma}{2\beta+1}}$ if $\frac{\gamma}{2\beta+1}<d/2$, with additionally $f\in \L^1_{\loc, t}\L^2_{x,v}$ otherwise. This also works for sums of spaces for which these conditions hold. 
 \end{rem}

\begin{rem}\label{rem:inverttime}
 The argument shows that it is only the behaviour at $t \to-\infty$ that forces $f$ to be 0. For the backward equation, it is the behaviour at $t\to +\infty$ that counts.
\end{rem}

\subsection{Isomorphism properties}
 
A consequence of uniqueness is the isomorphism properties of the Kolmogorov operators for our scales of spaces in an appropriate range of exponents.

\begin{lem}\label{lem:distributionalsolutions}
Assume $\beta>0$,  $0\le \gamma \le2\beta$ with $\gamma<d/2$. If $S\in \L^2_{t} \Xdot^{\gamma -2\beta}_\beta+ \L^1_{t} \Xdot^{\gamma-\beta}_{\beta}$, then
$\pm (\partial_{t}+v\cdot\nabla_{x})\cK^\pm_\beta S + (-\Delta_{v})^{\beta}\cK^\pm_\beta S=S$ in $\cS'(\Omega)$, hence in $\cD'(\Omega)$. 
If $\gamma=\beta$, one can drop the condition $\gamma<d/2$. 
\end{lem}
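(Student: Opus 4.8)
The plan is to prove the identity first for the dense class $\cS_{K}$ of Lemma~\ref{lem:density}, where $\cK^{\pm}_{\beta}S$ is an honest strong solution, and then to extend it to every $S$ in the source space by continuity, exploiting that both the source space and the relevant target space of $\cK^{\pm}_{\beta}$ embed continuously into $\cS'(\Omega)$.

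First I would treat the base case $S\in \cS_{K}$. Writing $g=\widehat{\Gamma \cK^{+}_{\beta}S}=T(\widehat{\Gamma S})$ and $h=\widehat{\Gamma S}$, the definition \eqref{eq:kolmogorovoperator}--\eqref{eq:kolmogorovkernel} of $T$ as the Duhamel operator means exactly that $g$ solves $\partial_{t}g+|\xi-t\varphi|^{2\beta}g=h$. By Lemma~\ref{lem:density}, for $S\in\cS_{K}$ one has $g,\partial_{t}g\in\L^2_{t,\varphi,\xi}$, so this ODE holds in the strong $\L^2$ sense. Undoing the Fourier transform and the Galilean change of variables---using that $\Gamma$ conjugates $\partial_{t}+v\cdot\nabla_{x}$ to $\partial_{t}$, and that the partial Fourier transform of $\Gamma[(-\Delta_{v})^{\beta}f]$ equals $|\xi-t\varphi|^{2\beta}\widehat{\Gamma f}$---this is precisely the assertion that $(\partial_{t}+v\cdot\nabla_{x})\cK^{+}_{\beta}S+(-\Delta_{v})^{\beta}\cK^{+}_{\beta}S=S$ in $\L^2_{t,x,v}$, hence in $\cS'(\Omega)$. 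The case of $\cK^{-}_{\beta}$ is identical, using $T^{*}$ from \eqref{eq:kolmogorovoperatoradjoint}.

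Next I would run the density argument. Since $\gamma\le 2\beta$, the two source indices $\gamma-2\beta$ and $\gamma-\beta$ are $\le 0<d/2$, so by Lemma~\ref{lem:density} the class $\cS_{K}$ is dense in $\L^2_{t}\Xdot^{\gamma-2\beta}_{\beta}$ and in $\L^1_{t}\Xdot^{\gamma-\beta}_{\beta}$, hence in their sum. Pick $S_{j}\in\cS_{K}$ with $S_{j}\to S$ in the source norm. By Proposition~\ref{p:boundsKolmo}~(i) and (iii), the operator $\cK^{\pm}_{\beta}$ is bounded from the source space into $\L^2_{t}\Xdot^{\gamma}_{\beta}$, so $\cK^{\pm}_{\beta}S_{j}\to\cK^{\pm}_{\beta}S$ there. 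Both $\L^2_{t}\Xdot^{\gamma}_{\beta}$ and the source space embed continuously into $\cS'(\Omega)$ (pairing with a Schwartz function costs only a dual Sobolev norm that decays rapidly in $t$), and the operators $\partial_{t}+v\cdot\nabla_{x}$ and $(-\Delta_{v})^{\beta}$ (the latter mapping $\L^2_{t,x}\Hdot^{\gamma}_{v}$ into $\L^2_{t,x}\Hdot^{\gamma-2\beta}_{v}$) are continuous into $\cS'(\Omega)$. Therefore each of the three terms in the equation for $S_{j}$ converges in $\cS'(\Omega)$, and passing to the limit yields the identity for $S$.

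Finally, for $\gamma=\beta$ with $\beta\ge d/2$ the target $\L^2_{t}\Xdot^{\beta}_{\beta}$ is not defined, but the source indices $-\beta$ and $0$ are still $<d/2$, so density still holds; I would instead invoke Corollary~\ref{cor:Kbeta}, by which $\cK^{\pm}_{\beta}$ maps the source space continuously into $\L^2_{t,x}\Hdot^{\beta}_{v}\cap\L^2_{t,v}\Hdot_{x}^{\frac{\beta}{2\beta+1}}\cap\C^{}_{0}(\R^{}_{t};\L^2_{x,v})$, a space that again embeds continuously into $\cS'(\Omega)$ and on which $(-\Delta_{v})^{\beta}$ makes sense because $\cK^{\pm}_{\beta}S(t)\in\L^2_{x,v}$ for every $t$; the same limiting passage then applies. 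The only point requiring genuine care is the bookkeeping of these continuous embeddings into $\cS'(\Omega)$, which is what guarantees the limiting passage is legitimate; the base case and the boundedness of $\cK^{\pm}_{\beta}$ are supplied directly by Lemma~\ref{lem:density}, Proposition~\ref{p:boundsKolmo} and Corollary~\ref{cor:Kbeta}.
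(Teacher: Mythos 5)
Your proposal is correct and follows essentially the same route as the paper: establish the ODE identity $\partial_t g + |\xi-t\varphi|^{2\beta} g = h$ in $\L^2$ for $S\in\cS_{K}$ using the proof of Lemma~\ref{lem:density}, transfer it back through the Galilean/Fourier correspondence to get the equation in $\cS'(\Omega)$, and then pass to general $S$ by density together with the boundedness of $\cK^{\pm}_{\beta}$ (Proposition~\ref{p:boundsKolmoapriori}/\ref{p:boundsKolmo}, and Corollary~\ref{cor:Kbeta} when $\gamma=\beta\ge d/2$). Your extra bookkeeping of the continuous embeddings into $\cS'(\Omega)$ just makes explicit what the paper leaves implicit in its density step.
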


\begin{proof} We do the argument for $\cK_{\beta}^+$ and the one for $\cK_{\beta}^-$ is similar. 
 For $S$ in the density class $\cS_{K}$ of Lemma~\ref{lem:density}, the proof of that lemma shows that for $g=Th$ with $h=\widehat{\Gamma S}$, we have $g, \partial_{t}g, |\xi-t\varphi|^{2\beta}g \in \L^2_{t,\varphi,\xi}$ and $\partial_{t}g+ |\xi-t\varphi|^{2\beta}g=h$. It easily follows that the equation holds in $\cS'(\Omega)$, hence the formula holds by applying the correspondence via the Fourier transform. Using the bounds for the operators $\cK_{\beta}^+$ in Proposition~\ref{p:boundsKolmoapriori}, one can deduce the formula in $\cS'(\Omega)$ for general $S$ by density, hence it holds also in $\cD'(\Omega)$. 
 If $\gamma=\beta$, then we already know the bounds of $\cK^\pm_\beta$ by Corollary~\ref{cor:Kbeta} for all $\beta>0$, and we conclude by the same density argument. 
 \end{proof}

We have already established that Kolmogorov solutions are the unique weak solutions.

\begin{proof}[Proof of Corollary~\ref{cor:equality}] 
Lemma~\ref{lem:distributionalsolutions} and Corollary~\ref{cor:Kbeta} prove that the Kolmogorov solutions $\cK_{\beta}^\pm S$ are weak solutions to $\pm (\partial_{t}+v\cdot\nabla_{x})f + (-\Delta_{v})^{\beta}f=S$ in $\cD'(\Omega)$ respectively when $S\in \Zdot^{\beta}$. Moreover, they are the only ones by Lemma~\ref{lem:uniqueness}. 
 \end{proof}

In fact, we have obtained more, and this is the key ingredient for developing our theory of weak solutions.

\begin{lem}[Isomorphisms] \label{lem:isom-embed}
  Assume $0\le \gamma \le2\beta$ with $\gamma<d/2$.
  The Kolmogorov operators $\cK^\pm_\beta$ are isomorphisms from 
  $\L^2_{t,x} \Hdot^{\gamma -2\beta}_{\vphantom{t,x} v}$ onto $\cFdot^\gamma_{\beta}$, from  $\L^2_{t} \Xdot^{\gamma -2\beta}_{\beta}$ onto $\cGdot^\gamma_{\beta}$ and from $\L^2_{t} \Xdot^{\gamma -2\beta}_\beta+ \L^1_{t} \Xdot^{\gamma-\beta}_{\beta}$ onto $\cLdot^\gamma_{\beta}$, with inverses given by $\pm (\partial_{t}+v\cdot\nabla_{x}) + (-\Delta_{v})^{\beta}$.
  If $\gamma=\beta\geq d/2$, then the statement holds provided we define the kinetic spaces with $f\in \Udot^\beta$ replacing $f\in\L^2_{t,x}\Hdot^{\beta}_{v}$.
\end{lem}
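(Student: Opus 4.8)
The plan is to exhibit, for each of the three source/target pairs, the map $\cK^\pm_\beta$ and the differential operator
$L^\pm:=\pm(\partial_t+v\cdot\nabla_x)+(-\Delta_{v})^{\beta}$ as mutually inverse bounded linear maps. Boundedness of $\cK^\pm_\beta$ into the kinetic spaces will come from the estimates of Proposition~\ref{p:boundsKolmo} (together with the final $\L^2_{t,x,v}$ bounds there and Corollary~\ref{cor:Kbeta} at the endpoint), boundedness of $L^\pm$ in the reverse direction will be read off the definitions of $\cFdot^\gamma_{\beta},\cGdot^\gamma_{\beta},\cLdot^\gamma_{\beta}$, the right-inverse identity $L^\pm\cK^\pm_\beta=\mathrm{Id}$ is exactly Lemma~\ref{lem:distributionalsolutions}, and the left-inverse identity $\cK^\pm_\beta L^\pm=\mathrm{Id}$ will be forced by the uniqueness Lemma~\ref{lem:uniqueness}. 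I carry out the argument for $\cK^+_\beta$; the case $\cK^-_\beta$ is identical.

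First I would treat the forward boundedness, taking the most general space $\cLdot^\gamma_{\beta}$ with source $\L^2_{t}\Xdot^{\gamma-2\beta}_\beta+\L^1_{t}\Xdot^{\gamma-\beta}_{\beta}$. By Proposition~\ref{p:boundsKolmo}(i),(iii), $\cK^+_\beta$ maps this sum boundedly into $\L^2_t\Xdot^\gamma_\beta\subset\L^2_{t,x}\Hdot^\gamma_{v}$, which gives the first membership condition. For the transport term I use Lemma~\ref{lem:distributionalsolutions} to write $(\partial_t+v\cdot\nabla_x)\cK^+_\beta S=S-(-\Delta_{v})^\beta\cK^+_\beta S$; since $\cK^+_\beta S\in\L^2_t\Xdot^\gamma_\beta$ and, using $0\le\gamma\le 2\beta$, $D_{v}^{2\beta}$ sends $\L^2_{x}\Hdot^\gamma_{v}$ into $\L^2_{x}\Hdot^{\gamma-2\beta}_{v}\subset\Xdot^{\gamma-2\beta}_\beta$, the second summand lies in $\L^2_t\Xdot^{\gamma-2\beta}_\beta$, so the whole transport term lies in the source space and $\cK^+_\beta S\in\cLdot^\gamma_{\beta}$. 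The cases of $\cFdot^\gamma_{\beta}$ (source $\L^2_{t,x}\Hdot^{\gamma-2\beta}_{v}$) and $\cGdot^\gamma_{\beta}$ (source $\L^2_t\Xdot^{\gamma-2\beta}_\beta$) go the same way, invoking the final $\L^2_{t,x,v}$ bounds of Proposition~\ref{p:boundsKolmo} (e.g. boundedness of $D_{v}^\gamma\cK^+_\beta D_{v}^{2\beta-\gamma}$) for the $\cFdot$ case. The reverse boundedness of $L^+$ is then immediate from the definitions: for $f$ in any of the three kinetic spaces the transport term already lies in the source space, while $(-\Delta_{v})^\beta f=D_{v}^{2\beta}f\in\L^2_{t,x}\Hdot^{\gamma-2\beta}_{v}\subset\L^2_t\Xdot^{\gamma-2\beta}_\beta$ because $f\in\L^2_{t,x}\Hdot^\gamma_{v}$ and $\gamma\le 2\beta$.

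Next I would close the two inverse identities. The right inverse $L^+\cK^+_\beta=\mathrm{Id}$ on each source space is precisely Lemma~\ref{lem:distributionalsolutions}. For the left inverse, given $f$ in the kinetic space I set $S=L^+f$ and $g=\cK^+_\beta S-f$; then $L^+g=S-S=0$, so $g$ solves $(\partial_t+v\cdot\nabla_x)g+(-\Delta_{v})^\beta g=0$ in $\cD'(\Omega)$. Since $\gamma<d/2$, both $f$ and $\cK^+_\beta S$ lie in $\L^2_{t,x}\Hdot^\gamma_{v}=\Udot^\gamma$, hence $g\in\Udot^\gamma$, and Lemma~\ref{lem:uniqueness} forces $g=0$, i.e. $\cK^+_\beta L^+f=f$. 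Combining the four facts shows $\cK^+_\beta\colon(\text{source})\to(\text{kinetic space})$ is an isomorphism with inverse $L^+$.

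For the endpoint $\gamma=\beta\ge d/2$, where the kinetic space is redefined with $\Udot^\beta$ in place of $\L^2_{t,x}\Hdot^{\beta}_{v}$, the only adjustments are that the forward bound into $\C^{}_{0}(\R^{}_{t}\,;\,\L^2_{x,v})\subset\L^2_{\loc,t}\L^2_{x,v}$ is supplied by Corollary~\ref{cor:Kbeta} rather than Proposition~\ref{p:boundsKolmo}, and the uniqueness step still applies because $g\in\Udot^\beta$ under the modified definition. The only real work in the whole argument is this last point: the entire left-inverse property rests on placing $g$ in $\Udot^\gamma$ so that Lemma~\ref{lem:uniqueness} is applicable, and it is exactly to guarantee $\Udot^\gamma$-membership at $\gamma=\beta\ge d/2$ that the kinetic spaces must be defined with $\Udot^\beta$; the remaining steps are routine bookkeeping of mapping properties carried entirely by the already-established boundedness and representation results.
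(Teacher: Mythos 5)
Your proposal is correct and follows essentially the same route as the paper's own proof: forward boundedness into the kinetic spaces via Proposition~\ref{p:boundsKolmo} combined with the equation from Lemma~\ref{lem:distributionalsolutions}, injectivity/the right-inverse identity from Lemma~\ref{lem:distributionalsolutions}, surjectivity (your left-inverse step with $g=\cK^+_\beta S-f$) from the uniqueness Lemma~\ref{lem:uniqueness}, and the endpoint $\gamma=\beta\ge d/2$ handled by substituting $\Udot^\beta$ and invoking Corollary~\ref{cor:Kbeta}. The paper phrases surjectivity directly as ``$\cK^+_\beta S$ agrees with $f$ by uniqueness'' rather than through the difference $g$, but the argument is identical.
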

\begin{proof}

We do the proof of the last statement for $\cK^+_\beta$ as the other cases are all the same. 

First we assume $0\le \gamma \le2\beta$ with $\gamma<d/2$. It follows from Proposition~\ref{p:boundsKolmo} that $\cK^+_\beta$ maps continuously $S\in \L^2_{t} \Xdot^{\gamma -2\beta}_{\beta}+ \L^1_{t} \Xdot^{\gamma-\beta}_{\beta}$ into $\cLdot^\gamma_{\beta}$.
 Indeed, $\cK^+_\beta S\in \L^2_{t,x} \Hdot^{\gamma}_{\vphantom{t,x} v}$ as $\gamma\ge 0$, and by the equation in Lemma~\ref{lem:distributionalsolutions}, $ (\partial_{t}+v\cdot\nabla_{x})\cK^+_\beta S=S - (-\Delta_{v})^{\beta}\cK^+_\beta S \in \L^2_{t} \Xdot^{\gamma -2\beta}_{\beta}+ \L^1_{t} \Xdot^{\gamma-\beta}_{\beta}$ since $(-\Delta_{v})^{\beta}\cK^+_\beta S \in 
 \L^2_{t,x} \Hdot^{\gamma -2\beta}_{\vphantom{t,x} v}\subset \L^2_{t} \Xdot^{\gamma -2\beta}_{\beta}$ when $\gamma - 2 \beta \le 0$.
 
Lemma~\ref{lem:distributionalsolutions} shows that $\cK^+_\beta$ is injective. 
 
 Next, we show the ontoness of $\cK^+_\beta$. Consider $f \in \cLdot^\gamma_{\beta}$ and set $S= (\partial_{t}+v\cdot\nabla_{x})f + (-\Delta_{v})^{\beta}f$. By definitions of the spaces $\cLdot^\gamma_{\beta}$ and $\Xdot^{\gamma-2\beta}_\beta$ with $\gamma - 2 \beta \le 0$, we have $S \in \L^2_{t}\Xdot^{\gamma-2\beta}_\beta + \L^1_{t} \Xdot^{\gamma-\beta}_{\beta}$ with norm controlled by that of $f$ in $\cLdot^\gamma_{\beta}$. We observe that $\cK^{+}_{\beta} S \in \L^2_{t}\Xdot^{\gamma}_\beta \subset \L^2_{t,x}\Hdot^\gamma_{v}$ by Proposition~\ref{p:boundsKolmo} and $\gamma\ge 0$. As $ \cK^{+}_{\beta} S$ is also a distributional solution of the equation satisfied by $f$ from Lemma~\ref{lem:distributionalsolutions}, it agrees with $f$ by uniqueness from Lemma~\ref{lem:uniqueness}.

This establishes continuity and bijectivity of $\cK^+_\beta$ and that its inverse is $ (\partial_{t}+v\cdot\nabla_{x}) + (-\Delta_{v})^{\beta}$. That the latter is continuous from $\cLdot^\gamma_{\beta}$ into $\L^2_{t}\Xdot^{\gamma-2\beta}_\beta + \L^1_{t} \Xdot^{\gamma-\beta}_{\beta}$ has been proven above. Hence, the isomorphism property is proved.
 
Now we consider the case $\gamma=\beta \ge d/2$. The definition of the kinetic space replacing 
 \eqref{eq:Ldotgammabeta}, seen already in Corollary~\ref{cor:boundedness}, is
 \begin{align}
 \label{eq:Ldotgammabetaged/2}
 \cLdot^\beta_{\beta}= \{ f \in \cD'(\Omega)\, ; \, f \in \Udot^\beta \ \& \ (\partial_t + v \cdot \nabla_x)f \in \L^2_{t} \Xdot^{ -\beta}_\beta+\L^1_{t} \L^2_{x,v}\}
\end{align}
with 
\begin{align*}
 \|f\|_{\cLdot^\beta_{\beta}}= \| D_{v}^{\beta} f\|_{\L^2_{t,x,v}} + \|(\partial_t + v \cdot \nabla_x)f\|_{\L^2_{t}\Xdot^{-\beta}_\beta+\L^1_{t}\L^2_{x,v}}.
 \end{align*}
 Then the same argument as above applies replacing $\gamma$ by $\beta$ and $\L^2_{t,x} \Hdot^{\gamma}_{\vphantom{t,x} v}$ by $\Udot^\beta$ when they appear. This proves the isomorphism property.
\end{proof}

We mention an immediate consequence, which is not direct from the definition of kinetic spaces. 

\begin{cor}\label{cor:complete} 
In the range of exponents prescribed in Lemma~\ref{lem:isom-embed}, the kinetic spaces $\cFdot^\gamma_{\beta}$, $\cGdot^\gamma_{\beta}$ and $\cLdot^\gamma_{\beta}$ (with the modification of definition when $\gamma=\beta\ge d/2$) are complete normed spaces. 
\end{cor}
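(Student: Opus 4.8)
The plan is to transport completeness from the source spaces through the isomorphisms furnished by Lemma~\ref{lem:isom-embed}, using the elementary fact that an isomorphism of normed spaces preserves completeness. Concretely, if $T\colon X\to Y$ is a bounded linear bijection with bounded inverse and $X$ is complete, then $Y$ is complete: given a Cauchy sequence $(y_n)$ in $Y$, the sequence $(T^{-1}y_n)$ is Cauchy in $X$ by boundedness of $T^{-1}$, hence converges to some $x\in X$, and then $y_n=T(T^{-1}y_n)\to Tx$ in $Y$ by boundedness of $T$. Since Lemma~\ref{lem:isom-embed} realises $\cK^\pm_\beta$ as isomorphisms from the respective source spaces onto $\cFdot^\gamma_\beta$, $\cGdot^\gamma_\beta$ and $\cLdot^\gamma_\beta$, it will suffice to check that each source space is itself a Banach space.

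I would first dispatch the source spaces for $\cFdot^\gamma_\beta$ and $\cGdot^\gamma_\beta$. For $\cFdot^\gamma_\beta$ the source space is $\L^2_{t,x}\Hdot^{\gamma-2\beta}_v$; since $\gamma-2\beta\le 0<d/2$, the space $\Hdot^{\gamma-2\beta}_v$ is a Hilbert space, and an $\L^2$-valued space over a Hilbert space is complete. For $\cGdot^\gamma_\beta$ the source space is $\L^2_t\Xdot^{\gamma-2\beta}_\beta$; by Remark~\ref{rem:Xdotgammabeta} the space $\Xdot^{\gamma-2\beta}_\beta$ is a Hilbert space for $\gamma-2\beta<d/2$, so the corresponding $\L^2_t$-valued space is again complete.

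The remaining source space, for $\cLdot^\gamma_\beta$, is the sum $\L^2_t\Xdot^{\gamma-2\beta}_\beta+\L^1_t\Xdot^{\gamma-\beta}_\beta$. The first summand is complete as above. For the second, note that $\gamma-\beta<\gamma<d/2$, so $\Xdot^{\gamma-\beta}_\beta$ is a Hilbert space and $\L^1_t\Xdot^{\gamma-\beta}_\beta$ is a Banach space (Bochner $\L^1$ over a Banach space). Both summands embed continuously into the common Hausdorff space $\L^1_{\loc,t}\cS'_{x,v}$, so they form a compatible Banach couple, and their sum, equipped with the infimum norm, is a Banach space by the standard result on sums of such couples. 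In the modified case $\gamma=\beta\ge d/2$, the source space is $\L^2_t\Xdot^{-\beta}_\beta+\L^1_t\L^2_{x,v}$; here $-\beta<d/2$ makes the first summand complete and $\L^2_{x,v}$ is Hilbertian, so the second is complete, and the same couple argument applies.

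I do not anticipate a genuine obstacle: the substantive content is already contained in the isomorphism statement of Lemma~\ref{lem:isom-embed}. The only point requiring a little care is the completeness of the sum space, which does not follow from completeness of the summands alone; it relies on the observation that the two summands are continuously embedded in a single ambient Hausdorff topological vector space, after which the infimum norm makes the sum complete.
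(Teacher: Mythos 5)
Your proposal is correct and is precisely the argument the paper intends: Corollary~\ref{cor:complete} is stated as an immediate consequence of Lemma~\ref{lem:isom-embed}, i.e.\ completeness is transported through the isomorphisms $\cK^\pm_\beta$ from the source spaces. Your verification that each source space is Banach --- in particular that the sum $\L^2_{t}\Xdot^{\gamma-2\beta}_\beta+\L^1_{t}\Xdot^{\gamma-\beta}_{\beta}$ is complete because both summands embed continuously into the common Hausdorff space $\L^1_{\loc,t}\cS'_{x,v}$ and the infimum norm of a compatible couple is complete --- correctly supplies the details the paper leaves implicit.
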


 \subsubsection{Proof of the embeddings and transfers of regularity}
  
\begin{proof}[Proof of Theorem~\ref{thm:homkinspace}] 
The first two items (i) and (ii) follow from Lemma~\ref{lem:isom-embed} and Proposition~\ref{p:boundsKolmo}, or Corollary~\ref{cor:Kbeta} if $\gamma=\beta$.

\bigskip

\paragraph{Proof of (iii):} By Lemma~\ref{lem:density},  the space $\cS_{K}$ which is dense in any $\L^2_{t}\Xdot^{\gamma-2\beta}_{\beta}+ \L^1_{t} \Xdot^{\gamma-\beta}_{\beta}$ and has range under $\cK^{+}_{\beta}$  contained in $\L^2_{t,x,v}$ (and, if we replace it by $\cS_{K}$ of Lemma~\ref{lem:strongerdensity} if $d\ge 2$, contained  in $ \cS(\Omega)$). We conclude using the isomorphism property in Lemma~\ref{lem:isom-embed} that this range is dense in each of our kinetic spaces. The same holds with the range under $\cK^{-}_{\beta}$ of this dense subspace.

\bigskip

\paragraph{Proof of (iv):} When $\gamma-2\beta <d/2$, the Fourier transform in the $(x,v)$ variables provides us with  isomorphisms 
\begin{align*}
 \L^2_{t,x} \Hdot^{\gamma -2\beta}_{\vphantom{t,x} v} &\to \L^2_{t,\varphi,\xi}(|\xi|^{2\gamma-4\beta} \dt \dd \varphi \dd\xi), \\
 \L^2_{t} \Xdot^{\gamma -2\beta}_{{\beta}} &\to \L^2_{t,\varphi,\xi}(w^{2\gamma-4\beta} \dt \dd \varphi \dd\xi),
\end{align*}
where we recall that $w(\varphi,\xi)=\sup(|\xi|, |\varphi|^{\frac{1}{1+2\beta}})$.
Hence, complex interpolation for weighted $\L^2$ spaces, see \cite{MR0482275}, implies that the right-hand spaces in the two lines have the complex interpolation property, so the same holds for the corresponding left-hand spaces. We conclude from the isomorphism 
properties of $\cK^{+}_{\beta}$ (Lemma~\ref{lem:isom-embed}) in the allowed range of $\gamma$'s that the scales $\cFdot^\gamma_{\beta}$ and $\cGdot^\gamma_{\beta}$ of kinetic spaces have the complex interpolation property, respectively.
\end{proof}
\begin{rem}
We also have the isomorphism
\begin{equation*}
	\L^2_{t}\Xdot^{\gamma-2\beta}_\beta + \L^1_{t} \Xdot^{\gamma-\beta}_{\beta} \to \L^2_{t}\L^2_{\varphi,\xi}(w^{2\gamma-4\beta} \dd \varphi \dd\xi)+ \L^1_t\L^2_{\vphantom{t} \varphi,\xi}(w^{2\gamma-2\beta} \dd \varphi \dd \xi )
\end{equation*}
but it is not clear whether the scale of spaces on the right-hand side is an interpolation family, whence we cannot deduce the complex interpolation property for $\cLdot^\gamma_{\beta}$. 
\end{rem}
\begin{rem}
When $\gamma=\beta$, we can obtain multiplicative inequalities by scale invariance for $f\in \cFdot^\beta_{\beta}$. Applying (i) to $f_\delta(t,x,v) = f(\delta t, x, \delta v)$ and optimizing the choice of $\delta>0$ we obtain the multiplicative inequality
\[
	\|D_{x}^{\frac{\beta}{2\beta+1}} f\|_{\L^2_{t,x,v}} \lesssim_{d,\beta} \|D_{\vphantom{x} v}^\beta f\|_{\L^2_{t,x,v}} ^{\frac{\beta+1}{2\beta+1}}\|(\partial_t +v \cdot \nabla_x) f\|_{{\L^2_{t,x} \Hdot_{\vphantom{t,x} v}^{-\beta}}}^{\frac{\beta}{2\beta+1}}.
\]
 Similarly, applying (ii) to $f_\delta(t,x,v) = f( t, \delta x, \delta v)$ and optimizing in $\delta>0$, we obtain the multiplicative inequality 
\[
 \sup_{t\in \R} \|f(t)\|_{\L^2_{x,v}} \le \big(2 \|D_{v}^\beta f\|_{{\L^2_{t,x,v}}} \|(\partial_t + v \cdot \nabla_x)f\|_{{\L^2_{t,x} \Hdot_{\vphantom{t,x} v}^{-\beta}}}\big)^{\frac 1 2}.
\]
The space $\cFdot^\beta_{\beta}$ has been defined for $\beta<d/2$. For $\beta\ge d/2$, we set $\cFdot^\beta_{\beta}=\{f \in \Udot^\beta\, ;\,  (\partial_t + v \cdot \nabla_x)f \in {\L^2_{t,x} \Hdot_{\vphantom{t,x} v}^{-\beta}}\}$.
 \end{rem}

 \subsubsection{Proof of Theorem~\ref{thm:optimalregularity}.}

We turn to the proof of our main tool in this work.
 
\begin{proof}[Proof of Theorem~\ref{thm:optimalregularity}]
We assume $f\in \Udot^\beta$
 and $(\partial_{t}+v\cdot\nabla_{x})f =S_{1}+S_{2}+S_{3}$ which belongs to $\L^2_{t,x}\Hdot^{-\beta}_{\vphantom{t,x} v} + \L^2_{\vphantom{t,x} t,v}\Hdot_{\vphantom{t,x} x}^{-\frac{\beta}{2\beta+1}}
+ \L^1_{\vphantom{t,x} t}\L^2_{\vphantom{t,x} x,v}$.
  Thus $S:= S_{1}+ (-\Delta_{v})^{\beta}f+S_{2}+S_{3}$ is in the same space and 
$(\partial_{t}+v\cdot\nabla_{x})f + (-\Delta_{v})^{\beta}f= S.$
 By Corollary~\ref{cor:Kbeta}, 
$\cK^{+}_{\beta} S \in \L^2_{t,x}\Hdot^\beta_{v}\cap \L^2_{t,v}\Hdot_{\vphantom{t,v} x}^{ \frac{\beta}{2\beta+1}} \cap \C^{}_{0}(\R^{}_{t}\, ; \L^2_{x,v} )$. Hence, $\cK^{+}_{\beta} S\in \Udot^\beta$ and it is a distributional solution of the same equation as $f$ (Lemma~\ref{lem:distributionalsolutions} in the case $\gamma=\beta$). It is the only one by uniqueness in Lemma~\ref{lem:uniqueness}. It follows that $f= \cK^{+}_{\beta} S,$ and the estimate on $f$ follows from the ones on $\cK^{+}_{\beta} S$.

Let us prove the absolute continuity on $\R$. We just showed that $f= \cK^{+}_{\beta} S$ with $S= S_{1}+ (-\Delta_{v})^{\beta}f+S_{2}+S_{3}$. 
Define $f_{1}=\cK^{+}_{\beta} (S_{1}+ (-\Delta_{v})^{\beta}f)$, $f_{2}=\cK^{+}_{\beta}S_{2}$, $f_{3}=\cK^{+}_{\beta}S_{3}$, so that $f=f_{1}+f_{2}+f_{3}$. By Lemma~\ref{lem:density}, consider approximations $(S_{1}+ (-\Delta_{v})^{\beta}f)_{k}$, $(S_{2})_{k}$ and $(S_{3})_{k}$ of $S_{1}+ (-\Delta_{v})^{\beta}f$, $S_{2}$ and $S_{3}$ in $\L^2_{t,x}\Hdot^{-\beta}_{\vphantom{t,x} v}, \L^2_{\vphantom{t,x} t,v}\Hdot_{\vphantom{t,x} x}^{-\frac{\beta}{2\beta+1}}, \L^1_{\vphantom{t,x} t}\L^2_{\vphantom{t,x} x,v}$ respectively. Set $(f_{1})_{k}=\cK^{+}_{\beta} (S_{1}+ (-\Delta_{v})^{\beta}f)_{k}$, $(f_{2})_{k}=\cK^{+}_{\beta}(S_{2})_{k}$ and $(f_{3})_{k}=\cK^{+}_{\beta}(S_{3})_{k}$. 
We have, when $k\to \infty$, and $i=1,2,3$, 
\begin{align*}
& (S_{1}+ (-\Delta_{v})^{\beta}f)_{k} \to S_{1}+ (-\Delta_{v})^{\beta}f, \quad \mathrm{in}\ \L^2_{t,x}\Hdot^{-\beta}_{v},
 \\
& (S_{2})_{k} \to S_{2}, \quad \mathrm{in}\ \L^2_{t,v}\Hdot_{\vphantom{t,v} x}^{-\frac{\beta}{2\beta+1}},
\\
& (S_{3})_{k} \to S_{3}, \quad \mathrm{in}\ \L^1_{t}\L^2_{x,v},
 \\ 
& (f_{i})_{k} \to f_{i}, \quad \mathrm{in}\ \L^2_{t,x}\Hdot^{\beta}_{\vphantom{t,x} v}\cap \L^2_{\vphantom{t,x} t,v}\Hdot_{\vphantom{t,x} x}^{\frac{\beta}{2\beta+1}} \cap \C^{}_{0}(\R^{}_{t}\, ; \L^2_{\vphantom{t,x} x,v} ).
\end{align*}
Setting $g_{k}=(f_{1})_{k}+(f_{2})_{k}+(f_{3})_{k}$, we know that $\Gamma g_{k}$ and $\Gamma((\partial_t + v \cdot \nabla_x)g_{k})= \partial_{t}\Gamma g_{k}$ belong to $\L^2_{t,x,v}$. 
We can thus write for all $s<t$, 
\begin{align*}
 \|g_{k}(t)\|^2_{\L^2_{x,v}}-\|g_{k}(s)\|^2_{\L^2_{x,v}}
 &= \iint_{\R^{2d}} |(\Gamma g_{k})(t,x,v)|^2 \dx \dv-  \iint_{\R^{2d}}|(\Gamma g_{k})(s, x,v)|^2 \dx \dv
 \\
 &= 2\Re \iint_{\R^{2d}}\int_{s}^t \partial_{t}(\Gamma g_{k})(\tau,x,v) \ \overline{\Gamma g_{k}}(\tau,x,v)  \dd\tau \dx \dv
 \\
 &=
 2\Re \int_{s}^t \iint_{\R^{2d}} \partial_{t}(\Gamma g_{k})(\tau,x,v)\ \overline{\Gamma g_{k}}(\tau,x,v)  \dx \dv \dd \tau
 \\
 &= 2\Re \int_{s}^t \iint_{\R^{2d}} ((\partial_t + v \cdot \nabla_x)g_{k})(\tau,x,v)\ \overline{ g_{k}}(\tau,x,v) \dx \dv \dd \tau
\end{align*}
where we used Fubini's theorem, and change of variables $\Gamma(t), \Gamma(s)$ and $\Gamma(\tau)$ in the $(x,v)$-integrals.  The left hand-side converges to $ \|f(t)\|^2_{\L^2_{x,v}}-\|f(s)\|^2_{\L^2_{x,v}}$. For the right-hand side, we split the integral using the decomposition of $(\partial_t + v \cdot \nabla_x) g_{k}$ into three terms and argue differently for each. Realising the $v$-integral as the duality bracket between 
$\Hdot^{-\beta}_{v}$ and $\Hdot^\beta_{v}$, we have, using the dominated convergence theorem for the $(\tau,x)$-integral,
\begin{align*}
\int_{s}^t \iint_{\R^{2d}} &((S_{1}+ (-\Delta_{v})^{\beta}f)_{k}-(-\Delta_{v})^{\beta}g_{k})\ \overline{ g_{k}} \dx \dv \dd \tau 
\\
&= 
\int_{s}^t \int_{\R^{d}} \angle{(S_{1}+ (-\Delta_{v})^{\beta}f)_{k}-(-\Delta_{v})^{\beta}g_{k}} { g_{k}} \dx \dd \tau 
\to \int_{s}^t \int_{\R^{d}} \angle{S_{1}} {f} \dx \dd \tau. 
\end{align*}
Similarly, realizing the $x$-integral as the duality bracket between $\Hdot_{x}^{-\frac{\beta}{2\beta+1}}$ and $\Hdot_{x}^{\frac{\beta}{2\beta+1}}$ and using dominated convergence in the $(t,v)$-integral 
\begin{align*}
\int_{s}^t \iint_{\R^{2d}} (S_{2})_{k} \ \overline{ g_{k}} \dx \dv \dd\tau 
= 
\int_{s}^t \int_{\R^{d}} \angle{(S_{2})_{k}} { g_{k}} \dv \dd \tau 
\to 
\int_{s}^t \int_{\R^{d}} \angle{S_{2}} {f} \dv \dd\tau.
\end{align*}
Eventually, using merely dominated convergence for the $(t,x,v)$-integral 
\[
\int_{s}^t \iint_{\R^{2d}} (S_{3})_{k}\ \overline{ g_{k}}  \dx \dv \dd\tau 
\to \int_{s}^t \iint_{\R^{d}} S_{3}\ \overline{ f} \dx \dv \dd\tau. \qedhere
\]
\end{proof}

The following result is a generalisation of Theorem~\ref{thm:optimalregularity}. The $d/2$ constraint is likely inessential with an appropriate definition of the spaces $\cLdot^{\gamma}_{\beta}$, which we have chosen not to develop here.
\begin{thm}[Kinetic embedding and transfer of regularity - polarized version]\label{thm:optimalregularitygeneralisation} Let $-\beta\le \varepsilon\le \beta$ such that $\beta\pm\varepsilon<d/2$.
 Let $f, \tilde f\in \cD'(\Omega)$ be such that $f\in \cLdot^{\beta+\varepsilon}_{\beta}$ and $\tilde f\in \cLdot^{\beta-\varepsilon}_{\beta}$. 
 Then 
 \begin{enumerate}
\item $f\in \L^2_{t,v}\Hdot_{\vphantom{t,v} x}^{\frac{\beta+\varepsilon}{2\beta+1}} \cap \C^{}_{0}(\R^{}_{t}\, ; \, \Xdot^{\varepsilon}_{\beta})$ with 
\[
\|D_{x}^{\frac{\beta+\varepsilon}{2\beta+1}} f \|_{\L^2_{t,x,v}}+ \sup_{t\in \R} \|f(t)\|_{\Xdot^{\varepsilon}_{\beta}} \\\lesssim_{d,\beta, \varepsilon} \|f\|_{\cLdot^{\beta+\varepsilon}_{\beta}}
\]
\item $\tilde f\in \L^2_{t,v}\Hdot_{\vphantom{t,v} x}^{\frac{\beta-\varepsilon}{2\beta+1}} \cap \C^{}_{0}(\R^{}_{t}\, ; \, \Xdot^{-\varepsilon}_{\beta})$ with 
\[
\|D_{x}^{\frac{\beta-\varepsilon}{2\beta+1}} {\tilde f} \|_{\L^2_{t,x,v}}+ \sup_{t\in \R} \|{\tilde f}(t)\|_{\Xdot^{-\varepsilon}_{\beta}} \\\lesssim_{d,\beta, \varepsilon} \|{\tilde f}\|_{\cLdot^{\beta-\varepsilon}_{\beta}}.
\]
\item The map $t\mapsto\angle{f(t)}{\tilde f(t)}$, where the bracket is for the duality between $\Xdot^{\varepsilon}_{\beta}, \Xdot^{-\varepsilon}_{\beta}$ is absolutely continuous on $\R$ and its almost everywhere derivative can be computed as follows. Write 
$$
(\partial_{t}+v\cdot\nabla_{x})f =S_{1}+S_{2}+S_{3}
$$
with $S_1 \in \L^2_{t,x}\Hdot^{-\beta+\varepsilon}_{\vphantom{t,x} v}$, $S_2 \in \L^2_{t,v}\Hdot_{\vphantom{t,x} x}^{\frac{-\beta+\varepsilon}{2\beta+1}}$ and $S_3 \in \L^1_{\vphantom{t,x} t}\Xdot_{\vphantom{t,x} \beta}^{-\beta+\varepsilon}$, and 
$$
(\partial_{t}+v\cdot\nabla_{x})\tilde f =\widetilde S_{1}+ \widetilde S_{2}+\widetilde S_{3}
$$
with $\widetilde S_1 \in \L^2_{t,x}\Hdot^{-\beta-\varepsilon}_{\vphantom{t,x} v}$, $\widetilde S_2 \in \L^2_{t,v}\Hdot_{\vphantom{t,x} x}^{\frac{-\beta-\varepsilon}{2\beta+1}}$ and $\widetilde S_3 \in \L^1_{\vphantom{t,x} t}\Xdot_{\vphantom{t,x} \beta}^{-\beta-\varepsilon}$.
Then for a.e. $t\in \R$
\begin{multline*}
\frac{\mathrm{d} }{\mathrm{d}t }\angle{f(t)}{\tilde f(t)} =  \int_{\R^d} (\angle {S_{1}}{\tilde f} + \angle { f}{\widetilde S_{1}}) \dx + \int_{\R^d} (\angle {S_{2}}{\tilde f}+ \angle { f}{\widetilde S_{2}}) \dv + \angle  {S_{3}}{\tilde f} + \angle  {f} {\widetilde S_{3}},
\end{multline*}
where each bracket denotes a different sesquilinear duality extending the $\L^2$ inner product by tracing the spaces involved. Putting absolute values around each bracket gives an integrable function in the missing variables. 
\end{enumerate}
\end{thm}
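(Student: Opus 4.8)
The plan is to read off (i) and (ii) directly from the general embedding theorem and to prove the bilinear energy identity (iii) by the density-and-limit scheme of the proof of Theorem~\ref{thm:optimalregularity}(2), now run on a product of two functions rather than on $\|f(t)\|_{\L^2_{x,v}}^2$. For (i) and (ii), I apply Theorem~\ref{thm:homkinspace} with $\gamma=\beta+\varepsilon$ to $f\in\cLdot^{\beta+\varepsilon}_\beta$ and with $\gamma=\beta-\varepsilon$ to $\tilde f\in\cLdot^{\beta-\varepsilon}_\beta$: the standing hypotheses $-\beta\le\varepsilon\le\beta$ and $\beta\pm\varepsilon<d/2$ are exactly the admissibility conditions $0\le\gamma\le 2\beta$, $\gamma<d/2$, and since $(\beta\pm\varepsilon)-\beta=\pm\varepsilon$, items (i)--(ii) there yield precisely the stated memberships in $\L^2_{t,v}\Hdot_x^{\frac{\beta\pm\varepsilon}{2\beta+1}}\cap\C_0(\R_t;\Xdot^{\pm\varepsilon}_\beta)$ together with the corresponding estimates.

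For (iii), I would first record the bilinear identity for smooth data. Using the isomorphism of Lemma~\ref{lem:isom-embed}, write $f=\cK^+_\beta S$ and $\tilde f=\cK^+_\beta\tilde S$ with $S=(\partial_t+v\cdot\nabla_x)f+(-\Delta_v)^\beta f$ and $\tilde S=(\partial_t+v\cdot\nabla_x)\tilde f+(-\Delta_v)^\beta\tilde f$. For $g=\cK^+_\beta\sigma$ and $\tilde g=\cK^+_\beta\tilde\sigma$ with $\sigma,\tilde\sigma\in\cS_K$, Lemma~\ref{lem:density} gives $\Gamma g,\partial_t\Gamma g,\Gamma\tilde g,\partial_t\Gamma\tilde g\in\L^2_{t,x,v}$. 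Since $\Gamma(\tau)$ is an $\L^2_{x,v}$-isometry one has $\angle{g(\tau)}{\tilde g(\tau)}_{\L^2_{x,v}}=\angle{\Gamma g(\tau)}{\Gamma\tilde g(\tau)}_{\L^2_{x,v}}$ and $\partial_\tau\Gamma g=\Gamma((\partial_\tau+v\cdot\nabla_x)g)$, so by the fundamental theorem of calculus for $\H^1_t$-valued functions and Fubini, for all $s<t$,
\[
\angle{g(t)}{\tilde g(t)}_{\L^2_{x,v}}-\angle{g(s)}{\tilde g(s)}_{\L^2_{x,v}}=\int_s^t\Big(\angle{(\partial_\tau+v\cdot\nabla_x)g}{\tilde g}_{\L^2_{x,v}}+\angle{g}{(\partial_\tau+v\cdot\nabla_x)\tilde g}_{\L^2_{x,v}}\Big)\dd\tau.
\]

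I would then approximate each component of $S$ and of $\tilde S$ in its own norm by elements of $\cS_K$ (Lemma~\ref{lem:density}), producing $g_k\to f$ and $\tilde g_k\to\tilde f$; by Corollary~\ref{cor:Kbeta} (and Proposition~\ref{p:boundsKolmoapriori}) the convergence holds in particular in $\C_0(\R_t;\Xdot^{\varepsilon}_\beta)$ and $\C_0(\R_t;\Xdot^{-\varepsilon}_\beta)$ respectively. Applying the identity to $g_k,\tilde g_k$ and letting $k\to\infty$, the left-hand side tends to $\angle{f(t)}{\tilde f(t)}-\angle{f(s)}{\tilde f(s)}$ by continuity of the $\Xdot^{\varepsilon}_\beta\times\Xdot^{-\varepsilon}_\beta$ pairing. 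On the right-hand side I substitute $(\partial_\tau+v\cdot\nabla_x)\cK^+_\beta\sigma=\sigma-(-\Delta_v)^\beta\cK^+_\beta\sigma$, split $S=S_1+S_2+S_3$ and $\tilde S=\tilde S_1+\tilde S_2+\tilde S_3$, realise each integral as the matching partial bracket (the $v$-duality $\Hdot^{-\beta+\varepsilon}_v$–$\Hdot^{\beta-\varepsilon}_v$ for the $S_1$ terms, the $x$-duality $\Hdot_x^{\frac{-\beta+\varepsilon}{2\beta+1}}$–$\Hdot_x^{\frac{\beta-\varepsilon}{2\beta+1}}$ for the $S_2$ terms, and the $\Xdot^{\pm\varepsilon}_\beta$-duality for the $S_3$ terms, and symmetrically for the tilded pieces), and pass to the limit by dominated convergence in the remaining variables, exactly as in the proof of Theorem~\ref{thm:optimalregularity}.

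Finally, absolute continuity follows because the limiting integrand lies in $\L^1_t$: the $S_1,S_2$ (and $\tilde S_1,\tilde S_2$) terms are controlled by Cauchy--Schwarz, combining the $\L^2_{t,x}\Hdot^\bullet_v$ and $\L^2_{t,v}\Hdot^\bullet_x$ norms from (i)--(ii) with the source norms, while the $S_3,\tilde S_3$ terms are controlled by pairing the $\L^1_t$-component of the source against the uniform-in-time $\Xdot^{\pm\varepsilon}_\beta$ bounds of $f$ and $\tilde f$ supplied by (i)--(ii). Thus $t\mapsto\angle{f(t)}{\tilde f(t)}$ is a constant plus the Lebesgue integral of an $\L^1_t$ function, hence absolutely continuous with the claimed a.e. derivative. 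I expect the main difficulty to be precisely this bookkeeping of brackets: one must match, at almost every time, the regularity of each source piece with that of $f$ or $\tilde f$ so that every duality is legitimate and its time integral converges — the $S_3,\tilde S_3$ terms being the delicate ones, as they force the use of the uniform-in-time embeddings rather than merely the $\L^2_t$ bounds, which is exactly where working on all of $\R$ with vanishing limits at infinity is essential.
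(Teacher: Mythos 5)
Your proposal is correct and follows exactly the paper's route: the paper's own proof consists of citing Theorem~\ref{thm:homkinspace} with $\gamma=\beta\pm\varepsilon$ for items (i)--(ii) and declaring item (iii) to be ``the polarised version of \eqref{e:abs-cont}'' with details left to the reader. Your density-and-limit polarization of the proof of Theorem~\ref{thm:optimalregularity} --- writing $f=\cK^+_\beta S$, $\tilde f=\cK^+_\beta\tilde S$ via Lemma~\ref{lem:isom-embed}, establishing the bilinear identity for $\cS_K$-data, and passing to the limit with the exponents paired symmetrically about $\beta$ (in particular pairing the $\L^1_t$ source pieces against the uniform-in-time $\Xdot^{\pm\varepsilon}_\beta$ bounds from (i)--(ii)) --- is precisely the omitted detail, carried out correctly.
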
 
\begin{proof}
 Items (i) and (ii) are already in Theorem~\ref{thm:homkinspace}.
Item (iii) is the polarised version \eqref{e:abs-cont} of Theorem~\ref{thm:optimalregularity} where the exponents are symmetric with respect to $\beta$ for duality reasons. Details are left to the reader. 
\end{proof}


\section{Construction of weak solutions to rough equations when $\beta<d/2$}
\label{s:weak-rough}

This section is devoted to the construction of weak solutions when $\beta<d/2$. It relies on a theorem due to J.-L.~Lions \cite[Chap.~II, Thm.~1.1]{Lions}, which can be used in this range for $\beta$. Since this article is written in French, we recall the statement before proving Theorem~\ref{thm:HomExUn}. 

\subsection{Existence and uniqueness of weak solutions}
\label{sec:weaksol}

The proof of the existence of weak solutions to Kolmogorov--Fokker--Planck equations with rough coefficients uses the following theorem.
\begin{thm}[Lions -- \cite{Lions}] \label{thm:lions}
 Let $F$ be a Hilbert space equipped with a scalar product $\langle \cdot, \cdot \rangle_F$ and the associated norm $\|\cdot\|_F$. Let $\mathcal{H} \subset F$ be a prehibertian space equipped with a scalar product $\langle \cdot, \cdot \rangle_{\mathcal{H}}$ and the associated norm $\|\cdot\|_{\mathcal{H}}$. Assume that there exists $c_1>0$ such that for all $f \in \mathcal{H}$, $\|f\|_F \le c_1 \|f\|_{\mathcal{H}}$. Let $E$ be a sesquilinear form on $F \times \mathcal{H}$ such that
 \begin{align*}
  \mathrm{(H1)} & \quad \text{for all } h \in \mathcal{H}, \text{ the linear form } f \mapsto E (f,h) \text{ is continuous on } F, \\
  \mathrm{(H2)} & \quad \text{there exists } \alpha>0 \text{ such that } \Re E(h,h) \ge \alpha \|h\|_{\mathcal{H}}^2 \text{ for all } h \in \mathcal{H}.
 \end{align*}

 Then for any semi-linear continuous form $L$ on $\mathcal{H}$, there exists $f \in F$ such that $E(f,h) = L(h)$ for all $h \in \mathcal{H}$. 
\end{thm}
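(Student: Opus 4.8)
The plan is to reduce the statement to the Riesz representation theorem by transporting the form $E$ to the ambient Hilbert space $F$, exploiting the asymmetry in the hypotheses: continuity (H1) is measured in the $F$-topology while coercivity (H2) only holds on $\mathcal{H}$. Throughout I adopt the convention that $E$ and $\langle \cdot,\cdot\rangle_F$ are linear in their first argument and conjugate-linear in their second, and that $L$ is semilinear. First I would fix $h \in \mathcal{H}$ and note that, by (H1), the map $f \mapsto E(f,h)$ is a continuous linear functional on $F$; the Riesz representation theorem then produces a unique element $Ah \in F$ with $E(f,h) = \langle f, Ah\rangle_F$ for every $f \in F$. Matching the conjugation of $E$ with that of $\langle \cdot,\cdot\rangle_F$ shows that the resulting map $A \colon \mathcal{H} \to F$ is linear.

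The second step is to extract a lower bound for $A$ from (H2) and the embedding constant $c_1$. For $h \in \mathcal{H}$,
\[
\alpha \|h\|_{\mathcal{H}}^2 \le \Re E(h,h) = \Re \langle h, Ah\rangle_F \le \|h\|_F\,\|Ah\|_F \le c_1 \|h\|_{\mathcal{H}}\,\|Ah\|_F,
\]
so that $\|h\|_{\mathcal{H}} \le (c_1/\alpha)\,\|Ah\|_F$. In particular $A$ is injective, and its inverse is bounded from the range $R(A) \subset F$ (carrying the $F$-norm) back onto $\mathcal{H}$.

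The third step brings in $L$. Since $A$ is injective, the rule $\ell(Ah) := L(h)$ unambiguously defines a semilinear functional on the subspace $R(A)$, and the bound of the previous step together with the continuity of $L$ gives $|\ell(Ah)| = |L(h)| \le \|L\|_{\mathcal{H}'}\,\|h\|_{\mathcal{H}} \le (c_1/\alpha)\,\|L\|_{\mathcal{H}'}\,\|Ah\|_F$. Thus $\ell$ is a bounded semilinear functional on $R(A)$; I would extend it by continuity to the closure $\overline{R(A)}$ and then to all of $F$ (by Hahn--Banach, or simply by precomposing with the orthogonal projection onto $\overline{R(A)}$, which sets it to zero on $\overline{R(A)}^{\perp}$). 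A final application of the Riesz representation theorem yields $f \in F$ with $\ell(v) = \langle f, v\rangle_F$ for all $v \in F$; evaluating at $v = Ah$ gives $E(f,h) = \langle f, Ah\rangle_F = \ell(Ah) = L(h)$ for every $h \in \mathcal{H}$, as required.

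The main obstacle is conceptual rather than computational: because $\mathcal{H}$ is merely prehilbertian and $E$ is coercive on $\mathcal{H}$ but only $F$-continuous, one cannot apply Lax--Milgram on a single Hilbert space. The device of pushing everything into $F$ through the operator $A$ is exactly what circumvents the incompleteness of $\mathcal{H}$ (note that $R(A)$ need not be closed, which is why one works with its closure). The one point demanding genuine care is the bookkeeping of the sesquilinearity conventions, so that $A$ and $\ell$ come out with the correct (anti)linearity and the concluding identity $E(f,h) = L(h)$ holds with no stray complex conjugates.
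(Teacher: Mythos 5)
Your proof is correct. Note that the paper offers no proof of this statement at all: it is quoted as a known result from Lions' 1957 article (Chap.~II, Thm.~1.1), so there is no internal argument to compare against. Your route --- applying the Riesz representation theorem to each functional $f \mapsto E(f,h)$ to produce $Ah \in F$, using (H2) together with the embedding $\|h\|_F \le c_1\|h\|_{\mathcal{H}}$ to get $\|h\|_{\mathcal{H}} \le (c_1/\alpha)\|Ah\|_F$, hence injectivity of $A$ and boundedness of the functional $\ell(Ah) := L(h)$ on the range of $A$, then extending $\ell$ to $F$ and applying Riesz once more --- is the standard proof of this projection lemma; it is complete, the sesquilinearity conventions are handled consistently, and it even yields the quantitative bound $\|f\|_F \le (c_1/\alpha)\sup_{\|h\|_{\mathcal{H}}\le 1}|L(h)|$, which the statement does not require.
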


The following lemma is a consequence of the transfer of regularity obtained in Theorem~\ref{thm:optimalregularity} and is valid for all $\beta>0$. 
\begin{lem}[Energy equality]
\label{lem:energyequalityweaksol} Let $\beta>0$ and
$S=S_{1}+S_{2}+S_{3}$ with $S_1 \in \L^2_{t,x}\Hdot^{-\beta}_{\vphantom{t,x} v}$ and $S_2 \in \L^2_{\vphantom{t,x} t,v}\Hdot_{\vphantom{t,x} x}^{-\frac{\beta}{2\beta+1}}$
and $S_3 \in \L^1_{\vphantom{t,x} t}\L^2_{\vphantom{t,x} x,v}$. Any weak solution to $(\partial_t + v \cdot \nabla_x) f + \cA f= S$ in the sense of Definition~\ref{defn:weaksol} satisfies the energy equality: for any $s,t \in \R$ with $s<t$, 
\begin{align} \label{e:energy}
\nonumber \|f(t)\|^2_{\L^2_{x,v}} + 2\Re& \int_{s}^t\int_{\R^d} a_{\tau,x}({f},{f})\dx\dd \tau = 
\\ \|f(s)\|^2_{\L^2_{x,v}}  &+ 2\Re \int_{s}^t\bigg(\int_{\R^d} \angle {S_{1}}{f} \dx + \int_{\R^d} \angle {S_{2}}{f} \dv + \iint_{\R^{2d}} {S_{3}}\overline f \dx \dv \bigg)\dd \tau.
 \end{align} 
\end{lem}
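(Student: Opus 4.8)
The plan is to reduce the statement to the transfer-of-regularity result, Theorem~\ref{thm:optimalregularity}, by recognising that any weak solution automatically belongs to the kinetic space $\cLdot^\beta_{\beta}$.

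First I would unwind Definition~\ref{defn:weaksol}: the weak formulation says precisely that $(\partial_{t}+v\cdot\nabla_{x})f+\cA f=S$ in $\cD'(\Omega)$, where $\cA$ is the operator attached to the measurable family of forms $(a_{\tau,x})$ through $\angle{\cA f}{h}=a(f,h)$. Since $f\in\Udot^\beta$, the homogeneous norm $\|f\|_{\Fdot^\beta}=\|D_{v}^\beta f\|_{\L^2_{t,x,v}}$ is finite, so the boundedness hypothesis \eqref{e:ellip-upper} gives $|a(f,h)|\le\Lambda\|D_{v}^\beta f\|_{\L^2_{t,x,v}}\|D_{v}^\beta h\|_{\L^2_{t,x,v}}$ and hence $\cA f\in\L^2_{t,x}\Hdot^{-\beta}_{v}$. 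Writing the free transport as
\[
(\partial_{t}+v\cdot\nabla_{x})f=(S_{1}-\cA f)+S_{2}+S_{3},
\]
the three terms lie in $\L^2_{t,x}\Hdot^{-\beta}_{v}$, $\L^2_{t,v}\Hdot_{x}^{-\frac{\beta}{2\beta+1}}$ and $\L^1_{t}\L^2_{x,v}$ respectively, so that $f\in\cLdot^\beta_{\beta}$.

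Next I would apply Theorem~\ref{thm:optimalregularity} with the source decomposition $(S_{1}-\cA f,\,S_{2},\,S_{3})$. Its first item gives $f\in\C_{0}(\R_{t};\L^2_{x,v})$, so that $\|f(t)\|_{\L^2_{x,v}}$ is defined for every $t$; its second item gives absolute continuity of $t\mapsto\|f(t)\|^2_{\L^2_{x,v}}$ together with the pointwise derivative formula \eqref{e:abs-cont}, in which the first duality bracket now reads $\angle{S_{1}-\cA f}{f}$. Splitting this bracket as $\angle{S_{1}}{f}-\angle{\cA f}{f}$ and integrating the derivative identity over $[s,t]$ (all integrands being integrable by the theorem), then transferring the $\cA$-term to the left-hand side, produces exactly \eqref{e:energy}, provided one identifies, at a.e. fixed time $\tau$, the quantity $\int_{\R^d}\angle{\cA f}{f}\dx$ with $\int_{\R^d}a_{\tau,x}(f,f)\dx$.

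The main obstacle, and essentially the only step that is not bookkeeping, is precisely this slice-wise identification of the globally defined operator $\cA$ with the forms $a_{\tau,x}$ frozen in $(\tau,x)$. To justify it I would use that $\cA f\in\L^2_{t,x}\Hdot^{-\beta}_{v}$ has almost-everywhere-defined slices $(\cA f)(\tau,x,\cdot)\in\Hdot^{-\beta}_{v}$, that by construction of $\cA$ these satisfy $\angle{(\cA f)(\tau,x,\cdot)}{\phi}=a_{\tau,x}(f,\phi)$ for all $\phi\in\H^\beta_{v}$, and a Fubini argument to commute the $(\tau,x)$-integration with the $v$-duality; taking $\phi=f(\tau,x,\cdot)$ then yields the claim. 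A minor point worth checking is that the whole reduction is uniform in $\beta$ (including the range $\beta\ge d/2$), which holds because both Theorem~\ref{thm:optimalregularity} and the mapping $\cA\colon\Fdot^\beta\to\L^2_{t,x}\Hdot^{-\beta}_{v}$ are available for every $\beta>0$ under the $\Udot^\beta$ convention.
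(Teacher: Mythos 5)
Your proposal is correct and follows essentially the same route as the paper: rewrite the transport term as $(S_{1}-\cA f)+S_{2}+S_{3}$, observe via \eqref{e:ellip-upper} that $\cA f\in \L^2_{t,x}\Hdot^{-\beta}_{v}$ so that Theorem~\ref{thm:optimalregularity} applies, integrate the absolute-continuity identity \eqref{e:abs-cont} over $[s,t]$, and identify the slice-wise bracket $\angle{\cA f}{f}(\tau,x)$ with $a_{\tau,x}(f,f)$. The paper disposes of this last identification with a one-line appeal to the definition of $\cA$, whereas you spell out the measurable-slices/Fubini justification; this is a harmless elaboration of the same step.
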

\begin{proof} As $(\partial_t + v \cdot \nabla_x) f = S-\cA f$, we know from Theorem~\ref{thm:optimalregularity} that 
 $t\mapsto \|f(t)\|^2_{\L^2_{x,v}}$ is absolutely continuous, with limit 0 at $\pm\infty$ and, for $s<t$, 
 $ \|f(t)\|^2_{\L^2_{x,v}}-\|f(s)\|^2_{\L^2_{x,v}}$ is equal to
\begin{align*}
 2\Re \int_{s}^t \bigg(\int_{\R^d} \angle {S_{1}-\cA f}{f} \dx + \int_{\R^d} \angle {S_{2}}{f} \dv + \iint_{\R^{2d}} {S_{3}}\overline f \dx \dv \bigg)\dd\tau.
 \end{align*}
 The bracket inside the first integral denotes the one expressing the $\Hdot^{-\beta}_{v}, \Hdot^{\beta}_{\vphantom {v}v}$ duality as a function of $(\tau,x)$, so by definition and writing the variables for clarity
 \[
  \angle {\cA f}{f}(\tau,x)=a_{\tau,x}({f(\tau,x)},{f(\tau,x)})= a_{\tau,x}({f},{f})
 \]
by convention in the last equality.
\end{proof}

\begin{proof}[Proof of Theorem~\ref{thm:HomExUn} when $\beta<d/2$]
 Lemma~\ref{lem:energyequalityweaksol} already proves that any weak solution satisfies the energy equality. 
\bigskip

In the second step, we prove uniqueness. Assume that $f\in \Fdot^\beta$ with $ (\partial_t + v \cdot \nabla_x)f + \cA f=0$ (in the sense of Definition~\ref{defn:weaksol}). 
The energy equality reads when $s<t$,
\begin{align*}
 \|f(t)\|^2_{\L^2_{x,v}}-\|f(s)\|^2_{\L^2_{x,v}}=- 2\Re \int_{s}^t\int_{\R^d} a_{\tau,x}({f},{f})\dx\dd \tau .
\end{align*}
Letting now $s\to -\infty$ and $t\to+\infty$ and using that limits of $\L^2_{x,v}$ norms are zero, we obtain 
$\Re \int_{-\infty}^{+\infty}\int_{\R^d} a_{\tau,x}({f},{f}) \dx\dd \tau=0$. Ellipticity implies $f=0$. 
\bigskip

In a third step, we prove the existence in the case where $S\in \L^2_{t,x}\Hdot^{-\beta}_{v}=(\Fdot^{\beta})'$ by use of Theorem~\ref{thm:lions}. This is the only place where we need the assumption $\beta<d/2$ in this proof. Indeed, we recall that when $\beta<d/2$,  $\Hdot^{\beta}_{v}$ is a Hilbert space, so is $\Fdot^\beta=\L^2_{t,x}\Hdot^{\beta}_{v}$. 

To verify the hypotheses there, we let $\cH=\cD(\Omega)$ equipped with the prehilbertian norm $\|h\|_{\cH}=\|h\|_{\Fdot^\beta}$. 
In particular, the inclusion $\cH\subset \Fdot^\beta$ is continuous.

Next, consider the sesquilinear form $E$ on $\Fdot^\beta \times \cH$ defined by
\[ E(f,h) = - \iiint_{\Omega} f (\partial_t+ v \cdot \nabla_x) \bar h \dt \dx \dv + a (f,h),\]
where $a$ is defined in \eqref{e:a-defi}.

We need to check (H1), that is: for all $h\in \cH$, $f\mapsto E(f,h)$ is continuous on $\Fdot^\beta$.
To this end, we show that $|E(f,h)| \lesssim \|f\|_{\Fdot^\beta}\|h\|_{\cFdot^\beta_{\beta}}$, where the $\cFdot^\beta_{\beta}$ norm is defined in \eqref{eq:Fdotgammabetanorm}. This implies (H1) as 
$ \|h\|_{\cFdot^\beta_{\beta}}<\infty$ for $h\in \cH$.
Let $h\in \cH$ and $f\in \Fdot^\beta$. Then, by H\"older inequality and Sobolev embedding, we have 
\begin{align*}
 \bigg|\iiint_{\Omega} f (\partial_{t}+v\cdot\nabla_{x}) \overline h \dt \dx \dv\bigg|
 &\le \|f\|_{\L^2_{t,x}\L^q_{\vphantom{t,x} v}}\ \|(\partial_{t}+v\cdot\nabla_{x})h\|_{\L^2_{t,x}\L^{q'}_{\vphantom{t,x} v}}\\
 & \lesssim \|f\|_{\L^2_{t,x}\Hdot^\beta_{\vphantom{t,x} v}}\ \|(\partial_{t}+v\cdot\nabla_{x})h\|_{\L^2_{t,x}\Hdot^{-\beta}_{\vphantom{t,x} v}}.
\end{align*}
For the other term, we use the assumption on $a$ in order to get,
\[
 |a(f,h)| \le \Lambda\|f\|_{\Fdot^\beta}\|h\|_{\Fdot^\beta}. 
\]
Gathering the two terms using the definition of the $\cFdot^\beta_{\beta}$ norm in \eqref{eq:Fdotgammabetanorm} gives us the conclusion.

We next check (H2), namely that $\Re E(h,h)\ge \lambda \|h\|_{\cH}^2$ for all $h\in \cH$. Indeed, by integrating by parts, 
\begin{align}
 \label{eq:ReY=0}
 - \Re \iiint_{\Omega} h (\partial_{t}+v\cdot\nabla_{x})\overline h \dt \dx \dv =0.
\end{align}
Thus, $\Re E(h,h)= \Re a(h,h) \ge \lambda \|h\|_{\Fdot^\beta}^2=\lambda\|h\|_{\cH}^2$. 

As $S\in (\Fdot^{\beta})'$, the map $h\mapsto \iint_{\R\times \R^d} \angle{S} {h}\dx \dt $,
where the brackets inside the integral are the 
$\Hdot^{-\beta}_{v}, \Hdot^{\beta}_{\vphantom{v} v}$ duality as usual, is a continuous semi-linear functional on $\cH$. Applying the Theorem~\ref{thm:lions}, there exists $f\in \Fdot^\beta$ such that
\[ E(f,h)= \iint_{\R\times \R^d} \angle{S} {h}\dx \dt \ \text{ for all } h\in\cH. \]
In our language, this means that $f$ is a weak solution to $(\partial_t + v \cdot \nabla_x)f+\cA f=S$ in the sense of Definition~\ref{defn:weaksol}. And since $(\partial_t + v \cdot \nabla_x)f=S-\cA f \in (\Fdot^{\beta})'$, we have that $f\in \cFdot^\beta_{\beta}$. 

The second and third steps show the isomorphism property of $(\partial_t + v \cdot \nabla_x)+\cA$ from $\cFdot^\beta_{\beta}$ onto $(\Fdot^{\beta})'$. 
\bigskip

In the fourth step, 
we assume $S\in \L^2_{t,v}\Hdot_{\vphantom{t,v} x}^{-\frac{\beta}{2\beta+1}}+\L^1_{t}\L^2_{x,v}$. We prove the existence of a weak solution by a duality scheme and the use of kinetic embedding and transfer of regularity in Theorem~\ref{thm:optimalregularity}. The same argument as above applies to $((\partial_t + v \cdot \nabla_x)+\cA)^*= -(\partial_t + v \cdot \nabla_x)+\cA^*$, because \eqref{eq:ReY=0} is not changed in changing the sign of the field and $\cA^*$ is also elliptic. It is also an isomorphism from $\cFdot^\beta_{\beta}$ onto $(\Fdot^{\beta})'$. In particular, Theorem~\ref{thm:optimalregularity} implies that
\[(-(\partial_t + v \cdot \nabla_x)+\cA^*)^{-1}: (\Fdot^\beta)'\to \cFdot^\beta_{\beta}\hookrightarrow \cLdot^\beta_{\beta} \hookrightarrow \L^2_{t,v}\Hdot_{\vphantom{t,v} x}^{\frac{\beta}{2\beta+1}} \cap \C^{}_{0}(\R^{}_{t}\, ; \L^2_{x,v}).\]
Hence, the operator $\cT: {\L^2_{t,v}\Hdot_{\vphantom{t,v} x}^{-\frac{\beta}{2\beta+1}}+}\L^1_{t}\L^2_{x,v} \to \Fdot^\beta$ defined by 
\[
\angle{\cT S}h= \angle{S}{(-(\partial_t + v \cdot \nabla_x)+\cA^*)^{-1}h}, \ \text{ for } h\in (\Fdot^\beta)' \text{ and } S\in \L^2_{t,v}\Hdot_{\vphantom{t,v} x}^{-\frac{\beta}{2\beta+1}}+\L^1_{t}\L^2_{x,v}
\]
is continuous.

If $S\in \L^2_{t,v}\Hdot_{\vphantom{t,v} x}^{-\frac{\beta}{2\beta+1}}+\L^1_{t}\L^2_{x,v}$ satisfies the extra assumption $S\in (\Fdot^\beta)'$, then $\cT S$ agrees with $((\partial_t + v \cdot \nabla_x)+\cA)^{-1}S$ as previously defined, so the function $f=\cT S$ satisfies $E(f,h)=\angle{S}{h}$ for all $h\in \cH =\cD(\Omega)$.

We now remove this extra assumption. Approximate $S\in \L^2_{t,v}\Hdot_{\vphantom{t,v} x}^{-\frac{\beta}{2\beta+1}}+\L^1_{t}\L^2_{x,v}$ by $S_{j}\in \cS_{K}$ as in Lemma~\ref{lem:density}, and thus, letting $f_{j}=\cT S_{j}$, we have $E(f_{j},h)= \angle {S_{j}} h $ when $h\in \cD(\Omega)$. For fixed $h$, since $S_{j} \to S$ in $\L^2_{t,v}\Hdot_{\vphantom{t,v} x}^{-\frac{\beta}{2\beta+1}}+\L^1_{t}\L^2_{x,v}$, using the proper meaning of $\angle {S} h $ explained in Theorem~\ref{thm:optimalregularity}, $\angle {S_{j}} h \to \angle {S} h$.  As 
$f_{j}\to \cT S$ in $\Fdot^\beta$, $E(f_{j},h)\to E(\cT S,h)$ using the estimate proved for (H1) above. We have shown that $\cT S$ is a weak solution to \eqref{eq:weaksol}. Eventually, the fact that $\cT S\in \L^2_{t,v}\Hdot_{\vphantom{t,v} x}^{\frac{\beta}{2\beta+1}} \cap \C^{}_{0}(\R^{}_{t}\, ; \L^2_{x,v} )$ follows from Theorem~\ref{thm:optimalregularity}.  
 \end{proof}

\section{Kinetic Cauchy problems}
\label{s:cauchy}

This section is devoted to the construction of weak solutions to kinetic Cauchy problems on strips for time intervals
 of the form $[0,\infty)$ or $[0,T]$ with $T$ finite. In order to construct solutions
 on strips, we will
 introduce inhomogeneous kinetic spaces and prove an embedding theorem. We begin by working on infinite intervals and obtain homogeneous estimates. 

\subsection{The kinetic Cauchy problem in homogeneous spaces}

We turn our attention to the kinetic Cauchy problem
 \begin{align}
 \label{eq:CP}
 \begin{cases}
 (\partial_{t}+v\cdot\nabla_{x})f + \cA f= S, & \\
 f(0)=\psi, &
\end{cases}
\end{align}
For the kinetic Cauchy problem on an infinite time interval, we replace $\Omega$ by $\Omega_{+}= (0,\infty)\times \R^d\times \R^d$ and one can again use the homogeneous spaces on $\Omega_{+}$ with the same notation, see Section~\ref{sec:homweakkineticspaces}.  We use the same notation as well for the operator $\cA$.

\begin{defn}[Weak solutions to the kinetic Cauchy problem]
\label{defn:weaksol-cauchy}
Let $\beta >0$. Let $S \in \cD' (\Omega_+)$ and $\psi \in \L^2_{x,v}$. The distribution $S$ is assumed to extend to a continuous linear functional on $\cD (\overline{\Omega_+})$.
A distribution $f \in \cD' (\Omega_+)$ is said to be a \emph{weak solution} to \eqref{eq:CP} if $f\in \Udot^\beta$
and for all $h\in \cD (\overline{\Omega_+})$, 
\[
 -\iiint_{\Omega_{+}} f (\partial_{t}+v\cdot\nabla_{x}) \overline h \, \dt \dx \dv + a(f,h) = \angle{S}{h} + \iint_{\R^{2d}} \psi (x,v) \overline h (0,x,v) \dx \dv.
\]
\end{defn} 

Again, $f\in \Udot^\beta$ 
implies $f\in \L^2_{t,x}\cS'_{v}\cap \L^2_{t,x}\L^2_{\loc,v}$ with our definition. When $h$ is smooth, the expression $(\partial_{t}+v\cdot\nabla_{x}) \overline h$ is nothing but the pointwise partial derivative calculation. Thus, the left-hand side is well-defined; in the right-hand side, $\angle{S}{h}$ expresses the ad-hoc duality for the extension of $S$ (it will be clear in the following). 
The function $\psi$ could also be a distribution in $\R^{2d}$, in which case the corresponding term should be a sesquilinear duality bracket $\angle{\psi}{h(0)}$.
When $h\in \cD (\Omega_+)$, the last term disappears and this means in particular that $f$ is a weak solution to the first equation $(\partial_{t}+v\cdot\nabla_{x})f + \cA f= S$ in the sense of Definition~\ref{defn:weaksol} on $\Omega_{+}$. The second equation in \eqref{eq:CP} is encoded by allowing test functions that do not vanish at $t=0$. 

\begin{thm}[Existence and uniqueness of weak solutions to the kinetic Cauchy problem]
\label{thm:homCP} Let $\beta>0$.
 Let $S\in \L^2_{t,x}\Hdot^{-\beta}_{v} + \L^2_{t,v}\Hdot_{\vphantom{t,v} x}^{-\frac{\beta}{2\beta+1}}
+ \L^1_{t}\L^2_{x,v}$ and $\psi\in \L^2_{x,v}$. There exists a unique weak solution $f\in \Udot^\beta$ to the kinetic Cauchy problem
in the sense of Definition~\ref{defn:weaksol-cauchy}. 

Moreover, $f\in \L^2_{t,v}\Hdot_{\vphantom{t,v} x}^{\frac{\beta}{2\beta+1}}\cap \C^{}_{0}([0,\infty)\, ; \L^2_{x,v} )$, hence there is $\L^2_{x,v}$ convergence of $f(t)$ to $\psi$ as $t\to 0$, and 
 \begin{multline*}
 \| D_{v}^{\beta} f\|_{\L^2_{t,x,v}}+ \|D_{ x}^{\frac{\beta}{2\beta+1}} f\|_{\L^2_{t,x,v}}+ \sup_{t\in [0,\infty)} \|f(t)\|_{\L^2_{x,v}}\\
  \lesssim \|S \|_{\L^2_{t,x}\Hdot^{-\beta}_{\vphantom{t,x} v} + \L^2_{t,v}\Hdot_{\vphantom{t,v} x}^{-\frac{\beta}{2\beta+1}}+ \L^1_{t}\L^2_{x,v}} +\|\psi\|_{\vphantom{\L^2_{t,x}\Hdot^{-\beta}_{\vphantom{t,x} v} + \L^2_{t,v}\Hdot_{\vphantom{t,v} x}^{-\frac{\beta}{2\beta+1}}+ \L^1_{t}\L^2_{x,v}}\L^2_{x,v}}
 \end{multline*}
 for an implicit constant depending only on $d,\beta, \lambda, \Lambda$. In addition, $f$ satisfies the energy equality \eqref{e:energy} for non-negative times. 
\end{thm}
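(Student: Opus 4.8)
The plan is to reduce the half-line problem to the full-line theory of Theorem~\ref{thm:HomExUn}, treating the source term and the initial datum by two separate mechanisms, and then to read off continuity, the trace and the energy identity from the half-line analogue of the transfer of regularity in Theorem~\ref{thm:optimalregularity}. I begin with uniqueness. If $f\in\Udot^\beta$ is a weak solution on $\Omega_+$ with $S=0$ and $\psi=0$, I extend it by zero to $\tilde f$ on $\Omega$. Testing the half-line weak formulation against the restriction to $\overline{\Omega_+}$ of an arbitrary $h\in\cD(\Omega)$, and using that $\tilde f$ vanishes for $t<0$ in both the transport integral and the form $a$, one checks that $\tilde f$ is a weak solution on $\Omega$ of $(\partial_t+v\cdot\nabla_x)\tilde f+\cA\tilde f=0$: since the boundary term carries $\psi=0$, no Dirac mass appears and $(\partial_t+v\cdot\nabla_x)\tilde f=-\cA\tilde f\in\L^2_{t,x}\Hdot^{-\beta}_v$. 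Uniqueness on the full line (second step of the proof of Theorem~\ref{thm:HomExUn}) forces $\tilde f=0$, hence $f=0$.

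For existence I split $f=f_a+g$. The source part $f_a$ handles the full $S=S_1+S_2+S_3$ with zero datum: I extend $S$ by zero to $\tilde S\in\Zdot^\beta$ and set $f_a=\cK_\cA^+\tilde S$ on $\Omega$ via Theorem~\ref{thm:HomExUn}. Since $\tilde S=0$ for $t<0$, the energy equality \eqref{e:intro:energy} on $(s,t)\subset(-\infty,0)$ together with coercivity \eqref{e:ellip-lower} gives $\|f_a(t)\|_{\L^2_{x,v}}^2\le\|f_a(s)\|_{\L^2_{x,v}}^2$, and letting $s\to-\infty$ (where the $\C_0$ property gives a vanishing limit) shows $f_a\equiv0$ for $t<0$; this is the causality recorded in Remark~\ref{rem:inverttime}. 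Hence $f_a(0)=0$, and restricting $f_a$ to $\Omega_+$ yields a weak solution with source $S$ and datum $0$, inheriting all estimates and the $\C_0([0,\infty)\,;\L^2_{x,v})$ regularity from Theorem~\ref{thm:HomExUn}. The datum part $g$ solves the homogeneous equation with datum $\psi$. For $\beta<d/2$ I apply Lions' Theorem~\ref{thm:lions} with $F=\Fdot^\beta$, test space $\cH=\cD(\overline{\Omega_+})$ normed by $\|h\|_\cH^2=\|h\|_{\Fdot^\beta}^2+\|h(0)\|_{\L^2_{x,v}}^2$, the form $E(f,h)=-\iiint_{\Omega_+}f(\partial_t+v\cdot\nabla_x)\bar h+a(f,h)$, and the functional $L(h)=\iint_{\R^{2d}}\psi\,\bar h(0)\dx\dv$. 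Integrating by parts on the half-line now produces the boundary contribution $\Re E(h,h)=\tfrac12\|h(0)\|_{\L^2_{x,v}}^2+\Re a(h,h)\ge\min(\tfrac12,\lambda)\|h\|_\cH^2$, which is exactly $\mathrm{(H2)}$; $\mathrm{(H1)}$ follows, as in the proof of Theorem~\ref{thm:HomExUn}, from H\"older's inequality and the Sobolev embedding, and $L$ is trivially continuous on $\cH$. This produces $g\in\Fdot^\beta$ with $E(g,h)=L(h)$; testing against $h\in\cD(\Omega_+)$ gives $(\partial_t+v\cdot\nabla_x)g=-\cA g\in\L^2_{t,x}\Hdot^{-\beta}_v$, so $g\in\cFdot^\beta_\beta(\Omega_+)$. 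The case $\beta\ge d/2$ is handled by the same adaptation (replacing $\L^2_{t,x}\Hdot^\beta_v$ by $\Udot^\beta$) used for the full-line theorem.

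It remains to produce time continuity, to identify the trace $g(0)=\psi$, and to collect the estimates and the energy identity. These follow from the half-line analogue of Theorem~\ref{thm:optimalregularity}, which gives $g\in\C_0([0,\infty)\,;\L^2_{x,v})\cap\L^2_{t,v}\Hdot_x^{\frac{\beta}{2\beta+1}}$ with the quantitative bounds. To match the continuous trace with the prescribed datum, I extend $g$ by zero: the weak formulation forces $(\partial_t+v\cdot\nabla_x)\tilde g+\cA\tilde g=\psi\otimes\delta_0$ on $\Omega$, while the jump formula for the extension of a function with trace $g(0^+)$ yields the same transport with $g(0^+)\otimes\delta_0$, whence $g(0^+)=\psi$ and the asserted $\L^2_{x,v}$-convergence as $t\to0$. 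Then $f=f_a+g$ is the desired weak solution; adding the two sets of estimates gives the stated bound, and the energy equality \eqref{e:energy} for $0\le s<t$ follows from the absolute continuity in the half-line transfer of regularity (the analogue of Theorem~\ref{thm:optimalregularity}(ii)) applied to $f$.

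The main obstacle is precisely the treatment of the initial datum. Unlike the source, $\psi$ cannot be folded into the full-line equation, because extension by zero turns it into the time-Dirac source $\psi\otimes\delta_0\notin\Zdot^\beta$, so the clean causal-extension trick used for $f_a$ is unavailable. One must instead work intrinsically on $\Omega_+$: establishing the half-line transfer of regularity (continuity up to $t=0$ with a genuine $\L^2_{x,v}$ trace, which is \emph{not} deducible from the full-line statement for the reason just mentioned), and then identifying that trace with $\psi$ through the jump-formula argument above. Everything else is a careful but routine transcription of the full-line arguments.
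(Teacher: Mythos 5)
Your architecture overlaps substantially with the paper's: the source term is absorbed by zero-extension and causality (this is exactly the paper's Corollary~\ref{cor:causality}, proved via the energy equality on $(-\infty,0)$ as you do), and the datum is handled by Lions' Theorem~\ref{thm:lions} on $\cD(\overline{\Omega_+})$ with the boundary term $\tfrac12\|h(0)\|^2_{\L^2_{x,v}}$ supplying coercivity, which is the paper's argument verbatim. Your uniqueness step (extend by zero and invoke full-line uniqueness from Theorem~\ref{thm:HomExUn}) is correct and is a nice alternative to the paper's route through the half-line energy equality. However, there is a genuine gap, and it sits exactly where you placed all the weight: the ``half-line analogue of Theorem~\ref{thm:optimalregularity}'' is what delivers $g\in\C_0([0,\infty)\,;\L^2_{x,v})$, the quantitative bounds, and the absolute continuity underlying both your jump-formula trace identification and the energy equality \eqref{e:energy} --- yet you never prove it, nor indicate the mechanism that makes it true. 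It is not a routine transcription of the full-line proof: that proof rests on the uniqueness Lemma~\ref{lem:uniqueness} for the \emph{forward} Kolmogorov equation, which is driven entirely by the behaviour as $t\to-\infty$ (Remark~\ref{rem:inverttime}) and simply fails on $\Omega_+$; indeed, in Fourier variables $G(t,\varphi,\xi)=K(t,0,\varphi,\xi)G_0(\varphi,\xi)$ is a nonzero solution of the forward equation with zero source lying in the relevant spaces on $(0,\infty)$ for a large class of $G_0$ (this is precisely the propagated initial datum). The paper's key device (Steps 1--4 of its proof) is to switch to the \emph{backward} Kolmogorov operator $\cK^-_\beta$: its representation \eqref{eq:kolmogorovoperatoradjoint} only involves times in $(s,+\infty)\subset(0,\infty)$, so its estimates restrict to $\Omega_+$, and backward uniqueness depends only on the behaviour at $t\to+\infty$, hence survives on the half-line; one then writes any $f\in\cLdot^\beta_\beta$ on $\Omega_+$ as $\cK^-_\beta\big({-}(\partial_t+v\cdot\nabla_x)f+(-\Delta_v)^\beta f\big)$ and reads off the embedding. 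Without this idea (or an equivalent one), your trace identification, estimates and energy identity are all unsupported.

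A second, smaller but real, gap is the case $\beta\ge d/2$, which you dismiss by saying the Lions argument goes through with $\Udot^\beta$ in place of $\L^2_{t,x}\Hdot^{\beta}_{v}$. Lions' Theorem~\ref{thm:lions} requires $F$ to be a Hilbert space, and $\Udot^\beta$ carries only the seminorm $\|D_v^\beta\cdot\|_{\L^2_{t,x,v}}$ together with a qualitative $\L^2_{\loc,t}\L^2_{x,v}$ condition; it is not a Hilbert space for any norm with respect to which the form is coercive. This is exactly why the paper abandons Lions in that range (Section~\ref{sec:Proofswhenbetaged/2}) and instead constructs the half-line solution by inhomogeneous approximation --- Theorem~\ref{thm:CP0T} with the $e^{-\varepsilon t}$ trick and weak$^*$ limits --- and then derives the full-line existence \emph{from} the half-line one; so in that range your proposed reduction of the Cauchy problem to Theorem~\ref{thm:HomExUn} also inverts the paper's logical order unless the latter is granted as a black box.
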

\begin{proof}[Proof of Theorem \ref{thm:homCP} in the case $\beta<d/2$] The strategy has several steps.
\medskip

\paragraph{Step 1:} The estimates of the backward Kolmogorov operator $\cK_{\beta}^-$ in Corollary~\ref{cor:Kbeta} 
hold on $\Omega_{+}$, by restriction to the time interval $(0,\infty)$ with $\C^{}_{0}(\R^{}_{t}\, ; \L^2_{x,v} )$ replaced by $\C^{}_{0}([0,\infty)\, ; \L^2_{x,v} )$.

\medskip

\paragraph{Step 2:} The uniqueness statement in Lemma~\ref{lem:uniqueness} holds for the backward Kolmogorov operator on $\Omega_{+}$ when $\gamma=\beta$.

\medskip

\paragraph{Step 3:} Combining Steps 1 and 2, the space $\cLdot^\beta_{\beta}$ on $\Omega_{+}$ embeds into $\C^{}_{0}([0,\infty)\, ; \, \L^2_{x,v})$.
In particular, for all $f \in \cLdot^\beta_{\beta}$, 
\[ \|f\|_{\C_0([0,\infty)\, ; \, \L^2_{x,v})} \lesssim_{d,\beta} \|f\|_{\cLdot^\beta_\beta}.\]

\paragraph{Step 4:} More generally, the statement of Theorem~\ref{thm:optimalregularity} holds on $\Omega_{+}$ with $\C^{}_{0}(\R^{}_{t}\, ; \L^2_{x,v} )$ replaced by $\C^{}_{0}([0,\infty)\, ; \L^2_{x,v} )$.  
The absolute continuity on $[0,\infty)$ is proved similarly using the backward Kolmogorov operator to construct approximations for the proof. In particular, any weak solution to the kinetic Cauchy problem as in the statement satisfies the energy equality \eqref{e:energy} of Lemma~\ref{lem:energyequalityweaksol} for non-negative times and converges in the sense of $\L^2_{x,v}$ limit at $t=0$.	

\medskip

\paragraph{Step 5:} The uniqueness in the kinetic Cauchy problem proceeds with the energy equality as in the proof of Theorem~\ref{thm:HomExUn}.

\medskip

\paragraph{Step 6:} We use $\beta<d/2$ here. Existence when $S\in \L^2_{t,x}\Hdot^{-\beta}_{v}=(\Fdot^\beta)' $, and $\psi\in \L^2_{x,v}$
also proceeds as in the proof of Theorem~\ref{thm:HomExUn} with modifications as follows. We change here $\cH$ to $\cH_{+}=\cD(\overline{\Omega_{+}})$ equipped with prehilbertian norm
$\|h\|_{\cH_{+}}^2=\|h\|_{\Fdot^\beta}^2+ \|h(0)\|_{\L^2_{x,v}}^2$. Clearly $\cH_{+}$ is continuously included in the Hilbert space $\Fdot^\beta$ on $\Omega_{+}$. 

Setting 
\[{E_{+}}(f,h):=
 -\iiint_{\Omega_{+}} f (\partial_{t}+v\cdot\nabla_{x}) \overline h \, \dt \dx \dv + a(f,h),
 \]
the proof of condition (H1) with $|{E_{+}}(f,h)| \lesssim \|f\|_{\Fdot^\beta}\|h\|_{\cFdot^\beta_{\beta}}$ for $f\in \Fdot^\beta$ and $h\in \cH_{+}$ is unchanged.
Now, for condition (H2), if $h\in \cH_{+}$ then integration by parts in $t$ and $x$ shows that
\[
 - \Re \iiint_{\Omega_{+}} h (\partial_{t}+v\cdot\nabla_{x})\overline h \dt \dx \dv = \frac 1 2 \|h(0)\|_{\L^2_{x,v}}^2.
\]
Thus, 
\[
 \Re {E_{+}}(h,h) =\frac 1 2 \|h(0)\|_{\L^2_{x,v}}^2+ \Re a(h,h) \ge \min\bigg(\lambda, \frac 1 2\bigg)\|h\|_{\cH_{+}}^2.
\]

Finally, the map $\cH_{+}\ni h\mapsto L(h)= \iint_{(0,\infty) \times \R^d} \angle{S} {h}\dx \dt + \angle{\psi}{h(0)}$,
where the brackets inside the integral are the $\Hdot^{-\beta}_{v}, \Hdot^{\beta}_{\vphantom{v} v}$ duality as usual,
  is a continuous semi-linear functional on $\cH_{+}$. For the first term, this is as before, and for the second, this is simply th e Cauchy--Schwarz inequality,
\[
 |\angle{\psi}{h(0)}| \le \|\psi\|_{\L^2_{x,v}} \|h(0)\|_{\L^2_{x,v}} \le \|\psi\|_{\L^2_{x,v}} \|h\|_{\cH_{+}}.
\]
Applying Theorem~\ref{thm:lions} again, there exists $f \in \Fdot^\beta$ such that  for all $h\in \cD(\overline{\Omega_{+}})$, ${E_{+}}(f,h)=L(h)$, that is explicitly
\begin{multline}
\label{eq:CPweaksol}
-\iiint_{\Omega_{+}} f (\partial_{t}+v\cdot\nabla_{x}) \overline h \dt \dx \dv 
 + \iint_{(0,\infty)\times \R^d} a_{t,x}(f,h)\dx \dt  \\
 =\iint_{(0,\infty) \times \R^d} \angle{S} {h}\dx \dt + \angle{\psi}{h(0)}. 
\end{multline}
 In particular, $f$ is a weak solution on $\Omega_{+}$ to $(\partial_{t}+v\cdot\nabla_{x})f + \cA f= S$. 
It remains to show $f(0)=\psi$ as we already know that $t\mapsto f(t)$ is continuous on $[0,\infty)$ in $\L^2_{x,v}$.
We remark again that $t\mapsto \angle{f(t)}{h(t)}$ is absolutely continuous and 
\begin{align*}
\angle{f(t)}{h(t)} &- \angle{f(0)}{h(0)}= - \int_{s}^t\int_{\R^d} a_{\tau,x}(f,h)\dx\dd\tau \\
& + \int_{0}^t\int_{ \R^d} \angle{S} {h}\dx\dd\tau +  \int_{0}^t\iint_{ \R^{2d}} f (\partial_{t}+v\cdot\nabla_{x}) \overline h\dx \dv \dd \tau.
 \end{align*}
 Letting $t\to +\infty$, the right hand side tends to $-\angle{\psi}{h(0)}$ by \eqref{eq:CPweaksol} and the left hand side to 
 $- \angle{f(0)}{h(0)}$ from Step 3. As $h(0)$ can be arbitrary in $\cD(\R^{2d})$, we obtain $u(0)=\psi$. 

\bigskip

\paragraph{Step 7:} It remains to prove existence when $S\in \L^2_{t,v}\Hdot_{\vphantom{t,v} x}^{-\frac{\beta}{2\beta+1}}+ \L^1_{t}\L^2_{x,v}$ and $\psi=0$.
In order to do so, we extend $S$ by $0$ for $t<0$ and take the weak solution $f$ to $(\partial_{t}+v\cdot\nabla_{x})f+\cA f=S$ on $\R\times \R^d \times \R^d$ of Theorem~\ref{thm:HomExUn}. Using the energy equality, we have when $s<t\le 0$, 
\[
 \|f(t)\|^2_{\L^2_{x,v}}-\|f(s)\|^2_{\L^2_{x,v}} = -2\Re \int_{s}^t\int_{\R^d} a_{t,x}(f,f)\dx \dt ,
\]
and letting $s\to-\infty$, we obtain $\|f(t)\|^2_{\L^2_{x,v}}\le 0$. Therefore, $f$ restricted to $\Omega_{+}$ is a weak solution on $\Omega_{+}$ with $f(0)=0$.
\end{proof}

\begin{cor}[Causality principle]
\label{cor:causality} Let $\beta>0$. Let $S\in \L^2_{t,x}\Hdot^{-\beta}_{\vphantom{t,x} v} + \L^2_{\vphantom{t,x} t,v}\Hdot_{\vphantom{t,x} x}^{-\frac{\beta}{2\beta+1}}+ \L^1_{t}\L^2_{x,v}$ on $\Omega_{+}$. If $ S_{0}$ is the zero extension of $S$ for $t<0$, then $\cK_{\cA}^+S_{0}$, with $\cK_{\cA}^+$ as in \eqref{eq:KAdefintro}, vanishes for $t\le 0$ and its restriction to $\Omega_{+}$ is the solution to the kinetic Cauchy problem \eqref{eq:CP} with zero initial value.
\end{cor}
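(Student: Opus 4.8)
The plan is to set $f=\cK_{\cA}^+S_{0}$, which by Theorem~\ref{thm:HomExUn} is the unique weak solution in $\Udot^\beta$ to $(\partial_{t}+v\cdot\nabla_{x})f+\cA f=S_{0}$ on $\Omega$; in particular $f\in\C^{}_{0}(\R^{}_{t}\,;\L^2_{x,v})$ and $f$ satisfies the energy equality of Lemma~\ref{lem:energyequalityweaksol}. First I would fix a decomposition $S=S_{1}+S_{2}+S_{3}$ on $\Omega_{+}$ with $S_{1}\in\L^2_{t,x}\Hdot^{-\beta}_{v}$, $S_{2}\in\L^2_{t,v}\Hdot_{\vphantom{t,v} x}^{-\frac{\beta}{2\beta+1}}$ and $S_{3}\in\L^1_{t}\L^2_{x,v}$, and extend each piece by zero, so that $S_{0}=(S_{1})_{0}+(S_{2})_{0}+(S_{3})_{0}$ is a decomposition on $\Omega$ in which every term vanishes for $t<0$.

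To prove that $f$ vanishes for $t\le 0$, I would apply the energy equality \eqref{e:energy} on an interval $(s,t)$ with $s<t\le 0$. Since each source piece vanishes for $\tau<0$, every duality bracket on the right-hand side is zero for a.e. $\tau\in[s,t]$, leaving $\|f(t)\|_{\L^2_{x,v}}^2+2\Re\int_{s}^{t}\int_{\R^d}a_{\tau,x}(f,f)\dx\dd\tau=\|f(s)\|_{\L^2_{x,v}}^2$. Coercivity \eqref{e:ellip-lower} makes the integral non-negative, whence $\|f(t)\|_{\L^2_{x,v}}\le\|f(s)\|_{\L^2_{x,v}}$; letting $s\to-\infty$ and using $f\in\C^{}_{0}(\R^{}_{t}\,;\L^2_{x,v})$ forces $f(t)=0$ in $\L^2_{x,v}$ for every $t\le 0$ (in particular $f(0)=0$).

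Next I would identify the restriction as a weak Cauchy solution. The restriction $f|_{\Omega_{+}}$ lies in $\Udot^\beta$ on $\Omega_{+}$. Given $h\in\cD(\overline{\Omega_{+}})$, extend it to some $\tilde h\in\cD(\Omega)$ with $\tilde h=h$ on $\Omega_{+}$, and insert $\tilde h$ into the weak formulation of Definition~\ref{defn:weaksol} on $\Omega$. Because $f=0$ for $t\le 0$—so that $f(t,\cdot)=0$ in $\Hdot^\beta_{v}$ there, giving $a_{t,x}(f,\tilde h)=0$ and a vanishing transport integrand for $t<0$—and because $S_{0}=0$ for $t<0$ while $\tilde h=h$ for $t>0$, every integral collapses to $\Omega_{+}$ and the values of $\tilde h$ for $t<0$ play no role. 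This yields precisely $-\iiint_{\Omega_{+}}f(\partial_{t}+v\cdot\nabla_{x})\overline h\dt\dx\dv+a(f,h)=\angle{S}{h}$, i.e. the identity of Definition~\ref{defn:weaksol-cauchy} with $\psi=0$, the boundary term being absent. Hence $f|_{\Omega_{+}}$ is a weak solution to \eqref{eq:CP} with zero initial value, and by the uniqueness part of Theorem~\ref{thm:homCP} it is the solution; consistency with $\psi=0$ is also confirmed directly by $f(0)=0$.

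The main obstacle is the passage from the formulation on $\Omega$ to the one on $\Omega_{+}$: one must verify that no boundary term at $t=0$ is produced, since such a term is exactly what would encode a nonzero initial datum. This is resolved by the causality established in the vanishing step ($f\equiv 0$ for $t\le 0$) together with the fact that the zero extension $S_{0}$ carries no mass at $t=0$, so that the half-space integrals coincide with the whole-space ones and the inhomogeneous trace term drops out.
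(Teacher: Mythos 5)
Your proposal is correct and follows essentially the same route as the paper: the paper's proof (Step 7 of Theorem~\ref{thm:homCP}, reused in Step 6 of Section~\ref{sec:Proofswhenbetaged/2} for $\beta\ge d/2$) likewise takes the whole-space solution $\cK_{\cA}^+S_{0}$ from Theorem~\ref{thm:HomExUn}, applies the energy equality on $(s,t]$ with $t\le 0$ where the zero-extended sources contribute nothing, and lets $s\to-\infty$ using $f\in \C^{}_{0}(\R^{}_{t}\,;\L^2_{x,v})$ and coercivity to conclude that $f$ vanishes for $t\le 0$. Your extra verification that the weak formulation on $\Omega$ collapses, via test-function extension, to the Cauchy-problem formulation on $\Omega_{+}$ with $\psi=0$ simply fills in the detail the paper leaves implicit in the sentence ``Therefore, $f$ restricted to $\Omega_{+}$ is a weak solution on $\Omega_{+}$ with $f(0)=0$.''
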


\begin{proof}[Proof in the case $\beta<d/2$]
 This is the argument in Step 7.
\end{proof}

\begin{rem}
 Following the same strategy, one can also solve the backward kinetic Cauchy problem for $-(\partial_{t}+v\cdot \nabla_{x})+\cA^*$ on intervals $(-\infty,0]$. Step 2 uses uniqueness for distributional solutions of the forward Kolmogorov operator on $(-\infty,0)\times \R^{2d}$. As integration is on this set, the change of sign for the kinetic field gives the condition (H2) in Step 6. The other steps are rigorously the same. \end{rem}

\section{Statements on inhomogeneous spaces}

It should be clear that inhomogeneous situations yield similar results and come with slight modifications in statements and proofs. We explain this in this section with a few details when necessary. 

\subsection{Embeddings for inhomogeneous kinetic spaces} 
 The theory of homogeneous kinetic spaces adapts to inhomogeneous kinetic spaces using the inhomogeneous Sobolev spaces $\H^\gamma_{v}$ and $  \H^{\frac{\gamma}{2\beta+1}}_{\vphantom{v} x}$ for all $\beta>0$ and $\gamma\in \R$ (no restriction $\gamma<d/2$ here). It amounts to adding the $\L^2_{t,x,v}$ norm to the homogeneous ones. This leads us to define on $\Omega$ the inhomogeneous spaces $X^\gamma_{\beta}$ as in \eqref{eq:dotXgammabeta} and \eqref{eq:dotX-gammabeta}, 
 $ \cF^\gamma_{\beta}$ as in \eqref{eq:Fdotgammabeta}$, \cG^\gamma_{\beta}$ as in \eqref{eq:Kdotgammabeta}, and $ \cL^\gamma_{\beta}$ as in \eqref{eq:Ldotgammabeta} for which the analog of Theorem~\ref{thm:homkinspace} holds. 
 \begin{thm}[Kinetic embeddings and transfer of regularity: inhomogeneous case]
\label{thm:inhomkinspace}
Assume $\gamma \in [0,2\beta]$.
\begin{enumerate} 
\item  $\cL^\gamma_{\beta} \hookrightarrow \L^2_{\vphantom{\beta} t,v}\H_{\vphantom{\beta} x}^{\frac{\gamma}{2\beta+1}}$ with 
\(
\|
f\|_{\L^2_{\vphantom{\beta} t,v}\H_{\vphantom{\beta} x}^{\frac{\gamma}{2\beta+1}}} \lesssim_{d,\beta,\gamma} \|f\|_{\cL^\gamma_{\beta}}
\)
\item $\cL^\gamma_{\beta} \hookrightarrow \C^{}_{0}(\R^{}_{t}\, ; \, \X^{\gamma-\beta}_{\beta} )$ with 
\(
\sup_{t\in \R} \|f(t)\|_{\X^{\gamma-\beta}_{\beta}} \lesssim_{d,\beta,\gamma} \|f\|_{\cL^\gamma_{\beta}}.
\) 
\item There is a subspace of $\L^2_{t,x,v}$ densely contained in all  $\cF^\gamma_{\beta}, \cG^\gamma_{\beta}$ and $ \cL^\gamma_{\beta}$.
\item The families of spaces $(\cF^\gamma_{\beta})_{\gamma}$ and $(\cG^\gamma_{\beta})_{\gamma}$ have the complex interpolation property. 
\end{enumerate}
\end{thm}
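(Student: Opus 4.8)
The plan is to mirror the proof of Theorem~\ref{thm:homkinspace} line by line, replacing the homogeneous Kolmogorov operator $\cK^\pm_\beta$ by an inhomogeneous counterpart $\cK^\pm_{\beta,1}$, namely the operator associated with the \emph{shifted} symbol, i.e. the inverse of $\pm(\partial_t+v\cdot\nabla_x) + \big(1+(-\Delta_v)^\beta\big)$. On the Fourier--Galilean side this amounts to replacing the kernel of \eqref{eq:kolmogorovkernel} by
\[
 K_1(t,s,\varphi,\xi) = \exp\Big(-\int_s^t \big(1 + |\xi-\tau\varphi|^{2\beta}\big)\dd\tau\Big) = e^{-(t-s)}K(t,s,\varphi,\xi),
\]
and the weight $W$ of \eqref{eq:weight} by the inhomogeneous weight $\langle W\rangle := \sup\big(1, |\xi-t\varphi|, |\varphi|^{\frac1{1+2\beta}}\big)$; correspondingly $w$ is replaced by $\langle w\rangle := \sup(1,w)$, which is exactly the Fourier weight of $\X^\gamma_\beta$ (the intersection of the $\langle\xi\rangle^{2\gamma}$ and $\langle\varphi\rangle^{2\gamma/(2\beta+1)}$ weights when $\gamma\ge0$). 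The decisive gain is that the inhomogeneous Sobolev spaces $\H^\gamma_v$ and $\H^{\gamma/(2\beta+1)}_x$ are genuine Hilbert spaces of distributions for \emph{every} $\gamma\in\R$, so the threshold $\gamma<d/2$ never enters. Since $(1+(-\Delta_v)^\beta)f \in \L^2_{t,x}\H^{\gamma-2\beta}_v \subset \L^2_t\X^{\gamma-2\beta}_\beta$ whenever $f\in\L^2_{t,x}\H^\gamma_v$ and $\gamma\le 2\beta$, the space $\cL^\gamma_\beta$ is unchanged if in its definition one replaces $(\partial_t+v\cdot\nabla_x)f$ by $\pm(\partial_t+v\cdot\nabla_x)f + (1+(-\Delta_v)^\beta)f$; this is precisely what licenses the representation $f = \cK^\pm_{\beta,1}\big(\pm(\partial_t+v\cdot\nabla_x)f + (1+(-\Delta_v)^\beta)f\big)$.

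First I would redo the elementary kernel estimates of Lemma~\ref{lem:comp} and Lemma~\ref{lem:estimatesKts} for $K_1$. The comparison becomes $\int_s^t(1+|\xi-\tau\varphi|^{2\beta})\dd\tau \simeq_\beta (t-s)\big(1 + |\xi-t\varphi|^{2\beta} + ((t-s)|\varphi|)^{2\beta}\big)$, and in the analog of \eqref{eq:equivAB} the extra constant $1$ in the exponent is exactly what upgrades the outcome from $W^{-2\beta}$ to $\langle W\rangle^{-2\beta}$; similarly the pointwise bound \eqref{eq:K4} holds with $\langle W\rangle$ in place of $W$. Concretely, the uniform factor $e^{-(t-s)}$ repairs the low-frequency region $|\varphi|\lesssim1$, $|\xi-t\varphi|\lesssim1$, where the purely homogeneous kernel fails to be time-integrable. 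With these in hand, the weighted temporal estimates of Proposition~\ref{prop:estimatesT}, the $\L^1$--$\L^\infty$ bounds of Proposition~\ref{prop:L1Linftyestimates}, and hence the a priori and boundedness statements of Propositions~\ref{p:boundsKolmoapriori}--\ref{p:boundsKolmo} and Corollary~\ref{cor:Kbeta} all transfer through the same Stein--Weiss interpolation and Galilean--Plancherel arguments, now with $\langle W\rangle,\langle w\rangle$ and without any constraint on $\gamma$. In particular $\cK^\pm_{\beta,1}$ maps $\L^2_t\X^{\gamma-2\beta}_\beta + \L^1_t\X^{\gamma-\beta}_\beta$ boundedly into $\L^2_t\X^\gamma_\beta \cap \C_0(\R_t;\X^{\gamma-\beta}_\beta)$ and into $\L^2_{t,v}\H^{\gamma/(2\beta+1)}_x$.

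Next I would establish the inhomogeneous analogs of the uniqueness Lemma~\ref{lem:uniqueness} and the isomorphism Lemma~\ref{lem:isom-embed}. For uniqueness, if $\pm(\partial_t+v\cdot\nabla_x)f + (1+(-\Delta_v)^\beta)f=0$ with $f\in\L^2_{t,x}\H^\gamma_v$, then $G=\widehat{\Gamma f}$ solves $\partial_t G + (1+|\xi-t\varphi|^{2\beta})G = 0$, whence $G = K_1(t,0)G_0$; the super-exponential growth of $K_1(t,0)$ as $t\to-\infty$ for the forward operator (and as $t\to+\infty$ for the backward one) — already forced by the factor $e^{-t}$ — is incompatible with $f\in\L^2$ unless $G_0=0$, so the argument is even more robust than in Lemma~\ref{lem:uniqueness}. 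The isomorphism then follows exactly as in Lemma~\ref{lem:isom-embed}: boundedness of $\cK^\pm_{\beta,1}$ into $\cL^\gamma_\beta$, injectivity via the distributional-solution identity (the analog of Lemma~\ref{lem:distributionalsolutions} for $K_1$), and surjectivity by representing $f\in\cL^\gamma_\beta$ through its shifted source and invoking uniqueness. Items (i) and (ii) are then read off from this isomorphism together with the boundedness estimates, as in Theorem~\ref{thm:homkinspace}; density (iii) follows from the inhomogeneous analogs of Lemmas~\ref{lem:density}/\ref{lem:strongerdensity} (the class $\cS_K$ is again dense in the inhomogeneous source spaces, and its image under $\cK^\pm_{\beta,1}$ lies in $\L^2_{t,x,v}$, resp.\ in $\cS(\Omega)$ when $d\ge2$); and the complex interpolation property (iv) follows by transporting, through the Fourier--Galilean isomorphism, to weighted $\L^2$ spaces with the weights $\langle\xi\rangle$ and $\langle w\rangle$, which form interpolation scales, and then using the isomorphism property of $\cK^\pm_{\beta,1}$ on the $\cF^\gamma_\beta$ and $\cG^\gamma_\beta$ scales.

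The main obstacle — essentially the only non-bookkeeping point — is to verify cleanly that shifting by the identity is exactly the operation that (a) converts the homogeneous weight $W$ into $\langle W\rangle$ in every kernel estimate, through the uniform time-decay $e^{-(t-s)}$, and (b) thereby eliminates the $d/2$ threshold, the low-frequency degeneration of the homogeneous kernel (the origin of that threshold) being precisely what $e^{-(t-s)}$ cures. Once the analog of Lemma~\ref{lem:estimatesKts} is secured, the remainder is a faithful transcription of Sections~\ref{sec:estimates} and \ref{s:uniqueness-embeddings} with $W,w,\cK^\pm_\beta$ replaced by $\langle W\rangle,\langle w\rangle,\cK^\pm_{\beta,1}$, and with $\X^\gamma_\beta$, $\cF^\gamma_\beta$, $\cG^\gamma_\beta$, $\cL^\gamma_\beta$ in place of their dotted versions.
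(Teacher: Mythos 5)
Your proposal is correct and is essentially the paper's own proof: the authors likewise introduce the shifted inhomogeneous Kolmogorov operators $(\pm(\partial_t+v\cdot\nabla_x)+(-\Delta_v)^\beta+1)^{-1}$ and rerun the homogeneous machinery with the weight $\sup(1,|\xi-t\varphi|,|\varphi|^{\frac{1}{2\beta+1}})$, removing all restrictions on $\gamma$. The paper states this in a few lines and skips the details; your write-up fills in exactly the intended steps (shifted kernel estimates, uniqueness, isomorphism, density and interpolation), including the correct observation that the factor $e^{-(t-s)}$ is what cures the low-frequency degeneration responsible for the $d/2$ threshold.
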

\begin{rem}
To the knowledge of the authors, the first estimate (i) for $\cG^\gamma_{\beta}$ in the special case $\gamma = \beta = 1$ appears first in \cite{MR1949176}. 
\end{rem}

 This is proved using the inhomogeneous Kolmogorov operators $\cK_{1,\beta}^+ $ corresponding to $\big(\pm (\partial_{t}+v\cdot\nabla_{x}) + (-\Delta_{v})^{\beta}+1 \, \big)^{-1}$ 
 as in Section~\ref{sec:estimates} and where the density with the analogs of Lemma~\ref{lem:density} and Proposition~\ref{p:boundsKolmo} without any restriction on $\gamma$. The proofs using weighted estimates with 
 weight $\sup(1, |\xi-t\varphi|, |\varphi|^{\frac{1}{2\beta+1}})$ are similar and we skip the details. Theorem~\ref{thm:optimalregularity} and its generalisation Theorem~\ref{thm:optimalregularitygeneralisation} are also true with inhomogeneous assumptions and conclusions without the condition $\beta\pm\varepsilon<d/2$.
\subsection{Kinetic Cauchy problems on strips}
 
 We now come to equations with relaxed ellipticity conditions, still imposing measurability. We assume that there exist $0<\Lambda<\infty$ and $0\le c^0<\infty$, such that uniformly for $(t,x)\in \R\times \R^d$, 
 \begin{equation}
\label{e:ellip-upper-inhom}
\forall f,g \in \L^2_{t,x}\H^{\beta}_{\vphantom{t,x} v}, \quad |a_{t,x}(f,g)|\le \Lambda (\|f\|_{\Hdot^\beta_{v}}+c^0\|f\|_{\L^2_{v}})( \|g\|_{\Hdot^\beta_{v}}+c^0\|g\|_{\L^2_{v}})
\end{equation}
  and that there exists $\lambda>0, c_{0}\ge 0$ such that for all $f\in \H^\beta_{v}$ and uniformly for $(t,x)\in \R\times \R^d$,
\begin{align}
\label{eq:weakelliptic-bis}
 \Re a_{t,x}(f,f) \ge \lambda \|f\|_{\Hdot^\beta_{v}}^2 -c_{0}\|f\|^2_{\L^2_{v}}.
 \end{align}
 Note that this allows the possibility of including lower-order operators acting in velocity with suitable coefficients.
 
 The inhomogeneous version of existence and uniqueness of weak solutions defined in $\F^\beta= \L^2_{t,x}\H^{\beta}_{\vphantom{t,x} v}$ on $\R\times \R^d\times \R^d$ of $(\partial_{t}+v\cdot\nabla_{x})f+\cA f+cf=S$ is completely similar with $c>c_{0}$. That is, the source and the solution belong to the corresponding inhomogeneous spaces. The inhomogeneous kinetic operator $\cK_{c,A}=(\partial_{t}+v\cdot \nabla_{x}+\cA+ c)^{-1}$ has the inhomogeneous boundedness corresponding to 
 \eqref{eq:KAintro}. 

For the kinetic Cauchy problem, we may work on a finite or infinite time interval. Define all the inhomogeneous kinetic spaces as before on $\Omega_{I}=I\times \R^d \times \R^d$ where $I$ is an open interval. The embeddings are proved as follows.

For $I$ infinite, say $(0,\infty)$, use the extension to the inhomogeneous backward Kolmogorov operator of estimates in Proposition~\ref{p:boundsKolmo} and uniqueness in Lemma~\ref{lem:uniqueness}  on $\Omega_{+}$. For $(-\infty,T)$, use the same arguments for the forward Kolmogorov operator. 

For $I$ finite, say $I=(0,T)$, use cut-off in time: if $\chi\colon \R \to [0,1]$ is Lipschitz and supported in $[0,T)$ and is 1 near 0, and $f\in \cL^\gamma_{\beta}$ on $\Omega_{(0,T)}$, then $\chi f\in \cL^\gamma_{\beta}$ on $\Omega_{+}$ so that we have continuity of $f$ in $\X^{\gamma-\beta}_{\beta}$ in any subinterval $[0,T']$ of $[0,T)$. Also doing the same thing at the other endpoint of $(0,T)$ and gluing the two information, we conclude that $f\in \C([0,T]\, ;\, \X^{\gamma-\beta}_{\beta})$. If $\gamma=\beta$, we also obtain the absolute continuity of $t\mapsto\|f(t)\|_{\L^2_{x,v}}^2$ on $[0,T]$ and its extension to duality brackets $\angle{f(t)}{\tilde f(t)}$ of pairs $(f,\tilde f)\in  \cL^{\beta+\varepsilon}_{\beta}\times \cL^{\beta-\varepsilon}_{\beta}$, $-\beta\le \varepsilon\le \beta$. 

Moreover, $\L^2_{t}( I\, ; \, \cS(\R^{2d}))$ is dense in all of them, and one can also interpolate the $\cF$ and 
$\cG$ scales.

 The inhomogeneous version of Theorem~\ref{thm:homCP} is the following.
\begin{thm}[Weak solutions for the kinetic Cauchy problem in inhomogeneous spaces]
 \label{thm:CP0T}
Let $\beta>0$ and $0<T<\infty$.  Assume that the sesquilinear form $a$ satisfies \eqref{e:ellip-upper-inhom} and \eqref{eq:weakelliptic-bis}  on $(0,T)\times \R^d\times \R^d$.
 Let $S\in \L^2_{t,x}\H^{-\beta}_{v} + \L^2_{t,v}\H_{\vphantom{t,v} x}^{-\frac{\beta}{2\beta+1}}
+ \L^1_{t}\L^2_{x,v}$ and $\psi\in \L^2_{x,v}$. There exists a unique $f\in \L^2_{t,x}\H^{\beta}_{v}$ such that 
 \begin{align}
 \label{eq:CP0T}
 \begin{cases}
 (\partial_{t}+v\cdot\nabla_{x})f + \cA f= S, &
 \\
 f(0)=\psi &
\end{cases}
\end{align}
in the sense of Definition~\ref{defn:weaksol-cauchy} for test functions in $\cD([0,T)\times \R^d\times \R^d)$.

Moreover, $f\in \L^2_{t,v}\H_{\vphantom{t,v} x}^{\frac{\beta}{2\beta+1}}\cap \C([0,T]\, ; \L^2_{x,v} )$, $f(t)$ converges to $\psi$ in $ \L^2_{x,v}$ as $t\to 0$, and 
 \begin{align*}
\sup_{t\in [0,T]} \|f(t)\|_{\L^2_{x,v}} + \| D_{v}^{\beta} f\|_{\L^2_{t,x,v}}&+ \|  D_{\vphantom{D_{v}} x}^{\frac{\beta}{2\beta+1}} f\|_{\L^2_{t,x,v}} + \|  f\|_{\L^2_{t,x,v}}\\
&\lesssim \|S \|_{\L^2_{t,x}\H^{-\beta}_{v} + \L^2_{t,v}\H_{\vphantom{t,v} x}^{-\frac{\beta}{2\beta+1}}+ \L^1_{t}\L^2_{x,v}} +\|\psi\|_{\vphantom{\L^2_{t,x}\H^{-\beta}_{v} + \L^2_{t,v}\H_{\vphantom{t,v} x}^{-\frac{\beta}{2\beta+1}}+ \L^1_{t}\L^2_{x,v}} \L^2_{x,v}}
\end{align*}
for an implicit constant depending on $d,\beta, \lambda, \Lambda, c_{0}, T$. Furthermore, $f$ satisfies the energy equality \eqref{e:energy} for all times $0 \le s < t \le T$.
\end{thm}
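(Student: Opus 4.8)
The plan is to reduce to a fully coercive inhomogeneous problem, to solve the two halves of the source separately (the $(\F^\beta)'$ part together with the initial datum first, then the remaining source types by transposition), and finally to read off the estimates, continuity and energy equality from the inhomogeneous kinetic embedding.

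First I would remove the coercivity defect $c_0$ in \eqref{eq:weakelliptic-bis} by the exponential substitution $g=e^{-\mu t}f$ with $\mu>c_0$. Since $\cA$ acts only in $v$, the equation for $g$ reads $(\partial_t+v\cdot\nabla_x)g+\cA g+\mu g=e^{-\mu t}S$, and the shifted form $\tilde a_{t,x}=a_{t,x}+\mu\langle\cdot,\cdot\rangle_{\L^2_v}$ satisfies $\Re\tilde a_{t,x}(f,f)\ge\min(\lambda,\mu-c_0)\|f\|_{\H^\beta_v}^2$ together with the matching boundedness from \eqref{e:ellip-upper-inhom}. On the finite strip $[0,T]$ the multiplier $e^{-\mu t}$ is bounded above and below, hence an isomorphism of all spaces involved (with constants depending on $c_0$ and $T$); the source stays in the same class and the initial datum is unchanged. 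I may therefore assume the form bounded and coercive on $\H^\beta_v$ and drop the tilde. Observe that in this inhomogeneous formulation the exponent restriction $\beta<d/2$ plays no role.

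Next I would treat $S=S_1\in\L^2_{t,x}\H^{-\beta}_v=(\F^\beta)'$ with $\psi\in\L^2_{x,v}$ by Lions' Theorem~\ref{thm:lions}, mimicking Step~6 of the proof of Theorem~\ref{thm:homCP} but on $\Omega_{(0,T)}$. I take $\cH_+=\cD([0,T)\times\R^d\times\R^d)$ with $\|h\|_{\cH_+}^2=\|h\|_{\F^\beta}^2+\|h(0)\|_{\L^2_{x,v}}^2$ and $E_+(f,h)=-\iiint_{\Omega_{(0,T)}}f(\partial_t+v\cdot\nabla_x)\overline h\,\dt\dx\dv+a(f,h)$. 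Condition (H1), in the quantitative form $|E_+(f,h)|\lesssim\|f\|_{\F^\beta}\|h\|_{\cF^\beta_\beta}$, now follows directly from the inhomogeneous $\H^\beta_v$–$\H^{-\beta}_v$ duality for the transport term (no Sobolev embedding, hence no $\beta<d/2$ constraint) and from boundedness of $a$. For (H2) the decisive point is that test functions in $\cD([0,T)\times\cdots)$ vanish near $t=T$, so integration by parts in $t$ and $x$ gives $-\Re\iiint_{\Omega_{(0,T)}}h(\partial_t+v\cdot\nabla_x)\overline h\,\dt\dx\dv=\tfrac12\|h(0)\|_{\L^2_{x,v}}^2$ with no unfavourable terminal contribution; combined with coercivity this yields $\Re E_+(h,h)\ge\min(\tfrac12,\lambda)\|h\|_{\cH_+}^2$. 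The functional $h\mapsto\iint_{(0,T)\times\R^d}\angle{S_1}h\,\dx\dt+\angle\psi{h(0)}$ is continuous on $\cH_+$, so Lions produces $f\in\F^\beta$; the identity $f(0)=\psi$ is obtained as in Theorem~\ref{thm:homCP} by integrating the absolutely continuous map $t\mapsto\angle{f(t)}{h(t)}$ and comparing with the weak formulation, the continuity $f\in\C([0,T];\L^2_{x,v})$ being supplied by the inhomogeneous embedding Theorem~\ref{thm:inhomkinspace}.

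For the remaining source types $S_2\in\L^2_{t,v}\H_x^{-\frac{\beta}{2\beta+1}}$ and $S_3\in\L^1_t\L^2_{x,v}$ with $\psi=0$, I would use the transposition scheme of Step~4 of the proof of Theorem~\ref{thm:HomExUn}. The backward strip problem for $-(\partial_t+v\cdot\nabla_x)+\cA^*$ with zero terminal datum is solvable by the same Lions argument (the sign change of the field turns the boundary term into $\tfrac12\|h(T)\|_{\L^2_{x,v}}^2$, test functions now vanishing near $t=0$), and its solution operator $R$ maps $(\F^\beta)'$ continuously into $\cL^\beta_\beta$, hence into $\L^2_{t,v}\H_x^{\frac{\beta}{2\beta+1}}\cap\C([0,T];\L^2_{x,v})$ by Theorem~\ref{thm:inhomkinspace}. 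Defining $\cT S$ through $\angle{\cT S}h=\angle{S}{Rh}$ and using these embedding bounds to make the pairings with $S_2,S_3$ meaningful and continuous, a density argument (Lemma~\ref{lem:density} adapted to the strip) identifies $\cT S$ as the desired weak solution. Uniqueness comes from the energy equality of Lemma~\ref{lem:energyequalityweaksol} in its strip form: for $S=0$, $\psi=0$ it gives $\|f(t)\|_{\L^2_{x,v}}^2+2\Re\int_0^t\int_{\R^d}a_{\tau,x}(f,f)\dx\dd\tau=0$, so coercivity forces $f\equiv0$. The stated bound, the continuity and the convergence $f(t)\to\psi$ all follow by applying Theorem~\ref{thm:inhomkinspace} to $f\in\cL^\beta_\beta$ (since $(\partial_t+v\cdot\nabla_x)f=S-\cA f$ lands in the required source space), and the energy equality on $[0,T]$ from the inhomogeneous version of Theorem~\ref{thm:optimalregularity}; undoing the exponential substitution reintroduces only the harmless $c_0,T$–dependence. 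The main obstacle is the bookkeeping in the transposition step: one must check that the backward strip problem is genuinely well-posed with the correct boundary behaviour and that $R$ lands in exactly the embedding spaces dual to $S_2$ and $S_3$, so that $\cT S$ is well-defined and the pairings are continuous. Tracking the terminal-versus-initial boundary terms (the $h(T)=0$ against $h(0)=0$ dichotomy) is precisely where a sign error would be fatal.
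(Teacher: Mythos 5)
Your proposal is correct, and it reaches the statement by a route that differs in execution from the paper's, though it draws on the same toolkit (exponential shift for coercivity, Lions' theorem, transposition against a backward adjoint problem, and the kinetic embedding for continuity, the energy equality and uniqueness). The paper's own proof is a reduction: it extends $a_{t,x}$ for $t\ge T$ by the canonical form associated with $\lambda(-\Delta_v)^\beta$, extends $S$ by zero, performs the exponential substitution with $c>c_0$, and then solves on $[0,\infty)$ \emph{exactly as in Theorem~\ref{thm:homCP} but with inhomogeneous spaces}, finally restricting to $[0,T]$; in particular the rough sources $S_2,S_3$ are handled by the zero-extension/causality mechanism of Step~7 of that proof, and nothing on the strip needs to be re-verified. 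You instead work intrinsically on $\Omega_{(0,T)}$: you rerun Lions with $\cH_+=\cD([0,T)\times\R^d\times\R^d)$ (correctly noting that vanishing near $t=T$ kills the terminal boundary term so that (H2) produces $\tfrac12\|h(0)\|^2_{\L^2_{x,v}}$), and you treat $S_2,S_3$ by transposition against the backward strip problem with zero terminal datum, whose boundary term $\tfrac12\|h(T)\|^2_{\L^2_{x,v}}$ has the right sign for test functions vanishing near $t=0$. What the paper's route buys is economy — no re-verification of (H1), (H2) or of the transposition identification, at the modest price of the canonical extension device; what your route buys is a self-contained argument on the strip that never extends the coefficients or the source, and it makes transparent why the restriction $\beta<d/2$ disappears (the inhomogeneous $\H^{\beta}_v$--$\H^{-\beta}_v$ duality replaces the Sobolev embedding in (H1)). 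Two small points of bookkeeping in your version: the continuity statement you invoke should be the strip version of the embedding (obtained in the paper by time cut-offs from Theorem~\ref{thm:inhomkinspace}, as discussed just before Theorem~\ref{thm:CP0T}), and your final uniqueness step, identical to the paper's, should be read after the exponential reduction (or else one applies Gr\"onwall with the defect $c_0$); neither affects correctness.
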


\begin{proof} 
For existence, extend $a_{t,x}$ by the form associated with $\lambda(-\Delta_{v})^\beta$. We may call it the canonical extension, and note that it preserves the ellipticity constants $\lambda,\Lambda$. Extend $S$ by $0$ for $t\ge T$. 
Use the exponential change of variable trick $f \to e^{-ct}f$ to go back to the kinetic Cauchy problem  $(\partial_{t}+v\cdot\nabla_{x})f+ \cA f+cf=e^{-ct}S$ with initial data $\psi$.  Solve this problem with $c>c_{0}$ on $[0,\infty)$ exactly as in Theorem~\ref{thm:homCP} but with inhomogeneous spaces, then restrict to $[0,T]$ and undo the change of variable mentioned above.
 
 For uniqueness, recall that any $f\in \cL^\beta_{\beta}$ on $\Omega_{(0,T)}$ belongs to $\C([0,T]\, ; \, \L^2_{x,v})$ with the absolute continuity of $t\mapsto \|f(t)\|^2_{\L^2_{x,v}}$. If $f\in \F^\beta$ and $(\partial_{t}+v\cdot\nabla_{x})f+ \cA f=0$ on $\Omega_{(0,T)}$ with $f(0)=0$, then $f\in \cF^\beta_{\beta}\subset \cL^\beta_{\beta}$ on $\Omega_{(0,T)}$. Thus $f$ satisfies the energy equality \eqref{e:energy}:  for any $t\le T$, $\|f(t)\|_{\L^2_{x,v}}^2 + 2\Re \int_{0}^t \int_{\R^d} a_{\tau,x}(f,f)\dx\dd\tau = \|f(0)\|_{\L^2_{x,v}}^2=0$, and it follows that $f=0$.
\end{proof}

\begin{rem}
 The same arguments show that one can also solve the backward kinetic Cauchy problem for $-(\partial_{t}+v\cdot \nabla_{x})+\cA^*$ on intervals $[0,T]$ with final value at $T$, proceeding with Theorem~\ref{thm:homCP} on $(-\infty, T]$) for operators $-(\partial_{t}+v\cdot \nabla_{x})+\cA^*+c$.  \end{rem}
 
\section{Proof of Theorem~\ref{thm:HomExUn} and Theorem~\ref{thm:homCP} when $\beta\ge d/2$.}
 \label{sec:Proofswhenbetaged/2}
 Basically, the main technical difference is the construction of weak solutions having the desired homogeneous estimates since we work in $\Omega$ or $\Omega_{+}$. Recall the notation 
 $$\Ydot^{\beta}=\L^2_{t,x}\Hdot^{\beta}_{v} \cap \L^2_{t,v}\Hdot_{\vphantom{t,v} x}^{\frac{\beta}{2\beta+1}} \cap \C^{}_{0}(\R_{\vphantom{t,v} t}\, ; \L^2_{x,v})$$ and 
 $$\Zdot^{\beta}= \L^2_{t,x}\Hdot^{-\beta}_{v} + \L^2_{t,v}\Hdot_{\vphantom{t,v} x}^{-\frac{\beta}{2\beta+1}}
+ \L^1_{t}\L^2_{x,v}$$ used when working on $\R$. We shall do the proofs at the same time, assuming that the form associated to $\cA$ is defined for $t\in \R$ (we know we can always extend it anyway) and divide the argument into 6 steps.

\begin{enumerate}
    \item[Step 1:] Uniqueness in $\Udot^\beta$ on $\Omega_{+}$ and $\Omega$.
    \item[Step 2:] Existence in $\Udot^\beta$ for $S\in \L^2_{t,x}\Hdot^{-\beta}_{v}$ on $\Omega_{+}$. 
    \item[Step 3:] Existence in $\Udot^\beta$ for $S\in \L^2_{t,x}\Hdot^{-\beta}_{v}$ on $\Omega$.
    \item[Step 4:] On $\Omega$, $\cK_{\cA}^+: \L^2_{t,x}\Hdot^{-\beta}_{v} \to \Ydot^{\beta}$. 
    \item[Step 5:] On $\Omega$, $\cK_{\cA}^+: \Zdot^{\beta} \to \Ydot^{\beta}$ (end of proof of Theorem~\ref{thm:HomExUn}).
    \item[Step 6:] Causality (end of proof of Theorem~\ref{thm:homCP}).
\end{enumerate}

\smallskip
\paragraph{Step 1:} Uniqueness is a consequence of the energy equality for weak solutions. The one in $\Omega$ is proved in Lemma~\ref{lem:energyequalityweaksol}. For the one in $\Omega_{+}$, one follows the first steps of the proof of Theorem~\ref{thm:homCP} when $\beta<d/2$. Indeed, all the results used there are proved for $\beta>0$. 
\medskip
\paragraph{Step 2:} Assume $S\in \L^2_{t,x}\Hdot^{-\beta}_{v} $ and $\psi\in \L^2_{x,v}$ and $0<T<\infty$. As $\L^2_{t,x}\Hdot^{-\beta}_{v} \subset \L^2_{t,x}\H^{-\beta}_{v}$,
  we may apply Theorem~\ref{thm:CP0T} with inhomogeneous spaces. Hence, there exists a weak solution  $f_{\varepsilon}\in \L^2_{t,x}\H^{\beta}_{v} \cap \L^2_{t,v}\H_{\vphantom{t,v} x}^{\frac{\beta}{2\beta+1}}\cap \C([0,T]\, ; \L^2_{x,v} )$ of the kinetic Cauchy problem on $(0,T)$ 
  $$
\begin{cases} (\partial_{t}+v\cdot\nabla_{x})f_{\varepsilon}+ \cA f_{\varepsilon} +\varepsilon f_{\varepsilon}= e^{-\varepsilon t}S
\\
f_{\varepsilon}(0)=\psi.
\end{cases}
$$ 
Rewriting the equation as $(\partial_{t}+v\cdot\nabla_{x})(e^{\varepsilon t}f_{\varepsilon})+ \cA (e^{\varepsilon t}f_{\varepsilon})= S$ and using the energy equality on all subintervals $[0,\tau]$ leads to 
the bound 
 $$\|D_{v}^\beta (e^{\varepsilon t}f_{\varepsilon})\|^2_{\L^2(0,T\, ;\,\L^2_{x,v})} + \sup_{t\in [0,T]} \|e^{\varepsilon t}f_{\varepsilon}(t)\|^2_{\L^2_{x,v}} \lesssim \|S\|^2_{\L^2_{t,x}\Hdot^{-\beta}_{\vphantom{t,x}v}} +\|\psi\|^2_{\L^2_{x,v}}
 $$
 with uniform implicit constant with respect to $\varepsilon>0$ and $T$. 
 Hence, we can extract when $\varepsilon\to 0$ a weak$^*$-limit for the sequence $e^{\varepsilon t}f_{\varepsilon} \in \L^\infty_{t}\L^2_{x,v}\cap \L^2_{t,x}\Hdot^{\beta}_{v}$ and obtain a weak solution $f^T\in \Udot^\beta$ in $\Omega_{(0,T)}$ of $(\partial_{t}+v\cdot\nabla_{x})f^T+ \cA f^T = S$ with same initial data $\psi$. Observe that this solution satisfies
 $$\|D_{v}^\beta f^T\|^2_{\L^2(0,T\, ;\,\L^2_{x,v})} + \sup_{t\in [0,T]} \|f^T(t)\|^2_{\L^2_{x,v}} \lesssim \|S\|^2_{\L^2_{t,x}\Hdot^{-\beta}_{\vphantom{t,x}v}} +\|\psi\|^2_{\L^2_{x,v}}
 $$
 with constant uniform with respect to $T$. Then, the compatibility for different $T$ implied by uniqueness allows us to define $f$ in $\Omega_{+}$ as $f=f^T$ on $\Omega_{(0,T)}$. This $f$ is a weak solution on $\Omega_{+}$ with initial data $\psi$, and it satisfies the above bound for $T=\infty$ (by a monotone convergence argument as $T\uparrow \infty$). In particular, $f\in \Udot^\beta$ in $\Omega_{+}$. 
 \medskip 
\paragraph{Step 3:}
 Assume $S\in \L^2_{t,x}\Hdot^{-\beta}_{v}$ on $\Omega$ and $\psi=0$. We can apply Step 2 on any interval $[-k,\infty)$, $ k\in \IN$, with zero initial condition and push backward the initial time $-k$ to $-\infty$ to obtain, again with a weak$^*$-limit argument, using uniform bounds with respect to $k$, a weak solution $f \in \Udot^\beta$ in $\Omega$. 
 \medskip 
\paragraph{Step 4:} Existence and uniqueness show that one can define $\cK_{\cA}^+: \L^2_{t,x}\Hdot^{-\beta}_{v} \to \Udot^\beta$ as $\cK_{\cA}^+ S$ is the unique solution $f\in \Udot^\beta$ to 
$(\partial_{t}+v\cdot\nabla_{x})f+ \cA f = S$ on $\Omega$. 
  It follows that $f\in \cLdot^\beta_{\beta}$. By Theorem~\ref{thm:homkinspace}, $f\in \L^2_{t,x}\Hdot^{\beta}_{v} \cap \L^2_{t,v}\Hdot_{\vphantom{t,v} x}^{\frac{\beta}{2\beta+1}} \cap \C^{}_{0}(\R_{\vphantom{t,v} t}\, ; \L^2_{x,v})$. This establishes the desired boundedness 
  \begin{align*}
 \cK_{\cA}^+\colon \L^2_{t,x}\Hdot^{-\beta}_{v} \to \Ydot^{\beta}.
\end{align*}
 \medskip 
\paragraph{Step 5:} We wish to apply the duality scheme as in an earlier argument, but we have to do it with care. The result of Step 4 applies to the backward equation for $\cA^*$ and yields
\begin{align*}
 \cK_{\cA^*}^-: \L^2_{t,x}\Hdot^{-\beta}_{v} \to \Ydot^\beta.
\end{align*}
Note that if $S\in \cS_{K}$ and
$h\in \L^2_{t,x}\Hdot^{-\beta}_{v}$, then 
\[ |\angle{\cK_{\cA}^+S}{h}|= |\angle{S}{\cK_{\cA^*}^-h}|\le \|S\|_{\Zdot^{\beta}}\|\cK_{\cA^*}^-h\|_{\Ydot^\beta} \lesssim \|S\|_{\Zdot^{\beta}}\|h\|_{\L^2_{t,x}\Hdot^{-\beta}_{v}}
\]
using Step 4 in the last inequality. Consequently, we can interpret $\cK_{\cA}^+ S$ as a bounded linear functional on $\L^2_{t,x}\Hdot^{-\beta}_{v}$, hence in $(\L^2_{t,x}\Hdot^{-\beta}_{\vphantom{t,x} v})'$, with norm controlled by $ \|S\|_{\Zdot^{\beta}}$.
Since $\cS_{K}\subset \L^2_{t,x}\Hdot^{-\beta}_{v}$, we know by Step 4 that $\cK_{\cA}^+ S\in \Ydot^\beta\subset \Udot^\beta$, the completion of which for the semi-norm  $\|\cdot\|_{\Udot^\beta}=\|\cdot \|_{\L^2_{t,x}\Hdot^{\beta}_{\vphantom{t,x}v}}$ is that dual space. In particular, for any $f\in \Udot^\beta$, $\|f \|_{\Udot^\beta} = \|f\|_{(\L^2_{t,x}\Hdot^{-\beta}_{\vphantom{t,x} v})'}$ under identification. It follows that 
 \[
 \|\cK_{\cA}^+ S\|_{\Udot^{\beta}}  \lesssim \|S\|_{\Zdot^{\beta}}.
\]
From the equation we deduce that $\cK_{\cA}^+ S \in \cLdot^\beta_{\beta}$ with
\[
\|\cK_{\cA}^+ S\|_{\cLdot^\beta_{\beta}}  \lesssim \|S\|_{\Zdot^{\beta}}.
\]
Now, as noted in Corollary~\ref{cor:complete}, $\cLdot^\beta_{\beta}$ is a complete normed space, so 
arguing by density of $\cS_{K}$ in $\Zdot^{\beta}$, we conclude that $\cK_{\cA}^+$ extends boundedly from $\Zdot^{\beta}$ to $\cLdot^\beta_{\beta}$, which is contained in $\Udot^\beta$. Eventually, this extension maps $\Zdot^{\beta}$ into weak solutions by passing to the limit in the weak formulation of the equation. This completes the proof of existence on $\Omega$ for source terms in $\Zdot^{\beta}$. 
\medskip 
\paragraph{Step 6:} The causality principle follows again from the energy equality and allows one to construct weak solutions on $\Omega_{+}$ as in Step 7 of the proof of Theorem~\ref{thm:homCP} when $\beta<d/2$.
This concludes the argument, and also proves Corollary~\ref{cor:causality} when $\beta\ge d/2$.

\begin{rem} 
As we have now fully proved Theorem~\ref{thm:homCP} when $\beta >0$, we may go back to Theorem~\ref{thm:CP0T}
in the case $c_{0}=c^0=0$ in \eqref{e:ellip-upper-inhom} and \eqref{eq:weakelliptic-bis}. Assume $S\in \L^2_{t,x}\Hdot^{-\beta}_{v} + \L^2_{t,v}\Hdot_{\vphantom{t,v} x}^{-\frac{\beta}{2\beta+1}}
+ \L^1_{t}\L^2_{x,v}$ on $\Omega_{(0,T)}$ and $\psi\in \L^2_{x,v}$. We have nothing else to do than restrict the construction on $[0,\infty)$ to $[0,T]$, having extended $S$ by 0 for $t\ge T$. The weak solution enjoys the homogeneous estimates in $\L^2_{t,x}\Hdot^{\beta}_{v} \cap \L^2_{t,v}\Hdot_{\vphantom{t,v} x}^{\frac{\beta}{2\beta+1}} \cap \C^{}_{0}([0,T]\, ; \L^2_{x,v})$ with bounds independent of $T$ if $S$ is normalized. It also has an $\L^2_{t,x,v}$ bound on $\Omega_{(0,T)}$ but this bound blows up with $T$.  
\end{rem}

\section{The special case of local diffusion operators}
\label{sec:localcase}

In this section, we focus on $\beta=1$ and give different definitions of the homogeneous spaces that are valid in any dimension $d\ge 1$.  

Define $\Wdot^{1,2}(\R^d)=\{f\in \cD'(\R^d)\, ;\, \nabla f\in \L^2(\R^d)\}$, equipped with the semi-norm $\|\nabla f\|_{\L^2}$. Remark that $\Hdot^1(\R^d) \subset \Wdot^{1,2}(\R^d)$ when $d\ge 3$.  
The inhomogeneous version is $\W^{1,2}(\R^d)=\{f\in\L^2(\R^d)\, ;\, \nabla f\in \L^2(\R^d)\}$ equipped with the norm $(\|f\|_{\L^2}^2+\|\nabla f\|_{\L^2}^2)^{1/2}$. It agrees with $\H^1(\R^d)$ when $d\ge 1$. Hence, our inhomogeneous theory is unchanged and we only have to address the homogeneous one.

\begin{lem}
\label{lem:hombeta=1}
Any element of $ \Wdot^{1,2}(\R^d)$ identifies to an element in $ \L^2_{\loc}(\R^d) \cap \cS'(\R^d)$, and $\cD(\R^d)$ is dense in $\Wdot^{1,2}(\R^d)$ for the semi-norm.\end{lem}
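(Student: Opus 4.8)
The plan is to prove the two assertions separately, using only mollification and the Poincaré inequality. For the local integrability, I would start from $f\in\cD'(\R^d)$ with $\nabla f\in\L^2(\R^d)$ and set $f_\eps=f*\rho_\eps$ for a standard mollifier $\rho_\eps$, so that $f_\eps\in\C^\infty(\R^d)$, $f_\eps\to f$ in $\cD'(\R^d)$, and $\nabla f_\eps=(\nabla f)*\rho_\eps\to\nabla f$ in $\L^2(\R^d)$ by Young's inequality. Fixing a ball $B=B(0,R)$ and $\phi\in\cD(B)$ with $\int\phi=1$, the generalised Poincaré inequality $\|g-\langle g,\phi\rangle\|_{\L^2(B)}\lesssim_{R,\phi}\|\nabla g\|_{\L^2(B)}$ (valid on $\W^{1,2}(B)$ by Rellich compactness, since $\langle\cdot,\phi\rangle$ does not vanish on constants) applied to $g=f_\eps-f_{\eps'}$, together with the convergence $\langle f_\eps,\phi\rangle\to\langle f,\phi\rangle$, shows that $(f_\eps)_\eps$ is Cauchy in $\L^2(B)$. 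Its $\L^2(B)$-limit coincides with $f$ because $f_\eps\to f$ in $\cD'$; hence $f\in\L^2(B)$ for every $R$, that is $f\in\L^2_{\loc}(\R^d)$ and $f\in\W^{1,2}_{\loc}(\R^d)$.

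To see that $f\in\cS'(\R^d)$, I would quantify its growth. The scaled Poincaré inequality gives $\|f-\bar f_{B_R}\|_{\L^2(B_R)}\lesssim R\,\|\nabla f\|_{\L^2(\R^d)}$ for the average $\bar f_{B_R}$, and comparing the averages over $B_R$ and $B_{2R}$ yields $|\bar f_{B_{2R}}-\bar f_{B_R}|\lesssim R^{1-d/2}\|\nabla f\|_{\L^2}$. Summing over dyadic radii $R=2^k$ shows that $|\bar f_{B_{2^k}}|$ grows at most polynomially (bounded for $d\ge 3$, like $\log R$ for $d=2$, like $R^{1/2}$ for $d=1$), whence $\|f\|_{\L^2(B_R)}$ is bounded by a polynomial in $R$. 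Decomposing a Schwartz function $\psi$ over dyadic annuli $A_k$ and applying Cauchy--Schwarz then gives $|\langle f,\psi\rangle|\lesssim\sum_k\|f\|_{\L^2(A_k)}\|\psi\|_{\L^2(A_k)}$, a convergent series controlled by a single Schwartz seminorm of $\psi$; hence $f\in\cS'(\R^d)$.

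For the density of $\cD(\R^d)$ in the semi-norm, the plan is to combine smoothing with a truncation that subtracts a constant, constants lying in the kernel of $\|\nabla\cdot\|_{\L^2}$. Given $\delta>0$, first pick $\eps$ with $\|\nabla(f-f_\eps)\|_{\L^2}<\delta$. Then, with $\chi_R(x)=\chi(x/R)$ for $\chi\in\cD(\R^d)$ equal to $1$ on $B_1$ and supported in $B_2$, set $\phi_R=\chi_R(f_\eps-c_R)\in\cD(\R^d)$, where $c_R$ is the average of $f_\eps$ over the annulus $A_R=B_{2R}\setminus B_R$. Since $\nabla\phi_R=\chi_R\nabla f_\eps+(f_\eps-c_R)\nabla\chi_R$, the first term tends to $\nabla f_\eps$ in $\L^2$ as $R\to\infty$ by dominated convergence, while the second obeys $\|(f_\eps-c_R)\nabla\chi_R\|_{\L^2}\lesssim R^{-1}\|f_\eps-c_R\|_{\L^2(A_R)}\lesssim\|\nabla f_\eps\|_{\L^2(A_R)}\to 0$ by the Poincaré inequality on $A_R$ (whose constant scales like $R$ under dilation) and the decay of the $\L^2$ tail of $\nabla f_\eps$. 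A diagonal choice of $(\eps,R)$ then produces elements of $\cD(\R^d)$ approximating $f$ in the semi-norm.

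The main difficulty in both parts is the handling of additive constants, which the semi-norm does not see. In the truncation step one cannot simply cut off $f_\eps$, since the boundary term $(f_\eps-c_R)\nabla\chi_R$ would fail to decay; subtracting the annulus average $c_R$ and invoking the correctly scaled Poincaré inequality on $A_R$ is precisely what forces this term to vanish, and getting that scaling right is the crux. Likewise, temperedness is delicate only because $f$ is determined by $\nabla f$ merely up to a constant, so one must track the (at most polynomial) growth of the averages $\bar f_{B_R}$ through the dyadic comparison above.
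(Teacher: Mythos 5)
Your first two steps (local square-integrability via mollification plus a Poincar\'e inequality with a fixed test function, and temperedness via polynomial growth of $\|f\|_{\L^2(B_R)}$ obtained from dyadic comparison of ball averages) are exactly the paper's argument, written out in more detail. The genuine difference is in the density step. The paper truncates $f$ against \emph{ball} averages, $f_k=(f-c_k)\varphi(2^{-k}\cdot)$ with $c_k$ the mean over $B(0,2^{k+1})$; there the cross term $(f-c_k)\nabla\big(\varphi(2^{-k}\cdot)\big)$ is only \emph{bounded} in $\L^2$ (Poincar\'e on the ball controls it by $\|\nabla f\|_{\L^2(B(0,2^{k+1}))}$, which does not tend to $0$), so the paper settles for weak $\L^2$ convergence of the gradients and then invokes Mazur's lemma before mollifying. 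You instead subtract the mean over the annulus $A_R=B_{2R}\setminus B_R$ on which $\nabla\chi_R$ is supported, so that Poincar\'e on $A_R$ controls the cross term by $\|\nabla f_\varepsilon\|_{\L^2(A_R)}$, which genuinely vanishes as $R\to\infty$. Where it applies, this is cleaner and more quantitative: it yields strong convergence directly and dispenses with Mazur's lemma entirely.

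There is, however, a gap in one dimension. The Poincar\'e inequality on $A_R$ with a single subtracted constant requires $A_R$ to be connected, and for $d=1$ the ``annulus'' $(-2R,-R)\cup(R,2R)$ is disconnected: with one constant $c_R$ the inequality $\|g-c_R\|_{\L^2(A_R)}\lesssim R\,\|g'\|_{\L^2(A_R)}$ is simply false (take $g$ locally constant with different values on the two components). This matters here because the lemma sits in the section treating local diffusion in \emph{any} dimension $d\ge 1$, and $d=1,2$ are precisely the cases for which $\Wdot^{1,2}$ is introduced in place of $\Hdot^1$. Your proof is complete for $d\ge 2$; for $d=1$ you need a separate argument. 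One quick fix: note that $\{g'\,;\,g\in\cD(\R)\}=\{h\in\cD(\R)\,;\,\int h=0\}$ is dense in $\L^2(\R)$ --- approximate $f'$ by $h_0\in\cD(\R)$ in $\L^2$ and replace $h_0$ by $h_0-(\int h_0)\,\psi_n$ with $\psi_n=n^{-1}\psi(\cdot/n)$, $\int\psi=1$, the correction having $\L^2$ norm $O(n^{-1/2})$ --- which gives density of $\cD(\R)$ in $\Wdot^{1,2}(\R)$ at once. Alternatively, keep the paper's weak-convergence/Mazur route, which is dimension-independent.
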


\begin{proof}
This is known. Here is a proof for the comfort of the reader. Let $f\in \Wdot^{1,2}(\R^d)$. As $f$ is a distribution, we may regularise it to $f_{\varepsilon}$ in this space by mollification, and $f_{\varepsilon} \in C^\infty$. Use Poincar\'e inequalities for $f_{\varepsilon}$ and limiting arguments to deduce that $f\in \L^2_{\loc}(\R^d)$. Poincar\'e inequalities again give a polynomial growth in $R$ of the $\L^2$ norms of $f$ on balls $B(0,R)$. Thus $f$ is a tempered distribution as well. Lastly, for the density, we claim that we can approximate $f$ by compactly supported functions in the $\|\nabla f\|_{2}$ semi-norm. Pick $f_{k}(x)=(f(x)-c_{k})\varphi(2^{-k}
x)$, $k\ge 1$, with $\varphi$ a smooth function supported on the ball $B(0,2)$ and that is 1 on the unit ball and where $c_{k}$ is the mean of $f$ on the ball $B(0,2^{k+1})$. It is easy to see that $\nabla f_{k}$ converges weakly to $\nabla f$ in $
\L^2$ and by Mazur's lemma, there exist convex combinations of subsequences of $\nabla f_{k}$ having strong convergence to $\nabla f$ in $\L^2$. From there, use mollification to obtain an approximation in $\cD(\R^d)$. 
\end{proof}

 We introduce the kinetic space
 \begin{align}
 \label{eq:Ldot}
 \cLdot= \{ f \in \cD'(\Omega)\, ; \, f \in \L^2_{t,x}\Wdot^{1,2}_{\vphantom{t,x} v} \ \& \ (\partial_{ t} + v \cdot \nabla_x)f \in \L^2_{\vphantom{t,x} t} \Xdot^{-1}_{\vphantom{t,x} 1}+\L^1_{\vphantom{t,x} t} \L^2_{\vphantom{t,x} x,v}\}
\end{align}
with semi-norm $ \|f\|_{\cLdot}$ defined by
\begin{align*}
 \|f\|_{\cLdot}= \| \nabla_{v} f\|_{\L^2_{t,x,v}} + \|(\partial_t + v \cdot \nabla_x)f\|_{\L^2_{t}\Xdot^{ -1}_{\vphantom{t,x,v} 1}+\L^1_{\vphantom{t,x,v} t} \L^2_{\vphantom{t,x,v} x,v}}.
 \end{align*}
 The spaces with negative indices are defined as before and $\Hdot^{-1}_{v}$ identifies to the dual of $\Wdot^{1,2}_{v}$, so that we also write $\Hdot^{-1}_{v}=\Wdot^{-1,2}_{v}$ and $\Xdot^{-1}_1= \L^2_{x}\Wdot^{-1,2}_{\vphantom{1} v}+ \L^2_{\vphantom{1} v}\Hdot_{\vphantom{1} x}^{-\frac{1}{3}}$. Note that the $f \in \L^2_{t,x}\Wdot^{1,2}$ condition excludes non-zero constants (in the first version of this article, we wrote the opposite, which would be true if we replace this condition by $\nabla f\in \L^2_{t,x,v}$). Hence, all results in this section are modified accordingly.

Redoing for the Kolmogorov operators $\cK^\pm:=\cK_{1}^\pm$ the proof in Corollary~\ref{cor:Kbeta}, we can state the following boundedness result.

\begin{prop}[Boundedness of the Kolmogorov operators] We have the bounded map
\[\cK^\pm\colon \L^2_{t,x}\Wdot^{-1,2}_{v}+\L^2_{t,v}\Hdot_{\vphantom{t,v} x}^{-\frac{1}{3}}+ \L^1_{t} \L^2_{\vphantom{t,x} x,v} \to \L^2_{t,x}\Wdot^{1,2}_{\vphantom{t,x} v}\cap  \L^2_{\vphantom{t,x} t,v}\Hdot_{\vphantom{t,x} x}^{\frac{1}{3}} \cap \C^{}_{0}(\R_{\vphantom{t,x} t}\, ; \,\L^2_{\vphantom{t,x} x,v}). 
\]
\end{prop}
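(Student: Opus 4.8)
The plan is to repeat the proof of Corollary~\ref{cor:Kbeta} essentially verbatim, the sole novelty being the substitution of $\Wdot^{\pm 1,2}_{v}$ for $\Hdot^{\pm 1}_{v}$ so as to cover the dimensions $d=1,2$ in which $\beta=1\ge d/2$. The first thing to record is that the source space is in fact unchanged: since $\Hdot^{-1}_{v}=\Wdot^{-1,2}_{v}$ (recalled above, being the dual of $\Wdot^{1,2}_{v}$), the domain here coincides with the $\beta=1$ source space $\Zdot^{1}$ of Corollary~\ref{cor:boundedness}.

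First I would approximate a general source $S$ in this space by a sequence $S_{j}\in\cS_{K}$ using Lemma~\ref{lem:density}; this is legitimate in every dimension $d\ge 1$ because the relevant exponents $-1$ and $-1/3$ both satisfy the constraint $<d/2$. For each $S_{j}$ the function $\cK^{\pm}S_{j}$ lies in $\L^2_{t,x,v}$ (indeed $\Gamma\cK^{\pm}S_{j}\in\H^1_{t}(\L^2_{x,v})$ by Lemma~\ref{lem:density}), so every target semi-norm is finite and computable on the Fourier side. Applying Proposition~\ref{p:boundsKolmoapriori} with $\gamma=\beta=1$ --- a value lying in both ranges $0\le\gamma\le 2\beta$ and $\beta\le\gamma\le 2\beta$ --- and splitting $S_{j}$ along the three pieces of the source so as to exploit the appropriate minimum term for each, I obtain
\[
\|D_{v}\cK^{\pm}S_{j}\|_{\L^2_{t,x,v}}+\|D_{\vphantom{D_{v}}x}^{1/3}\cK^{\pm}S_{j}\|_{\L^2_{t,x,v}}+\sup_{t\in\R}\|\cK^{\pm}S_{j}(t)\|_{\L^2_{x,v}}\lesssim\|S_{j}\|_{\Zdot^{1}},
\]
where I use that $\Xdot^{0}_{1}=\L^2_{x,v}$ in order to read the $\L^1_{t}$-term as $\L^1_{t}\L^2_{x,v}$. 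The point that transports this bound into the $\Wdot^{1,2}_{v}$ framework is simply that $\|D_{v}\cK^{\pm}S_{j}\|_{\L^2}=\|\nabla_{v}\cK^{\pm}S_{j}\|_{\L^2}$, both being the Fourier multiplier of symbol $|\xi|$ acting on such nice functions; hence the left-hand side controls exactly the three target semi-norms.

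Next I would pass to the limit. The sequence $(\cK^{\pm}S_{j})_{j}$ is Cauchy in $\C^{}_{0}(\R^{}_{t}\, ;\, \L^2_{x,v})$, and I define $\cK^{\pm}S$ as its uniform limit; in particular $\cK^{\pm}S(t)\in\L^2_{x,v}$ for every $t$, which removes the additive-constant indeterminacy of $\Wdot^{1,2}_{v}$. Simultaneously $\nabla_{v}\cK^{\pm}S_{j}$ and $D_{x}^{1/3}\cK^{\pm}S_{j}$ are Cauchy in $\L^2_{t,x,v}$, their limits being $\nabla_{v}\cK^{\pm}S$ and $D_{x}^{1/3}\cK^{\pm}S$ in the distributional sense; invoking Lemma~\ref{lem:hombeta=1} one then checks that $\cK^{\pm}S(t,x,\cdot)\in\Wdot^{1,2}_{v}$ for a.e.\ $(t,x)$. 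Letting $j\to\infty$ in the displayed inequality yields the asserted boundedness.

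I expect the only genuine obstacle to be the semi-normed nature of $\Wdot^{1,2}_{v}$: one must ensure that the limit is an honest function and not merely a class modulo constants. This is precisely what the concurrent $\C^{}_{0}(\R^{}_{t}\, ;\, \L^2_{x,v})$ convergence secures, together with Lemma~\ref{lem:hombeta=1}, which identifies $\Wdot^{1,2}_{v}$ with a subspace of $\L^2_{\loc,v}\cap\cS'_{v}$. All the remaining details are a line-by-line transcription of the proof of Corollary~\ref{cor:Kbeta}.
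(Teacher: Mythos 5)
Your proposal is correct and is essentially the paper's own proof: the paper establishes this proposition by the single remark that one redoes the proof of Corollary~\ref{cor:Kbeta} for $\cK^{\pm}=\cK_{1}^{\pm}$, which is exactly your scheme of approximating by $\cS_{K}$ via Lemma~\ref{lem:density}, invoking the a priori estimates of Proposition~\ref{p:boundsKolmoapriori} with $\gamma=\beta=1$, and passing to the limit in $\C^{}_{0}(\R^{}_{t}\,;\,\L^2_{x,v})$ first and in the remaining norms afterwards. The details you add --- the identification $\Wdot^{-1,2}_{v}=\Hdot^{-1}_{v}$ of the source space with $\Zdot^{1}$, the equality $\|D_{v}\cdot\|_{\L^2}=\|\nabla_{v}\cdot\|_{\L^2}$ on nice functions, and the use of the $\C^{}_{0}$ convergence together with Lemma~\ref{lem:hombeta=1} to remove the additive-constant ambiguity of $\Wdot^{1,2}_{v}$ --- are precisely the modifications the paper leaves implicit.
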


\begin{cor}[Uniqueness]\label{cor:nabla}
Let $f\in \cD'(\Omega)$ for which $\nabla_{v}f\in \L^2_{t,x,v}$. Then $\pm (\partial_{t}+v\cdot\nabla_{x})f -\Delta_{v}f=0$ in the sense of distributions implies that 
 $f$ is constant. If we further impose $f \in \L^2_{t,x}\Wdot^{1,2}_{\vphantom{t,x} v}$ then $f=0$.
 \end{cor}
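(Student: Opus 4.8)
The plan is to adapt the Galilean--Fourier argument of Lemma~\ref{lem:uniqueness} to the weaker hypothesis $\nabla_{v}f\in\L^2_{t,x,v}$, which controls $f$ only modulo its velocity average, so that one only reaches ``$f$ constant'' instead of ``$f=0$''. I treat the forward equation $(\partial_{t}+v\cdot\nabla_{x})f-\Delta_{v}f=0$, the backward case being identical after exchanging the roles of $t\to-\infty$ and $t\to+\infty$ (cf.\ Remark~\ref{rem:inverttime}). First I would set $G=\widehat{\Gamma f}$, so that the equation becomes $\partial_{t}G+|\xi-t\varphi|^{2}G=0$ in the sense of distributions, with propagator $K(t,s,\varphi,\xi)$ as in \eqref{eq:kolmogorovkernel}. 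The new feature is that $\nabla_{v}f\in\L^2_{t,x,v}$ translates, via Plancherel and the relation $\widehat{\Gamma(\nabla_{v}f)}=i(\xi-t\varphi)\,\widehat{\Gamma f}$, into $|\xi-t\varphi|\,G\in\L^2_{t,\varphi,\xi}$ only. This pins down $G$ away from the characteristic set $\Sigma=\{\xi=t\varphi\}$ but says nothing on $\Sigma$, which is exactly the image of the velocity-frequency-zero set $\{\xi=0\}$ under the Galilean shift.

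Then, on the open set $\{|\xi-t\varphi|>0\}$ (whose complement $\Sigma$ has Lebesgue measure zero), $G$ is a genuine $\L^2_{\loc}$ function solving $\partial_{t}G=-|\xi-t\varphi|^{2}G$, so $G(t,\varphi,\xi)=K(t,0,\varphi,\xi)\,G_{0}(\varphi,\xi)$ for a.e.\ $(\varphi,\xi)$. Integrating the weighted bound and using Fubini, finiteness of $\iiint|\xi-t\varphi|^{2}K(t,0)^{2}|G_{0}|^{2}$ together with the divergence of $\int_{\R}|\xi-t\varphi|^{2}K(t,0)^{2}\dt$ as $t\to-\infty$ for every $(\varphi,\xi)\neq(0,0)$ --- which follows from the comparisons in Lemma~\ref{lem:comp} and the exponential growth exactly as in the proof of Lemma~\ref{lem:uniqueness} --- forces $G_{0}=0$ a.e. Hence $G$ vanishes off $\Sigma$, i.e.\ $\supp\widehat{\Gamma f}\subset\Sigma$, equivalently $\supp\widehat{f}\subset\{\xi=0\}$. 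A distribution whose partial Fourier transform in $v$ is supported at $\xi=0$ is a polynomial in $v$ with distributional coefficients in $(t,x)$; since $\nabla_{v}f\in\L^2_{t,x,v}$ and the $v$-gradient of a nonzero polynomial in $v$ is never in $\L^2(\R^d_{v})$, the polynomial must have degree $0$, so $f=c(t,x)$ is independent of $v$.

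Finally I would substitute $f=c(t,x)$ into the equation: since $\Delta_{v}c=0$, it reads $\partial_{t}c+v\cdot\nabla_{x}c=0$ as a distribution in $(t,x,v)$; differentiating in $v_{j}$ gives $\partial_{x_{j}}c=0$ for all $j$, hence $\nabla_{x}c=0$ and then $\partial_{t}c=0$, so $f$ is constant. This proves the first assertion. For the last statement, if in addition $f\in\L^2_{t,x}\Wdot^{1,2}_{v}$, then a nonzero constant is not an element of this space (as noted before the statement, within $\L^2_{t,x}\Wdot^{1,2}_{v}$ the condition $\nabla_{v}f=0$ forces $f=0$), whence $f=0$.

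The hard part will be the preliminary step I have glossed over: making sense of $G=\widehat{\Gamma f}$, i.e.\ showing $f$ is temperable in $(x,v)$ locally in $t$. Unlike in Lemma~\ref{lem:uniqueness}, where $f\in\Udot^{\gamma}$ gives $\widehat{\Gamma f}\in\L^1_{\loc}(\Omega)$ outright, here $\nabla_{v}f\in\L^2$ leaves the velocity average of $f$ uncontrolled, and this degree of freedom is precisely what produces the (a priori nonzero) constant. To handle it I would invoke Lemma~\ref{lem:hombeta=1}, by which $f(t,x,\cdot)\in\Wdot^{1,2}(\R^d_{v})\subset\L^2_{\loc,v}\cap\cS'_{v}$ with at most polynomial growth controlled by $\|\nabla_{v}f(t,x,\cdot)\|_{\L^2_{v}}$, together with the equation written as $(\partial_{t}+v\cdot\nabla_{x})f=\Delta_{v}f=\div_{v}(\nabla_{v}f)\in\L^2_{t,x}\Hdot^{-1}_{v}$ to tame the dependence on $(t,x)$; this should place $f$ in $\L^1_{\loc,t}\cS'_{x,v}$ and legitimise the distributional ODE analysis above. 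The genuinely delicate point throughout is the $v$-frequency-zero behaviour, and it is the only reason the first conclusion is ``constant'' rather than ``zero''.
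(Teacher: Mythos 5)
Your central step has a genuine gap, and it is exactly the point where your route diverges from the paper's. You integrate the ODE only on the open set $U=\{|\xi-t\varphi|>0\}$ and write $G(t,\varphi,\xi)=K(t,0,\varphi,\xi)\,G_{0}(\varphi,\xi)$ with a \emph{single} constant per frequency $(\varphi,\xi)$. This presupposes that the $t$-fibers of $U$ are connected. When $\varphi\neq0$ and $\xi\in\R\varphi$, say $\xi=s\varphi$, the fiber is $\R\setminus\{s\}$, and the general solution of the ODE on $U$ carries two independent constants, one on $(-\infty,s)$ and one on $(s,+\infty)$. Your weighted bound kills only the backward one: $\int|\xi-t\varphi|^{2}K^{2}\dt$ diverges on $(-\infty,s)$ by exponential growth, but \emph{converges} on $(s,+\infty)$ because the kernel decays forward in time, so nothing forces the forward constant to vanish. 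In $d\ge2$ this is harmless (the parallel pairs form a Lebesgue-null set of frequencies), but in $d=1$ \emph{every} $(\varphi,\xi)$ with $\varphi\neq0$ is of this type, so your argument concludes nothing about $G$ on $\{t>\xi/\varphi\}$ --- and $d=1$ is squarely within the scope of the corollary; covering low dimensions is the very purpose of Section~\ref{sec:localcase}. Concretely, take $G=\mathbf{1}_{\{t>\xi/\varphi\}}K(t,\xi/\varphi,\varphi,\xi)\,h(\varphi,\xi)$ with $0\neq h\in\L^{2}_{\varphi,\xi}$, where $K$ is the kernel \eqref{eq:kolmogorovkernel} with $\beta=1$: it solves the ODE pointwise on $U$, and a computation gives $\bigl\||\xi-t\varphi|G\bigr\|_{\L^{2}_{t,\varphi,\xi}}^{2}=\tfrac12\|h\|_{\L^{2}_{\varphi,\xi}}^{2}$, so it satisfies every condition your argument uses, yet $G\neq0$. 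This is not a counterexample to the corollary --- the corresponding $f$ does not solve the equation on all of $\Omega$, since distributionally $\partial_{t}G+|\xi-t\varphi|^{2}G$ is a Dirac mass in time on $\{t=\xi/\varphi\}$ weighted by $h$ --- but that Dirac mass is precisely the information you discarded by restricting the equation to $U$, so your argument cannot rule this $G$ out.

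The repair is the paper's key move: with $\beta=1$, $K(t,0,\varphi,\xi)=\exp\bigl(-\int_{0}^{t}|\xi-\tau\varphi|^{2}\dd\tau\bigr)$ is the exponential of a polynomial, hence smooth and strictly positive on \emph{all} of $\Omega$, so $H:=K(t,0)^{-1}G$ is a well-defined distribution and the equation becomes $\partial_{t}H=0$ in $\cD'(\Omega)$; thus $G=K(t,0)G_{0}$ globally, with one $G_{0}\in\cD'_{\varphi,\xi}$ valid across the characteristic set. The $t\to-\infty$ growth then forces $G_{0}$ to vanish near every $(\varphi,\xi)\neq(0,0)$ in every dimension $d\ge1$, and yields the stronger conclusion $\supp G\subset\R_{t}\times\{(0,0)\}$ rather than only $\supp G\subset\Sigma$. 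Granted that, the rest of your proposal is sound and is a legitimate alternative to the paper's ending: your reduction to ``$f$ is a polynomial in $v$ with distributional coefficients'', the degree-zero argument from $\nabla_{v}f\in\L^{2}_{t,x,v}$, and the constancy argument obtained by differentiating $(\partial_{t}+v\cdot\nabla_{x})c=0$ in $v$ are all correct (the paper instead identifies $G=T_{0}\otimes\delta$ and shows $T_{0}$ is constant), and your use of Lemma~\ref{lem:hombeta=1} to secure temperedness is exactly what the paper does.
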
 
 \begin{proof} Assume $ (\partial_{t}+v\cdot\nabla_{x})f -\Delta_{v}f=0$ as the other case is similar. We follow the strategy of Lemma~\ref{lem:uniqueness} and the notation there. By Lemma~\ref{lem:hombeta=1}, $f\in \L^2_{t,x}\cS'_{v}\subset \cS'(\Omega)$ but $\hat f$ may not be locally integrable on $\Omega$.
 Still, one can solve the ordinary differential equation in a distributional sense and obtain that 
 $G= K(t,0)G_{0}$ in $\cD'(\Omega)$ with $G_{0}\in \cD'_{\varphi,\xi}$. Next the condition $(\xi-t\varphi)G\in \L^2_{t,\varphi,\xi}$ implies that the restriction of $G$ to the complement of $\{\xi-t\varphi=0\}$ is locally square integrable. 
This forces $G_{0}$ to be supported at the origin $(\varphi,\xi)=(0,0)$. Indeed,  pick a compact set $C$ in $\R_{\varphi,\xi}^{2d}\setminus \{(0,0)\}$. Then one can find a compact interval $I\subset\R^{}_{t}$ such that $\min |\xi-t\varphi|^2>0$ on $I\times C$. Thus $G\in \L^2(I\times C)$ and, using the equation, we see that $G\in \C(I; \L^2(C))$.  This implies that $G_{0}\in \L^2(C)$. Using again 
 $(\xi-t\varphi)G\in \L^2_{t,\varphi,\xi}$, this gives us 
 \begin{equation*}
 \iiint_{\R\times C} |\xi-t\varphi|^2\exp\bigg({-2\int_{0}^t |\xi-\tau \varphi|^{2}\, d\tau}\bigg)| G_{0}(\varphi,\xi)|^{2} \, \dt \dd \varphi \dd \xi <\infty,
\end{equation*}
 which yields $G_{0}=0$ on $C$. This proves that $G$ is supported in $\R^{}_{t}\times \{(0,0)\}$. This implies $(\xi-t\varphi)G=0$ as a distribution because it is locally integrable in $\Omega$ and supported in the null Lebesgue set $\R^{}_{t}\times \{(0,0)\}$. Thus $\partial_{t}G=0$ from the equation. Differentiating $\xi G= t\varphi G$ with respect to $t$ leads to $\varphi G=0$, and consequently $\xi G=0$. This implies that $G=T_{0}\otimes \delta$ where $\delta$ is the Dirac mass at the origin in $\R_{\varphi,\xi}^{2d}$ and $T_{0}\in \cD'(\R^{}_{t})$. One uses again $\partial_{t}G=0$ to conclude that $T_{0}$ is constant. 
 Going back to $f$, we conclude that $f$ is constant. And this constant must be 0 if $f \in \L^2_{t,x}\Wdot^{1,2}_{\vphantom{t,x} v}$.
 \end{proof}

\begin{prop} The Kolmogorov operators $\cK^\pm$ are isomorphisms from $\L^2_{t,x} \Wdot^{-1,2}_{\vphantom{t,x} v}$ onto $\cFdot$, from  $\L^2_{t} \Xdot^{-1}_{1}$ onto $\cGdot$ and from $\L^2_{t} \Xdot^{-1}_1+ \L^1_{t} \L^2_{x,v}$ onto $\cLdot$, using quotient norms, where the spaces $\cFdot$ and $\cGdot$ are defined similarly to $\cLdot$ replacing $\L^2_{t,x}\Hdot^{1}_{v}$ by $\L^2_{t,x}\Wdot^{1,2}_{v}$. In particular, $\cFdot$, $\cGdot$ and $\cLdot$ are complete normed spaces.

\begin{proof}
 This follows from the estimates and uniqueness, as in Lemma~\ref{lem:isom-embed}. 
\end{proof}

 \end{prop}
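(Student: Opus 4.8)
The plan is to reproduce the three-part argument of Lemma~\ref{lem:isom-embed} almost verbatim, substituting the local ingredients: the preceding boundedness proposition for $\cK^\pm$ supplies continuity, Lemma~\ref{lem:distributionalsolutions} in the case $\gamma=\beta=1$ (where the restriction $\gamma<d/2$ may be dropped, so the argument runs for every $d\ge 1$) supplies the fact that $\cK^\pm S$ is a genuine distributional solution, and Corollary~\ref{cor:nabla} replaces Lemma~\ref{lem:uniqueness} for uniqueness. Since $\Hdot^1_v$ and $\Wdot^{1,2}_v$ carry the same semi-norm $\|\nabla_v\cdot\|_{\L^2}=\|D_v\cdot\|_{\L^2}$, every quantitative estimate transfers unchanged, and the only genuinely new bookkeeping concerns the constants lying in the kernel of $\pm(\partial_t+v\cdot\nabla_x)-\Delta_v$ and the associated quotient structure. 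I would treat the three source spaces simultaneously, the first two being subspaces of the third.

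First I would check continuity and identify the inverse. For $S\in\L^2_t\Xdot^{-1}_1+\L^1_t\L^2_{x,v}$, the boundedness proposition gives $\cK^\pm S\in\L^2_{t,x}\Wdot^{1,2}_v$ with $\|\nabla_v\cK^\pm S\|_{\L^2_{t,x,v}}\lesssim\|S\|$, while Lemma~\ref{lem:distributionalsolutions} yields $\pm(\partial_t+v\cdot\nabla_x)\cK^\pm S-\Delta_v\cK^\pm S=S$ in $\cS'(\Omega)$. Rewriting $(\partial_t+v\cdot\nabla_x)\cK^\pm S=\pm\bigl(S+\Delta_v\cK^\pm S\bigr)$ and noting $\Delta_v\cK^\pm S\in\L^2_{t,x}\Wdot^{-1,2}_v\subset\L^2_t\Xdot^{-1}_1$, this shows $\cK^\pm S\in\cLdot$ with $\|\cK^\pm S\|_{\cLdot}\lesssim\|S\|$, and analogously into $\cFdot,\cGdot$ when $S$ lies in $\L^2_{t,x}\Wdot^{-1,2}_v$ or $\L^2_t\Xdot^{-1}_1$. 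Injectivity is then immediate, since $\cK^\pm S=0$ forces $S=\pm(\partial_t+v\cdot\nabla_x)(0)-\Delta_v(0)=0$. For surjectivity, given $f$ in a kinetic space I would set $S:=\pm(\partial_t+v\cdot\nabla_x)f-\Delta_v f$, which belongs to the matching source space by the very definition of the norm; then $\cK^\pm S\in\L^2_{t,x}\Wdot^{1,2}_v$ solves the same equation as $f$ by Lemma~\ref{lem:distributionalsolutions}, so $f-\cK^\pm S\in\L^2_{t,x}\Wdot^{1,2}_v$ solves the homogeneous equation and Corollary~\ref{cor:nabla} forces $f=\cK^\pm S$. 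That $\pm(\partial_t+v\cdot\nabla_x)-\Delta_v$ is the bounded inverse is read off from the definitions, giving $\|f\|_{\cLdot}\sim\|\pm(\partial_t+v\cdot\nabla_x)f-\Delta_v f\|$.

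The delicate point, and what I expect to be the main obstacle, is the passage to quotient norms together with completeness. The semi-norm of $\cLdot$ (and of $\cFdot,\cGdot$) vanishes precisely on those $f$ with $\nabla_v f=0$ and $\pm(\partial_t+v\cdot\nabla_x)f-\Delta_v f=0$, which by Corollary~\ref{cor:nabla} are the constants; since the membership requirement $f\in\L^2_{t,x}\Wdot^{1,2}_v$ already excludes nonzero constants (as recorded after \eqref{eq:Ldot}), the semi-norm is in fact a norm on each kinetic space, and on the third source space the sum is understood with its infimum (quotient) norm. I would make the identifications rigorous via Lemma~\ref{lem:hombeta=1}, which ensures every element of $\Wdot^{1,2}_v$ is a bona fide element of $\L^2_{\loc}\cap\cS'$, so that all the equalities above take place unambiguously in $\cD'(\Omega)$. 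Completeness of $\cFdot,\cGdot,\cLdot$ then follows mechanically, exactly as in Corollary~\ref{cor:complete}: they are the continuous isomorphic images under $\cK^\pm$ of the complete spaces $\L^2_{t,x}\Wdot^{-1,2}_v$, $\L^2_t\Xdot^{-1}_1$, and $\L^2_t\Xdot^{-1}_1+\L^1_t\L^2_{x,v}$.
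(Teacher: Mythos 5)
Your proposal is correct and follows essentially the same route as the paper, whose proof is precisely "the estimates and uniqueness, as in Lemma~\ref{lem:isom-embed}": you substitute the local boundedness proposition for Proposition~\ref{p:boundsKolmo}, Lemma~\ref{lem:distributionalsolutions} with $\gamma=\beta=1$ (valid for all $d\ge 1$) for the equation, and Corollary~\ref{cor:nabla} for Lemma~\ref{lem:uniqueness}, exactly as intended. Your additional care with constants is consistent with the paper's remark after \eqref{eq:Ldot} that the membership condition $f\in \L^2_{t,x}\Wdot^{1,2}_{v}$ excludes non-zero constants, so the kinetic semi-norms are genuine norms and completeness transfers through the isomorphism as in Corollary~\ref{cor:complete}.
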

 
 \begin{cor}\label{cor:cLemb}
 Let $f\in \cLdot$. Then 
$ f\in \C^{}_{0}(\R^{}_{t}\, ; \,\L^2_{x,v} )\cap \L^2_{t,v}\Hdot_{\vphantom{t,v} x}^{\frac{1}{3}}$ with estimate 
\begin{equation*}
\| f\|_{\L^2_{t,v}\Hdot_{\vphantom{t,v} x}^{\frac{1}{3}}}+ \sup_{t\in \R} \|f(t)\|_{\L^2_{x,v}} \lesssim_{d} \|f\|_{\cLdot}.\end{equation*} 
 The space in item~(iii) of Theorem~\ref{thm:homkinspace} is dense in $\cLdot$. Moreover, for $f\in \cLdot$, the absolute continuity stated in Theorem~\ref{thm:optimalregularity} (and its generalization Theorem~\ref{thm:optimalregularitygeneralisation}) holds.
\end{cor}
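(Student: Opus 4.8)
The plan is to transcribe the homogeneous argument behind Theorem~\ref{thm:optimalregularity}, replacing $\Hdot^1_v$ by $\Wdot^{1,2}_v$ and the uniqueness statement Lemma~\ref{lem:uniqueness} by its local counterpart Corollary~\ref{cor:nabla}. Given $f\in\cLdot$, I would set $S=(\partial_t+v\cdot\nabla_x)f-\Delta_v f$, which by the definition \eqref{eq:Ldot} of $\cLdot$ lies in $\L^2_t\Xdot^{-1}_1+\L^1_t\L^2_{x,v}$. Since the preceding Proposition asserts that $\cK^+$ is an isomorphism from this source space onto $\cLdot$, with inverse $(\partial_t+v\cdot\nabla_x)-\Delta_v$ as in Lemma~\ref{lem:isom-embed}, we obtain the identity $f=\cK^+S$. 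The boundedness Proposition stated above then gives $f=\cK^+S\in\L^2_{t,x}\Wdot^{1,2}_v\cap\L^2_{t,v}\Hdot_x^{\frac13}\cap\C_0(\R_t;\L^2_{x,v})$, with the target norm controlled by the source norm of $S$, hence by $\|f\|_{\cLdot}$. This simultaneously yields the embedding $\cLdot\hookrightarrow\C_0(\R_t;\L^2_{x,v})\cap\L^2_{t,v}\Hdot_x^{\frac13}$ and the asserted dimensional estimate.

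For the density claim, I would copy the proof of item~(iii) of Theorem~\ref{thm:homkinspace}. By Lemma~\ref{lem:density}, $\cS_K$ is dense in the source space $\L^2_t\Xdot^{-1}_1+\L^1_t\L^2_{x,v}$ and its image under $\cK^+$ lies in $\L^2_{t,x,v}$ (and even in $\cS(\Omega)$ when $d\ge2$, by Lemma~\ref{lem:strongerdensity}). Transporting this density through the isomorphism shows that $\cK^+(\cS_K)$, a subspace of the dense class used in item~(iii) of Theorem~\ref{thm:homkinspace}, is dense in $\cLdot$.

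For the absolute continuity, I would run the approximation scheme from the proof of Theorem~\ref{thm:optimalregularity} unchanged with $\beta=1$ (so that $\beta/(2\beta+1)=1/3$). Decompose the transport term as $(\partial_t+v\cdot\nabla_x)f=S_1+S_2+S_3$ with $S_1\in\L^2_{t,x}\Hdot^{-1}_v$, $S_2\in\L^2_{t,v}\Hdot_x^{-\frac13}$ and $S_3\in\L^1_t\L^2_{x,v}$, so that $f=\cK^+(S_1-\Delta_v f+S_2+S_3)$ with $S_1-\Delta_v f\in\L^2_{t,x}\Hdot^{-1}_v$. Approximating each piece by elements of $\cS_K$ produces strong $\L^2$ solutions $g_k=\cK^+S_k$ for which $\Gamma g_k$ and $\partial_t\Gamma g_k$ lie in $\L^2_{t,x,v}$; one then writes $\|g_k(t)\|^2_{\L^2_{x,v}}-\|g_k(s)\|^2_{\L^2_{x,v}}=2\Re\int_s^t\iint_{\R^{2d}}\big((\partial_t+v\cdot\nabla_x)g_k\big)\overline{g_k}\dx\dv\dd\tau$ after the Galilean change of variables, and passes to the limit using the three duality pairings exactly as in Theorem~\ref{thm:optimalregularity}. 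This gives the absolute continuity of $t\mapsto\|f(t)\|^2_{\L^2_{x,v}}$ and, with polarised exponents, the statement of Theorem~\ref{thm:optimalregularitygeneralisation}.

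The only genuinely new point, and the part that requires care rather than routine transcription, is the presence of constants: since $\Wdot^{1,2}_v$ contains the constant functions (contrary to $\Hdot^1_v$ when $d\ge3$), the homogeneous equation $\pm(\partial_t+v\cdot\nabla_x)f-\Delta_v f=0$ admits a nontrivial solution among functions with $\nabla_v f\in\L^2_{t,x,v}$. This is exactly why the isomorphism Proposition must rest on Corollary~\ref{cor:nabla} rather than Lemma~\ref{lem:uniqueness}, and why $\cLdot$ is defined in \eqref{eq:Ldot} with the condition $f\in\L^2_{t,x}\Wdot^{1,2}_v$ (which removes the residual constant and restores $f=0$) instead of merely $\nabla_v f\in\L^2_{t,x,v}$. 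All other ingredients are verbatim those of the homogeneous theory.
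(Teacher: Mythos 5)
Your proposal is correct and is essentially the paper's own proof: the paper disposes of Corollary~\ref{cor:cLemb} with the single instruction ``follow the strategy of proof of Theorem~\ref{thm:homkinspace}, details left to the reader,'' and your argument --- writing $f=\cK^+S$ with $S=(\partial_t+v\cdot\nabla_x)f-\Delta_v f$ via the isomorphism Proposition (resting on Corollary~\ref{cor:nabla}), invoking the boundedness Proposition for the embedding and estimate, transporting the density of $\cS_K$ from Lemma~\ref{lem:density} through the isomorphism, and rerunning the approximation scheme of Theorem~\ref{thm:optimalregularity} with $\beta=1$ for the absolute continuity --- is exactly that strategy spelled out. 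Your closing observation about constants, i.e.\ why Corollary~\ref{cor:nabla} replaces Lemma~\ref{lem:uniqueness} and why the condition $f\in\L^2_{t,x}\Wdot^{1,2}_{v}$ in \eqref{eq:Ldot} is what restores genuine uniqueness, correctly identifies the only point where the local case deviates from the homogeneous theory.
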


\begin{proof}
Follow the strategy of proof of Theorem~\ref{thm:homkinspace}. Details are left to the reader. 
\end{proof}
 
\begin{rem} One can now see that 
 $ \cLdot=\cLdot^{1}_{1}$ when $d\ge 3$. It is only because we have not defined $\Hdot^{1}_v$ when $d=1,2$ that we do not have a statement for $\cLdot^{1}_{1}$ in that case, see \cite{AN}.
\end{rem}

Let us turn to the construction of weak solutions. The difficulty with this homogeneous space here is that $\L^2_{t,x}\Wdot^{1,2}_{v}$ is not a Hilbert space, so that we cannot apply the Lions theorem directly. We use an approximation procedure starting from the inhomogeneous case. We let $\mathbf{A}=\mathbf{A}(t,x,v)$ be a bounded and coercive matrix with measurable coefficients (real and symmetric is not necessary) in such a way that 
\begin{equation*}
a_{t,x}(f,g)= \int_{\R^d}\angle{\mathbf{A}(t,x,v)\nabla_{v} f(v)}{\nabla_{v}g(v)}\, \dv, \quad f,g\in \Wdot^{1,2}_{v},
\end{equation*}
satisfies \eqref{e:ellip-upper} and \eqref{e:ellip-lower} on $\Wdot^{1,2}_{v}$. We define $\cA$ as in \eqref{e:fk} with the form $a$ defined by \eqref{e:a-defi} on $\L^2_{t,x}\Wdot^{1,2}_{v}$. Here we assume either $t\in \R$ or $t\in (0,\infty)$.

\begin{thm} \label{thm:beta1}
\begin{enumerate}
\item On $\R$, if $S\in \L^2_{t}\Xdot^{-1}_1+\L^1_{t} \L^2_{x,v}$, then there is a unique weak solution $f \in \L^2_{t,x}\Wdot^{1,2}_{v}$ to  $(\partial_{t}+v\cdot\nabla_{x})f + \cA f= S$. We have $f\in \C^{}_{0}(\R^{}_{t}\, ; \,\L^2_{x,v} )\cap \L^2_{t,v}\Hdot_{\vphantom{t,v} x}^{\frac{1}{3}}$ and satisfies the energy equality. 
\item On $[0,\infty)$, if $S\in \L^2_{t}\Xdot^{-1}_1+\L^1_{t} \L^2_{x,v}$ and $\psi\in \L^2_{x,v}$, then there is a unique weak solution in $\L^2_{t,x}\Wdot^{1,2}_{v}$ to the kinetic Cauchy problem  $(\partial_{t}+v\cdot\nabla_{x})f + \cA f= S$ with $f(0)=\psi$. We have  $f\in \C^{}_{0}([0,\infty)\, ; \,\L^2_{x,v} )\cap \L^2_{t,v}\Hdot_{\vphantom{t,v} x}^{\frac{1}{3}}$ and satisfies the energy equality.
\end{enumerate}
\end{thm}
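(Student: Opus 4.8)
The plan is to reproduce, for $\beta=1$, the six-step scheme of Section~\ref{sec:Proofswhenbetaged/2}, systematically replacing $\Hdot^1_v$ by $\Wdot^{1,2}_v$ and invoking the local-diffusion results of this section (the boundedness of $\cK^\pm$, the isomorphism proposition, Corollary~\ref{cor:cLemb} and Corollary~\ref{cor:nabla}) in place of the general ones. The one genuine obstruction is that $\L^2_{t,x}\Wdot^{1,2}_v$ carries only a semi-norm and is not Hilbertian, so Lions' theorem (Theorem~\ref{thm:lions}) cannot be applied directly; I would circumvent this by regularising to the inhomogeneous Cauchy problem, which \emph{is} Hilbertian, solving it with Theorem~\ref{thm:CP0T}, and extracting a weak limit. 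For $d\ge 3$ this is in any case subsumed by Theorem~\ref{thm:HomExUn} since then $\cLdot=\cLdot^1_1$, but the approximation argument works uniformly in $d\ge1$.

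Concretely, for the Cauchy problem in (ii), I would fix $\varepsilon>0$ and $0<T<\infty$ and solve
\[
(\partial_{t}+v\cdot\nabla_{x})f_{\varepsilon}+ \cA f_{\varepsilon} +\varepsilon f_{\varepsilon}= e^{-\varepsilon t}S, \qquad f_{\varepsilon}(0)=\psi,
\]
on $(0,T)$ by Theorem~\ref{thm:CP0T}. This applies because the form associated with $\mathbf A$ satisfies \eqref{e:ellip-upper-inhom} and \eqref{eq:weakelliptic-bis} with $c^0=c_0=0$ on $\W^{1,2}_v=\H^1_v$, and because the source embeds into the inhomogeneous class $\L^2_{t,x}\H^{-1}_v+\L^2_{t,v}\H^{-1/3}_x+\L^1_t\L^2_{x,v}$ (one has $\Wdot^{-1,2}_v=\Hdot^{-1}_v\hookrightarrow\H^{-1}_v$ and $\Hdot^{-1/3}_x\hookrightarrow\H^{-1/3}_x$, whence $\L^2_t\Xdot^{-1}_1+\L^1_t\L^2_{x,v}$ sits inside it). As in Step~2 of Section~\ref{sec:Proofswhenbetaged/2}, the function $h_\varepsilon:=e^{\varepsilon t}f_\varepsilon$ solves $(\partial_{t}+v\cdot\nabla_{x})h_\varepsilon+\cA h_\varepsilon=S$ with $h_\varepsilon(0)=\psi$, so it satisfies the energy equality \eqref{e:energy} on every $[0,\tau]$; coercivity \eqref{e:ellip-lower} together with Young's inequality absorbs the three source pairings—the $\L^2_{t,v}\Hdot^{-1/3}_x$ pairing being controlled through the transfer of regularity of Corollary~\ref{cor:cLemb}, exactly as in the proofs of Theorems~\ref{thm:HomExUn} and~\ref{thm:homCP}—yielding
\[
\sup_{t\in[0,T]}\|h_\varepsilon(t)\|_{\L^2_{x,v}}^2+\|\nabla_v h_\varepsilon\|_{\L^2_{t,x,v}}^2\lesssim \|S\|^2_{\L^2_t\Xdot^{-1}_1+\L^1_t\L^2_{x,v}}+\|\psi\|^2_{\L^2_{x,v}}
\]
uniformly in $\varepsilon$ and $T$. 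Extracting weak limits of $\nabla_v h_\varepsilon$ in $\L^2_{t,x,v}$ and a weak$^*$ limit of $h_\varepsilon$ in $\L^\infty_t\L^2_{x,v}=(\L^1_t\L^2_{x,v})'$ as $\varepsilon\to0$ and $T\to\infty$ produces a limit $f$; the distributional identity $\nabla_v f=\lim\nabla_v h_\varepsilon$ places $f\in\L^2_{t,x}\Wdot^{1,2}_v$, while passing to the limit in the weak formulation (the term $\varepsilon\iiint f_\varepsilon\overline h$ vanishes and $e^{-\varepsilon t}S\to S$) shows that $f$ is a weak solution on $\Omega_+$ with initial value $\psi$, retaining the above bound at $T=\infty$ by monotone convergence. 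For (i) on $\R$ I would instead push the initial time $-k$ to $-\infty$ with zero data as in Step~3 of Section~\ref{sec:Proofswhenbetaged/2}, and recover general $\psi$ in (ii) by superposition and the causality argument of Step~7 in Theorem~\ref{thm:homCP}.

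Once $f\in\L^2_{t,x}\Wdot^{1,2}_v$ is in hand, the transport $(\partial_{t}+v\cdot\nabla_{x})f=S-\cA f$ lies in $\L^2_t\Xdot^{-1}_1+\L^1_t\L^2_{x,v}$, since $\cA f\in\L^2_{t,x}\Wdot^{-1,2}_v$ with $\|\cA f\|\le\Lambda\|\nabla_v f\|_{\L^2}$; hence $f\in\cLdot$. Corollary~\ref{cor:cLemb} then delivers at once the regularity $f\in\C^{}_{0}(\R^{}_{t};\L^2_{x,v})\cap\L^2_{t,v}\Hdot^{1/3}_x$ (respectively on $[0,\infty)$), the quantitative estimate, the $\L^2_{x,v}$-convergence $f(t)\to\psi$ as $t\to0$, and the energy equality. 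Uniqueness is the standard energy argument: a weak solution of the homogeneous problem lies in $\cLdot$ and satisfies the energy equality, so letting the endpoints tend to $\pm\infty$ (or using $f(0)=0$) and invoking \eqref{e:ellip-lower} forces $\nabla_v f\equiv0$; as $f(t)\in\L^2_{x,v}$ and is constant in $v$, it must vanish, which is precisely the content of Corollary~\ref{cor:nabla} in this setting.

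The main obstacle is the passage to the weak$^*$ limit in the nonreflexive, semi-normed target: one must check that weak$^*$ compactness in $\L^\infty_t\L^2_{x,v}$ combined with weak compactness of $\nabla_v h_\varepsilon$ in $\L^2_{t,x,v}$ identifies a genuine element of $\L^2_{t,x}\Wdot^{1,2}_v$, and that all terms of the weak formulation (including the initial trace) pass to the limit consistently despite the absence of a Hilbert structure on $\Wdot^{1,2}_v$. Everything downstream is then routine, as the hard analytic content has been packaged into Corollary~\ref{cor:cLemb} and Corollary~\ref{cor:nabla}.
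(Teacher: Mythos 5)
Your overall architecture — regularise with $\cA+\varepsilon$ and $e^{-\varepsilon t}S$ on finite strips via Theorem~\ref{thm:CP0T}, pass to the equation for $h_\varepsilon=e^{\varepsilon t}f_\varepsilon$, extract weak$^*$ limits, glue by uniqueness, get regularity and the energy equality a posteriori from Corollary~\ref{cor:cLemb}, and prove uniqueness by the energy equality — is indeed the paper's, and it is sound for the part of the source lying in $\L^2_{t,x}\Wdot^{-1,2}_{v}$ (and also for $\L^1_t\L^2_{x,v}$, whose pairing absorbs directly against $\sup_t\|h_\varepsilon(t)\|_{\L^2_{x,v}}$). The genuine gap is your treatment of the component $S_2\in\L^2_{t,v}\Hdot_{\vphantom{t,v}x}^{-\frac13}$. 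You claim the pairing $\int\angle{S_2}{h_\varepsilon}$ can be absorbed in the energy estimate on $(0,T)$ by controlling $\|D_x^{1/3}h_\varepsilon\|_{\L^2}$ "through the transfer of regularity of Corollary~\ref{cor:cLemb}, exactly as in the proofs of Theorems~\ref{thm:HomExUn} and~\ref{thm:homCP}". Neither half of this is available. Corollary~\ref{cor:cLemb} (like Theorem~\ref{thm:optimalregularity}) is an estimate for functions in $\cLdot$ on the whole line $\Omega$ (or, by the arguments in the proof of Theorem~\ref{thm:homCP}, on $[0,\infty)$), whereas $h_\varepsilon$ solves the equation only on the strip $(0,T)$; a \emph{homogeneous} kinetic embedding on a finite strip with a $T$-independent constant is precisely the kind of statement the introduction explains cannot hold (homogeneous trace theory fails on finite time intervals), and the inhomogeneous substitute (Theorem~\ref{thm:inhomkinspace}, proved by time cut-offs) carries constants, and an extra $\|h\|_{\L^2_{t,x,v}}$ term, that blow up with $T$, which destroys your limit $T\to\infty$. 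Moreover, the proofs of Theorems~\ref{thm:HomExUn} and~\ref{thm:homCP} never absorb the $S_2$-pairing into the energy estimate: coercivity only controls $\|\nabla_v h\|_{\L^2}$, not $\|D_x^{1/3}h\|_{\L^2}$, and those proofs handle $S_2$ by a different device altogether.

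That device — the duality scheme — is what your proposal omits and what the paper's proof of Theorem~\ref{thm:beta1} actually uses to finish. Having solved the forward \emph{and backward} problems for sources in $\L^2_{t,x}\Wdot^{-1,2}_{v}$ only (where the energy estimate closes with no transfer of regularity, uniformly in $\varepsilon$ and $T$), one invokes Section~\ref{sec:Proofswhenbetaged/2}, Step~5: for $S$ in the dense class $\cS_K$ of Lemma~\ref{lem:density}, pair $\cK_{\cA}^+S$ against $\tilde\cK_{\cA^*}^-h$ with $h\in\L^2_{t,x}\Wdot^{-1,2}_{v}$ to get $|\angle{\cK_{\cA}^+S}{h}|\lesssim\|S\|\,\|h\|_{\L^2_{t,x}\Wdot^{-1,2}_{v}}$ with $\|S\|$ the norm in $\L^2_{t}\Xdot^{-1}_1+\L^1_t\L^2_{x,v}$, then use the completeness of $\cLdot$ (the isomorphism proposition of this section) and density to extend $\cK_\cA^+$ to the full source class; the Cauchy problem on $[0,\infty)$ then follows by causality as in Step~7 of Theorem~\ref{thm:homCP}. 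Without this step, or a new uniform-in-$T$ finite-strip transfer-of-regularity estimate which the paper does not provide, your existence proof covers only $S\in\L^2_{t,x}\Wdot^{-1,2}_{v}+\L^1_t\L^2_{x,v}$, not the stated class $\L^2_{t}\Xdot^{-1}_1+\L^1_t\L^2_{x,v}$. Two minor points: your uniqueness argument is correct but does not need Corollary~\ref{cor:nabla} — once coercivity and the energy equality give $\Re a_{\tau,x}(f,f)=0$ a.e., the equality says $t\mapsto\|f(t)\|_{\L^2_{x,v}}$ is constant, and the zero limits at infinity (or $f(0)=0$) force $f=0$; and for $d\ge3$ the theorem is not fully subsumed by Theorem~\ref{thm:HomExUn}, since uniqueness here is asserted in the strictly larger class $\L^2_{t,x}\Wdot^{1,2}_{v}\supset\L^2_{t,x}\Hdot^{1}_{v}$.
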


\begin{proof} 
 We start with uniqueness. In case (i), a weak solution  $f \in \L^2_{t,x}\Wdot^{1,2}_{v}$ to $(\partial_{t}+v\cdot\nabla_{x})f + \cA f= S$ belongs to $\cLdot$. Thus, $f\in \C^{}_{0}(\R^{}_{t}\, ; \,\L^2_{x,v} )$ and satisfies the energy equality. If we assume $S=0$, then for all $s<t$,
\begin{align*}
 \|f(t)\|^2_{\L^2_{x,v}}-\|f(s)\|^2_{\L^2_{x,v}}=- 2\Re \int_{s}^t\int_{\R^d} a_{\tau,x}({f},{f})\dx\dd \tau. 
\end{align*}
In particular, this yields after taking limits at $\pm\infty,$ $\Re \int_{-\infty}^{+\infty}\int_{\R^d} a_{\tau,x}({f},{f}) \dx\dd \tau=0$, hence $\Re a_{\tau,x}({f},{f})=0$ for almost all $(\tau,x)$ (because it is non negative). This yields $\|f(t)\|^2_{\L^2_{x,v}}-\|f(s)\|^2_{\L^2_{x,v}}=0$ for all $s<t$, that is $t\mapsto \|f(t)\|^2_{\L^2_{x,v}}$ is constant. As it has zero limits at $\pm\infty$,  $f=0$.

In case (ii), we can produce the same argument following Steps 1 and 2 in the proof of Theorem~\ref{thm:homCP} to show that on $\Omega_{+}$,
for all $f \in \cLdot$,  
\[ \|f\|_{\C_0([0,\infty)\, ; \, \L^2_{x,v})} \lesssim_{d} \|f\|_{\cLdot}.\]
Now consider a weak solution $f$ with $S=0$ and $\psi=0$. As we assume that $f(0)=0$ at time 0, the energy equality yields for $t>0$, $\|f(t)\|^2_{\L^2_{x,v}}=- 2\Re \int_{0}^t\int_{\R^d} a_{\tau,x}({f},{f})\dx\dd \tau\le 0$, thus $f(t)=0$.

Now we turn to existence. It follows the argument done in Section~\ref{sec:Proofswhenbetaged/2}. We give details for completeness. We begin with (ii) in the case where $S\in \L^2_{t,x}\Wdot^{-1,2}_{v}$ and $\psi\in \L^2_{x,v}$. We prove existence on any finite strip $\Omega_{(0,T)}=(0,T)\times \R^d\times \R^d$ of a weak solution $f^T$ with $\nabla_{v}f^T\in \L^2(\Omega_{(0,T)}) $ and $f^T\in \C([0,T]\, ; \, \L^2_{x,v})$. Assume this is done. Such solutions also verify the energy equality. If $T<T'$ and $g=f^{T'} - f^T$, then $g$ is a weak solution in the same class on $[0,T]$ with zero source and zero initial data. The energy equality yields when $t\in [0,T]$, $\|g(t)\|^2_{\L^2_{x,v}}=- 2\Re \int_{0}^t\int_{\R^d} a_{\tau,x}({g},{g})\dx\dd \tau\le 0$, thus $g(t)=0$. Since $f^T$ and $f^{T'}$ agree on the smaller strip, this allows us to define $f$ on $\Omega_{+}$ and $f$ is a weak solution to the kinetic Cauchy problem in (ii) with the desired properties. 

To construct $f^T$, we argue using the inhomogeneous setup on $[0,T]$ that is now fixed, and we work on this interval. For $\varepsilon>0$, using Theorem~\ref{thm:CP0T} with $\cA+\varepsilon$, we solve in $\L^2_{t,x}\W^{1,2}_{v}$
 the kinetic Cauchy problem $(\partial_{t}+v\cdot\nabla_{x})f_{\varepsilon}+ \cA f_{\varepsilon} +\varepsilon f_{\varepsilon}= e^{-\varepsilon t}S$ with $f_{\varepsilon}(0)=\psi$. Note that $(\partial_{t}+v\cdot\nabla_{x})(e^{\varepsilon t}f_{\varepsilon})+ \cA(e^{\varepsilon t}f_{\varepsilon}) = S$ and $(e^{\varepsilon t}f_{\varepsilon})(0)=\psi$. Using the energy equality for $e^{\varepsilon t}f_{\varepsilon}$ (absolute continuity follows from that of $f_{\varepsilon}$) and the fact that $S\in \L^2_{x}\Wdot^{-1,2}_{v}$, we can see that 
 $\|\nabla_{v}(e^{\varepsilon t}f_{\varepsilon})\|_{\L^2_{t,x,v}}$ and $\sup_{t\in [0,T]}\|e^{\varepsilon t}f_{\varepsilon}(t)\|_{\L^2_{x,v}}$ are uniformly bounded with respect to $\varepsilon$. Hence, a classical weak$^*$-limit argument furnishes a weak solution $f^T \in \L^\infty_{t}\L^2_{x,v}\cap \L^2_{t,x}\W^{1,2}_{v}$. As $f^T$ belongs to the inhomogeneous version $\cL$ of $\cLdot$ on $\Omega_{(0,T)}$, continuity in time follows. 
 
 Now, we turn to existence in (i) when $S\in \L^2_{t,x}\Wdot^{-1,2}_{v}$ on $\Omega$ this time.  By applying (ii) with time interval $[-n,\infty)$, $n\in \IN$, instead on $[0,\infty)$,  consider the weak solution $f_{n}$ on $[-n,\infty)$ to the kinetic Cauchy problem $(\partial_{t}+v\cdot\nabla_{x})f_{n}+ \cA f_{n} = S$ and $f_{n}(-n)=0$. The extension of this solution by 0 when $t\le -n$ belongs to
 $\L^2_{t,x}\Wdot^{1,2}_{v} \cap \C^{}_{0}(\R^{}_{t}\, ; \,\L^2_{x,v} )\cap \L^2_{t,v}\Hdot_{\vphantom{t,v} x}^{\frac{1}{3}}$ on $\Omega$, uniformly in $n$. Extract a weak$^*$-limit in $\L^2_{t,x}\Wdot^{1,2}_{v} \cap \L^\infty_{t}\L^2_{x,v} \cap \L^2_{t,v}\Hdot_{\vphantom{t,v} x}^{\frac{1}{3}}$. This weak$^*$-limit is a weak solution on $\R$ and as it is already a bounded function of time valued in $\L^2_{x,v}$, it further belongs to $\C^{}_{0}(\R^{}_{t}\, ; \,\L^2_{x,v} )$ using the embedding in Corollary~\ref{cor:cLemb}. 
 
 It remains to solve the problems for source terms in $\L^2_{t,v}\Hdot_{\vphantom{t,v} x}^{-\frac{1}{3}}\cap \L^1_{t}\L^2_{x,v}$. For this, we use the duality scheme in Section~\ref{sec:Proofswhenbetaged/2} first on $\R$ starting from the operator 
 \begin{align*}
 \tilde\cK_{\cA^*}^-: \L^2_{t,x}\Wdot^{-1,2}_{v} \to \Ydot^\beta \hookrightarrow \cLdot
\end{align*}
defined as $\tilde\cK_{\cA^*}^-S$ is the weak solution for the backward equation, as we just did for the forward equation, and using that $\cLdot$ is a complete normed space.
Next, on $(0,\infty)$ with zero initial data, we use this previous case and the causality principle
 as in Step 7 of the proof of Theorem~\ref{thm:homCP}.
 \end{proof}

\begin{rem}
 When $d\ge 3$, as $\L^2_{t,x}\Hdot^{1}_{v}\subset \L^2_{t,x}\Wdot^{1,2}_{v}$, uniqueness implies that the weak solutions in Theorem~\ref{thm:beta1}, (ii), and in Theorem~\ref{thm:homCP} when $\beta=1$ are the same. Similarly, the weak solution $f$ in Theorem~\ref{thm:beta1}~(i) is the same as the one in Theorem~\ref{thm:HomExUn}. This holds as well when $d=1,2$ provided one defines $\Hdot^{1}_v$ appropriately, see \cite{AN}.
\end{rem}
 
 Let us illustrate our results by a consequence for optimal regularity, to be compared with \cite[Theorem 1.5]{MR1949176} where $f$ and $\nabla_{v}f$ were both supposed to belong to $\L^2_{t,x,v}$ for the conclusion $\Delta_{v}f\in \L^2_{t,x,v}$. 

\begin{cor}[Optimal regularity]
\label{cor:optimalregularity} Let $f\in \cD'(\Omega)$ with $\sup_{\sigma\in \R}\|\nabla_{v}f\|_{\L^2(\Omega_{(\sigma,\sigma+1)})}<\infty$ $($in particular, if
$\nabla_{v}f\in \L^2_{t,x,v})$ be such that $S= (\partial_{t}+v\cdot\nabla_{x})f -\Delta_{v}f \in \L^2_{t,x,v}$. Then, $(\partial_{t}+v\cdot\nabla_{x})f, \Delta_{v}f\in \L^2_{t,x,v}$ and $\nabla_{v}f\in \C^{}_{0}(\R^{}_{t}\, ; \,\L^2_{x,v} )$. 
 \end{cor}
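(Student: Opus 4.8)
The plan is to identify $f$, up to an additive constant, with the Kolmogorov solution $\cK^{+}S$ of the same equation, and then to read off all three conclusions from the regularity of $\cK^{+}S$. Observe first that it is enough to prove $\Delta_{v}f\in \L^2_{t,x,v}$: once this is known, $(\partial_{t}+v\cdot\nabla_{x})f=S+\Delta_{v}f\in \L^2_{t,x,v}$ follows at once.

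First I would construct a good solution $g:=\cK^{+}S$ enjoying maximal regularity. Although $S\in \L^2_{t,x,v}$ does not lie in the source space used on $\R$ (since $\L^2_{t}\not\subset \L^1_{t}$), the \emph{a priori} estimates of Proposition~\ref{p:boundsKolmoapriori} hold on the dense class $\cS_{K}$ with no restriction on $\gamma$ nor on the dimension. Applying them with $\beta=1$ and $\gamma=2=2\beta$ gives, for $S\in \cS_{K}$,
\[
\|\Delta_{v}\cK^{+}S\|_{\L^2_{t,x,v}}+\|D_{x}^{2/3}\cK^{+}S\|_{\L^2_{t,x,v}}+\sup_{t\in\R}\|\nabla_{v}\cK^{+}S(t)\|_{\L^2_{x,v}}\lesssim \|S\|_{\L^2_{t,x,v}},
\]
together with $\cK^{+}S\in \C^{}_{0}(\R^{}_{t}\,;\,\Xdot^{1}_{1})$, so in particular $\nabla_{v}\cK^{+}S\in \C^{}_{0}(\R^{}_{t}\,;\,\L^2_{x,v})$. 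Since $\cS_{K}$ is dense in $\L^2_{t,x,v}=\L^2_{t}\Xdot^{0}_{1}$ by Lemma~\ref{lem:density}, and these bounds involve only $\L^2$-norms of derivatives (hence are meaningful in every dimension), they extend by density to all $S\in \L^2_{t,x,v}$. This produces $g=\cK^{+}S\in \cD'(\Omega)$ with $\Delta_{v}g,\,D_{x}^{2/3}g\in \L^2_{t,x,v}$ and $\nabla_{v}g\in \C^{}_{0}(\R^{}_{t}\,;\,\L^2_{x,v})$, solving $(\partial_{t}+v\cdot\nabla_{x})g-\Delta_{v}g=S$.

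Next I would compare $f$ and $g$. The difference $h:=f-g$ solves the homogeneous equation $(\partial_{t}+v\cdot\nabla_{x})h-\Delta_{v}h=0$, and since $\nabla_{v}g\in \L^\infty_{t}\L^2_{x,v}$ while $\nabla_{v}f$ is controlled in $\L^2$ over unit time strips, it satisfies $\sup_{\sigma\in\R}\|\nabla_{v}h\|_{\L^2(\Omega_{(\sigma,\sigma+1)})}<\infty$. The crucial step is a \emph{refined uniqueness} statement: this merely local control already forces $h$ to be constant. I would follow the Fourier-side argument of Corollary~\ref{cor:nabla} (valid since $h\in \L^2_{\loc,t}\cS'_{x,v}$ by Lemma~\ref{lem:hombeta=1}), writing $G=\widehat{\Gamma h}$ so that $\partial_{t}G+|\xi-t\varphi|^{2}G=0$ distributionally and $G=K(t,0)G_{0}$. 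The only place where Corollary~\ref{cor:nabla} uses a \emph{global} $\L^2$ bound is to conclude $G_{0}=0$ away from the origin; here I would instead use that, on any compact $C\subset\R^{2d}\setminus\{(0,0)\}$, the uniform-in-$\sigma$ bound $\int_{\sigma}^{\sigma+1}\!\!\int_{C}|\xi-t\varphi|^{2}K(t,0)^{2}|G_{0}|^{2}\le M$ combined with the rapid growth $K(t,0)^{2}=\exp\!\big(2\int_{t}^{0}|\xi-\tau\varphi|^{2}\dd\tau\big)\to\infty$ as $t\to-\infty$ (quantified on $C$ through Lemma~\ref{lem:comp}) forces $G_{0}=0$ on $C$. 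Exhausting $\R^{2d}\setminus\{(0,0)\}$ shows $G_{0}$ is supported at the origin, and then, exactly as in Corollary~\ref{cor:nabla}, $(\xi-t\varphi)G=0$ and $\partial_{t}G=0$ yield $G=T_{0}\otimes\delta$ with $T_{0}$ constant, i.e. $h$ is constant.

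Finally I would conclude: as $h$ is constant, every derivative in the statement annihilates it, so $\Delta_{v}f=\Delta_{v}g\in \L^2_{t,x,v}$, whence $(\partial_{t}+v\cdot\nabla_{x})f=(\partial_{t}+v\cdot\nabla_{x})g=S+\Delta_{v}g\in \L^2_{t,x,v}$, and $\nabla_{v}f=\nabla_{v}g\in \C^{}_{0}(\R^{}_{t}\,;\,\L^2_{x,v})$. I expect the main obstacle to be the refined uniqueness: pushing Corollary~\ref{cor:nabla} through with only uniform-local, rather than global, $\L^2$ control on $\nabla_{v}h$. Everything hinges on the fact that the blow-up of $K(t,0)$ as $t\to-\infty$ is strong enough to defeat even a bounded local mass. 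A secondary care point is the density extension in the first step, since $\Xdot^{1}_{1},\Xdot^{2}_{1}$ are genuine function spaces only for $d>2$ and $d>4$ respectively, which is exactly why the estimates are phrased purely in terms of $\L^2$-norms of derivatives.
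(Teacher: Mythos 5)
Your proposal is correct and takes essentially the same approach as the paper: you construct $g=\cK^{+}S$ with maximal regularity from the Fourier-side weighted estimates (you invoke Proposition~\ref{p:boundsKolmoapriori} with $\gamma=2\beta$; the paper cites Proposition~\ref{prop:estimatesT} directly, which is the same set of bounds, phrased in the paper via $\Wdot^{1,2}_{v},\Wdot^{2,2}_{v}$ precisely to sidestep the low-dimensional issue you flag with $\Xdot^{1}_{1},\Xdot^{2}_{1}$), and then show $f-\cK^{+}S$ is constant by rerunning the argument of Corollary~\ref{cor:nabla} with the global $\L^2$ control replaced by the uniform-in-$\sigma$ bound on unit strips, letting the exponential growth of $K(t,0)$ as $t\to-\infty$ force $G_{0}=0$ away from the origin. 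This refined uniqueness step, which you correctly identify as the crux, is exactly how the paper's proof proceeds.
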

 
\begin{proof} 
Again, the estimates obtained using Fourier methods in Proposition~\ref{prop:estimatesT} imply in particular the boundedness
\[\cK^\pm\colon \L^2_{t,x,v}\to \L^2_{ t,x}\Wdot^{2,2}_{ v}
\cap \C^{}_{ 0 \vphantom{x}}(\R^{}_{\vphantom{x} t}\, ; \,\L^2_{\vphantom{t,x,v} x}\Wdot^{1,2}_{ v}) \]
where $\Wdot^{2,2}(\R^d)=\{f\in \cD'(\R^d)\, ;\, \nabla^2 f\in \L^2(\R^d)\}.$
Thus, $\cK^+S$ is a tempered distribution solution of the same equation as $f$. We show that $f-\cK^+S$ is constant, which concludes the proof from the properties of $\cK^+S$.

Remark that from the above bounds $\nabla_{v}\,\cK^+S$ satisfies the same condition as $\nabla_{v}f$ in the hypothesis. Applying the method of proof of Corollary~\ref{cor:nabla} to $f-\cK^+S$ and calling $G$ the Fourier transform of $\Gamma(f-\cK^+S)$, the only thing that changes is the condition $(\xi-t\varphi)G\in \L^2_{t,\varphi,\xi}$ replaced by $(\xi-t\varphi)G \in \L^2(\sigma,\sigma+1; \L^2_{\varphi,\xi})$ uniformly in $\sigma\in \R$. This is enough to conclude that the restriction of $G_{0}$ to any compact set $C\subset \R^{2d}\setminus \{(0,0)\}$ is square integrable and that 
 \begin{equation*}
\sup_{\sigma\in \R} \iiint_{(\sigma,\sigma+1)\times C} |\xi-t\varphi|^2\exp\bigg({-2\int_{0}^t |\xi-\tau \varphi|^{2}\, d\tau}\bigg)| G_{0}(\varphi,\xi)|^{2} \, \dt \dd \varphi \dd \xi <\infty.
\end{equation*} 
By taking $\sigma\to -\infty$, this is only possible if $G_{0}=0$ almost everywhere on $C$. The rest of the proof is verbatim the one of Corollary~\ref{cor:nabla}. 
\end{proof}

\begin{rem}
 At the expense of more technicalities, one can make the same discussion with the higher-order homogeneous space
$\Wdot^{m,2}(\R^d)=\{f\in \cD'(\R^d)\, ;\, \nabla^m f\in \L^2(\R^d)\}$, $m$ integer, which contains polynomials of degree less than $m$, applying this to higher order Kolmogorov operators $\cK_{m}^\pm$. For weak solutions, ellipticity \eqref{e:ellip-lower} should be in the sense of a G\aa rding inequality on $\Wdot^{m,2}_{v}$. Embedding (on $\R$ or $[0,\infty)$) and uniqueness of weak solutions on $\R$ can be proved. 
\end{rem}


\section{Link with other kinetic embeddings} 
\label{sec:kinmax}

As mentioned in the introduction, there is a notion of kinetic spaces developed in \cite{MR4350284} where embeddings similar to ours are proved without restriction on $\gamma$, while we are limited to $0\le \gamma\le 2\beta$. It is worth explaining the links between the two works. For this, we need to introduce more spaces and make some extensions (mostly without proofs, which are adaptations of the previous arguments).

\subsection{Homogeneous source spaces revisited}
We first extend the range of parameter $\gamma$ in our spaces $\Xdot^{\gamma}_{{\beta}}$. We just mention at this stage that we did not do it in the article because it would not have helped to prove the existence of weak solutions in Theorem~\ref{thm:HomExUn}, where we only impose ellipticity in the $v$-variable.

In the proof of Lemma~\ref{l:galilean}, we established the following isomorphism via the Fourier transform in $\R^{2d}$
\begin{align*}
 \Xdot^{\gamma}_{{\beta}} \to \L^2_{\varphi,\xi}(w^{2\gamma} \dd \varphi \dd\xi),
\end{align*}
where $w(\varphi,\xi)=\sup(|\xi|, |\varphi|^{\frac{1}{1+2\beta}})$, provided 
$\gamma<d/2$. When $\gamma>0$, the only reason for the restriction $\gamma<d/2$ is our definition of $\Xdot^{\gamma}_{{\beta}}$ as the intersection of two spaces, separating the variables in some sense. We remark that a simple calculation shows for $\gamma\in \R$, 
\[ w^{-\gamma} \in \L^2_{\loc}(\R^{2d}) \Longleftrightarrow \gamma< (\beta+1)d =\frac{(2\beta+2)d}{2}.
\]
Note that $(2\beta+2)d$ is the anisotropic homogeneous dimension for the scaling $(x,v)\mapsto (\delta^{2\beta+1}x, \delta v)$.
This allows us to define $\Xdot^{\gamma}_{{\beta}}$ for $\gamma< (\beta+1)d$ in the spirit of homogeneous Sobolev spaces by
\[ \Xdot^{\gamma}_{{\beta}}= \{f\in \cS'(\R^{2d})\, ; \, \exists g \in \L^2(\R^{2d}),  \hat f = w^{-\gamma}\hat g\}
\]
with norm $\|f \|_{\Xdot^{\gamma}_{{\beta}}}=\|g\|_{\L^2_{\varphi,\xi}}$. In this range, it is a well-defined Hilbert space, contained in $ \cS'(\R^{2d})$ with elements having Fourier transforms in $\L^1_{\loc}(\R^{2d})$.\footnote{One can give a definition a a space of tempered distribution also when $\gamma\ge (\beta+1)d$; see \cite{AN}.} When $\gamma<d/2$, this space is equal, as a set and with equivalent norm, to the one defined in \eqref{eq:dotXgammabeta} and \eqref{eq:dotX-gammabeta}. 

With this definition, our proofs on the Fourier transform side (where there is no restriction on exponents) show that the statements of Lemma~\ref{l:galilean}, 
Proposition~\ref{p:boundsKolmo} hold for $\gamma<(\beta+1)d$. Also the uniqueness as in Lemma~\ref{lem:uniqueness} applies with $f\in \L^2_{t}\Xdot^{\gamma}_{\beta}$ and $\gamma<(\beta+1)d$, thanks to Remark~\ref{rem:uniqueness}.

\subsection{More homogeneous kinetic spaces}

We remark that even with this new definition of $\Xdot^{\gamma}_{{\beta}}$, the upper bound on $\gamma$ in the definitions of our kinetic spaces $\cFdot^\gamma_{\beta}$, $\cGdot^\gamma_{\beta}$, $\cLdot^\gamma_{\beta}$ is $d/2$ because we impose them to be subspaces of 
$\L^2_{t,x}\Hdot^{\gamma}_{v}$. Nevertheless, changing this condition to $f\in \L^2_{t}\Xdot^{\gamma}_{\beta}$ allows to introduce when $\gamma<(\beta+1)d$,
\begin{align}
 \label{eq:Tdotgammabeta}
 \cTdot^\gamma_{\beta}= \{ f \in \cD'(\Omega)\, ; \, f \in \L^2_{t}\Xdot^{\gamma}_{\beta} \ \& \ (\partial_t + v \cdot \nabla_x)f \in \L^2_{t} \Xdot^{\gamma -2\beta}_\beta\}
\end{align}
with Hilbertian norm $ \|f\|_{\cTdot^\gamma_{\beta}}$ defined by
\begin{align*}
 \|f\|_{\cTdot^\gamma_{\beta}}^2= \| f\|_{\L^2_{t}\Xdot^{\gamma}_{\beta}}^2 + \|(\partial_t + v \cdot \nabla_x)f\|_{\L^2_{t}\Xdot^{\gamma-2\beta}_\beta}^2, 
 \end{align*}
 and the larger space
 \begin{align}
 \label{eq:Udotgammabeta}
 \cUdot^\gamma_{\beta}= \{ f \in \cD'(\Omega)\, ; \, f \in \L^2_{t}\Xdot^{\gamma}_{\beta} \ \& \ (\partial_t + v \cdot \nabla_x)f \in \L^2_{t} \Xdot^{\gamma -2\beta}_\beta+\L^1_{t} \Xdot^{\gamma-\beta}_{\beta}\}
\end{align}
with norm $ \|f\|_{\cUdot^\gamma_{\beta}}$ defined by
\begin{align*}
 \|f\|_{\cUdot^\gamma_{\beta}}= \| f\|_{\L^2_{t}\Xdot^{\gamma}_{\beta}} + \|(\partial_t + v \cdot \nabla_x) f\|_{\L^2_{t}\Xdot^{\gamma-2\beta}_\beta+\L^1_{t} \Xdot^{\gamma-\beta}_{\beta}}.
 \end{align*}
 Note that the $x$-regularity is already included in the definition of these spaces, hence $ \cTdot^\gamma_{\beta} \subset \cGdot^\gamma_{\beta}$ and $ \cUdot^\gamma_{\beta} \subset \cLdot^\gamma_{\beta}$ when $0\le \gamma<d/2$. When $\gamma\le 0$, the opposite inclusions hold. 
 
 There is an immediate observation that $(-\Delta_{v})^{\beta}$ maps $\L^2_{t}\Xdot^{\gamma}_{\beta}$ into $\L^2_{t}\Xdot^{\gamma-2\beta}_\beta$ when $\gamma<(\beta+1)d$. Indeed, writing $ \hat f = w^{-\gamma}\hat g$ with $\hat g\in \L^2_{t,\varphi,\xi}$, we have $|\xi|^{2\beta} \hat f= w^{-\gamma+2\beta} (|\xi|^{2\beta}w^{-2\beta}\hat g)$. As $|\xi|^{2\beta}w^{-2\beta}\hat g$ is in $\L^2_{t,\varphi,\xi}$ and $\gamma-2\beta<\gamma$, we have that $|\xi|^{2\beta} \hat f$ is a tempered distribution with inverse (partial) Fourier transform in $\L^2_{t}\Xdot^{\gamma-2\beta}_\beta$. 

 \subsubsection{Another homogeneous kinetic embedding}
 
In this scale, the kinetic embedding statement becomes the following one. 
\begin{thm}[Homogeneous kinetic embedding: extension]\label{thm:homkinspaceT}
Assume $\gamma <(\beta+1)d$. 
\begin{enumerate} 
\item $\cTdot^\gamma_{\beta} \subset \cUdot^\gamma_{\beta} \hookrightarrow \C^{}_{0}(\R^{}_{t}\, ; \, \Xdot^{\gamma-\beta}_{\beta} )$ with 
\(
\sup_{t\in \R} \|f(t)\|_{\Xdot^{\gamma-\beta}_{\beta}} \lesssim_{d,\beta,\gamma} \|f\|_{\cUdot^\gamma_{\beta}}.
\) 
\item The subspaces  in Theorem~\ref{thm:homkinspace}, (iii), are dense in all $\cTdot^\gamma_{\beta} $ and $\cUdot^\gamma_{\beta}$.
\item The family of spaces $(\cTdot^\gamma_{\beta})_{\gamma}$  has the complex interpolation property. 
\end{enumerate}
\end{thm}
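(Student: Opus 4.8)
The plan is to follow Section~\ref{s:uniqueness-embeddings} almost verbatim, the only novelty being that the enlarged range $\gamma<(\beta+1)d$ replaces the threshold $d/2$. As recorded just before the statement, with the Fourier definition of $\Xdot^{\gamma}_{\beta}$ the estimates of Lemma~\ref{l:galilean} and Proposition~\ref{p:boundsKolmo} persist for $\gamma<(\beta+1)d$ (their proofs are carried out purely on the Fourier side, where no restriction on exponents intervenes), and the uniqueness of Lemma~\ref{lem:uniqueness} applies to $f\in\L^2_{t}\Xdot^{\gamma}_{\beta}$ in this range via Remark~\ref{rem:uniqueness}. The first and decisive step is therefore to establish the analogue of Lemma~\ref{lem:isom-embed} for the present scales, from which (i)--(iii) are read off exactly as Theorem~\ref{thm:homkinspace} was deduced.

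Concretely, I would show that $\cK^{\pm}_{\beta}$ is an isomorphism from $\L^2_{t}\Xdot^{\gamma-2\beta}_{\beta}$ onto $\cTdot^\gamma_{\beta}$ and from $\L^2_{t}\Xdot^{\gamma-2\beta}_{\beta}+\L^1_{t}\Xdot^{\gamma-\beta}_{\beta}$ onto $\cUdot^\gamma_{\beta}$, with inverse $\pm(\partial_{t}+v\cdot\nabla_{x})+(-\Delta_{v})^{\beta}$. Boundedness into $\L^2_{t}\Xdot^{\gamma}_{\beta}$ is the extended Proposition~\ref{p:boundsKolmo}; that the image lands in the kinetic space uses the mapping property $(-\Delta_{v})^{\beta}\colon\L^2_{t}\Xdot^{\gamma}_{\beta}\to\L^2_{t}\Xdot^{\gamma-2\beta}_{\beta}$ noted just above the statement, together with the distributional identity $\pm(\partial_{t}+v\cdot\nabla_{x})\cK^{\pm}_{\beta}S+(-\Delta_{v})^{\beta}\cK^{\pm}_{\beta}S=S$, which holds on $\cS_{K}$ by the proof of Lemma~\ref{lem:distributionalsolutions} and extends by density and the extended boundedness. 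Injectivity is precisely the uniqueness granted above, and surjectivity follows as in Lemma~\ref{lem:isom-embed}: given $f$ in the kinetic space, $S:=\pm(\partial_{t}+v\cdot\nabla_{x})f+(-\Delta_{v})^{\beta}f$ lies in the source space, $\cK^{\pm}_{\beta}S$ solves the same equation as $f$, and uniqueness forces $f=\cK^{\pm}_{\beta}S$.

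Item (i) is then immediate: $\cTdot^\gamma_{\beta}\subset\cUdot^\gamma_{\beta}$ directly from the definitions, and for $f=\cK^{\pm}_{\beta}S\in\cUdot^\gamma_{\beta}$ the $\C^{}_{0}(\R^{}_{t}\,;\,\Xdot^{\gamma-\beta}_{\beta})$ bounds of Proposition~\ref{p:boundsKolmo} (valid since $\gamma-\beta<(\beta+1)d$) give the stated estimate. For (ii) I would use Lemma~\ref{lem:density} (and Lemma~\ref{lem:strongerdensity} when $d\geq2$): the Galilean--Fourier isomorphism reduces density of $\cS_{K}$ in $\L^2_{t}\Xdot^{\gamma-2\beta}_{\beta}+\L^1_{t}\Xdot^{\gamma-\beta}_{\beta}$ to density, in the corresponding weighted $\L^2$ spaces, of smooth functions whose Fourier transform is compactly supported away from $\varphi=0$ and $\xi=0$, which holds for every admissible $\gamma$; transporting the image $\cK^{\pm}_{\beta}(\cS_{K})$ through the isomorphism yields density in $\cTdot^\gamma_{\beta}$ and $\cUdot^\gamma_{\beta}$ by the same subspace as in Theorem~\ref{thm:homkinspace}(iii). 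For (iii) I would argue as in Theorem~\ref{thm:homkinspace}(iv): the Fourier transform in $(x,v)$ makes $\L^2_{t}\Xdot^{\gamma-2\beta}_{\beta}$ isometric to $\L^2_{t,\varphi,\xi}(w^{2\gamma-4\beta}\,\dt\,\dd\varphi\,\dd\xi)$, weighted $\L^2$ spaces form a complex interpolation scale, and the isomorphism $\cK^{\pm}_{\beta}$ transports this property to $(\cTdot^\gamma_{\beta})_{\gamma}$.

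The main obstacle, and the only place where the enlargement of the range is genuinely exploited rather than cited, is the verification that the ingredients of the isomorphism survive for $d/2\le\gamma<(\beta+1)d$: one must check that elements of $\Xdot^{\gamma}_{\beta}$ still have locally integrable Fourier transforms (which rests on $w^{-\gamma}\in\L^2_{\loc}(\R^{2d})\Leftrightarrow\gamma<(\beta+1)d$, so that the ODE argument of Lemma~\ref{lem:uniqueness} can be run) and that the density statement of Lemma~\ref{lem:density}, stated there for $\gamma<d/2$, indeed extends to the whole range. Both reduce to the single integrability threshold $\gamma<(\beta+1)d$, after which every remaining estimate is an unchanged Fourier-side computation, so I expect no further difficulty beyond carefully transcribing the earlier proofs with $w^{-\gamma}$ in place of $|\xi|^{-\alpha}$.
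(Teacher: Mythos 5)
Your proposal is correct and follows essentially the same route as the paper, which proves this theorem only in sketch form in Section~\ref{sec:kinmax}: extend the Fourier-side estimates (Lemma~\ref{l:galilean}, Proposition~\ref{p:boundsKolmo}) and the uniqueness of Lemma~\ref{lem:uniqueness} via Remark~\ref{rem:uniqueness} to the range $\gamma<(\beta+1)d$ using $w^{-\gamma}\in\L^2_{\loc}$, establish that $\cK^{\pm}_{\beta}$ is an isomorphism from $\L^2_{t}\Xdot^{\gamma-2\beta}_{\beta}$ onto $\cTdot^{\gamma}_{\beta}$ and from $\L^2_{t}\Xdot^{\gamma-2\beta}_{\beta}+\L^1_{t}\Xdot^{\gamma-\beta}_{\beta}$ onto $\cUdot^{\gamma}_{\beta}$, and read off (i)--(iii) exactly as in Theorem~\ref{thm:homkinspace}. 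Your identification of the single genuine checkpoint (local integrability of Fourier transforms, hence the validity of the ODE/uniqueness and density arguments, all governed by the threshold $\gamma<(\beta+1)d$) is precisely the point the paper relies on.
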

 Again, this is only the definition of spaces that imposes the upper bound on $\gamma$. The inclusion (i) in this statement is weaker than (ii) in Theorem~\ref{thm:homkinspace}, but for a larger range.

We also have modifications of the statements of Theorem~\ref{thm:optimalregularity} and of Theorem~\ref{thm:optimalregularitygeneralisation} assuming $f\in \cUdot^{\beta+\varepsilon}_{\beta}$ and $\tilde f\in \cUdot^{\beta-\varepsilon}_{\beta}$, with only constraint $\beta\pm\varepsilon<(\beta+1)d$. In the latter statement, as there is no transfer of regularity, (i) and (ii) only contain the sup norm estimates. Next, (iii) has to be modified slightly with assumptions on the sums $S_{1}+S_{2}$, $\widetilde S_1 +\widetilde S_2$ rather than individual assumptions. The dualities in the formula for the derivative also have to be adapted. This is left to the reader. 

The isomorphism statement Lemma~\ref{lem:isom-embed} is modified as follows: when $\gamma<(\beta+1)d$,  
$\cK^\pm_\beta$ are isomorphisms from 
    $\L^2_{t} \Xdot^{\gamma -2\beta}_{\beta}$ onto $\cTdot^\gamma_{\beta}$ and from $\L^2_{t} \Xdot^{\gamma -2\beta}_\beta+ \L^1_{t} \Xdot^{\gamma-\beta}_{\beta}$ onto $\cUdot^\gamma_{\beta}$. This is what induces the interpolation property for the scale of spaces $\cTdot^\gamma_{\beta}$.

The above discussion also extends to homogeneous spaces on $\Omega_{+}$. 

\subsection{Inhomogeneous versions}

Consider the inhomogeneous versions $\cT^\gamma_{\beta}(I)$, $\cU^\gamma_{\beta}(I)$ on $\Omega$, $\Omega_{+}$ or on strips $(0,T)\times\R^d\times \R^d$ where $I=\R,(0,\infty)$ or $(0,T)$, respectively. With these spaces, there are no conditions on $\gamma\in \R$. The natural generalisations to Theorem~\ref{thm:homkinspaceT} and absolute continuity hold. 
In particular, one has 
\begin{equation}
\label{eq:cTembedding}
\cT^\gamma_{\beta}(0,T) \hookrightarrow \C([0,T]\, ; \, \X^{\gamma-\beta}_{\beta} )\ \mathrm{with}\ 
\sup_{t\in [0,T]} \|f(t)\|_{\X^{\gamma-\beta}_{\beta}} \lesssim_{d,\beta,\gamma, T} \|f\|_{\cT^\gamma_{\beta}(0,T)}.
\end{equation}
If $T=\infty$, the homogeneous estimate is the one corresponding to (i) in Theorem~\ref{thm:homkinspaceT} (with $\gamma<(\beta+1)d$). 

\begin{prop}
If $0\le \gamma\le 2\beta$, then $ \cG^\gamma_{\beta}(0,T)= \cT^\gamma_{\beta}(0,T)$. As a consequence, Theorem~\ref{thm:inhomkinspace}(i), and \eqref{eq:cTembedding} are the same statements in this range of $\gamma$.
\end{prop}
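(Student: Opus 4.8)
The plan is to prove the two inclusions separately, the first being immediate and the second being exactly a transfer-of-regularity statement. Throughout, observe that the two spaces $\cG^\gamma_\beta(0,T)$ and $\cT^\gamma_\beta(0,T)$ impose the \emph{same} condition on the transport term, $(\partial_t+v\cdot\nabla_x)f\in\L^2_t\X^{\gamma-2\beta}_\beta$, and differ only in the requirement on $f$ itself: $f\in\L^2_{t,x}\H^\gamma_v$ for $\cG$, versus $f\in\L^2_t\X^\gamma_\beta$ for $\cT$. Since $0\le\gamma\le2\beta$ the inhomogeneous space $\X^\gamma_\beta$ is the \emph{intersection} $\L^2_x\H^\gamma_v\cap\L^2_v\H_x^{\frac{\gamma}{2\beta+1}}$, so $\L^2_t\X^\gamma_\beta\hookrightarrow\L^2_{t,x}\H^\gamma_v$; this yields at once $\cT^\gamma_\beta(0,T)\hookrightarrow\cG^\gamma_\beta(0,T)$ with $\|f\|_{\cG^\gamma_\beta}\le\|f\|_{\cT^\gamma_\beta}$.

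For the reverse inclusion, let $f\in\cG^\gamma_\beta(0,T)$. The only missing information is the gain of position regularity $f\in\L^2_{t,v}\H_x^{\frac{\gamma}{2\beta+1}}$; once this is available, combining it with $f\in\L^2_{t,x}\H^\gamma_v$ gives $f\in\L^2_t(\L^2_x\H^\gamma_v\cap\L^2_v\H_x^{\frac{\gamma}{2\beta+1}})=\L^2_t\X^\gamma_\beta$ --- here one uses that for mixed $\L^2_t$-valued spaces the intersection commutes with the $\L^2_t$-functor, the squared norms simply adding --- so that $f\in\cT^\gamma_\beta$ with controlled norm. To produce the gain I would invoke the inhomogeneous transfer of regularity, Theorem~\ref{thm:inhomkinspace}(i), which gives $\cL^\gamma_\beta\hookrightarrow\L^2_{t,v}\H_x^{\frac{\gamma}{2\beta+1}}$, after remarking that $\cG^\gamma_\beta\subset\cL^\gamma_\beta$ because $\L^2_t\X^{\gamma-2\beta}_\beta$ embeds into $\L^2_t\X^{\gamma-2\beta}_\beta+\L^1_t\X^{\gamma-\beta}_\beta$.

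The hard part will be running Theorem~\ref{thm:inhomkinspace}(i) on the \emph{finite} strip, which I would do by the same time cut-off as in the strips subsection. Picking $\chi\colon\R\to[0,1]$ Lipschitz, supported in $[0,T)$ and $\equiv1$ near $0$, one has $(\partial_t+v\cdot\nabla_x)(\chi f)=\chi' f+\chi\,(\partial_t+v\cdot\nabla_x)f$; the second term stays in $\L^2_t\X^{\gamma-2\beta}_\beta+\L^1_t\X^{\gamma-\beta}_\beta$, while the commutator $\chi' f$ lies in $\L^2_{t,x}\H^\gamma_v\subset\L^2_t\X^{\gamma-2\beta}_\beta$, the last inclusion holding precisely because $\gamma-2\beta\le0$ (inhomogeneous Sobolev spaces are nested downward, $\H^\gamma_v\hookrightarrow\H^{\gamma-2\beta}_v$, and $\L^2_x\H^{\gamma-2\beta}_v$ is one summand of $\X^{\gamma-2\beta}_\beta$). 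Hence $\chi f\in\cL^\gamma_\beta(\Omega_+)$, and Theorem~\ref{thm:inhomkinspace}(i) on $\Omega_+$ gives $f\in\L^2_{t,v}\H_x^{\frac{\gamma}{2\beta+1}}$ on $[0,T']$ for each $T'<T$; repeating with a cut-off anchored at the right endpoint and gluing gives the estimate on all of $(0,T)$, with a $T$-dependent constant. This $T$-dependence is harmless since we only claim equality of spaces with equivalent norms; the one genuine check is that the commutator $\chi' f$ indeed lands in the source space, which is exactly where the hypothesis $\gamma\le2\beta$ is used.

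Combining the two inclusions yields $\cG^\gamma_\beta(0,T)=\cT^\gamma_\beta(0,T)$ with equivalent norms. For the consequence, note that in this range the transfer-of-regularity embedding Theorem~\ref{thm:inhomkinspace}(i) for $\cG^\gamma_\beta$ is nothing but the assertion that membership in $\cG^\gamma_\beta$ automatically supplies the position regularity that is \emph{built into} the definition of $\cT^\gamma_\beta$; consequently the sup-norm embedding \eqref{eq:cTembedding} for $\cT^\gamma_\beta$ and the embeddings of Theorem~\ref{thm:inhomkinspace} for $\cG^\gamma_\beta$ are statements about one and the same space, and thus carry identical content.
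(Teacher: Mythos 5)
Your proof is correct and follows essentially the same route as the paper: the inclusion $\cT^\gamma_{\beta}(0,T)\subset\cG^\gamma_{\beta}(0,T)$ is immediate from the definitions since $\gamma\ge 0$, and the reverse inclusion is precisely the inhomogeneous transfer of regularity applied via the same time cut-off that the paper uses to pass from $\Omega_{+}$ to finite strips, with your verification that the commutator term $\chi' f$ lands in $\L^2_{t}\X^{\gamma-2\beta}_{\beta}$ (using $\gamma\le 2\beta$ and the nesting of inhomogeneous Sobolev spaces) being exactly the point where the hypothesis enters. One small remark: you invoke Theorem~\ref{thm:inhomkinspace}(i), the $\L^2_{t,v}\H_{x}^{\frac{\gamma}{2\beta+1}}$ embedding, which is indeed the statement that supplies the missing position regularity, whereas the paper's proof cites item (ii) — your reference is the apt one.
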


\begin{proof}
 When $\gamma\ge 0$, the definition yields the inclusion $ \cT^\gamma_{\beta}(0,T) \subset \cG^\gamma_{\beta}(0,T)$. The inhomogeneous version  of the transfer of regularity contained in Theorem~\ref{thm:inhomkinspace}(ii) shows that if  $0\le \gamma\le 2\beta$ then $ \cG^\gamma_{\beta}(0,T) \subset \cT^\gamma_{\beta}(0,T)$.
\end{proof}

When $\gamma\notin [0,2\beta]$, this equality of spaces does not seem true, hence the kinetic embedding for $ \cG^\gamma_{\beta}(0,T)$ is not clear as well.

\subsection{Comparison with the embeddings in \cite{MR4350284}} We can now make a precise link between the estimate \eqref{eq:cTembedding} and \cite[Theorem~5.13]{MR4350284}). More precisely,  the spaces $\cT^\gamma_{\beta}(0,T)$ are introduced (with different notation and in the range $\gamma\ge \beta$: a close examination shows this restriction was not necessary and their arguments work for all $\gamma$) as part of a more general framework (with temporal weights) for solving the kinetic Cauchy problem for the Kolmogorov equation with constant diffusion ($0<\beta\le 1$) and \eqref{eq:cTembedding} is a key ingredient in the argument. While the techniques used there relied on the Fourier transform as well, the method is different. It is proved by first establishing a weaker embedding in $\C([0,T]\, ; \, \X)$ where $\X$ is a Sobolev space with a large enough negative exponent. This argument is not possible when working with homogeneous spaces. Then, by an interpolation argument, the exact trace space was characterised as $\X^{\gamma-\beta}_\beta$. Some estimates were mentioned to hold when $T=\infty$, but not the homogeneous embedding that would correspond to \eqref{eq:cTembedding}. 

\bibliographystyle{plain}

\end{document}